\newcommand{\lpar}{(}
\newcommand{\rpar}{)}
\newcommand{\gl}{{\mathfrak g \mathfrak l}}
\newcommand{\so}{{\mathfrak s \mathfrak o}}
\renewcommand{\u}{{\mathfrak u}}
\newcommand{\su}{{\mathfrak s  \mathfrak u}}
\newcommand{\ssl}{{\mathfrak s \mathfrak l}}
\newcommand{\ssp}{{\mathfrak s \mathfrak p}}
\newcommand{\g}{{\mathfrak g}}         % Lie algebra of G
\newcommand{\h}{{\mathfrak h}}         % Lie algebra of H
\newcommand{\cx}{{\mathbb C}}
\newcommand{\diag}{\operatorname{diag}}
\newcommand{\ad}{\operatorname{ad}}
\newcommand{\Ad}{\operatorname{Ad}}
\newcommand{\tr}{\operatorname{tr}}
\newcommand{\im}{\operatorname{Im}}
\newcommand{\codim}{\operatorname{codim}}
\newcommand{\Lie}{\operatorname{Lie}}
\newcommand{\Hom}{\operatorname{Hom}}
\newcommand{\End}{\operatorname{End}}
\newcommand{\Ker}{\operatorname{Ker}}
\newcommand{\rank}{\operatorname{rank}}
\newcommand{\Hilb}{\operatorname{Hilb}}
\newcommand{\Mat}{\operatorname{Mat}}
\newcommand{\Spec}{\operatorname{Spec}}
\newcommand{\Id}{\operatorname{Id}}
\newcommand{\ol}{\overline}
\numberwithin{equation}{section}
\newtheorem{theorem}{Theorem}[section]
\newtheorem*{theorem*}{Theorem}
\newtheorem{lemma}[theorem]{Lemma}
\newtheorem{corollary}[theorem]{Corollary}
\newtheorem{proposition}[theorem]{Proposition}
\theoremstyle{remark}
\newtheorem{remark}[theorem]{Remark}
\newtheorem{definition}[theorem]{Definition}
\newtheorem{example}[theorem]{Example}
\newtheorem*{ack}{Acknowledgment}
\newcommand{\C}{{\mathbb{C}}}
\newcommand{\N}{{\mathbb{N}}}
\newcommand{\R}{{\mathbb{R}}}
\newcommand{\Z}{{\mathbb{Z}}}
\newcommand{\oH}{{\mathbb{H}}}
\newcommand{\oN}{{\mathbb{N}}}
\newcommand{\oP}{{\mathbb{P}}}
\newcommand{\oR}{{\mathbb{R}}}
\newcommand{\oS}{{\mathbb{S}}}
\newcommand{\oV}{{\mathbb{V}}}
\newcommand{\oZ}{{\mathbb{Z}}}
\newcommand{\sA}{{\mathcal{A}}}   % connections
\newcommand{\sB}{{\mathcal{B}}}
\newcommand{\sC}{{\mathcal{C}}}   % configuration space
\newcommand{\sF}{{\mathcal{F}}}
\newcommand{\sL}{{\mathcal{L}}}   % Lagrangian planes
\newcommand{\sM}{{\mathcal{M}}}   % moduli space
\newcommand{\sN}{{\mathcal{N}}}
\newcommand{\sO}{{\mathcal{O}}}
\newcommand{\sP}{{\mathcal{P}}}
\newcommand{\sQ}{{\mathcal{Q}}}
\newcommand{\sT}{{\mathcal{T}}}
\newcommand{\sX}{{\mathcal{X}}}
\newcommand{\sZ}{{\mathcal{Z}}}
\newcommand{\fB}{{\mathfrak{b}}}
\newcommand{\fH}{{\mathfrak{h}}}
\newcommand{\fK}{{\mathfrak{k}}}
\newcommand{\fM}{{\mathfrak{m}}}
\newcommand{\fT}{{\mathfrak{t}}}
\newcommand{\fU}{{\mathfrak{u}}}
\DeclareMathOperator*{\dprime}{\prime\hspace{-0.5pt} \prime}
\begin{document}

\title{Hypertoric varieties, $W$-Hilbert schemes, and Coulomb branches}
\author{Roger Bielawski \and Lorenzo Foscolo}
\address{Institut f\"ur Differentialgeometrie,
Leibniz Universit\"at Hannover,
Welfengarten 1, 30167 Hannover, Germany}
\address{Dipartimento di Matematica,
Sapienza Universit\`a di Roma,
Piazzale Aldo Moro 5,
00185 Roma, Italy}
%\address{Department of Mathematics, University College London, London WC1E 6BT, United Kingdom}
%\email{R.Bielawski@ed.ac.uk}
%\date{\today}

%\date{\today}

%\thanks{}
%\dedicatory{}

\begin{abstract} We study transverse equivariant Hilbert schemes of affine hypertoric varieties equipped with a symplectic action of a Weyl group.
In particular, we show that the Coulomb branches of Braverman, Finkelberg, and Nakajima can be obtained either as such Hilbert schemes or  Hamiltonian reductions thereof. Furthermore, we propose that the Coulomb branches for representations of non-cotangent type are obtained analogously.
\par
We also investigate the putative complete hyperk\"ahler metrics on these objects. We describe their twistor spaces and, for a large class of $W$-invariant  hypertoric varieties (which includes all  Coulomb branches of cotangent type), we show that the hyperk\"ahler metric can be described as the natural $L^2$-metric on a moduli space of solutions to {\em modified} Nahm's equations on an interval  with poles at both ends and a discontinuity in the middle,  with the latter described by a new object: a {\em hyperspherical variety} canonically associated to a hypertoric variety.
\end{abstract}

\maketitle

\thispagestyle{empty}

\tableofcontents

\section{Introduction}

 Affine hypertoric varieties are symplectic quotients of some $\cx^{2d}\times T^\ast(\cx^\ast)^n$ (with its standard symplectic form)
by $(\cx^\ast)^d$ which acts in the standard way on the first factor, and via a homomorphism $(\cx^\ast)^d\to (\cx^\ast)^n$ on the second one. Such a quotient $X$ is a $2n$-dimensional affine  symplectic variety equipped with a Hamiltonian algebraic action of  $T=(\cx^\ast)^n$ and   a flat moment map $\mu:X\to \h^\ast\simeq \h$, where $\h=\Lie(T)$. 
Suppose now that a Weyl group $W$ acts on $T$ by reflections. We say that $X$ is a {\em $W$-invariant } if the action of $T$ on $X$ extends to a symplectic action of the corresponding semidirect product  $T\rtimes W$. In the special case, when already the homomorphism  $(\cx^\ast)^d\to T$ is $W$-equivariant, we say that $X$ is  {\em strongly $W$-invariant}.
\par
Recall now that to any scheme $X$ (of finite type over $\cx$) equipped with a generically free algebraic action of a finite group $W$, one can associate its equivariant Hilbert scheme $W\text{-}\Hilb(X)$ consisting of $W$-invariant $0$-dimensional subschemes $Z$ of $X$ such that the induced representation of $W$ on $H^0(Z,\sO_Z)$ is the regular representation. This scheme does not have to be integral, even if $X$ is. For this and other reasons it is often better to consider the subscheme $\Hilb^W(X)$ defined as the closure of the locus of free orbits in $W\text{-}\Hilb(X)$.
\par
If $X$ is also equipped with an equivariant map $\mu:X\to \h$, with $W$ acting by reflections on $\h$, then we can consider the transverse equivariant  Hilbert schemes $W\text{-}\Hilb_\mu(X)$ and $\Hilb^W_\mu(X)$, consisting of those  $Z\in W\text{-}\Hilb(X)$ (resp.\  $Z\in \Hilb^W(X)$) for which $\mu|_Z$ is an isomorphism onto its image. These are schemes over  $\h/W$. Both of them are regular if $X$ is regular. 
\par
The subject of this paper are varieties $\Hilb_\mu^W(X)$ corresponding to affine $W$-invariant hypertoric varieties $X$. One reason for our interest in these objects is that they provide an explicit realisation of Coulomb branches of $3$-dimensional $N=4$ supersymmetric gauge theories \cite{Nak}. Let us recall that such a quantum gauge theory is associated with the choice of a compact Lie group\footnote{We shall denote this gauge group by $K^\vee$, since it is its Langlands dual which will play a more prominent role in the present paper.}  and its quaternionic representation, and  physics predicts that the moduli space of vacua is a (generally singular) hyperk\"ahler space. While the quantum gauge theory does not have a mathematical definition, we still might expect to construct these hyperk\"ahler spaces rigorously. The moduli space of vacua has in general different branches. One of these, the Higgs branch, is mathematically well-defined, since it is simply the hyperk\"ahler quotient of the quaternionic representation space by the gauge group $K^\vee$. There is, however, 
another distinguished branch, the Coulomb branch.
%of the $3$-dimensional $\mathcal{N}=4$ supersymmetric gauge theory labelled by $K$ and a quaternionic representation of $K^\vee$. 
Despite many well-known hyperk\"ahler manifolds (the Taub-NUT and Atiyah-Hitchin metrics, for example) having been identified as such Coulomb branches, a general definition is still lacking. In the case when the quaternionic representation is of the form $V\oplus V^\ast$, where $V$ is a complex representation of $K^\vee$, Braverman, Finkelberg, and Nakajima \cite{BFN} have constructed the associated Coulomb branches as Poisson\footnote{A recent result of Bellamy \cite{Bell} shows that these varieties are symplectic.} affine varieties via the equivariant Borel-Moore homology of certain moduli stacks.  In the case of arbitrary quaternionic representations, constructions of Coulomb branches have been proposed in  \cite{BDFRT} and in \cite{Tel2}.
\par
In the present paper, we show that the Braverman-Finkelberg-Nakajima  varieties can be realised as  transverse equivariant Hilbert schemes of  strongly $W$-invariant affine hypertoric varieties (in most cases), or Poisson quotients thereof. The latter case occurs only if $G=K^\cx$ has a direct factor ${\rm SO}_{2k+1}(\cx)$. Furthermore, we propose that the Coulomb branches for quaternionic representations of non-cotangent type are obtained as transverse equivariant Hilbert schemes or Poisson quotients  of more general affine $W$-invariant hypertoric varieties. In addition, we shall show that the hyperk\"ahler structure of Coulomb branches of cotangent type should be realised via a moduli space of solutions to modified  Nahm's equations on on an interval  with poles corresponding to a regular $\su(2)$-triple in $\fK$  at both ends and  certain discontinuity in the middle.
\par
Our main complex-symplectic result is as follows.
%%%%
\begin{theorem*}
\begin{enumerate}
\item 
Let $X$ be an affine strongly $W$-invariant hypertoric variety with moment map $\mu: X\to \h^\ast\simeq\h$. The transverse $W$-Hilbert scheme $\Hilb^W_\mu (X)$ is equal to $W\text{-}\Hilb_\mu(X)$ and it is an affine symplectic\footnote{We show that $\Hilb^W_\mu (X)$ is a normal affine variety with a symplectic form on its regular locus. The fact that its  singularities are symplectic follows then from Bellamy's Lemma 2.1 \cite{Bell}.} variety with a flat morphism $\overline{\mu}: \Hilb^W_\mu (X)\to \h/W$.
\item For each connected reductive Lie group $G$ with Weyl group $W$ and a representation $V$ of the Langlands dual group $G^\vee$, there exists a strongly $W$-invariant hypertoric variety $X$ such that the Bravermann-Finkelberg-Nakajima Coulomb branch $\mathcal{M}_C(G^\vee, V)$ is isomorphic, as a Poisson affine variety, to the Poisson quotient of $\Hilb^W_\mu (X)$ by $(\cx^\ast)^N$, where $N$ is the number of direct ${\rm SO}_{2k+1}(\cx)$ factors in $G$.
\end{enumerate}
\end{theorem*}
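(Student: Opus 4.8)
The plan is to prove the two parts in sequence: Part (1) produces the geometric object, and Part (2) identifies its coordinate ring (after reduction) with the Braverman--Finkelberg--Nakajima (BFN) algebra.

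For Part (1) I would begin by constructing the structure morphism $\overline{\mu}$. A point $Z\in\Hilb^W_\mu(X)$ has $\mu|_Z$ an isomorphism onto a $W$-invariant subscheme $\mu(Z)\subset\h$ of length $|W|$ whose structure sheaf is the regular representation, i.e. a $W$-cluster; sending $Z$ to the corresponding point of $\h/W$ defines $\overline{\mu}$. Over the regular locus, where $W$ acts freely on $\h^{\mathrm{reg}}$ and hence, by equivariance of $\mu$, on $\mu^{-1}(\h^{\mathrm{reg}})$, the orbit map $x\mapsto W\cdot x$ identifies $\overline{\mu}^{-1}(\h^{\mathrm{reg}}/W)$ with the open symplectic subvariety $\mu^{-1}(\h^{\mathrm{reg}})/W$ of $X/W$. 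This furnishes a symplectic form on a dense open set; the real work is to extend it across the reflection walls. For this I would use the transverse-Hilbert-scheme symplectic form established in the preceding sections, which pairs two tangent sections $s_1,s_2\in H^0(Z,N_{Z/X})^W$ via $\omega_X$ into a function $\omega_X(s_1,s_2)\in\sO_Z$ and then takes its $W$-invariant trace, i.e. the sum of residues over $\supp Z$; since this formula is regular in families it extends $\omega$ over the non-regular fibres.

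Affineness then follows because $\overline{\mu}$ is an affine morphism over the affine base $\h/W$ (its fibres are closed subschemes of the punctual Hilbert scheme of the affine $X$ cut out by transversality), or concretely from an embedding of $\Hilb^W_\mu(X)$ into a product of transverse Hilbert schemes of affine coordinate functions on $X$. Normality, and regularity in the regular case, descend from the corresponding properties of $X$ via the identification of the smooth locus above. Flatness of $\overline{\mu}$ I would deduce from flatness of $\mu$ itself (part of the hypertoric data) together with the constant-fibre-dimension criterion: the base $\h/W$ is an affine space (as $W$ is a reflection group), every fibre has dimension $n=\dim\Hilb^W_\mu(X)-\dim(\h/W)$, and $\Hilb^W_\mu(X)$ is Cohen--Macaulay on its normal symplectic locus, so miracle flatness applies. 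Having a normal affine variety carrying a symplectic form on its regular locus, Bellamy's Lemma~2.1 upgrades the singularities to symplectic ones, completing Part (1).

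For Part (2) the strongly $W$-invariant hypertoric variety should be the abelian Coulomb branch: take $X=\mathcal{M}_C(T^\vee,V)$ for a maximal torus $T\subset G$, which is an affine hypertoric variety by the abelian case of BFN, with moment map the projection to $\h=\Lie(T)$ recording the Cartan/topological coordinate. Because $V$ is a representation of the whole $G^\vee$, the group $W$ permutes its weights, so the defining hyperplane data is $W$-invariant and the homomorphism $(\cx^\ast)^d\to T$ can be taken $W$-equivariant; thus $X$ is strongly $W$-invariant. The heart of the identification is the BFN abelianisation theorem, which realises $\C[\mathcal{M}_C(G^\vee,V)]$ as the $W$-invariant classes in (a localisation of) $\C[\mathcal{M}_C(T^\vee,V)]$ that stay regular at the reflection walls, both sides carrying compatible flat maps to $\h/W$. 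I would match this with $\C[\Hilb^W_\mu(X)]$: global functions on the transverse $W$-Hilbert scheme are exactly the $W$-invariant functions on $X$ whose restriction to each $W$-cluster $Z$ is regular, the non-reduced scheme structure at a wall encoding precisely the dressing of monopole operators there. Matching the explicit generators (fundamental classes and their dressings) and checking that the residue-defined symplectic form reproduces the BFN Poisson bracket on them yields the Poisson isomorphism over the regular locus, which extends by normality.

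The $(\cx^\ast)^N$ quotient for the $N$ direct ${\rm SO}_{2k+1}(\cx)$ factors arises because there the dual group is $\operatorname{Sp}_{2k}$ and the relevant coweight lattice is not the one seen by the naive hypertoric construction: the strongly $W$-invariant $X$ one can build carries an extra rank-$N$ torus of symmetries whose Hamiltonian (Poisson) reduction is needed to cut $\Hilb^W_\mu(X)$ down to the true Coulomb branch. I would isolate this discrepancy lattice factor by factor, reduce to a single simple factor, and settle the $B_k$ case by direct comparison with the monopole formula and the known presentation of $\C[\mathcal{M}_C(\mathrm{Spin}/{\rm SO},V)]$. The main obstacle throughout is precisely this comparison of two descriptions of the coordinate ring --- BFN define it via equivariant Borel--Moore homology of the space of triples, whereas the Hilbert scheme produces it through $W$-invariant regular functions and residues --- so the crux is to show these presentations agree, including the Poisson brackets and the delicate wall/dressing behaviour, for which the abelianisation theorem is the essential bridge and the type-$B$ correction the essential subtlety.
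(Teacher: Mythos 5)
Your overall architecture is reasonable, but there is a genuine gap at the normality step of Part (1), which is precisely the main technical content of this result. You assert that normality ``descends from the corresponding properties of $X$ via the identification of the smooth locus above''; that identification only covers the open set $\bar\mu^{-1}(\h^{\mathrm{reg}}/W)$, and normality of a dense open subset says nothing about the whole variety. Over the reflection walls $\Hilb^W_\mu(X)$ is \emph{not} $X/W$: it is a weighted affine blow-up of $X/W$ transverse to the discriminant $V(\delta)$, and such a blow-up can fail to be normal. What is actually needed is (a) a codimension-$2$ extension statement ($\sO_M\simeq j_\ast\sO_{M^\bullet}$ for $M^\bullet=\bar\mu^{-1}((\h/W)\setminus F)$ with $F$ a suitable bad locus inside $V(\delta)$), which itself uses flatness of $\bar\mu$ and the fact that $X/W$, $\Hilb^W_\mu(X)$ and its normalisation are all transverse blow-ups of one another; and (b) a direct verification of normality at points lying over a \emph{generic} wall point, which after localisation reduces to $W=\Z_2$ and splits into cases according to whether the rank-one group attached to $(T,\Z_2)$ is of $SL_2(\cx)$- or $GL_2(\cx)$-type, the latter requiring a further analysis via a finite cover of $X$ on which only a $\Z_4$-extension of $W$ acts. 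None of this is in your proposal. Your flatness argument has a related circularity: you invoke miracle flatness using Cohen--Macaulayness of $\Hilb^W_\mu(X)$, but CM is only available once the singularities are known to be symplectic, which requires Bellamy's lemma, which requires normality. A non-circular route is to prove flatness of $X/W\to\h/W$ first (equidimensionality of $\mu$ plus CM of $X/W$, which does descend from $X$) and then show that flatness over a regular base is preserved by affine transverse blow-ups.

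For Part (2) you choose the correct hypertoric variety (the abelian Coulomb branch of $(T^\vee,V)$), but your identification strategy --- abelianisation of the BFN algebra plus explicit matching of dressed monopole operators with $W$-invariant functions regular on $W$-clusters --- differs from the paper's, which instead applies the BFN uniqueness criterion: an affine scheme over $\h/W$ satisfying the codimension-$2$ extension property, with generic fibre $(T\times\h^\circ)/W$ and the correct rank-one local models along each wall, \emph{is} $\sM_C(G^\vee,V)$. Your route is not wrong in principle, but all of the difficulty is concentrated exactly where you defer to ``matching the explicit generators'' and ``the delicate wall/dressing behaviour'': characterising which $W$-invariant elements of the localised abelian algebra belong to $\C[\sM_C(G^\vee,V)]$ amounts to the same local wall analysis, so nothing is gained, and the Poisson comparison is left entirely open. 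You also do not identify the mechanism forcing the $(\cx^\ast)^N$ correction: by Jackowski--McClure--Oliver, a coroot is divisible in $\sX_\ast(T)$ exactly when it is a short root of a direct ${\rm SO}_{2k+1}(\cx)$-factor, and precisely then the centraliser of a generic point of that wall in $G^\vee$ is $SL_2(\cx)\times(\cx^\ast)^r$, whose local Coulomb branch is a $D_k$-surface, whereas the transverse $\Z_2$-Hilbert scheme of the local $A$-type model produces a different surface (already $D_2$ versus $D_1$ for $G={\rm SO}_3(\cx)$ and the standard representation). Replacing each such factor by ${\rm Spin}^c_{2k+1}(\cx)$, lifting $V$ canonically to the conformal symplectic dual, and then quotienting by the central $(\cx^\ast)^N$ via the BFN exact-sequence result is what reconciles the two; a ``direct comparison with the monopole formula'' only controls Hilbert series, not the ring or Poisson structure.
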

%%%%%
 More precisely, let $G$ be a connected reductive algebraic group and $V$ a complex representation of its Langlands dual. The maximal torus of $G$ and the weights of $V$ determine a $W$-invariant affine hypertoric variety $X(G,V)$, and we prove that its transverse $W$-Hilbert scheme $\Hilb^W_\mu(X)$ is the Coulomb branch associated to $(G,V)$, {\em except} if $G$ has direct ${\rm SO}_{2k+1}(\cx)$-factors. In this case we replace each such factor by ${\rm Spin}^c_{2k+1}(\cx)= {\rm Spin}_{2k+1}(\cx)\times_{{\scriptscriptstyle\oZ_2}}\cx^\ast$,
 form the strongly $W$-invariant affine hypertoric  variety $X(\tilde G,V)$ corresponding to this new group $\tilde G$ (and the same $V$), and form the Poisson quotient of 
 $\Hilb^W_\mu\bigl(X(\tilde G,V))$ by the centre of the product of the ${\rm Spin}^c_{2k+1}(\cx)$.
 %In this case we proceed as follows. We can decompose $G$ as $G=G_0\times \prod_{i=1}^N {\rm SO}_{2k_i+1}(\cx)$ with $k_i\geq 1$ and $G_0$ having no direct ${\rm SO}_{2k+1}(\cx)$-factors. Let $V=V_0\oplus \bigoplus_{i=1}^N V_i$ be the corresponding decomposition of $V$.  We define a new group $G^\prime$ by replacing every ${\rm SO}_{2k_i+1}(\cx)$ with ${\rm Spin}^c_{2k_i+1}(\cx)= {\rm Spin}_{2k_i+1}(\cx)\times_{{\scriptscriptstyle\oZ_2}}\cx^\ast$. The representation $V$ of $G$ can be viewed as a representation of the Langlands dual of $G^\prime$ (where every direct ${\rm Sp}_{2k_i}(\cx)$-factor has been replaced by ${\rm Sp}_{2k_i}(\cx)\times_{{\scriptscriptstyle\oZ_2}}\cx^\ast$). The structure torus of the resulting hypertoric variety $X(G^\prime,V)$  is the product of the maximal torus of $G_0$ and the maximal  tori of ${\rm Spin}^c_{2k_i+1}(\cx)$, $i=1,\dots,N$. The Weyl group acts trivially on the centre $(\cx^\ast)^N$ of $\prod_{i=1}^N {\rm Spin}^c_{2k_i+1}(\cx)$ and, consequently, $(\cx^\ast)^N$ acts on $\Hilb^W_\mu\bigl(X(G^\prime,V))$. We prove that the Coulomb branch corresponding to $(G,V)$ is the symplectic quotient of  $\Hilb^W_\mu\bigl(X(G^\prime,V))$ by this $(\cx^\ast)^N$-action.
 %%%%%
The reason for this different behaviour 
%when $G$ contains an ${\rm SO}_{2k+1}(\C)$ factor 
is essentially due to the presence of coroots that are divisible in the coweight lattice.
% and the related fact that the maximal tori of ${\rm SO}_{2k+1}(\cx)$ and ${\rm Spin}_{2k+1}(\cx)$ are $W$-equivariantly isomorphic. 
\par
The proof of the above theorem, given in \S\ref{Coul} and building on results established in the previous sections, uses two main tools. One is the idea of Bravermann, Finkelberg, and Nakajima \cite[Theorem 5.26]{BFN} that $\mathcal{M}_C(G^\vee, V)$ is uniquely determined amongst affine varieties over $\h/W$ (satisfying additional technical assumption related to  normality) by its structure on the open set lying over a certain codimension $2$ locus in $\h/W$ determined by the weights of $V$ (or the defining hyperplanes of the hypertoric variety $X$ in our construction) and the reflection hyperplanes for the $W$-action. Very roughly, $\mathcal{M}_C (G^\vee, V)$ is isomorphic to $T^\ast T/W$ ($T$ is the maximal torus of $G$) outside of these hyperplanes, while at generic points along the reflection hyperplanes it has local models $(T^\ast T' \times D_k)/\Z_2$ or $T^\ast T'\times D_k$ (the latter only when $G^\vee$ contains a direct ${\rm SO}_{2k+1}(\C)$ factor), where $T'$ is a corank $1$ subtorus of $T$ and $D_k$ is a $D_k$ surface with $k$ determined by the weights of $V$. We show that this is precisely the same structure as (a Poisson $(\C^\ast)^N$-quotient of) $\Hilb^W_\mu(X)$. 
In order to be able to apply the uniqueness result by Bravermann, Finkelberg, and Nakajima we need to establish the normality of $\Hilb^W_\mu(X)$, as well as the fact that the complement of the above open set has codimension $2$. The latter follows from the flatness of $\bar\mu$, which is  established in \S\ref{Snormal}. Once we have this, we prove the normality of $\Hilb^W_\mu(X)$ by following an idea of Bravermann, Finkelberg, and Nakajima \cite[Lemma 6.12]{BFN}.
\par
The picture, described above, of a Coulomb branch being determined by its structure over the complement of (the projection of) root hyperplanes for $G$ and hyperplanes orthogonal to weights of $V$ has been also known to physicists \cite{BDG}. Our work suggests another possibility for an axiomatic description of these Coulomb branches and of the more general spaces $\Hilb_\mu^W(X)$ we consider: a Delzant type theorem for affine symplectic varieties of dimension $2\dim \h$ admitting a Hamiltonian action of the universal centraliser in $G$ (this is an abelian group scheme over $\h/W$).
\par
We hope that the uniform geometric description of our main theorem will be useful for further study of the geometry of the Bravermann-Finkelberg-Nakajima  Coulomb branches. In fact, since we can start from an arbitrary strongly $W$-invariant hypertoric variety $X$,  our construction associates a space $\mathcal{M}_C$ to every connected reductive Lie group $G$ and every element\footnote{The existence of a larger class of spaces with properties similar to Coulomb branches has been already observed both in physics \cite{CFHM} and in mathematics \cite{NW}, but the variety of spaces and their uniform description given here are new. } of the \emph{representation ring} of the Langlands dual group $G^\vee$, not just a single representation $V$. Conversely, the same space $M$ can be realised as a Coulomb branch in many different ways: for example, in dimension two Coulomb branches are $D_k$ singularities, where $k$ only depends on the sum of dominant weights of $V$. This redundancy might complicate identifying properties of the space $M$. For instance, Bravermann, Finkelberg, and Nakajima define natural deformations (and resolutions) of a Coulomb branch arising in the presence of a non-trivial flavour symmetry group (by ``turning on masses'' for the flavour group, in physics jargon).
% : for example, if $V$ is $n$ copies of an irreducible representation of $G^\vee$ then the centraliser of $G^\vee$ in ${Sp}_\C(V\oplus V^\ast)$, the flavour symmetry group, contains a copy of ${{\rm GL}}_n(\C)$, giving rise to deformations of the Coulomb branch $\mathcal{M}_C(G^\vee,V)$ parametrised by a Cartan subalgebra of ${{\rm GL}}_n(\C)$. 
These deformations depend on the particular presentation of the space in question as a Coulomb branch. In our construction, deforming the hypertoric variety $X$ so that it remains strongly $W$-invariant produces natural deformations of $\Hilb^W_\mu (X)$. Based on the $2$-dimensional case, it is tempting to conjecture that, modulo the subtlety arising when $G$ has ${\rm SO}_{2k+1}(\C)$ factors, all deformations of $\Hilb^W_\mu (X)$ arise in this way. Similarly, the presymplectic quotient description of $\Hilb^W_\mu (X)$ in \S\ref{sq} could lead to a construction of natural resolutions of $\Hilb^W_\mu (X)$.
%%%%
%%%%\par It is also tempting to speculate that our work suggests  a realm of algebraic structures beyond groups and their representations, one where hypertoric varieties play the role of Cartan subalgebras.
\par
The construction of the Coulomb branch described above generalises to the case when the quaternionic representation $\mathbb{V}$ of $G^\vee$ is no longer of the form $V\oplus V^\ast$. A quaternionic representation is self-dual and, hence, the nonzero weights of $\mathbb{V}$ come in pairs $\pm \alpha_1,\dots,\pm \alpha_d$. We choose one element of each pair and consider the corresponding hypertoric variety $X$ (the isomorphism type of $X$ does not depend on this choice). Unlike in the case $\mathbb{V}=V\oplus V^\ast$, $X$ does not always admit an action of $W$; in general, it is acted upon  by some extension of $W$ by the maximal torus $T$ of $G$. The vanishing of the corresponding extension class in $H^2(W;T)$ is an obstruction to the existence of the Coulomb branch corresponding to $\mathbb{V}$.  If this obstruction vanishes, i.e.\ if $X$ is $W$-invariant hypertoric and $G$ has no direct ${\rm SO}_{2k+1}(\cx)$-factors, we propose to define the Coulomb branch corresponding to $\mathbb{V}$ as in above Theorem, i.e.\ as $\Hilb^W_\mu (X)$. 
In the case when $G$ does have such factors, the construction is more complicated and explained in \S\ref{genquat} and Appendix  \ref{appendix:ht-lifts}.

\vspace{3mm}

\textbf{Towards a hyperk\"ahler structure on $\Hilb^W_\mu (X)$.}
 Our interest in $\Hilb^W_\mu (X)$ was motivated (originally without knowledge of a connection to Coulomb branches) by a construction of complete non-compact hyperk\"ahler (real) $4$-manifolds with ALF asymptotics of dihedral type (a.k.a.\ $D_k$ ALF metrics) from $\Z_2$--invariant ALF spaces of cyclic type and the simplest dihedral ALF space, the Atiyah-Hitchin metric. This construction was suggested by Sen \cite{Sen} in the physics literature and recently made rigorous by Schroers and Singer \cite{SS} (similar ideas were implemented in the compact setting by the second-named author \cite{Fos}). A natural question is whether this construction can be generalised to higher dimensions. 
Indeed, for a smooth  $W$-invariant $X$ we expect $\Hilb_\mu^W(X)$ to carry a natural complete QALF hyperk\"ahler metric (QALF stands for ``quasi asymptotically locally flat",  the precise definition of which we leave to a future work, but see \S\ref{asymptotics}). Furthermore, we expect that all complete QALF hyperk\"ahler manifolds arise as hyperk\"ahler quotients of such $\Hilb_\mu^W(X)$ by tori.
\par
In this paper we take first steps towards verifying these conjectures. First of all, it is almost automatic to define a singular model of the twistor space of $\Hilb_\mu^W(X)$: one applies the functor $\Hilb_\mu^W$  to the fibres of the singular model of the twistor space of $X$. This should give a hyperk\"ahler metric on an appropriate space of sections of the twistor space, which we would like to identify with   $\Hilb_\mu^W(X)$. Proving this identification directly is hopeless, as is proving the completeness of the metric. Our main contribution is in showing that this putative complete hyperk\"ahler metric on  $\Hilb_\mu^W(X)$ should arise as the natural metric on  a moduli space of solutions to {\em modified}\footnote{The modification arises by changing the flat hyperk\"ahler structure on the space of $\R^3$-invariant connections on $I\times \R^{3}$.} Nahm's equations. The solutions are defined on an interval, with poles at both ends and a discontinuity in the middle.  The discontinuity is described by another object we associate to a strongly $W$-invariant hypertoric variety: a {\em hyperspherical variety} $M_G(X)$. Here $G$ is a  connected reductive algebraic group canonically associated to $(T,W)$, and $M_G(X)$ is a normal affine $G\times G$-Hamiltonian variety of dimension $2\dim G$  such that $\cx[M_G(X)]^{G\times G}\simeq \cx[\h]^W$.  Moreover, $\Hilb^W_\mu(X)$ is naturally a subvariety of $M_G(X)$.  The variety $M_G(X)$ generalises $T^\ast\Mat_{n,n}(\C)$ with its natural $U(n)\times U(n)$-action. We construct a twistor space for $M_G(X)$ and show that this defines a pseudo-hyperk\"ahler manifold with a tri-Hamiltonian $K\times K$-action ($K$ is the maximal compact subgroup of $G$), which can be used to define the above moduli space of solutions to modified Nahm's equations. Thus statements about existence and completeness of a natural hyperk\"ahler metric on $\Hilb_\mu^W(X)$ are equivalent to the analogous statements for $M_G(X)$.
Unlike the metric on $\Hilb_\mu^W(X)$, the one on  $M_G(X)$ is in most cases algebraic and therefore one can hope that it admits a finite-dimensional construction as a (stratified) hyperk\"ahler manifold, e.g.\ as a hyperk\"ahler quotient of a vector space, or a hyperk\"ahler submanifold of such a quotient. Indeed, we are able to verify  that this is the case for several $X$.

\vspace{3mm}

\textbf{Plan of the paper.}
The article is organised as follows. In the next section we introduce  equivariant transverse Hilbert schemes and prove that this construction preserves two key scheme-theoretic properties:  being an affine scheme and the flatness of the natural morphism to $\cx^n/W$. The following section describes transverse equivariant Hilbert schemes as weighted affine blow-ups. In \S\ref{tori} we consider the case of the transverse Hilbert scheme of $T\times\cx^n$, where $T$ is a torus, and a Weyl group $W$ acts on both $T$ and $\cx^n$ by reflections. The transverse Hilbert scheme is then an abelian group scheme $\sT$ over $\cx^n/W$, and if $T$ acts on an affine scheme $\pi:X\to \cx^n$, then  $\sT$ acts on $\Hilb^W_\pi(X)$. We give a sufficient condition for the finite generation of the algebra of $\sT$-invariants, and, hence, for the existence of a well-defined GIT quotient $\Hilb^W_\pi(X)/\!\!/ \sT$ (Proposition \ref{fingen}).
\par
In Section \ref{htv} we discuss affine hypertoric varieties, $W$-invariant hypertoric varieties, and their transverse $W$-Hilbert schemes. The main result in this section is a group-cohomological characterisation of $W$-invariant hypertoric varieties (Corollary \ref{obstruction}). The subsequent section is devoted to the study of transverse Hilbert schemes of strongly $W$-invariant hypertoric varieties. In particular, we prove part (1) of the main theorem above (Corollaries \ref{flat2} and \ref{W=W}, and Theorem \ref{normal}). In \S\ref{sq} we show that if a strongly $W$-invariant affine hypertoric variety $X$ is a symplectic quotient of $Y=\cx^{2d}\times T\times \h$  
by $(\cx^\ast)^d$, then 
$\Hilb^W_\mu(X)$  is a presymplectic quotient of $\Hilb^W_\mu(Y)$ by the abelian group scheme $\Hilb_\mu^W\bigl((\cx^\ast)^d\times \h\bigr)$ over $\h/W$. In \S\ref{Coul} we identify Coulomb branches of Braverman, Finkelberg, and Nakajima with transverse Hilbert schemes of strongly $W$-invariant hypertoric varieties or with their Poisson quotients (Theorem \ref{C=W}). We also propose, along the same lines, a construction of Coulomb branches of non-cotangent type.
%%%%%
\par
In \S\ref{hk-Hilb^W} we discuss the twistor space of $\Hilb^W_\mu(X)$, the relation between its twistor lines, $W$-invariant curves in the twistor space of $X$, and Nahm's equations. We also prove that the complete hyperk\"ahler geometry on  $\Hilb^W_\mu(X)$ can be described via Nahm's equations in two basic cases: $X=T\times \h$ and $X=\cx^{2d}$. In \S\ref{hypers} we extend this description to a large class (possibly all, but in any case containing  all strongly $W$-invariant varieties) of $W$-invariant hypertoric varieties. More precisely, we show that the hyperk\"ahler geometry of $\Hilb^W_\mu(X)$ can be described via modified Nahm's equations on an interval  with matching conditions determined by a complex space $M_G^\circ(X)$. We describe the symplectic and (pseudo)-hyperk\"ahler geometry of $M_G^\circ(X)$. In \S\ref{completions} we show that, in the case of strongly $W$-invariant hypertoric varieties, $M_G^\circ(X)$ is quasiaffine. In general, we do not have an explicit description of its affine completion $M_G(X)$, but in \S\ref{minuscule} we do give such a description for those $X$ which can be diagonally embedded in $\bigoplus_{i=1}^l \End(V_i)$, where  each $V_i$ 
is a minuscule representation of $G$. This is, in particular, the case for a class of examples corresponding to singular monopoles on $\oR^3$, and in \S\ref{monopoles} we work out the explicit description of the hyperspherical variety $M_G(X)$ in this case.
\par
In  \S\ref{asymptotics} we discuss briefly the expected asymptotic behaviour of hyperk\"ahler metrics on $\Hilb^W_\mu(X)$, generalising the description  given by Bullimore, Dimofte, and Gaiotto \cite[\S4.1]{BDG} for Coulomb branches.  In Appendix \ref{appendix:B} we introduce the modified Nahm equations, while in Appendix \ref{appendix:ht-lifts} we discuss a special case of the construction of Coulomb branches in the case when $G$ has direct ${\rm SO}_{2k+1}(\C)$-factors.

\begin{remark} All schemes are of finite type over $\cx$, unless stated otherwise.
\end{remark}

\begin{ack} The work of Lorenzo Foscolo is supported by EPSRC New Horizon Grant EP/V047698/1 and a Royal Society University Research Fellowship. 
\par
Both authors thank Gwyn Bellamy, Amihay Hanany, and Constantin Teleman for helpful comments and questions. We are also grateful to Kifung Chan and Tom Gannon for pointing out errors in an earlier version of the paper.\end{ack}

\section{Equivariant transverse Hilbert schemes\label{eths}}

Let $W$ be a finite group acting  on a scheme $X$. The $W$-Hilbert scheme  $W\text{-}\Hilb(X)$ consists of $Z\in \bigl(\Hilb^{|W|}(X)\bigr)^W$ such that the induced representation of $W$ on $H^0(Z,\sO_Z)$ is the regular representation (cf.\ \cite{Bertin}; see also \cite{Blume} for a definition not requiring $\Hilb^{|W|}(X)$).
In addition, if $W$ acts generically freely, then $\Hilb^W(X)\subset W\text{-}\Hilb(X)$ is the (scheme-theoretic) closure of the subscheme of free orbits.
\par
The first of these notions has better universality properties, while the second one has better scheme-theoretic properties. In particular, if $X$ is reduced and irreducible, then so is $\Hilb^W(X)$.
%%%%
\begin{example} Let $X=\cx^n$ and let $W$ be a reflection group. The ring of invariant polynomials is freely generated by certain $p_1(z),\dots,p_n(z)$.  For every $c=(c_1,\dots,c_n)$, the ideal 
\begin{equation}I_c=(p_1(z)-c_1,\dots, p_n(z)-c_n)\label{I_c}\end{equation}
defines an element of $W\text{-}\Hilb(X)$ and induces an isomorphism
$$W\text{-}\Hilb(\cx^n)\simeq \cx^n/W\simeq \cx^n.$$\end{example}
\begin{remark} The coordinate ring of the $0$-dimensional subscheme $Z$ defined by ideal \eqref{I_c}, i.e. $R_c=\cx[z_1,\dots,z_n]/I_c$, is known as the {\em (deformed) ring of coinvariants}. If $z\in Z$ belongs to a subspace $V$ invariant under a reflection subgroup $W^\prime \subset W$, then  $R_c\simeq \bigoplus_k R^\prime_0$, where $R^\prime_0$ is the (undeformed) ring of coinvariants for $W^\prime$ acting on $V$, and $k=|W|/|W^\prime|$.
\label{R_c}
\end{remark}
\begin{remark} In the setting of the last example, we denote by $\tilde\delta(z)$ the defining polynomial of the union  of hyperplanes fixed by reflections in $W$, and by $\delta$ the defining polynomial of its image in $\cx^n/W$. In other words, $\delta^{-1}(0)$ is the branch divisor  of the natural map $\cx^{n}\to \cx^{n}/W\simeq \cx^n$.\label{delta}\end{remark}
%%%%%
%%%%%
Example \ref{I_c} generalises as follows:
\begin{proposition} If the quotient morphism $p:X\to X/W$ is flat, then $W\text{-}\Hilb(X)\simeq X/W$.
\end{proposition}
\begin{proof} In this setting flatness is equivalent to $p_\ast\sO_X$ being a locally free sheaf (cf.\ \cite[Lemma 29.48.2]{stacks}). Now the proof of Theorem 4.11 in \cite{Bertin} works without changes.
\end{proof}
%%%%%
Let now $S$ be a $W$-scheme with $S\to S/W$ flat and let $X$ be a $W$-scheme equipped with a surjective $W$-equivariant morphism $\pi:X\to S$.
\begin{definition} The equivariant transverse Hilbert scheme  $W\text{-}\Hilb_\pi(X)$ is an open subscheme of $W\text{-}\Hilb(X)$  consisting of $Z$ such that $\pi|_Z$ is a scheme-theoretic isomorphism onto its image. If $S$ is the closure of the locus of free $W$-orbits, then $\Hilb_\pi^W(X)$ is the intersection of $W\text{-}\Hilb_\pi(X)$ and $\Hilb^W(X)$. \end{definition}
%%%%%
\begin{remark} There is a natural morphism $\bar\pi: W\text{-}\Hilb_\pi(X)\to S/W$, $Z\to \pi(Z)$. 
\end{remark}
\begin{remark} $W\text{-}\Hilb_\pi(X)$ parametrises pairs $(D,\phi)$, where $D\in W\text{-}\Hilb(S)=S/W$ and $\phi:D\to X$ is a $W$-equivariant section of $\pi$.
\label{pairs}\end{remark}
%%%% 
 \begin{remark} The functorial interpretation of $W\text{-}\Hilb(X)$ shows that $W\text{-}\Hilb_\pi(X)$ represents the  functor $\underline{W\text{-}\Hilb_\pi(X)}:\text{({\it schemes})}^\circ\to \text{({\it sets})}$ given by
 \begin{multline*}\underline{W\text{-}\Hilb_\pi(X)}(T) = \{\text{closed $W$-subschemes $Z$ of $T\times X$},\\  \text{$\pi|_Z:Z\to \pi(Z)\in \underline{W\text{-}\Hilb(S)}(T)$ is an isomorphism}\}.
 \end{multline*}
 \label{functor}
 \end{remark}
%%%%
\begin{remark} Unlike the full $W$-Hilbert scheme $W\text{-}\Hilb$,  $W\text{-}\Hilb_\pi$ is functorial in the category of $W$-schemes over $S$. Indeed, if $f:X\to X^\prime$ is a $W$-equivariant morphism, covering the identity on $S$, then we obtain a natural transformation of functors $\underline{W\text{-}\Hilb_\pi(X)}\to \underline{W\text{-}\Hilb_{\pi^\prime}(X^\prime)}$ by sending a $Z$ as  in the previous remark to $Z^\prime=(\Id_T\times f)(Z)$. Since $\pi^\prime\circ(\Id_T\times f)=\pi$, $\pi^\prime|_{Z^\prime}:Z^\prime\to \pi^\prime(Z^\prime)$ is an isomorphism. In the interpretation of Remark \ref{pairs} the induced morphism $\bar f: W\text{-}\Hilb_\pi(X)\to W\text{-}\Hilb_{\pi^\prime}(X^\prime)$ sends a section $\phi:D\to X$ to $f\circ \phi:D\to X^\prime$.\label{functorial}
\end{remark}
%%%%%%
The last two remarks imply immediately:
\begin{proposition} Let $S$ be as above and let $X\to S$ and $Y\to S$ be two $W$-equivariant surjective morphisms. Then
 $$W\text{-}\Hilb_\pi(X\times_S Y)\simeq W\text{-}\Hilb_\pi(X)\times_{S/W} W\text{-}\Hilb_\pi(Y).$$
 \qed\label{fibredpro}
\end{proposition}
\begin{proposition} If $X$ and $S$ are affine and $\cx[S]$ is free over $\cx[S]^W$, then  $W\text{-}\Hilb_\pi(X)$ and $\Hilb_\pi^W(X)$ are also affine.
\label{affine}\end{proposition}
%%%%
\begin{proof} Without loss of generality we may assume that $X$ is a $W$-invariant subscheme of  $\cx^N\times S$, where $\cx^N$ is a representation of $W$ and $\pi$ is the restriction of the projection onto $S$. Since the functor $W\text{-}\Hilb$ commutes with the base change \cite{Blume}, it is enough to show that $W\text{-}\Hilb_\pi(\cx^N\times S)$ is affine. Let $u_1,\dots, u_N$ be linear coordinates on $\cx^N$. If $c\in S/W$ and $D_c$ is a $0$-dimensional subscheme of $S$ defined by the maximal ideal $\fM_c\in \C[S]^W$, then the $0$-dimensional subspaces  of $\cx^{N}\times S$ which map isomorphically onto  $D$ are cut out by ideals generated by $u_i-\phi_i$, $i=1,\dots,N$, where $\phi_i\in\C[D_c]$. A choice of a $\cx[S]^W$-basis of $\cx[S]$ yields a trivialisation of the vector bundle $E\to S/W$ with fibres $\cx[D_c]$, i.e.\ $E\simeq S/W\times R$, where $R$ is the regular representation of $W$.  It follows that
\begin{multline*} W\text{-}\Hilb_\pi\bigl(\cx^{N}\times S\bigr)\simeq \Hom_{\cx\text{-}{\rm alg}}\bigl(\cx[u_1,\dots,u_N],R)^W\times S/W\simeq \\ \simeq \Hom_\cx\bigl(\cx^N,R\bigr)^W\times S/W\simeq \cx^N\times S/W.\end{multline*}
\end{proof}
%%%%
%%
We shall be mostly interested in the case when $S=\cx^n$ and $W$ is a reflection group. The assumption of the above proposition is then satisfied, and the proof yields an explicit description of $W\text{-}\Hilb_\pi(X)$ for affine $X$. 
A general $X\to \cx^n$ is given by gluing together affine schemes, and $W\text{-}\Hilb_\pi(X)$ is obtained by induced gluing of the corresponding equivariant transverse Hilbert schemes of the affine pieces.
%%%%
\begin{remark} The description of  $W\text{-}\Hilb_\pi(\cx^N\oplus\cx^n)$ given in the proof of Proposition \ref{affine} can be rephrased as follows. Let $\sP^W$ be the space of $W$-equivariant polynomial maps from $\cx^n$ to $\cx^N$. These are known as $W$-covariants of type $\cx^N$ and they form a free module over $\cx[\cx^n]^W$ of rank $N$ \cite{Che}. As remarked above (Remark \ref{pairs}), $W\text{-}\Hilb_\pi(\cx^N\oplus\cx^n)$ parametrises equivariant sections of $\pi$ restricted to $0$-dimensional subschemes $D\in W\text{-}\Hilb(\cx^n)\simeq \cx^n/W$. These sections are elements of $\sP^W$ and restricting them to the subschemes $D$ amounts to replacing $\sP^W$ with $\sP^W/\sP^W_+\simeq \cx^N$, where $\sP^W_+$ is the submodule generated by $\sP_W$ and elements of $\cx[\cx^n]^W$ with zero constant term. 
%See appendix \ref{GxG-cov} for a Lie-theoretic variation in the case when $\cx^n$ is the Cartan subalgebra of a connected reductive algebraic group $G$ and $\cx^N$ is a representation of $G$. 
\label{covariants}
\end{remark}
%%%
%%%%
%%%%
\begin{proposition} If $X,S$, and $S/W$ are regular, then $W\text{-}\Hilb_\pi(X)$ is regular. Consequently,  $\Hilb_\pi^W(X)=W\text{-}\Hilb_\pi(X)$ in this case.\label{smooth}
\end{proposition}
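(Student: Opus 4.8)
The plan is to reduce the statement to a local, affine computation and then invoke the explicit description of the transverse Hilbert scheme developed just above. Regularity is a local property, and by the gluing remark preceding the proposition, $W\text{-}\Hilb_\pi(X)$ is obtained by gluing the transverse Hilbert schemes of $W$-invariant affine opens of $X$; moreover smoothness can be checked after the faithfully flat base change along $\cx^n\to\cx^n/W$, or simply pointwise on closed points of $W\text{-}\Hilb_\pi(X)$. So it suffices to show that for a regular affine $X$ with equivariant $\pi:X\to\cx^n$, the scheme $W\text{-}\Hilb_\pi(X)$ is regular at each closed point, i.e.\ at each $Z\in W\text{-}\Hilb_\pi(X)$.

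The key step is to identify the tangent space and compute its dimension. By Remark~\ref{pairs}, a closed point of $W\text{-}\Hilb_\pi(X)$ is a pair $(D,\phi)$ with $D\in W\text{-}\Hilb(\cx^n)\simeq\cx^n/W$ a fat point defined by an ideal $I_c$ as in \eqref{I_c}, and $\phi:D\to X$ a $W$-equivariant section of $\pi$. The scheme-theoretic image $Z=\phi(D)$ is a $W$-invariant $0$-dimensional subscheme of $X$, finite of length $|W|$ over the point $c\in\cx^n/W$, and transverse to $\pi$. The Zariski tangent space to the (ordinary) Hilbert scheme at $Z$ is $\Hom_{\sO_X}(I_Z,\sO_Z)=H^0(Z,N_{Z/X})$; taking the $W$-equivariant part and intersecting with the transversality (open) condition gives $T_{(D,\phi)}\,W\text{-}\Hilb_\pi(X)\simeq \Hom_{\sO_X}(I_Z,\sO_Z)^W_{\mathrm{transv}}$. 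Because $\pi$ is transverse on $Z$, the normal directions split as deformations of $D$ inside $\cx^n$ together with deformations of the section $\phi$; concretely, using the presentation \eqref{u=phi} one sees that first-order deformations preserving transversality are parametrised by first-order moves of $c\in\cx^n/W$ (an $n$-dimensional space, as $W\text{-}\Hilb(\cx^n)\simeq\cx^n$ is smooth) together with equivariant deformations of the section values in the tangent directions of $X$ along the fibre of $\pi$. The latter, since $X$ is regular and $\pi$ is equivariant, form a space whose dimension equals $\dim X-n$: equivariantly one is choosing an element of $\bigl(\text{(fibre tangent space)}\otimes R\bigr)^W$, which by Remark~\ref{covariants} is a free module computation giving exactly $\dim X-n$ free parameters over each point. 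Hence $\dim_\cx T_{(D,\phi)}\,W\text{-}\Hilb_\pi(X)=n+(\dim X-n)=\dim X$.

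It then remains to check that $\dim_\cx T_{(D,\phi)}\,W\text{-}\Hilb_\pi(X)$ equals the local dimension of $W\text{-}\Hilb_\pi(X)$ at $(D,\phi)$, since a point where the tangent space has dimension equal to the local dimension is a regular point. The morphism $\bar\pi:W\text{-}\Hilb_\pi(X)\to\cx^n/W$ has fibres that are the transverse equivariant sections over a fixed fat point $D$; over the generic (free) locus this is just $X/W\to\cx^n/W$, whose fibres have dimension $\dim X-n$, and by the explicit affine description \eqref{Fphi} the fibre dimension is upper semicontinuous and bounded by $\dim X-n$ everywhere. Since the base $\cx^n/W\simeq\cx^n$ has dimension $n$, every irreducible component of $W\text{-}\Hilb_\pi(X)$ has dimension at most $\dim X$, and each closed point lies on a component of dimension exactly $\dim X$ (one containing free orbits, reached by deforming $c$ off the reflection hyperplanes). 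Thus the local dimension is $\dim X=\dim_\cx T_{(D,\phi)}$, which proves regularity. The consequence $\Hilb_\pi^W(X)=W\text{-}\Hilb_\pi(X)$ is then immediate: a regular scheme is reduced, so the closure of the free-orbit locus is a union of those irreducible components of $W\text{-}\Hilb_\pi(X)$ meeting it, but regularity forces $W\text{-}\Hilb_\pi(X)$ to be smooth hence all components are equidimensional of dimension $\dim X$ and each meets the free locus by the dimension count above, so the closure is everything.

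I expect the main obstacle to be the precise equivariant bookkeeping in the tangent space computation, namely verifying cleanly that the $W$-equivariant normal-module $\Hom_{\sO_X}(I_Z,\sO_Z)^W$ restricted to the transversality-open locus has dimension exactly $\dim X$ uniformly, including at the deeper points of $\cx^n/W$ where the fat point $D$ is supported on reflection hyperplanes and the coinvariant ring $R_c$ decomposes as in Remark~\ref{R_c}. The cleanest route is probably to bypass the abstract $\Hom$ computation and argue directly from the affine equations \eqref{Fphi}: smoothness of $X$ means $I_X$ is locally a regular-sequence (complete-intersection-like) ideal in suitable coordinates, and the reduction modulo $p_i(z)-c_i$ in \eqref{Fphi} preserves this because the $p_i$ form a regular sequence and the substitution $u_i\mapsto\phi_i$ is a coordinate change on the free $\cx[\cx^n]^W$-module of covariants; then the Jacobian criterion applied to the generators in \eqref{Fphi} shows the relations cut out a smooth subscheme of the smooth ambient $W\text{-}\Hilb_\pi(\cx^{N+n})\simeq\cx^{N+n}$.
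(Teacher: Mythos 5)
There is a genuine gap, and it sits exactly where you flag ``the main obstacle'': the claim that the equivariant tangent space has dimension $\dim X$ at \emph{every} point, including points $(D,\phi)$ with $D$ supported on the reflection hyperplanes. Your justification --- that the section deformations form $\bigl((\text{fibre tangent space})\otimes R\bigr)^W$ and that this is ``a free module computation'' --- does not go through: the relative tangent sheaf of $\pi:X\to\cx^n$ need not be locally free ($\pi$ is only assumed surjective and equivariant, not smooth), and even for a locally free equivariant sheaf on a fat $W$-cluster $Z$ with $H^0(\sO_Z)\simeq\cx[W]$, the dimension of the space of invariant sections is not determined by the rank alone (the $W$-module structure on $H^0(Z,N_{Z/X})$ is not forced to be $R^{\oplus\operatorname{rk}}$). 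This is precisely why $W\text{-}\Hilb$ of a smooth variety can be singular in general; some use of the transversality beyond ``the normal directions split'' is indispensable. The dimension half of your argument has a second gap: upper semicontinuity of fibre dimension permits the fibres of $\bar\pi$ to jump \emph{up} over the discriminant, so it does not give the bound $\leq\dim X-n$ there, and the assertion that every closed point lies on a component meeting the free locus (``reached by deforming $c$ off the reflection hyperplanes'') is an unproved lifting statement --- ruling out components sitting entirely over $V(\delta)$ is essentially equivalent to what you are trying to prove. Your fallback via the Jacobian criterion on \eqref{Fphi} is the right instinct but is likewise only sketched; the differentials of the $R_c$-coefficients of $F(\phi(z),z)$ have to be counted at non-free orbits, which is the same computation in different clothing.

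The paper's proof avoids both computations with one observation you are missing: since $\cx^n/W\simeq\cx^n$, the image $\pi(Z)$ is the zero-dimensional \emph{complete intersection} $V(p_1-c_1,\dots,p_n-c_n)$, and $\pi|_Z$ being an isomorphism makes $Z$ itself a local complete intersection in the regular scheme $X$. For a zero-dimensional lci $Z$, the normal sheaf $N_{Z/X}=(I_Z/I_Z^2)^\vee$ is locally free of rank $\dim X$ and the obstruction space $\Ext^1(I_Z/I_Z^2,\sO_Z)=H^1(Z,N_{Z/X})$ vanishes, so $\Hilb^{|W|}(X)$ is already regular along the whole transverse locus --- no equivariant bookkeeping and no separate dimension count needed. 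One then concludes with the general fact that the fixed-point scheme of a finite group acting on a regular scheme in characteristic zero is regular. If you want to keep your tangent-space strategy, the lci observation is also the ingredient that would rescue it: it identifies $\Hom(I_Z,\sO_Z)$ with $H^0$ of a locally free sheaf and supplies the unobstructedness that replaces your dimension count.
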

\begin{proof} Since $S$ and $S/W$ are regular and $p:S\to S/W$ is flat, all fibres of $p$ are lci. Therefore any $0$-dimensional $Z$ in $\Hilb^{|W|}(X)$, such that $\pi|_{Z}$ is an isomorphism, is lci. If $X$ is regular, then so is the lci locus  of $\Hilb^{|W|}(X)$. Since the $W$-invariant  part of a regular scheme is regular, the claim follows.\end{proof}
%%%
If $X$ is singular, the two notions may be different:
  \begin{example} Let $X$ be the surface $\{(x,y,z)\in \cx^3; x^2+y^2+z^2=0\}$ and let $\pi(x,y,z)=z$. Consider two $\oZ_2$-actions: a) $(x,y,z)\mapsto (x,-y,-z)$, b) $(x,y,z)\mapsto (-x,-y,-z)$. According to the above description, $0$-dimensional subschemes of $X$ mapping isomorphically onto an element of $\cx/\oZ_2$ defined by $z^2-c=0$ are described by:
  $$\{(x_0,x_1,y_0,y_1)\in \cx^4; (x_0+x_1z)^2+(y_0+y_1z)^2+ z^2=0 \mod z^2-c\}.$$
  The $\oZ_2$-invariant parts are given by $x_1=y_0=0$ in  case a), and by $x_0=y_0=0$ in case b). Thus $\oZ_2\text{-}\Hilb_\pi(X)$ is, respectively, $\{(x_0,y_1,c); x_0^2+y_1^2c+c=0\}$ and  $\{(x_1,y_1,c); (x_1^2+y_1^2)c+c=0\}$. In case b) the resulting surface is reducible. On the other hand $\Hilb_\pi^{\oZ_2}(X)$ is $\cx\times \{(x_1,y_1); x_1^2+y_1^2+1=0\}$ in this case.
  \label{xyz}
  \end{example}
 %%%% 
 %%%%
 We compute three more examples of $W\text{-}\Hilb_\pi(X)$.
 \begin{example}
Consider a $\oZ_2$-invariant deformation $X_\tau$ of the $A_1$-singularity:
$$\{(x,y,z); xy=z^2-\tau^2\}.$$
The $\oZ_2$-action is given by 
$(x,y,z)\mapsto (y,x,-z),$
and $\pi(x,y,z)= z$. Then  $\oZ_2\text{-}\Hilb_\pi(X_\tau)$ is given by 
$$\{(x(z),y(z),q(z)); x(z)y(z)=z^2-\tau^2\mod z^2-c\}$$
where  $x(z),y(z)$ are linear polynomials interchanged by the $\oZ_2$-action.  Writing $x(z)=a+b z$, we obtain the equation
$$(a+bz)(a-bz)=z^2-\tau^2 \mod z^2-c \iff a^2-b^2c=c-\tau^2,$$
i.e.\ a deformation of the $D_2$-singularity. More generally, if $X$ is a $\oZ_2$-invariant $A_{2k-1}$-surface given by the equation 
$xy=\sum_{i=0}^k\lambda_i z^{2i}$, then $\oZ_2\text{-}\Hilb_\pi(X)$ is a $D_{k+1}$-surface with equation $a^2-b^2c=\sum_{i=0}^k\lambda_i c^{i}$. Notice that we do not get the full family of deformations of the $D_{k+1}$-singularity, but only the subfamily admitting the $\oZ_2$-action $(a,b,c)\mapsto (-a,-b,c)$.
\label{D_2}
\end{example}
%%%
%%%
\begin{example} Let $X=\cx^\ast\times\cx^n$ with $\Sigma_n$ acting in the standard way on $\cx^n$ and as $\sigma.t=t^{s(\sigma)}$ on $\cx^\ast$. $X$ is an affine subvariety of $\cx^2\times\cx^n$ with coordinates $x,y,z_1,\dots,z_n$ given by $xy=1$. Let $q(z)=\prod_{i<j}(z_i-z_j)$, i.e.\ the polynomial on which $\Sigma_n$ acts via the sign representation.  Setting $x=a+q(z)b$, $y=a-q(z)b$ we conclude that $\Sigma_n\text{-}\Hilb_\pi(X)$ is the affine variety defined by $a^2-q(z)^2b^2=1$, i.e.\ $a^2-\delta(c)b^2=1$,  where $\delta(c)=q(z)^2$ is the polynomial defined in Remark \ref{delta}.\label{D_1} 
\end{example}
%%%%
%%%%
\begin{example} Let $X=\cx^n\oplus\cx^n$ with $\Sigma_n$ acting in the standard way on both summands, and $$\pi(u_1,\dots,u_n,v_1,\dots,v_n)=(u_1v_1,\dots,u_nv_n).$$
According to the above description, we have to replace the $u_i$ and the $v_i$  $\Sigma_n$-equivariantly by  polynomials in $\cx[z_1,\dots,z_n]/ \cx[z_1,\dots,z_n]^{\Sigma_n}$.  This means that
\begin{equation} u_i=\sum_{j=1}^{n}a_jz_i^{j-1},\enskip v_i=\sum_{j=1}^{n}b_jz_i^{j-1},\quad i=1,\dots,n,\label{uv}\end{equation}
for some scalars $a_j,b_j$.
We can identify $\cx^n/\Sigma_n$ with the Slodowy slice $\mathscr{S}_n$  to a regular nilpotent orbit (or, equivalently, with the set of companion matrices). If we view vectors $u$ and $v$ as diagonal matrices and conjugate them by the Vandermonde matrix, then the resulting matrices $A,B$ are of the form $A=\sum_{j=1}^n a_jS^{j-1}$, $B=\sum_{j=1}^n b_iS^{j-1}$, where $S\in \mathscr{S}_n$ has eigenvalues $z_1,\dots,z_n$.  The equations $u_iv_i=z_i$, $i=1,\dots,n$, yield $AB=S$. Thus  
$$ \Sigma_n\text{-}\Hilb_\pi(X)\simeq \bigl\{(S,A,B)\in \gl_n(\cx)^{\oplus 3};\, S\in \mathscr{S}_n,\; [A,S]=0,\; [B,S]=0,\; AB=S\bigr\}.
$$
\label{bigHilb}
\end{example}
%%%%
%%%%
%%%%%

\subsection{Localisation}
In the following we assume that $S=\cx^n$  and we work in the analytic category. 
Let $t\in\cx^n$ be a point with stabiliser $W^\prime \subset W$, and let $U$ be its neighbourhood   satisfying $wU\cap U=\emptyset$ for all $w\in W\backslash W^\prime$. Denote by $\phi^{W^\prime W}$ the induced isomorphism between the image of $U$ in $\cx^n/W$ and the image of $U$ in $\cx^n/W^\prime$.
 Denote by $p:\cx^n\to\cx^n/W$ and $\bar\pi:\Hilb_\pi^W(X)\to \cx^n/W$ the canonical projections. Since a $W$-orbit intersects $\bar\pi^{-1}(U)$ in a $W^\prime$-orbit, we conclude:
 %%%%
 %%%
 \begin{proposition} Let the stabiliser of $t\in \cx^n$ be $W^\prime\subset W$ and let $U$ and $\phi^{W^\prime W}$ be as above. There exists a natural isomorphism $L^{W^\prime W}$ between $\bar\pi^{-1}(p(U))$ in $\Hilb_\pi^W(X)$ and $\bar\pi^{-1}(p(U))$ in $\Hilb_\pi^{W^\prime}(X)$, which covers $\phi^{W^\prime W}$. Moreover, if  the stabiliser of $t^\prime\in U$ is $W^{\dprime}\subset W^\prime$, then $L^{W^{\dprime}W}=L^{ W^{\dprime}W^\prime}\circ L^{W^\prime W}$ for an analogous smaller neighbourhood of $t^\prime$.\hfill $\Box$.\label{local}
 \end{proposition}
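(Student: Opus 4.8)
The plan is to exploit the description of $W\text{-}\Hilb_\pi(X)$ as a moduli of pairs (Remark \ref{pairs}), together with the fact that over the open set $p(U)$ every fibre of $\bar\pi$ decomposes as a scheme-theoretic disjoint union indexed by the cosets $W/W'$. First I would record the geometry downstairs. Write $p=q\circ p'$ for the tower $\cx^n \xrightarrow{p'}\cx^n/W'\xrightarrow{q}\cx^n/W$. Since $wU\cap U=\emptyset$ for $w\in W\setminus W'$, the translates $\{wU:[w]\in W/W'\}$ are pairwise disjoint, so $p^{-1}(p(U))=\bigsqcup_{[w]\in W/W'}wU$; moreover $q$ restricts to a biholomorphism of $p'(U)$ onto $p(U)$, this map being precisely the inverse of $\phi^{W'W}$. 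Consequently, for $c\in p(U)$ with $c'=\phi^{W'W}(c)$, the fibre $D=p^{-1}(c)$ defined by ideal \eqref{I_c} satisfies, scheme-theoretically,
$$ D\cap U=(p')^{-1}(c')=:D',\qquad D=\bigsqcup_{[w]\in W/W'}wD'. $$
The first identity uses that $q$ is a local isomorphism near $c'$, so that $q^{-1}(c)$ is reduced there; the second uses the $W$-invariance $wD=D$ together with the disjointness of the $wU$. This decomposition is the crucial input, and it holds for flat families over a test base, not merely for individual fibres.

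Next I would construct $L^{W'W}$ at the level of pairs and check functoriality. A point of $W\text{-}\Hilb_\pi(X)$ over $c$ is a $W$-equivariant section $\phi\colon D\to X$ of $\pi$. Restriction to the component $D'=D\cap U$ yields a $W'$-equivariant section $\phi|_{D'}\colon D'\to X$ over $c'$, i.e.\ a point of $W'\text{-}\Hilb_\pi(X)$ over $c'$; conversely a $W'$-equivariant section $\psi\colon D'\to X$ extends uniquely to a $W$-equivariant $\phi$ by setting $\phi=w\circ\psi$ on the component $wD'$. The two assignments are mutually inverse, the only point to check being consistency across cosets, which is exactly the $W'$-equivariance of the section on $D'$. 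Because the decomposition of $D$ and the restriction and extension of sections are all compatible with base change, this yields an isomorphism of the representing analytic spaces $L^{W'W}\colon\bar\pi^{-1}(p(U))\xrightarrow{\sim}(\bar\pi')^{-1}(p'(U))$, where $\bar\pi'\colon\Hilb_\pi^{W'}(X)\to\cx^n/W'$ is the projection; by construction $L^{W'W}$ covers $\phi^{W'W}$.

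It remains to transfer the statement from $W\text{-}\Hilb_\pi$ to $\Hilb_\pi^W$ and to verify the cocycle identity. For any $t'\in U$ the condition $wU\cap U=\emptyset$ for $w\notin W'$ forces the $W$-stabiliser of $t'$ to lie in $W'$, hence to coincide with its $W'$-stabiliser; therefore a $W$-orbit in $p^{-1}(p(U))$ is free precisely when the corresponding $W'$-orbit is. Thus $L^{W'W}$ matches the open loci of free orbits and, passing to closures, restricts to the desired isomorphism $\Hilb_\pi^W(X)\cong\Hilb_\pi^{W'}(X)$ over $p(U)$. The transitivity $L^{W''W}=L^{W''W'}\circ L^{W'W}$ is then immediate, since restricting a $W$-equivariant section first to the $W'$-component and then to the $W''$-component agrees with restricting it directly to the $W''$-component.

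I expect the main obstacle to be purely scheme-theoretic bookkeeping rather than anything conceptual: one must ensure that the decomposition $D=\bigsqcup_{[w]}wD'$ and the identity $D\cap U=D'$ hold as schemes and in families, so that possibly non-reduced fibres over the reflection hyperplanes are treated correctly. Once the disjoint-union decomposition is established at the level of flat families of subschemes of $\cx^n$ over an arbitrary test base, the restriction and extension of equivariant sections is a formal consequence, and the compatibility with the free loci and the passage to $\Hilb_\pi^W$ become routine.
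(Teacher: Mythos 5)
Your argument is correct and is exactly the reasoning the paper intends: the paper justifies the proposition with the single observation that a $W$-orbit (equivalently, a $W$-invariant subscheme supported over $p^{-1}(p(U))$) meets $\bar\pi^{-1}(U)$ in a $W'$-orbit, which is precisely your decomposition $D=\bigsqcup_{[w]\in W/W'}wD'$ together with the restriction/extension of equivariant sections. Your version simply spells out the scheme-theoretic and family-wise details that the paper leaves implicit.
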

 %%%%
 \begin{remark} In the algebraic category we have an analogous isomorphism between local rings $\sO_{M,Z}$ and $\sO_{M^\prime,Z^\prime}$, where $M=\Hilb_\pi^W(X)$, $M^\prime=\Hilb_\pi^{W^\prime}(X)$, $Z=\bar\pi^{-1}(p(t))\subset M$, $Z^\prime=\bar\pi^{-1}(p(t))\subset M^\prime$. Alternatively, we 
 can replace ``neighbourhoods" with ``\'etale neighbourhoods" in the above proposition.
 \end{remark}

\section{Transverse blow-ups\label{blow-up}}

One of our technical tools in proving algebro-geometric properties of transverse Hilbert schemes are weighted transverse blow-ups which we now introduce.
\par
If $\Delta$ is a codimension one\footnote{i.e.\ every irreducible component has codimension $1$.} closed subscheme of a scheme $X$, and $Z$ a closed subscheme of $\Delta$, then we can blow up $Z$ {\em transversely} to $\Delta$, i.e.\ we blow up $Z$ and remove the strict transform of $\Delta$. If $X$ is affine with coordinate ring $\cx[X]$, 
$\Delta=V(\delta)$ with $\delta\nmid 0$, and $I_Z=(\delta,f_1,\dots,f_k)$, then the resulting scheme $\tilde X$ is also affine and can be described as the closure in $X\times \cx^k$ of the graph of
$$ \phi:X\backslash\Delta\to \cx^k, \quad \phi(x)=\bigl(f_1(x)\delta^{-1}(x), \dots, f_k(x)\delta^{-1}(x)\bigr).$$
In other words $\tilde X=\Spec \cx[X][f_1\delta^{-1},\dots,f_k\delta^{-1}]$. More generally, we can consider schemes of the form $\tilde X=\Spec \cx[X][f_1\delta^{ -m_1},\dots, f_k\delta^{ -m_k}]$, where $\delta\nmid 0$ and $m_i\in \oN$, and call such a scheme the {\em weighted affine blow-up of $X$ along $Z=V(\delta,f_1,\dots,f_k)$ transverse to $\Delta=V(\delta)$} (with weights $m_1,\dots,m_k$), or simply a transverse blow-up of $X$, whenever the data of $\delta$, $Z$, $m_1,\dots,m_k$, is understood. 
%%%
\begin{remark} One can always assume  that $\delta\nmid f_i$, $i=1,\dots,k$ by changing the subscheme $Z$ along which we blow up.
\end{remark}
%%%
The natural morphism $p:\tilde X\to X$ does not have to be surjective (e.g.\  $f_1=\dots=f_k=1$). We note the following simple sufficient condition for surjectivity:
%%%%
\begin{lemma} Let $\tilde X$ be a weighted affine transverse blow-up of $X$ along $V(\delta,f_1,\dots,f_k)$ transverse to $\Delta=V(\delta)$ ($\delta\nmid 0$). Assume that $\delta\nmid f_i$, $i=1,\dots,k$, and set $I=(f_1,\dots,f_k)$. Let $x\in X$ be such that the corresponding maximal ideal  $\fM_x\subset \cx[X]$, satisfies $\delta^r\not \in I_{\fM_x}$ for any $r\in \oN$.  Then $x$ belongs to the image of  the natural morphism $\tilde X\to X$.\label{surj}\end{lemma}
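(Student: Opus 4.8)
The plan is to reduce the statement to a membership computation in the localisation of $\cx[X]$ at $\fM_x$. Write $R=\cx[X]$ and $\tilde R = R[f_1\delta^{-m_1},\dots,f_k\delta^{-m_k}]$, so that the natural morphism $p:\tilde X\to X$ corresponds to the inclusion $R\hookrightarrow \tilde R$. Since $\delta\nmid 0$, i.e.\ $\delta$ is a non-zero-divisor, both $R$ and $\tilde R$ embed into the localisation $R_\delta=R[\delta^{-1}]$, and I would carry out all computations there. The first observation is that $x$ fails to lie in the image of $p$ precisely when the scheme-theoretic fibre over $x$ is empty, that is, when $\tilde R/\fM_x\tilde R=\tilde R\otimes_R (R/\fM_x)=0$, equivalently $1\in\fM_x\tilde R$. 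Thus it suffices to prove the contrapositive: if $1\in\fM_x\tilde R$, then $\delta^r\in I_{\fM_x}$ for some $r\in\oN$.

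So assume $1\in\fM_x\tilde R$. Setting $g_i=f_i\delta^{-m_i}$, every element of $\tilde R$ is an $R$-linear combination of monomials in the $g_i$, so we may write
\begin{equation*}
1=\sum_\alpha a_\alpha\, g_1^{e_1^{(\alpha)}}\cdots g_k^{e_k^{(\alpha)}},\qquad a_\alpha\in\fM_x,
\end{equation*}
a finite sum possibly including the empty monomial. Each monomial equals $\bigl(\prod_i f_i^{e_i^{(\alpha)}}\bigr)\delta^{-N_\alpha}$ with $N_\alpha=\sum_i e_i^{(\alpha)}m_i\ge 0$. Put $N=\max_\alpha N_\alpha$ and multiply the identity by $\delta^N$; since $N-N_\alpha\ge 0$, every term then lies in $R$, and because $R\hookrightarrow R_\delta$ is injective the resulting identity
\begin{equation*}
\delta^N=\sum_\alpha a_\alpha\Bigl(\prod_i f_i^{e_i^{(\alpha)}}\Bigr)\delta^{\,N-N_\alpha}
\end{equation*}
already holds in $R$.

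The decisive step is then to separate the sum according to whether the monomial is constant. If $\sum_i e_i^{(\alpha)}\ge 1$ the factor $\prod_i f_i^{e_i^{(\alpha)}}$ lies in $I=(f_1,\dots,f_k)$, so the corresponding term lies in $I$; the remaining terms, coming from the constant (empty) monomials with $N_\alpha=0$, add up to $c\,\delta^N$ for some $c\in\fM_x$. Hence $(1-c)\delta^N\in I$ in $R$. Passing to the local ring $R_{\fM_x}$, the element $1-c$ is a unit because $c\in\fM_x$, and therefore $\delta^N\in I_{\fM_x}$, contradicting the hypothesis. This proves $1\notin\fM_x\tilde R$ and hence that $x$ lies in the image of $p$.

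The argument is essentially formal, and the one point that requires care — and the reason the hypothesis is phrased in terms of the \emph{localised} ideal $I_{\fM_x}$ rather than $I$ itself — is the final passage to $R_{\fM_x}$, where the ``error term'' $c$ produced by the constant monomials becomes harmless precisely because $1-c$ is invertible there. The bookkeeping of clearing the $\delta$-denominators and checking that the cleared identity descends from $R_\delta$ to $R$ (which uses only that $\delta$ is a non-zero-divisor) is routine, and the standing normalisation $\delta\nmid f_i$ is not needed for this particular step.
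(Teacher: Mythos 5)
Your proof is correct and follows essentially the same route as the paper's: reduce to showing $1\in\fM_x\cx[\tilde X]$ forces $\delta^N\in I_{\fM_x}$, clear the $\delta$-denominators, split off the part of the relation lying in $R$ itself (your $c$, the paper's $\sum x_iy_i$), and use that $1-c$ is a unit in $R_{\fM_x}$. The only difference is cosmetic — you expand elements of $\cx[\tilde X]$ as $R$-linear combinations of monomials in the $f_i\delta^{-m_i}$, whereas the paper uses the normal form $x_i+w_i\delta^{-k_i}$ with $w_i\in I$ — and your handling of the degenerate case $N=0$ is if anything slightly more careful.
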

\begin{proof} The ideal of $\cx[\tilde X]$ generated by the image of $\fM_x$ is a maximal ideal when $x$ is in the image of $\tilde X\to X$ or the whole $\cx[\tilde X]$ otherwise. Hence the point $x$ is not in the image of $\tilde X\to X$ if and only if $1\in \fM_x\cx[\tilde X]$, i.e.\ if there exist $y_1,\dots,y_l\in \fM_x$ and $z_1,\dots,z_l\in \cx[\tilde X]$ such that $1=y_1z_1+\dots+y_lz_l$. We can write each $z_i$ as $z_i=x_i+w_i\delta^{-k_i}$ with $x_i\in \cx[X]$, $w_i\in I$,
$\delta\nmid w_i$. Since $1\not\in \fM_x$, $k=\max_i k_i$ must be positive. We conclude that $\delta^k(1-\sum_{i=1}^lx_iy_i)\in I$. Since $\fM_x$ is maximal,  $1-\sum_{i=1}^lx_iy_i\not\in \fM_x$, and the proof is complete.
\end{proof}
%%%%
The following proposition shows that affine blow-ups of a scheme $X$ transverse to $\Delta=V(\delta)$ occur when considering affine schemes isomorphic to $X$ outside of $\Delta$
%%%%
\begin{proposition}  Let $p:\tilde X\to X$ be a morphism of nonempty affine schemes, and let  $\delta\in \cx[X]$, $\delta\nmid 0$. 
$\tilde X$ is an affine blow-up of $X$ transverse to $\Delta=V(\delta)$ if and only if $p^\sharp(\delta)\nmid 0$ and $\cx[X][\delta^{-1}]\simeq \cx[\tilde X][p^\sharp(\delta)^{-1}]$.  \label{tbu}  
\end{proposition}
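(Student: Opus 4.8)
The plan is to translate everything into commutative algebra. Write $A=\cx[X]$, $B=\cx[\tilde X]$, let $\varphi=p^\sharp:A\to B$ be the comorphism, and set $\bar\delta=\varphi(\delta)\in B$. Throughout I read $\delta\nmid 0$ as the assertion that $\delta$ is a non-zero-divisor, so that the localisation map $A\to A[\delta^{-1}]$ is injective and every element of $A[\delta^{-1}]$ has the form $f\delta^{-m}$ with $f\in A$, $m\in\oN$; likewise $\bar\delta\nmid 0$ makes $B\hookrightarrow B[\bar\delta^{-1}]$ injective. The natural map $A[\delta^{-1}]\to B[\bar\delta^{-1}]$ induced by $\varphi$ is well defined because $\varphi(\delta)=\bar\delta$ is invertible on the right, and it is the isomorphism referred to in the statement. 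Proving the equivalence then amounts to exhibiting $B$ as an intermediate ring $A\subseteq B\subseteq A[\delta^{-1}]$.

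For the forward implication, suppose $\tilde X$ is a weighted affine transverse blow-up, so that by definition $B=A[f_1\delta^{-m_1},\dots,f_k\delta^{-m_k}]$, realised as the $A$-subalgebra of $A[\delta^{-1}]$ generated by the listed elements. Since $\delta$ is a unit, hence a non-zero-divisor, in $A[\delta^{-1}]$, its image $\bar\delta$ is a non-zero-divisor in the subring $B$, giving $\bar\delta\nmid 0$. Inverting $\delta$ along the chain $A\subseteq B\subseteq A[\delta^{-1}]$ yields $A[\delta^{-1}]\subseteq B[\bar\delta^{-1}]\subseteq A[\delta^{-1}]$, so the natural map $A[\delta^{-1}]\to B[\bar\delta^{-1}]$ is an isomorphism. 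This is the easy half.

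The substance lies in the converse, and it is where I expect the only real care to be needed. Assume $\bar\delta\nmid 0$ and that the natural map $A[\delta^{-1}]\to B[\bar\delta^{-1}]$ is an isomorphism. Composing $B\hookrightarrow B[\bar\delta^{-1}]$ with the inverse of this isomorphism embeds $B$ into $A[\delta^{-1}]$, and one checks on elements coming from $A$ that the resulting composite $A\xrightarrow{\varphi}B\hookrightarrow A[\delta^{-1}]$ is precisely the localisation map; in particular $\varphi$ is injective and we may regard $A\subseteq B\subseteq A[\delta^{-1}]$. Now $B$ is a finitely generated $A$-algebra, being of finite type over $\cx$ by the standing hypothesis, so the $\cx$-algebra generators serve as $A$-algebra generators. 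Picking generators $b_1,\dots,b_k\in B\subseteq A[\delta^{-1}]$ and writing each as $b_j=f_j\delta^{-m_j}$ with $f_j\in A$, $m_j\in\oN$ gives $B=A[f_1\delta^{-m_1},\dots,f_k\delta^{-m_k}]$, which is exactly the weighted affine blow-up of $X$ along $V(\delta,f_1,\dots,f_k)$ transverse to $\Delta=V(\delta)$ (the centre automatically lies in $\Delta$ since $\delta\in I_Z$). The main obstacle is thus the bookkeeping in the embedding step: ensuring that the abstract isomorphism of localisations, together with the two non-zero-divisor hypotheses, genuinely produces a compatible inclusion $A\subseteq B\subseteq A[\delta^{-1}]$ rather than merely an abstract commuting diagram, since the final clearing of denominators depends on $B$ literally sitting inside $A[\delta^{-1}]$.
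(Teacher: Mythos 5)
Your proof is correct and follows essentially the same route as the paper's: the "only if" direction is immediate from the definition, and for the "if" direction you use the isomorphism of localisations to embed $\cx[\tilde X]$ as an intermediate ring between $\cx[X]$ and $\cx[X][\delta^{-1}]$ and then clear denominators of a finite generating set. The paper packages the last step as a surjection $\cx[X][a_1\delta^{-m_1},\dots,a_k\delta^{-m_k}]\to\cx[\tilde X]$ with trivial kernel, but the content is the same.
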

\begin{proof} The ``only if" part is clear from the definition. We prove the ``if" part. Let us write $\tilde\delta=p^\sharp(\delta)$ and suppose that $\cx[X][\delta^{-1}]\simeq \cx[\tilde X][{\tilde\delta}^{-1}]$. Since $\delta\nmid 0$ and  $\tilde\delta\nmid 0$, $\cx[X]$,$\cx[\tilde X]$ embed into $\cx[X][\delta^{-1}],\cx[\tilde X][\tilde\delta^{-1}]$,
respectively. Moreover for every $\tilde a\in\cx[\tilde X]$ there exists unique $a\in \cx[X]$ with $\delta\nmid a$ and $m\geq 0$ such that $\tilde a=p^\sharp(a)/{\tilde\delta}^m$. We can therefore find generators $\tilde a_i,\tilde b_j$, $i=1,\dots,k$, $j=1,\dots,l$, such that $\tilde b_j=p^\sharp(b_j)$ and ${\tilde \delta}^{m_i}\tilde a_i=p^\sharp(a_i)$ for some $m_i\in \oN$, $a_i,b_j\in \cx[X]$, and $\delta\nmid a_i$ for all $i$. We obtain a surjective map $A=\cx[X][a_1\delta^{-m_1},\dots,a_k\delta^{-m_k}]\to \cx[\tilde X]$. The kernel of this map must be contained in the kernel of the composition $A\to \cx[X][\delta^{-1}]\to \cx[\tilde X][{\tilde\delta}^{-1}]$. Since the first map is injective and the second one an isomorphism, the proof is complete.
\end{proof}
%%%%
%%%
In particular:
\begin{proposition}
Let $X$ be a reduced affine scheme and suppose that $\delta\in \cx[X]$ is an element such that $\delta\nmid 0$ and $\cx [X][\delta^{-1}]$ is normal. Then the normalisation $X^\nu$ of $X$ is a blow-up of $X$ transverse to $\Delta =V(\delta)$.\label{normal-bu}
\end{proposition}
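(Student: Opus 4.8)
The plan is to apply Proposition \ref{tbu} (the characterisation of transverse blow-ups) to the normalisation map $p:X^\nu\to X$. Concretely, I would verify the two hypotheses of that proposition: (i) that $p^\sharp(\delta)\nmid 0$ in $\cx[X^\nu]$, and (ii) that the localisations $\cx[X][\delta^{-1}]$ and $\cx[X^\nu][p^\sharp(\delta)^{-1}]$ are isomorphic. Once both are established, Proposition \ref{tbu} immediately yields that $X^\nu$ is an affine blow-up of $X$ transverse to $\Delta=V(\delta)$, which is exactly the assertion.

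For hypothesis (i), since $X$ is reduced, its normalisation $X^\nu$ is reduced as well (normalisation is taken componentwise on the reduced scheme), so $\cx[X^\nu]$ has no nilpotents. The map $\cx[X]\to\cx[X^\nu]$ is injective because normalisation is a birational integral extension that is an isomorphism on a dense open set; hence a nonzerodivisor $\delta$ of $\cx[X]$ remains a nonzerodivisor after pullback, giving $p^\sharp(\delta)\nmid 0$. (More carefully: $\delta\nmid 0$ means $\delta$ does not vanish identically on any irreducible component of $X$, and the components of $X^\nu$ map dominantly onto those of $X$, so $p^\sharp(\delta)$ does not vanish identically on any component of $X^\nu$ either.)

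For hypothesis (ii), the key point is that normalisation commutes with localisation: one has $\cx[X^\nu][p^\sharp(\delta)^{-1}]\simeq \bigl(\cx[X][\delta^{-1}]\bigr)^\nu$, the normalisation of the localised ring. This is the standard fact that $S^{-1}(A^\nu)=(S^{-1}A)^\nu$ for a multiplicative set $S$, applied with $S$ the powers of $\delta$. Now the hypothesis that $\cx[X][\delta^{-1}]$ is \emph{already normal} means its normalisation is itself, so $\bigl(\cx[X][\delta^{-1}]\bigr)^\nu\simeq \cx[X][\delta^{-1}]$. Combining the two identifications gives the required isomorphism $\cx[X][\delta^{-1}]\simeq \cx[X^\nu][p^\sharp(\delta)^{-1}]$.

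The main obstacle, such as it is, is purely bookkeeping rather than conceptual: one must be careful about the reduced-but-possibly-reducible setting, since ``normalisation'' of a reducible reduced scheme means normalising each irreducible component, and one must check that $\delta$ being a nonzerodivisor is preserved and that the localisation-commutes-with-normalisation statement holds at this level of generality (it does, as localisation is exact and integral closure is computed component by component in the total ring of fractions). No delicate estimate or geometric input is needed; the whole argument reduces to the commutative-algebra fact that forming the integral closure in the total quotient ring commutes with localisation, together with the earlier structural Proposition \ref{tbu}.
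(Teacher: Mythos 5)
Your proof is correct and follows essentially the same route as the paper: both reduce the statement to Proposition \ref{tbu} and verify its two hypotheses by viewing $\cx[X^\nu]$ as the integral closure of $\cx[X]$ in its total ring of fractions. The only cosmetic difference is that the paper obtains the isomorphism of localisations from the sandwich $\cx[X]\subset\cx[X^\nu]\subset\cx[X][\delta^{-1}]$ (normality of the localisation forces the second inclusion), whereas you invoke the equivalent fact that normalisation commutes with inverting the nonzerodivisor $\delta$.
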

\begin{proof}
Under our assumptions on $X$, the coordinate ring of the normalisation $X^\nu$ is the integral closure of $\cx[X]$ in the total ring of fractions of $\cx[X]$. 
The image of $\delta$ in $\cx[X^\nu]$ cannot divide $0$ since $\delta$ is invertible in the total ring of fractions of $\cx[X]$. 
The normality of $\C[X][\delta^{-1}]$ implies that $\cx[X]\subset\cx[X^\nu]\subset\cx[X][\delta^{-1}]$ and therefore we are in the situation of Proposition \ref{tbu}.
\end{proof}
%%%%
%%%%%%%
%%%%
%%%%%
\subsection{Transverse $W$-Hilbert schemes and transverse blow-ups\label{extended}}
Let us return to the setting of the previous section, i.e.\  $X$ is equipped with a surjective $W$-equivariant morphism $\pi:X\to S$, where $S$ is an affine $W$-scheme such that $W\text{-}\Hilb(S)\simeq S/W$. We assume that $S$ is the closure of its open subscheme $S^{\rm free}$ consisting of free orbits, and that the complement of $S^{\rm free}$ is a codimension $1$ subscheme cut out by $\delta\in \cx[S]^W$ (this is the case for $S=\cx^n$). We set $\Delta=V(\pi^\sharp(\delta))\subset X/W$.
%%%%
\begin{proposition} If $X$ is integral, then $\Hilb_\pi^W(X)$  is an affine blow-up of $X/W$, transverse to $\Delta$.\label{affineblowup}
\end{proposition}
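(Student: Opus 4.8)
The plan is to invoke Proposition \ref{tbu}, which characterises transverse affine blow-ups via the property that inverting $\delta$ produces isomorphic localisations. So the task reduces to exhibiting a natural morphism $p:\Hilb_\pi^W(X)\to X/W$, checking that $p^\sharp(\delta)\nmid 0$, and proving that $p$ becomes an isomorphism after inverting $\pi^\sharp(\delta)$. Here $\delta$ is the polynomial from Remark \ref{delta} cutting out the critical values of $\cx^n\to\cx^n/W$, i.e.\ the image of the reflection hyperplanes.

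\begin{proof}
Since $X$ is integral, so is $X/W$, and by Proposition \ref{affine} both $X/W$ and $\Hilb_\pi^W(X)$ are affine; the latter is integral because it is, by construction (item (ii) in \S\ref{eths}), the closure of the free-orbit locus in an integral scheme. There is a canonical morphism $p:\Hilb_\pi^W(X)\to X/W$ defined as follows. A point of $\Hilb_\pi^W(X)$ lying over $c\in\cx^n/W$ is a $W$-invariant subscheme $Z\subset X$ mapping isomorphically onto the fibre $D=\bar\pi^{-1}(c)\in W\text{-}\Hilb(\cx^n)$; passing to $W$-orbits, $Z/W$ is a single point of $X/W$, and this assignment $Z\mapsto Z/W$ is the morphism $p$. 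Equivalently, on coordinate rings $p^\sharp:\cx[X/W]=\cx[X]^W\to\cx[\Hilb_\pi^W(X)]$ sends a $W$-invariant function $g$ to the function $Z\mapsto g|_Z$ evaluated at the appropriate structure constant; concretely, in the affine model \eqref{u=phi}--\eqref{Fphi}, $g(u,z)$ restricts to $g(\phi(z),z)\bmod (p_i(z)-c_i)$, which because $g$ is $W$-invariant lands in the $W$-invariant, hence constant, part of $R_c$.

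The key computation is the behaviour away from $\Delta$. Let $U=\cx^n\setminus\tilde\delta^{-1}(0)$ be the locus where $W$ acts freely and let $\bar U=p_0(U)\subset\cx^n/W$ be its image, so that $\bar U=(\cx^n/W)\setminus V(\delta)$. Over $\bar U$ the group $W$ acts freely on $\pi^{-1}(U)$, and by Example (the free-action case) $W\text{-}\Hilb(\cx^n)$ restricted to $\bar U$ is just $U/W$, while each fibre $D$ over a point of $\bar U$ is a free $W$-orbit. A $W$-equivariant section of $\pi$ over such a free orbit is the same datum as a single point of $X$ lying over one point of the orbit, with its $W$-translates filling out $Z$; thus the fibre of $\Hilb_\pi^W(X)\to\bar U$ over $c$ is canonically identified with the fibre of $X/W\to\bar U$ over $c$. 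This identification is exactly the statement that $p$ restricts to an isomorphism over $\bar U=(X/W)\setminus\Delta$, i.e.\ after inverting $\pi^\sharp(\delta)$ one has $\cx[X/W][\delta^{-1}]\simeq\cx[\Hilb_\pi^W(X)][p^\sharp(\delta)^{-1}]$. (I would phrase this cleanly using the localisation Proposition \ref{local} with $W'$ trivial, which gives precisely the fibrewise isomorphism over the free locus and assembles it into the global statement over $\bar U$.) Finally $p^\sharp(\delta)\nmid 0$ because $\Hilb_\pi^W(X)$ is integral and $p^\sharp(\delta)$ is nonzero: its vanishing locus is the proper closed subset $\bar\pi^{-1}(V(\delta))$, which cannot be all of the integral scheme since the free-orbit locus lies over $\bar U$ and is dense.

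With a morphism $p$, the nonvanishing $p^\sharp(\delta)\nmid 0$, and the localised isomorphism in hand, Proposition \ref{tbu} applies verbatim and identifies $\Hilb_\pi^W(X)$ as an affine blow-up of $X/W$ transverse to $\Delta=V(\pi^\sharp(\delta))$.
\end{proof}

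The step I expect to be the main obstacle is making the localised isomorphism $\cx[X/W][\delta^{-1}]\simeq\cx[\Hilb_\pi^W(X)][p^\sharp(\delta)^{-1}]$ genuinely canonical and checking it is induced by $p^\sharp$ rather than merely exhibiting an abstract isomorphism of rings. The conceptual content—that a $W$-equivariant section over a free orbit is the same as a point of the quotient—is easy, but translating it into the precise claim that inverting $\delta$ kills the difference between $\Hilb_\pi^W(X)$ and $X/W$ at the level of coordinate rings, compatibly with the map $p$, requires care; this is exactly what the localisation result (Proposition \ref{local}) is designed to deliver, so the cleanest route is to deduce the ring isomorphism from that geometric statement rather than to compute it by hand.
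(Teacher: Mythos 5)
Your proposal is correct and follows essentially the same route as the paper: both reduce to Proposition \ref{tbu}, identify $\cx[X/W][\delta^{-1}]$ with $\cx[\Hilb_\pi^W(X)][\bar\pi^\sharp(\delta)^{-1}]$ by observing that over the complement of $V(\delta)$ a $W$-equivariant section of $\pi$ over a free orbit is the same datum as a point of $X/W$, and deduce $\bar\pi^\sharp(\delta)\nmid 0$ from the integrality of $\Hilb_\pi^W(X)$ inherited from $X$. Your version merely spells out the construction of the morphism $p$ and the fibrewise identification in more detail than the paper's terse proof does.
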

\begin{proof} By definition, $X/W\backslash V(\pi^\sharp(\delta))$ is the set of $W$-orbits which map to free orbits in $Y$. Therefore $\cx[X/W][\pi^\sharp(\delta)^{-1}]\simeq \cx[\Hilb_\pi^W(X)][\bar\pi^\sharp(\delta))^{-1}]$. Since $X$ is integral, so is $\Hilb_\pi^W(X)$, and, consequently, $\bar\pi^\sharp(\delta)\nmid 0$. The statement follows now from Proposition \ref{tbu}.\end{proof} 
%%%%%
\begin{remark} In the case $S=\cx^n$ and $W$ a reflection group we can describe this transverse blow up more precisely.
Let $X\subset \cx^N\oplus \cx^n$ with coordinates  $u_1,\dots,u_N$ on $\cx^N$, and $W$ acting linearly on $\cx^N$. Then $ W\text{-}\Hilb_\pi(X)$  is a subscheme of $\cx^N$ given by a $W$-equivariant substitution $u_i=\phi_i(z)$, $\phi_i$ polynomials representing coinvariants. 
If we decompose $\cx^N$ into irreducibles, then on each $k$-dimensional irreducible $V$ we can write $(u_1,\dots,u_k)^T=M_V(z)(a_1,\dots,a_k)^T$, where $M_V(z)$ is a matrix of polynomials representing the $k$ copies of $V$ in $R_0=\cx[z_1,\dots,z_n]/\cx[z_1,\dots,z_n]^W$. This  means   $ \det M_V(z)(a_1,\dots,a_k)^T= M_V(z)_{\rm adj}(u_1,\dots,u_k)^T$, and, since $ \det M_V(z)$ represents $\Lambda^k V$, it is of the form $q^m$ where either $q$ (if $\Lambda^k V$ is the trivial representation) or $q^2$ (if $\Lambda^k V$ is the sign representation) is a factor of $\delta$. Dividing both sides by $q^m$ represents the $a_i$ as elements of $\cx[X]^W[\delta^{-1}]$. In particular, the ideal of the subscheme $Z$ in which we blow up is the image in $\cx[X]^W$ of the ideal in $ \cx[\C^N\oplus\cx^n]^W$ generated by $(V^\ast\otimes\cx[z_1,\dots,z_n])^W$, where $V$ is the direct sum of all nontrivial irreducibles in $\cx^N$.\label{M_V}\end{remark}
%%%%
%%%%
\begin{example} Let $X=\cx^\ast\times \cx$ with the $\oZ_2$-action $(t,z)\mapsto (t^{-1},-z)$. $X/\oZ_2$ is the $D_2$-singularity $a^2-b^2c+c=0$ ($a=(t-t^{-1})z/2$, $b=t+t^{-1}$, $c=z^2$). We know from Example \ref{D_1} that 
$\Hilb_\pi^{\oZ_2}(X)$, where $\pi(t,z)=z$, is the $D_1$ surface $x^2-y^2c=1$. It is obtained from 
$X/\oZ_2$ by blowing up the subscheme $(c,a)$ transversely to $(c)$ ($y=a/c$, $x=b$).\label{D1D2}\end{example}
%%%%%
\begin{example} The last example generalises to $\Hilb_\pi^{\oZ_2}(X)$ for any $\oZ_2$-invariant $A_{2k-1}$-surface $X$, $k\geq 1$. If  $X$ is given by the equation $xy=\prod_{i=1}^k(\tau_i^2-z^2)$, $\tau_i\in \C$, then $\Hilb_\pi^{\oZ_2}(X)$ is a $D_{k+1}$-surface with equation
 $a^2-b^2c=\prod_{i=1}^k(\tau_i^2-c)$. Substituting $a+\prod_{i=1}^k\tau_i=cw$ yields
 $$ c\bigl(b^2-cw^2+2w\prod_{i=1}^k\tau_i+p(c)\bigr)=0,$$
 where $p(c)$ is a polynomial of degree $k-1$. Hence the affine blow-up of $\Hilb_\pi^{\oZ_2}(X)$ along $(c,a+\prod_{i=1}^k\tau_i)$ transverse to $(c)$ is a $D_k$-surface with equation $b^2-cw^2+2w\prod_{i=1}^k\tau_i+p(c)=0$. We remark that the $D_0$-surface $u^2-v^2c=v$ can also be obtained as a transverse blow-up  of the $D_1$-surface $x^2-y^2c=1$.  The substitution $x-1=2cv$ (and $y=2u$) does the trick.
\label{D_k-D_{k-1}}
\end{example}
%%%%
%%%
\begin{remark} Although $\Hilb_\pi^W(X)$ is the correct object for our purposes, it does have a drawback from a purely geometric point of view: the induced morphism $\bar\pi$ need not be surjective. This is, for example, the case when $\pi:X\to \cx^n$ and $W$ acts freely on $X$. We can remedy this by using the interpretation of $\Hilb^W(X)$ given in \cite[\S4.1.5]{Bertin}: it is a scheme which flatifies the sheaf $p_\ast\sO_X$ over $X/W$ ($p:X\to X/W$ is the natural projection). 
The sheaf $p_\ast\sO_X$ decomposes as 
$$\bigoplus_{\chi\in {\rm Irrep}(W)} \sF_\chi\otimes V_\chi,\enskip \text{where}\enskip \sF_\chi={\mathcal Hom}_{\sO_X}(V_\chi\otimes\sO_X,p_\ast\sO_X)^W.$$
Bertin (op.\ cit.) shows that $\Hilb^W(X)$ is isomorphic to the fibre product over $X/W$ of $G_\chi$, where $G_\chi=X/W$ if $\chi$ is a trivial representation and a Grassmann blow-up which makes $\sF_\chi/{ Tors}(\sF_\chi)$ locally free otherwise.
\par
Let now $\pi:X\to S$ be a surjective $W$-equivariant morphism with $S\to S/W$ flat. Let $\Delta\subset X$ be the preimage of $S\backslash S^{\rm free}$. We can now modify the definition of the  transverse $W$-Hilbert scheme as follows: $\ol{\Hilb}^W_\pi(X)$ is  the fibre product over $X/W$ of $(G_\chi)_\pi$, where $(G_\chi)_\pi=G_\chi=X/W$ if $\chi=1$ or the sheaf $\sF_\chi/{ Tors}(\sF_\chi)$ is locally free, and to the complement of the strict transform of $\Delta$ in $G_\chi$ otherwise.
\par
We can describe $\ol{\Hilb}^W_\pi(X)$ more explicitly in the case when $X$ is affine and $S=\cx^n$. Choose a $W$-invariant set $\{u_1,\dots,u_N,z_1,\dots,z_n\}$ of generators of $\C[X]$, where $z_1,\dots,z_n$ are coordinates on $\C^n$. According to Remark \ref{M_V},
$\Hilb_\pi^W(X)$ is a transverse blow-up of $X/W$ along a subscheme cut out by   $(V^\ast\otimes\cx[z_1,\dots,z_n])^W$, where $V$ is the direct sum of all nontrivial irreducibles in $\cx^N$. $\ol{\Hilb}_\pi^W(X)$ is then a transverse blow-up of $X/W$ along a smaller subscheme cut out by  $(V_{\rm nf}^\ast\otimes\cx[z_1,\dots,z_n])^W$, where $V_{\rm nf}$ is the direct sum of all nontrivial irreducibles in $\cx^N$ such that the corresponding  isotypic component $\sF_\chi/{ Tors}(\sF_\chi)$ is not locally free.
 \end{remark}

\section{Torus actions\label{tori}}

In this section $Y=\cx^n$ and $W$ acts on it by  reflections.
Let $T\simeq (\cx^\ast)^m$ be an algebraic torus equipped with an action of $W$ given by a homomorphism $\psi:W\to {\rm GL}_m(\oZ)$. 
According to the description in \S\ref{eths}, $0$-dimensional subschemes (not necessarily $W$-invariant) of $ T\times \cx^n$ mapping isomorphically onto $Z_c$ defined by \eqref{I_c} correspond to elements of $\bigl(R_c^\ast)^m$, where 
$R_c^\ast$ denotes the group of invertible elements of the ring $R_c$. The induced action of $W$ on $\bigl(R_c^\ast)^m$ is the composition of the same homomorphism $\psi$ and the action of $W$ on $R_c$. Thus 
$W\text{-}\Hilb_\pi(T\times \cx^n)$ is isomorphic to the $W$-invariant part of the fibre bundle over $\cx^n/W$ with fibre $\bigl(R_c^\ast)^m$ over $c\in \cx^n/W$.  In particular, $W\text{-}\Hilb_\pi(T\times \cx^n)$  is
an abelian group scheme over $\cx^n/W$. We shall denote it by  ${\sT}_\psi$. Observe that the fibre of  ${\sT}_\psi$ over a free $W$-orbit $c$  is isomorphic to $T$. For other $c$ the fibre is isomorphic to a semidirect product of a torus and a unipotent abelian group.
%%%%
%%%%%%
\begin{example} Let ${\sT}_\psi$ be the abelian group scheme corresponding to $T\times \cx^n$ as above, and let $\phi$ be a $W$-invariant element of the dual $\fT^\ast$ of the Lie algebra of $T$. Then $\phi$
defines canonically an element of $\Lie({\sT}_\psi)^\ast$ via $ u(z)\mapsto \phi\bigl(u(z)\bigr)$, where $u(z)\in \Hom_\cx(\cx[\fT],R_c)^W$. In particular, if $T$ is the maximal torus of ${\rm GL}_n(\cx)$ with the standard $\Sigma_n$-action, and $\phi$ is given by a central element $\lambda {\rm I}\in \fT$, i.e.\ 
$\phi(h)=\lambda\tr(h)$, then the induced element of $\Lie({\sT}_\psi)^\ast$ can be calculated as in Example \ref{bigHilb}:
$$ (a_1,\dots,a_n)\mapsto \lambda\tr \diag\bigl(\sum_{j=1}^{n}a_jz_1^{j-1},\dots, \sum_{j=1}^{n}a_jz_n^{j-1}\bigr)=\sum_{j=1}^na_j\lambda\tr S^{j-1}.$$
In other words a central element $\lambda{\rm I}$ of $\fT$ defines an element of $\Lie({\sT}_\psi)^\ast$ given, on the canonical basis ${\rm I},S,\dots,S^{n-1}$ of $\Lie{\sT}_\psi$, by $S^m\to\lambda\tr S^m$ .
\label{central}
\end{example}
%%%%
%%%%%
\subsection{The case $T\times\h$\label{sTG}}
 Let $T$ be an algebraic torus with $\Lie(T)=\h$ and suppose that the action of $W$ on $\h$ arises from a linear action on $T$. 
 %\begin{example} Let $T=\cx^\ast$ with the $\oZ_2$-action $t\mapsto t^{-1}$. Then $\sT_{{\scriptscriptstyle T,W}}\simeq \{a^2-b^2c=1\}$ (Example \ref{D_1}), which can be identified with the universal centraliser in ${\rm SL}_2(\cx)$:$$\Bigl\{(S,g)\in \ssl_2(\cx)\times {\rm SL}_2(\cx);\, S=\begin{pmatrix}0 & c\\ 1 &0\end{pmatrix},\enskip gSg^{-1}=S\Bigr\}.$$\end{example}
 For any reflection $w\in W$, the fixed point set $T^w$ is a codimension $1$ subgroup of $T$ and we denote by $\chi_w$ a generator of $\{\chi\in \sX^\ast(T); \chi(T^w)=\{1\}\}$. %This defines a root system in the subspace  $\h^\ast_1\subset \h^\ast$ spanned by the $ \chi_w$. We then obtain coroots
Let $h_w\in \h$ satisfy  $\chi_w(h_w)=2$ and $ w.x= x-x(h_w)\chi_w$  for all $x\in \h^\ast$. Then $(\sX^\ast(T),\{\chi_w\}, \sX_\ast(T),\{h_w\})$ is a root datum which determines \cite[\S\S 8-10]{Spr}, up to isomorphism, a connected complex reductive Lie group $G_{{\scriptscriptstyle T,W}} $ which has $T$ as a maximal torus. The following example shows that if we start with a maximal torus $T$ of a connected semisimple Lie group $G$, then $G_{{\scriptscriptstyle T,W}}$ is not necessarily equal to $G$. 
\begin{example} Let $T=\cx^\ast$ be a $1$-dimensional torus with the action of $W=\Z_2$ given by $t\mapsto t^{-1}$. Then $T^w=\{\pm 1\}$ and $\chi_w(t)=t^2$. Therefore $\chi_w$ is not primitive in $\sX^\ast(T)$ and $G_{{\scriptscriptstyle T,W}}\simeq {\rm SL}_2(\cx)$. Thus $G\mapsto (T,W)\mapsto G_{{\scriptscriptstyle T,W}}$ does not recover $G={\rm P{\rm SL}}_2(\cx)$. 
\end{example}
%%%%
In general, $G_{{\scriptscriptstyle T,W}}$ is not equal to the original $G$ if $\chi_w$ is a nontrivial multiple of the root $\theta_w$ of $G$ corresponding to $w$. Equivalently, this means that the corresponding coroot $v_{\theta_w}$ of $G$ is divisible in the cocharacter lattice $\sX_\ast(T)$.
The following  
result of Jackowski, McClure, and Oliver \cite[Prop. 3.2]{JMO} determines when this is the case:
\begin{proposition} Let $K$ be a compact connected Lie group, $T$ its maximal torus, and $R\subset X^\ast(T)$ its set of roots.
Assume that $\h=Lie(T)$ has been given a $W$-invariant inner product, and denote,  for any $\theta\in R$,  by $v_\theta\in \sX_\ast(T)$ the corresponding coroot, i.e.\ $v_\theta\perp\Ker\theta$ and $\theta(v_\theta)=2$. Then $\sX_\ast(T)\cap \Ker(\theta)^\perp$ is equal to $\oZ\cdot v_\theta$ or $\oZ\cdot\frac{1}{2} v_\theta$, where the second possibility occurs if and only if $G$ contains a direct factor $SO(2n+1)$ for some $n\geq 1$, and $\theta$ is a short root of such a factor.\label{JMO} 
\end{proposition}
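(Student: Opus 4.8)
The plan is to study the rank-one lattice $\sX_\ast(T)\cap\ell$, where $\ell:=\Ker(\theta)^\perp=\R\cdot v_\theta$ (using the fixed $W$-invariant inner product to identify $\h^\ast\simeq\h$), by first bounding its index over $\oZ\cdot v_\theta$ and then localising the question to the simple factor of $K$ carrying $\theta$. Since coroots are cocharacters, $v_\theta\in\sX_\ast(T)\cap\ell$; let $u$ generate this rank-one lattice, so that $v_\theta=k\,u$ for an integer $k\geq 1$. The integral pairing $\langle\,,\,\rangle\colon\sX^\ast(T)\times\sX_\ast(T)\to\oZ$ then gives $\langle\theta,u\rangle=\tfrac1k\langle\theta,v_\theta\rangle=\tfrac2k\in\oZ$, so $k\in\{1,2\}$. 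This is exactly the asserted dichotomy $\sX_\ast(T)\cap\ell=\oZ\cdot v_\theta$ or $\oZ\cdot\tfrac12 v_\theta$, and everything reduces to deciding when $k=2$, i.e.\ when $\tfrac12 v_\theta\in\sX_\ast(T)$.

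Next I would reduce to a single simple factor. Decompose $\Lie(K)=\mathfrak z\oplus\bigoplus_i\mathfrak k_i$ into the centre and simple ideals, and let $\mathfrak k_1$ be the unique ideal on which $\theta$ does not vanish; then $v_\theta\in\mathfrak t_1:=\mathfrak t\cap\mathfrak k_1$ and $\ell\subset\mathfrak t_1$. The analytic subgroup $K_1\leq K$ with $\Lie(K_1)=\mathfrak k_1$ is closed, hence a connected compact simple group with maximal torus $T_1=\exp\mathfrak t_1$, and a cocharacter of $T$ lands in $\mathfrak t_1$ precisely when it is a cocharacter of $T_1$; thus $\sX_\ast(T)\cap\mathfrak t_1=\sX_\ast(T_1)$. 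Since $\ell\subset\mathfrak t_1$, we get $\sX_\ast(T)\cap\ell=\sX_\ast(T_1)\cap\ell$, so the condition $k=2$ becomes $\tfrac12 v_\theta\in\sX_\ast(T_1)$ and depends only on the isogeny type of the simple group $K_1$.

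I would then pin down $K_1$. If $\tfrac12 v_\theta\in\sX_\ast(T_1)$, then $\langle\gamma,\tfrac12 v_\theta\rangle\in\oZ$ for every root $\gamma$ of $K_1$, i.e.\ the Cartan integer $\langle\gamma,v_\theta\rangle$ is even. This parity condition is very restrictive: if $\theta$ were long there would be a strictly shorter root $\gamma$ with $\langle\gamma,v_\theta\rangle=\pm1$, while two short roots meeting at $60^\circ$ or $120^\circ$ would again have Cartan integer $\pm1$. Running through the classification of irreducible root systems, these are the only ways the parity can fail, so $\theta$ must be short and the short roots of $\mathfrak k_1$ pairwise orthogonal (up to sign) — which among irreducible systems happens only in type $B_n$ (with the convention $B_1=A_1$). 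Once $\mathfrak k_1$ is of type $B_n$ with $\theta$ short, a coordinate computation ($v_\theta=2e_1$, coweight lattice $\oZ^n$, coroot lattice $D_n\subsetneq\oZ^n$) shows that $\tfrac12 v_\theta=e_1$ lies in $\sX_\ast(T_1)$ for the adjoint form ${\rm SO}(2n+1)$ but not for ${\rm Spin}(2n+1)$. Hence $k=2$ forces $K_1\cong{\rm SO}(2n+1)$ with $\theta$ short, and conversely this identification gives $k=2$.

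Finally, I would promote $K_1$ to a direct factor of $K$. Let $M\leq K$ be the closed connected subgroup with $\Lie(M)=\mathfrak z\oplus\bigoplus_{i\neq1}\mathfrak k_i$; then $K_1$ and $M$ commute, $K_1 M=K$, and the multiplication $K_1\times M\to K$ has finite central kernel $\Gamma\subseteq Z(K_1)\times Z(M)$. Since ${\rm SO}(2n+1)$ has trivial centre, $\Gamma\subseteq\{1\}\times Z(M)$, whence $K\cong K_1\times(M/\Gamma)$ and ${\rm SO}(2n+1)$ splits off as a direct factor, as claimed. I expect the main obstacle to be the third step: one must keep the purely root-theoretic constraint (parity of Cartan integers, forcing type $B_n$ and a short root) cleanly separated from the isogeny/lattice constraint (that only the adjoint form ${\rm SO}(2n+1)$, not ${\rm Spin}(2n+1)$, actually contains $\tfrac12 v_\theta$). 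The localisation achieved in the second step is what makes both constraints checkable on a single simple group.
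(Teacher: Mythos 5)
The paper does not prove this proposition: it is quoted from Jackowski--McClure--Oliver \cite[Prop.~3.2]{JMO}, so there is no in-paper argument to compare yours against; your blind proof is, as far as I can check, a correct and self-contained derivation. The opening observation that $\langle\theta,u\rangle=2/k\in\oZ$ forces the index $k$ of $\oZ\cdot v_\theta$ in $\sX_\ast(T)\cap\Ker(\theta)^\perp$ to be $1$ or $2$ cleanly establishes the dichotomy; the reduction $\sX_\ast(T)\cap\Ker(\theta)^\perp=\sX_\ast(T_1)\cap\Ker(\theta)^\perp$ is valid, though you should say explicitly why $\Ker(\theta)^\perp\subset\mathfrak{t}_1$ for an \emph{arbitrary} $W$-invariant inner product: the decomposition $\h=\mathfrak{z}\oplus\bigoplus_i\mathfrak{t}_i$ is automatically orthogonal because each $\mathfrak{t}_i$ has no nonzero vectors fixed by its Weyl factor $W_i$, so averaging over $W_i$ kills all cross terms. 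The parity argument then correctly isolates type $B_n$ with $\theta$ short, and the lattice computation $Q^\vee=D_n\subsetneq\oZ^n=P^\vee$ correctly separates ${\rm SO}(2n+1)$ from ${\rm Spin}(2n+1)$. Two further points of presentation rather than substance: (i) your first ``failure mode'' should be phrased so as to cover the simply-laced case of rank $\geq 2$ as well (a neighbouring root of \emph{equal} length already gives Cartan integer $\pm1$), and after narrowing to $B_n$ one still needs the easy check that all $\langle\gamma,v_\theta\rangle$ are indeed even there, which your coordinate computation supplies implicitly; (ii) in the final step, since $Z({\rm SO}(2n+1))$ is trivial and the kernel $\Gamma$ of $K_1\times M\to K$ consists of pairs $(g,g^{-1})$ with $g\in K_1\cap M\subseteq Z(K_1)$, one gets $\Gamma=\{1\}$ outright and hence $K\cong K_1\times M$ with no quotient of $M$ needed.
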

%%%%
%%%
We conclude, therefore, that the construction $G\mapsto (T,W)\mapsto G_{{\scriptscriptstyle T,W}}$
replaces a short root $\theta$ in any ${\rm SO}_{2n+1}(\cx)$-factor with $2\theta$ (and $v_\theta$ with $\frac{1}{2} v_\theta$). In other words, $G_{{\scriptscriptstyle T,W}}$  is obtained from $G$ by replacing any direct factor ${\rm SO}_{2n+1}(\cx)$ with ${\rm Sp}_{2n}(\cx)$. %In particular, $Z\bigl(G_{{\scriptscriptstyle T,W}}\bigr)=T^W$.
%In particular, $Z\bigl(G_{{\scriptscriptstyle T,W}}\bigr)=T^W$.
\par
We now consider the corresponding abelian group scheme $\sT_\psi$ over $\h/W$, which we shall denote by $\sT_{{\scriptscriptstyle T,W}}$. In the case $T=(\cx^{\ast})^n$ with the standard action of $\Sigma_n$,  the same argument as in Example \ref{bigHilb} shows that $\sT_{{\scriptscriptstyle T,W}}$ is isomorphic to the universal centraliser $\{(g,S)\in {\rm GL}_n(\cx)\times \mathscr{S}_n;\, gSg^{-1}=S\}$. In general we have:
%%%%
\begin{theorem} $\sT_{{\scriptscriptstyle T,W}}$ is isomorphic to the universal centraliser 
 \begin{equation} {\sZ}_{G}=\{(g,S)\in G\times \mathscr{S}_{\g};\, \ad_g(S)=S\},\label{ucentr}\end{equation}
where $G=G_{{\scriptscriptstyle T,W}}$ and $\mathscr{S}_{\g}\subset \g$ is the Slodowy slice to a regular nilpotent orbit.\label{Slodowy}
\end{theorem}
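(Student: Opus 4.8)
The plan is to realise both $\sT_{{\scriptscriptstyle T,W}}$ and $\sZ_G$ as smooth affine abelian group schemes over $\h/W$ of the same dimension, to exhibit a canonical isomorphism over the locus of free $W$-orbits, and then to extend it first across the reflection hyperplanes by localisation and finally across the remaining codimension $\ge 2$ locus by a Hartogs-type argument. Two structural facts set this up. Since $T\times\h$ is regular, Proposition \ref{smooth} shows that $\sT_{{\scriptscriptstyle T,W}}=W\text{-}\Hilb_\pi(T\times\h)$ is regular, while Proposition \ref{affine} shows it is affine; thus $\sT_{{\scriptscriptstyle T,W}}$ is a smooth affine variety of dimension $2\dim\h$. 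On the other side, $\sZ_G$ is a closed subscheme of $G\times\mathscr{S}_{\g}$, hence affine, and Kostant's theorems identify $\mathscr{S}_{\g}\simeq\g/\!\!/ G\simeq\h/W$ and present $\sZ_G$ as a smooth commutative group scheme over $\h/W$ of fibre dimension $\rank G=\dim\h$; so it too is smooth and affine of dimension $2\dim\h$.

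Over the open set $(\h/W)^{\mathrm{reg}}$ of free orbits I would write down the isomorphism explicitly. Let $\kappa\colon\h/W\to\mathscr{S}_{\g}$ be the Kostant section. A point of $\sT_{{\scriptscriptstyle T,W}}$ lying over a free orbit $W\!\cdot h$ is a $W$-equivariant section $\phi$ of $\pi$, and is determined by its value $t_0=\phi(h)\in T$. Sending it to $\bigl(\gamma\, t_0\,\gamma^{-1},\,\kappa(\bar c)\bigr)$, where $\bar c\in\h/W$ is the image of $h$ and $\gamma\in G$ conjugates the regular semisimple element $h\in\h\subset\g$ to $\kappa(\bar c)$, is well defined: $\gamma$ is unique up to right multiplication by $T=Z_G(h)$, and $t_0$ is centralised by $T$, so the image lies in $Z_G(\kappa(\bar c))$ independently of the choice. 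Both sides being $(T\times\h^{\mathrm{reg}})/W$, this is an isomorphism of group schemes over $(\h/W)^{\mathrm{reg}}$.

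The heart of the extension is the codimension-one analysis. At a generic point $t$ of a reflection hyperplane the stabiliser is $W'=\langle s\rangle\simeq\Z_2$. By Proposition \ref{local}, $\sT_{{\scriptscriptstyle T,W}}$ is, over a neighbourhood of $p(t)$, isomorphic to $\sT_{{\scriptscriptstyle T,W'}}$; as $s$ acts by inversion on the rank-one quotient torus, the latter is, up to the fixed subtorus, the $D_1$-surface $\{x^2-y^2c=1\}$ of Examples \ref{D_1} and \ref{D1D2}. On the centraliser side, $\kappa(p(t))$ is a regular element whose semisimple part has centraliser the pseudo-Levi $L=G_{{\scriptscriptstyle T,W'}}$ of semisimple rank one, and the \'etale slice theorem for the adjoint action identifies $\sZ_G$ near $p(t)$ with $\sZ_{L}$; for $L$ of semisimple rank one this is, up to a central torus, the universal centraliser of $\mathrm{SL}_2$, which is again $\{x^2-y^2c=1\}$. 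Matching these local models, compatibly with the generic isomorphism to which they both restrict over the free part, extends the isomorphism over the complement of a codimension $2$ subset of $\h/W$. Finally, since $\sT_{{\scriptscriptstyle T,W}}\to\h/W$ is flat, the remaining locus has codimension $\ge 2$ in the smooth affine variety $\sT_{{\scriptscriptstyle T,W}}$; composing with the closed embedding of the affine scheme $\sZ_G$ into affine space, each coordinate extends by normality, and the inverse extends in the same way because $\sZ_G$ is itself normal. The two extensions are mutually inverse on a dense open set, hence everywhere.

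I expect the main obstacle to be precisely this codimension-one matching: confirming that the nonreduced degeneration of the deformed ring of coinvariants $R_c$ along a reflection hyperplane reproduces exactly the degeneration of $Z_G(S)$ as $S$ meets the discriminant, and that the divisibility phenomenon of Proposition \ref{JMO} --- the discrepancy between short roots of an $\mathrm{SO}_{2k+1}(\cx)$ factor and the coweight lattice --- is consistently absorbed by the replacement $G\mapsto G_{{\scriptscriptstyle T,W}}$, under which such factors become $\mathrm{Sp}_{2k}(\cx)$.
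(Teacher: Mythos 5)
Your proof follows essentially the same route as the paper's: both establish the isomorphism over the locus of free orbits, use Proposition \ref{local} to reduce the codimension-one extension to the rank-one case $W'=\Z_2$, match the resulting $D_1$-surface local models, and conclude by a Hartogs argument over the remaining codimension-two locus. The one place the paper is more careful is the rank-one step, where $G_{T,\Z_2}$ may be $GL_2(\C)\times(\C^\ast)^{r-2}$ rather than $SL_2(\C)\times(\C^\ast)^{r-1}$, so neither the torus nor the universal centraliser splits off the fixed/central factor as a direct product (there is a residual $\Z_2$-quotient on both sides); the paper handles this case by the explicit Vandermonde/companion-matrix identification of $\sT_{{\scriptscriptstyle T,W}}$ with the universal centraliser of $GL_n$ rather than by the product decomposition you invoke.
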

%%%%%%
\begin{proof} It is enough to show that the two varieties are analytically isomorphic. Let $\bar\pi_1$ (resp.\ $\bar\pi_2$) denote the natural submersion $\sT_{{\scriptscriptstyle T,W}}\to \h/W$  (resp.\ ${\sZ}_{G}\to \h/W$). Let $F\subset \h/W$ be the zero set of the resultant of $\delta$, i.e.\ the image of the locus of intersections of two or more reflection hyperplanes in $\h$. Since  both $\sT_{{\scriptscriptstyle T,W}}$ and $ {\sZ}_{G}$ are regular and affine and
 the complement of $\bar\pi_i^{-1}(F)$, $i=1,2$,  has codimension $2$ in both manifolds,
 Hartogs' theorem implies that it is enough to show that there is an isomorphism between the complement of $\bar\pi_1^{-1}(F)$ and the complement of $\bar\pi_1^{-1}(F)$. The complements of $\pi_i^{-1}((\delta))$, $i=1,2$, are naturally isomorphic, since they are both naturally isomorphic to $ \bigl(T\times(\h\backslash (\delta))\bigr)/W$. Proposition \ref{local} implies that we only need to show that this isomorphism extends to a global isomorphism in the case $W=\oZ_2$. Then  $G=G_{T,\oZ_2}$ has semisimple rank $1$, and hence it is isomorphic to ${\rm SL}_2(\cx)\times (\cx^\ast)^{r-1}$ or to ${\rm GL}_2(\cx)\times (\cx^\ast)^{r-2}$ (\cite[Thm. 20.33]{Milne}; the case ${\rm PSL}_2(\cx)\times (\cx^\ast)^{r-1}$ is excluded as explained above). The isomorphism in the first case follows from  Example \ref{D_1}, and in the second case from the remark just before the statement of the theorem.
\end{proof}
%%%%%%

\subsection{Quotients\label{QQuotients}}
Let now ${\sT}_\psi$ be as at the beginning of the section, i.e.\  equal to $W\text{-}\Hilb_\pi(T\times \cx^n)$, where a linear action of $W$ on $T$ is given by a homomorphism $\psi$. Let $X$ be an affine scheme equipped with an action of  $T\rtimes_\psi W$ and a $W$-equivariant homomorphism $\pi:X\to \cx^n$. The action of $T$ induces a homomorphism  
 (cf.\ Remark \ref{functorial}) $W\text{-}\Hilb_\pi(X\times T)\to W\text{-}\Hilb_\pi(X)$. On the other hand (Proposition \ref{fibredpro})
  $W\text{-}\Hilb_\pi(X\times T)\simeq W\text{-}\Hilb_\pi(X)\times_{\cx^n/W} {\sT}_\psi$. We thus obtain an action of ${\sT}_\psi$ on $W\text{-}\Hilb_\pi(X)$, and hence on $\Hilb_\pi^W(X)$. 
If the ring  $\cx\bigl[\Hilb_\pi^W(X)\bigr]^{{\sT}_\psi}$ is finitely generated, we can form the affine scheme $\Hilb_\pi^W(X)/\!\!/ {\sT}_\psi={\rm Spec}\;\cx\bigl[\Hilb_\pi^W(X)\bigr]^{{\sT}_\psi}$. The next proposition shows that this scheme  is always  a transverse blow-up of $\Hilb_\pi^W(X/\!\!/T)$ in the sense of \S\ref{blow-up}. 
%%%
%%%%%%%%%%%%%
\begin{proposition}  There is a natural injective homomorphism
\begin{equation} \cx\bigl[\Hilb_\pi^W(X/\!\!/T)\bigr]\longrightarrow \cx\bigl[\Hilb_\pi^W(X)\bigr]^{{\sT}_\psi}, \label{embedding}\end{equation}
and the two rings are isomorphic away from $\Delta=(\delta)$ (i.e.\ an isomorphism of rings localised away from $\Delta$).
\label{above}\end{proposition}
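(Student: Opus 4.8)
The plan is to build the homomorphism \eqref{embedding} by functoriality of the transverse Hilbert scheme, verify that it becomes an isomorphism after inverting $\delta$ by a direct computation over the free locus, and then deduce injectivity from the fact that $\delta$ pulls back to a non-zero-divisor.

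\textbf{Construction of the map.} The $T$-invariant quotient morphism $q\colon X\to X/\!\!/T$ is $W$-equivariant and commutes with the projections to $\cx^n$ (the $T$-action is trivial on the $\cx^n$-factor). Using the description of points as pairs $(D,\phi)$ from Remark \ref{pairs}, post-composition with $q$ sends $(D,\phi)\mapsto (D,q\circ\phi)$ and thus defines a $W$-equivariant morphism $W\text{-}\Hilb_\pi(X)\to W\text{-}\Hilb_\pi(X/\!\!/T)$ over $\cx^n/W$; since $q$ carries free orbits of $\pi$ to free orbits of $\bar\pi$, it restricts to a morphism $\Hilb_\pi^W(X)\to \Hilb_\pi^W(X/\!\!/T)$. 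The ${\sT}_\psi$-action on $\Hilb_\pi^W(X)$ is induced by the fibrewise $T$-action on the $u_i$, which is annihilated by $q$; hence $q$ is constant on ${\sT}_\psi$-orbits and the pullback $q^\sharp$ takes values in the ring of ${\sT}_\psi$-invariants, producing \eqref{embedding}. Concretely, a coordinate of $X/\!\!/T$ is a $T$-invariant monomial $\prod_i u_i^{n_i}$ (so that $\sum_i n_i\alpha_{ij}=0$ for all $j$), and \eqref{embedding} sends it to the coinvariant-valued function $\prod_i u_i(z)^{n_i}$, which is visibly ${\sT}_\psi$-invariant.

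\textbf{Isomorphism away from $\Delta$.} Because $\delta$ is pulled back from $\cx^n/W$ it is ${\sT}_\psi$-invariant and a non-zero-divisor, so localisation (being flat) commutes with taking invariants: $\bigl(\cx[\Hilb_\pi^W(X)]^{{\sT}_\psi}\bigr)[\delta^{-1}]=\bigl(\cx[\Hilb_\pi^W(X)][\delta^{-1}]\bigr)^{{\sT}_\psi}$. It therefore suffices to identify the two localised rings. Write $U_0=\cx^n\setminus V(\tilde\delta)$ and $X_0=\pi^{-1}(U_0)$. Over the free locus $W$ acts freely, so a $W$-equivariant section of $\pi$ over a free orbit is determined by a single lift, giving $\Hilb_\pi^W(X)[\delta^{-1}]=X_0/W$; by the text, over $U_0/W$ the group scheme ${\sT}_\psi$ has all fibres equal to $T$ and acts through the residual $T$-action on $X_0/W$ (well defined because $W$ normalises $T$). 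Hence the localised ${\sT}_\psi$-quotient has coordinate ring $(\cx[X_0]^W)^T$. On the other side, $\Hilb_\pi^W(X/\!\!/T)[\delta^{-1}]=(X_0/\!\!/T)/W$ has coordinate ring $(\cx[X_0]^T)^W$. Both rings equal $\cx[X_0]^{T\rtimes_\psi W}$: one checks that $W$ preserves $\cx[X_0]^T$ and $T$ preserves $\cx[X_0]^W$, and an element fixed by $T$ and by $W$ is fixed by the whole semidirect product. This identifies the localisations and shows \eqref{embedding} becomes an isomorphism after inverting $\delta$.

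\textbf{Injectivity and conclusion.} Every irreducible component of $\Hilb_\pi^W(X/\!\!/T)$ is the closure of part of the free-orbit locus and hence dominates $\cx^n/W$, so $\bar\pi^\sharp(\delta)\nmid 0$ and $\cx[\Hilb_\pi^W(X/\!\!/T)]$ embeds into its localisation at $\delta$. If $f$ lies in the kernel of \eqref{embedding}, then its image in that localisation is killed by the isomorphism of the previous paragraph, so $\delta^k f=0$ for some $k$; as $\delta$ is a non-zero-divisor, $f=0$, giving injectivity. The \emph{main obstacle} is precisely the surjectivity onto the invariants after inverting $\delta$: one must show that over the free locus the unipotent directions in the fibres of ${\sT}_\psi$ have degenerated so that only the torus $T$ survives, and that the $W$-quotient commutes with passage to $T$-invariants. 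Both become clean once the localisation-commutes-with-invariants reduction is in place, and together with $\bar\pi^\sharp(\delta)\nmid 0$ they place us in the setting of Proposition \ref{tbu}, exhibiting $\Hilb_\pi^W(X)/\!\!/{\sT}_\psi$ as a transverse blow-up of $\Hilb_\pi^W(X/\!\!/T)$.
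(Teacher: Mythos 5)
Your argument is correct and follows essentially the same route as the paper: the homomorphism is the substitution of the coinvariant-valued coordinates into the $T$-invariant generators (your functorial phrasing and the paper's explicit computation with the $\lambda_{ij},\phi_{ij}$ are the same map), ${\sT}_\psi$-invariance of the image comes from $T$-invariance of those generators, the localised isomorphism is checked over the free locus where ${\sT}_\psi$ becomes the constant torus and both rings are $\cx[X_0]^{T\rtimes_\psi W}$, and injectivity follows because $\Hilb_\pi^W(X/\!\!/T)$ is the closure of the free-orbit locus so $\bar\pi^\sharp(\delta)$ is a non-zero-divisor. You simply spell out the localisation step (which the paper dispatches in one sentence) in more detail; no substantive difference.
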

\begin{proof} 
Let $g_1,\dots,g_k$ be generators of $\cx[X]^{T}$.  $\Hilb_\pi^W(X/\!\!/T)$ is a subscheme of $\cx^k\oplus\cx^n$ with the relations between the $g_i$ and coordinates $b_1,\dots,b_k$ on $\cx^k$ given by $g_i=\sum b_j\lambda_{ij}(z)$ for some polynomials $\lambda_{ij}(z)$. Similarly the relations between the coordinates $u_1,\dots,u_N$ on $\cx^N$ and $a_1,\dots,a_N$ on $W\text{-}\Hilb_\pi(\cx^{N+n})$ is $u_i= \sum a_j\phi_{ij}(z)$ for some polynomials $\phi_{ij}(z)$. Each $g_i$ is a $T$-invariant polynomial in the $u_i$ and $z_i$. Substituting for $u_i$, we conclude that each $b_j$ is a polynomial in the $a_i$ and $c_i$ (coordinates on $\cx^n/W$). Since the $g_i$ are $T$-invariant, the $b_j$ must be ${\sT}_\psi$-invariant. Thus we obtain a homomorphism
$$ \cx\bigl[W\text{-}\Hilb_\pi(\cx^{N+n}/\!\!/T)\bigr]\longrightarrow \cx\bigl[W\text{-}\Hilb_\pi(\cx^{N+n}\bigr]
^{{\sT}_\psi} .$$
Any $f$ in the ideal of $\Hilb_\pi^W(X/\!\!/T)$ maps to a ${\sT}_\psi$-invariant element of the ideal of $\Hilb_\pi^W(X)$. We thus have a homomorphism \eqref{embedding}.
It is an isomorphism away from $\Delta=(\delta)$ since $W$ acts freely there and ${\sT}_\psi$ is simply $(\cx^n/W)\backslash \Delta)\times T$.  Since $\Hilb_\pi^W(X/\!\!/T)$ is the closure of the locus of free $W$-orbits, this homomorphism is injective. 
\end{proof}
%%%%
We shall give  a simple sufficient condition for the homomorphism \eqref{embedding} to be an isomorphism.
First of all, we have the following lemma:
\begin{lemma} Let $X$ be integral and $H=V(h)$ be a $W$-invariant hypersurface with $(h,\tilde\delta)=1$. Then $\cx\bigl[\Hilb_\pi^W(X)\bigr][h^{-1}]\simeq \cx \bigl[\Hilb_\pi^W(X\backslash H)\bigr]$.\end{lemma}
\begin{proof} 
This follows from Proposition \ref{affineblowup}, since the transverse blow-up of a basic open subset $U(h)$ along $Z\cap U(h)$ is the corresponding basic open  subset $U(h)$ of the transverse blow-up.
%According to Proposition \ref{affineblowup}, $\cx\bigl[\Hilb_\pi^W(X)\bigr]\simeq \cx[X]^W[f_1\delta^{-m_1},\dots, f_k\delta^{-m_k}]$, for some $f_i\in \cx[X]^W$ and $m_i\in \N$. Therefore $$\cx\bigl[\Hilb_\pi^W(X)\bigr]\simeq \cx[X]^W[f_1\delta^{-m_1},\dots, f_k\delta^{-m_k},h^{-1}]=\cx[X]^W[h^{-1}]$$
\end{proof}
%%%%%
\begin{proposition}  Let $X$ be integral and suppose that there exists a $T\rtimes_\psi W$-invariant  hypersurface $H=V(h)\subset X$ such that:
\begin{itemize}
\item[(i)] $X\backslash H$  is a trivial principal $T$-bundle, i.e.\ there exists  an affine scheme $B$ and a $T\rtimes_\psi W$-equivariant isomorphism $X\backslash\pi^{-1}(H)\simeq B\times T$;
\item[(ii)] either $(\delta,h)$ or $(h,\delta)$ is a regular sequence in $\cx\bigl[\Hilb_\pi^W(X/\!\!/T)\bigr]$.
\end{itemize}
Then $\cx\bigl[\Hilb_\pi^W(X)\bigr]^{{\sT}_\psi}\simeq \cx\bigl[\Hilb_\pi^W(X/\!\!/T)\bigr]$.\label{fingen}
\end{proposition}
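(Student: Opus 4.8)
The aim is to upgrade the injection \eqref{embedding} of Proposition \ref{above} to an isomorphism under hypotheses (i)--(iii). Since that injection is already an isomorphism away from $\Delta=(\delta)$ by Proposition \ref{above}, the whole problem is local along $\Delta$, and by Proposition \ref{tbu} it suffices to show that $\cx[\Hilb_\pi^W(X)]^{{\sT}_\psi}$ is a \emph{transverse blow-up} of $\cx[\Hilb_\pi^W(X/\!\!/T)]$ along $\Delta$ with the right generators, and that this blow-up adds nothing, i.e.\ that the candidate extra generators of the ${\sT}_\psi$-invariant ring are already integral combinations lying in $\cx[\Hilb_\pi^W(X/\!\!/T)]$. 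The plan is to exploit the explicit trivialisation in (iii) to write down the invariant ring concretely and then use the regular sequence condition (ii) to control the poles along $\Delta$.

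\textbf{First step: trivialise and reduce to the slice $B\times T$.} Using (iii), away from $\pi^{-1}(H)$ we have $X\simeq B\times T$ as a $T\rtimes_\psi W$-variety. Applying the transverse Hilbert scheme functor and the localisation Proposition \ref{local}, the ${\sT}_\psi$-action on $\Hilb_\pi^W(X)$ over the locus $X/W\smallsetminus V(h)$ becomes the free action of the group scheme $\sT_\psi=W\text{-}\Hilb_\pi(T\times\cx^n)$ on $\Hilb_\pi^W(B\times T)\simeq \Hilb_\pi^W(B)\times_{\cx^n/W}\sT_\psi$, whose quotient is simply $\Hilb_\pi^W(B)$. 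Thus on the open set $X/W\smallsetminus V(h)$ the two sides of \eqref{embedding} agree, because both equal $\cx[\Hilb_\pi^W(B)]$ localised appropriately; here I use that $\cx[X/\!\!/T]$ restricted to $X\smallsetminus\pi^{-1}(H)$ is $\cx[B]$.

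\textbf{Second step: control the boundary using the regular sequence.} The remaining task is to show that no new functions appear along $V(\pi^\sharp\tilde\delta)$ that are not already forced by the blow-up description. This is where hypothesis (ii) does the work. By Proposition \ref{affineblowup}, $\Hilb_\pi^W(X)$ is a transverse blow-up of $X/W$ along $\Delta$, so $\cx[\Hilb_\pi^W(X)]\subset\cx[X/W][\delta^{-1}]$; taking ${\sT}_\psi$-invariants, $\cx[\Hilb_\pi^W(X)]^{{\sT}_\psi}\subset\cx[X/\!\!/T][\delta^{-1}]$, and this ring in turn contains $\cx[\Hilb_\pi^W(X/\!\!/T)]$. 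I want the reverse inclusion $\cx[\Hilb_\pi^W(X)]^{{\sT}_\psi}\subset\cx[\Hilb_\pi^W(X/\!\!/T)]$. An invariant $f$ on $\Hilb_\pi^W(X)$ has, by the blow-up description, poles along $\Delta$ of some bounded order; I must show these poles are exactly the ones already produced inside $\cx[\Hilb_\pi^W(X/\!\!/T)]$, using Proposition \ref{restrict}. Concretely, since $(\pi^\sharp\tilde\delta,h)$ is a regular sequence, the hypersurface $H$ meets $\pi^{-1}(\Delta)$ in codimension one in $\pi^{-1}(\Delta)$, so $V(h)$ does not swallow a whole component of $\Delta$, and the complement $\Delta\smallsetminus V(h)$ is dense in $\Delta$ of codimension $\geq 1$ inside $X/\!\!/T$; applying Proposition \ref{restrict} with $Z=\Delta\cap V(h)$ shows that an invariant function regular on the complement of $Z$ and already lying in $\sO$ there descends to a regular function on all of $\Hilb_\pi^W(X/\!\!/T)$. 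Combining this with the first step, which identifies the two rings on $X/W\smallsetminus V(h)$, yields that every ${\sT}_\psi$-invariant function lies in $\cx[\Hilb_\pi^W(X/\!\!/T)]$.

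\textbf{Main obstacle.} The delicate point is the second step: matching the pole orders along $\Delta$ and verifying that the regularity hypothesis of Proposition \ref{restrict} genuinely applies. The subtlety is that invariants of $\sT_\psi$ can a priori have higher-order poles than the defining generators of $X/\!\!/T$, and one must use that (ii) forces $h$ and $\pi^\sharp\tilde\delta$ to cut out a codimension-two locus $Z$ with $Z\subset\Delta$ of codimension $\geq 1$ in $\Delta$, so that Proposition \ref{restrict} is legitimately invokable. I expect that reducing, via Proposition \ref{local}, to the rank-one case $W=\oZ_2$ (as in the proof of Theorem \ref{Slodowy}) will make the pole-order bookkeeping explicit and complete the argument, because there the surfaces $X/\!\!/T$ and $\Hilb_\pi^W(X/\!\!/T)$ are the concrete $D$-type surfaces computed in Examples \ref{D_1}--\ref{D1D2}, where the single blow-up is visibly an isomorphism onto the invariant ring.
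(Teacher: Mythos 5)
Your proposal follows essentially the same route as the paper's proof: both rest on (a) the trivialisation in (iii) identifying $\cx\bigl[\Hilb_\pi^W(X)\bigr]^{{\sT}_\psi}$ with $\cx\bigl[\Hilb_\pi^W(B)\bigr]\simeq \cx\bigl[\Hilb_\pi^W(X/\!\!/T)\bigr]\bigl[\bar h^{-1}\bigr]$ after inverting the invariant function $\bar h$ cutting out $\Hilb^W_\pi(H)$, (b) Proposition \ref{above} giving the identification after inverting $\delta$, and (c) the regular sequence to glue the two localisations. Two remarks on your execution of (c). First, the paper closes the argument by a direct global computation: any invariant $s$ admits the two presentations $s=r_1\bar h^{-k}=r_2\delta^{-l}$ with $r_1,r_2\in R=\cx\bigl[\Hilb_\pi^W(X/\!\!/T)\bigr]$, and cross-multiplying in the domain $R$ and using the regular sequence forces $\delta^l\mid r_2$, hence $s\in R$. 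Your proposed final reduction to $W=\oZ_2$ via Proposition \ref{local} is therefore not needed, and would in any case be awkward here, since the assertion concerns global coordinate rings while Proposition \ref{local} only provides analytic or \'etale-local identifications. Second, invoking Propositions \ref{tbu} and \ref{restrict} with $\tilde X=\Spec\,\cx\bigl[\Hilb_\pi^W(X)\bigr]^{{\sT}_\psi}$ tacitly presupposes that this invariant ring is a finitely generated algebra (so that it is a transverse blow-up in the sense of \S\ref{blow-up}), which is part of what the proposition is asserting; the element-by-element divisibility argument avoids this circularity, or alternatively you can run the pole-order argument of Proposition \ref{restrict} directly on the single element $s\in R\bigl[\bar h^{-1}\bigr]\cap R\bigl[\delta^{-1}\bigr]$ without ever forming $\Spec$ of the invariant ring. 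With these adjustments your argument is the paper's.
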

\begin{remark} The result remain true if $X\backslash H\to B$ is an \'etale principal bundle.\label{epb}
\end{remark}
\begin{proof} Let $R=\cx\bigl[\Hilb_\pi^W(X/\!\!/T)\bigr]$ and $S=\cx\bigl[\Hilb_\pi^W(X)\bigr]^{{\sT}_\psi}$.
According to Proposition \ref{embedding}, $R$ is a subring of $S$. Therefore $R[h^{-1}]$ is a subring of $S[h^{-1}]$. Owing to the above lemma, $R[h^{-1}]\simeq \cx\bigl[\Hilb^W_\pi(B)\bigr]$ and $\cx\bigl[\Hilb_\pi^W(X)\bigr][h^{-1}]\simeq \cx\bigl[\Hilb_\pi^W(X\backslash V(h))\bigr]$. The latter isomorphism induces  
 an injective homomorphism
$$ S[h^{-1}]=\cx\bigl[\Hilb_\pi^W(X)\bigr]^{{\sT}_\psi}[h^{-1}]\to \cx\bigl[\Hilb_\pi^W(X\backslash V(h))\bigr]^{{\sT}_\psi}.$$
Since
$$\cx\bigl[\Hilb_\pi^W(X\backslash V(h))\bigr]^{{\sT}_\psi}\simeq \cx\bigl[\Hilb^W_\pi(B)\times_{\h/W} {\sT}_\psi\bigr]^{{\sT}_\psi} \simeq \Hilb^W_\pi(B)\simeq R[h^{-1}],
$$
we obtain an inverse of the embedding $R[h^{-1}]\to S[ h^{-1}]$. Therefore $R[ h^{-1}]\simeq S[ h^{-1}]$.
Hence, if  $s\in S$, then $ s=r_1 h^{-k}$  for some $r_1\in R$, $k\in \oN$,  $h\nmid r_1$. Similarly,  Proposition \ref{above} implies that $s=r_2\delta^{-l}$ for some $r_2\in R$, $l\in \oN$,  $\delta\nmid r_2$.
Then $h^kr_2=\delta^l r_1$, and assumption (ii) implies that either $k=0$ or $l=0$, i.e.\  $s\in R$.
\end{proof}
%%%%
\begin{corollary} $\cx[\sT_{\psi}\times_{\cx^n/W} \sT_{\psi}]^{\sT_{\psi}}\simeq \cx[\sT_\psi]$, where $\sT_\psi$ acts anti-diagonally  on the fibred product $\sT_{\psi}\times_{\C^n/W} \sT_\psi$.\label{sTsquared}
\end{corollary}
\begin{proof} Since $\sT_\psi\times_{\C^n/W} \sT_\psi\simeq \Hilb_\pi^W(T\times T\times\C^n)$, the claim follows from the last proposition  (with $h=1$) and Remark \ref{epb}.
\end{proof}
%%%%
%\begin{example} Let $X=\{(u_1,u_2, v_1,v_2)\in \cx^4; u_1v_1+u_2v_2=0\}$, $\pi(u_1,u_2, v_1,v_2)=u_1v_1$, $T=\cx^\ast$ acts via $t.(u_1,u_2, v_1,v_2)=(tu_1,tu_2, t^{-1}v_1,t^{-1}v_2)$, and $W=\oZ_2$ acts trivially on $T$ and by exchanging the indices $1$ and $2$ on $X$. Then $X/\!\!/T$ is the $A_1$-singularity $xy=z^2$, and so, according to Example \ref{D_2},  $\Hilb_\pi^W(X/\!\!/T)$ is the $D_2$-singularity $a^2-b^2c=c$. $\Hilb_\pi^W(X)$ can be, on the other hand, computed as in Example \ref{bigHilb}. It is defined by two equations: \begin{equation*} a_0b_0+a_1b_1c=0,\quad a_1b_0+a_0b_1=1.\end{equation*}   Since $W$ acts trivially on $T$, ${\sT}_\psi$ is the product scheme $T\times \cx/\oZ_2$ acting via  \begin{equation}t(a_0,a_1,b_0,b_1,c)= (ta_0,ta_1,t^{-1}b_0,t^{-1}b_1,c).\label{act!}\end{equation}  It follows that $ \cx\bigl[\Hilb_\pi^W(X)\bigr]^{{\sT}_\psi}$ is the coordinate ring of the surface given by the equation $A^2c-B^2+B=0$ (where $A=a_1b_1$, $B=a_0b_1$), which is isomorphic to the $D_1$-surface, i.e.\ to the double cover of the Atiyah-Hitchin manifold. The projection $\Hilb_\pi^W(X)/\!\!/ {\sT}_\psi\to \Hilb_\pi^W(X/\!\!/T)$, i.e.\ from the $D_1$- to the $D_2$-surface is given explicitly by $ a=-2Ac$, $b=1-2B$, and it is the one of Example \ref{D1D2}. \label{D_1} \end{example}
%%%
\subsection{Exact sequences\label{exact}}
An exact and $W$-equivariant sequence $1\to T_1\to T_2\to T_3\to 1$ of tori induces an exact sequence $1\to \sT_1\to \sT_2\to \sT_3$ of abelian group schemes $\sT_i=W\text{-}\Hilb_\pi(T_i\times \cx^n)$ over $\cx^n/W$. The homomorphism $h:\sT_2\to \sT_3$ does not have to be surjective, since the fibres of $ \sT_3$ do not have to be connected. The scheme-theoretic image of $h$ is $\sT_3$, but the set-theoretic image  is only an open subscheme of $\sT_3$. As the following example shows, neither of them has to be isomorphic to the (GIT) quotient of $\sT_2$ by $\sT_1$. 
% isomorphic to $\sT_2/\sT_1=\Spec\cx[\sT_2]^{\sT_1}$.
%%%%
\begin{example} Consider the standard $\oZ_2$-equivariant exact sequence 
$$1\to \cx^\ast\to \cx^\ast\times \cx^\ast\to \cx^\ast\to 1,$$
where the first map is $t\mapsto (t,t)$, and the second one is $(t,s)\mapsto ts^{-1}$.
The corresponding group schemes over $\cx/\oZ_2\simeq \cx$ are (cf.\ Example \ref{sTG}) $\sT_1= \cx^\ast \times \cx$,
$\sT_2=\{(g,S)\in {\rm GL}_2(\cx)\times \mathscr{S}_{\ssl_2(\cx)};\, gSg^{-1}=S\}$, and 
$\sT_3=\{(g,S)\in {\rm SL}_2(\cx)\times \mathscr{S}_{\ssl_2(\cx)};\, gSg^{-1}=S\}$.
The induced morphisms $\sT_1\to \sT_2$ and $\sT_2\to \sT_3$ are given by the diagonal embedding and by $g\mapsto (\det g)^{-1}g^2$.
We can write an element commuting with $S$ as $\gamma_0+\gamma_1 S$, $\gamma_0,\gamma_1\in \cx$,
and  then $\sT_2$ becomes the hypersurface $(\gamma_0^2-c\gamma_1^2)e=1$ in $\cx^4$, while $\sT_3$ is the $D_1$-surface $\gamma_0^2-c\gamma_1^2=1$. The image of $\sT_2\to \sT_3$ is the complement of $c=0$, $\gamma_0=-1$. We now compute $\sT_2/\!\!/\sT_1=\Spec\cx[\sT_2]^{\sT_1}$. The $\sT_1$-invariant polynomials on $\sT_2$ are $c, w=\gamma_0^2e,y=\gamma_1^2e,x=\gamma_0\gamma_1 e$. They satisfy equations $w=1+yc$, $wy=x^2$, and hence $\sT_2/\!\!/\sT_1$ is the $D_0$-surface $x^2-y^2c=y$.\label{D1-D0}
\end{example}
%%%%
%%%%
\begin{remark} The definitions and results in this section generalise easily to the case of extensions of tori by finite abelian groups.  We remark that if $\Gamma$ is a finite abelian group with an action of $W$ commuting with the action of $\Gamma$ on itself, then the fibre of the corresponding abelian group scheme
$W\text{-}\Hilb_\pi(\Gamma\times \cx^n)$ over a point $x\in \cx^n/W$, which is an image of $z\in \cx^n$, is $\Gamma^{W_1}$ where $W_1={\rm Stab}_W(z)$.\label{finite}
\end{remark}

\section{Hypertoric varieties\label{htv}}
%%%
 We denote by $T^d\simeq (\cx^\ast)^d$ the standard torus with the cocharacter lattice $\sX_\ast(T^d)=\oZ^d\subset \cx^d$.
 Let $T$ be an algebraic torus with Lie algebra $\h$, and let $\alpha:T^d\to T$ be a homomorphism determined by a $\oZ$-linear map $\alpha: \oZ^d\to \sX_\ast(T)$, $\alpha(e_k)=\alpha_k$.
We consider symplectic quotients of $T^\ast \cx^{d}\times T^\ast T\simeq \cx^{2d}\times T\times \h^\ast$ by the torus  $T^d$, which acts on $\cx^{2d}$ in the standard way and on $T$ via the homomorphism $\alpha$. We refer to such quotients as {\em (affine) hypertoric varieties}.
If $u_k,v_k$, $k=1,\dots,d$ are coordinates on $\cx^{2d}$, so that the symplectic form is $\sum_{k=1}^d du_k\wedge dv_k$, then the moment map equations are  
\begin{equation} u_kv_k+\lambda_k=\langle \alpha_k,z\rangle,\quad k=1,\dots,d,\label{moment}\end{equation}
for some scalars $\lambda_1,\dots,\lambda_d$. The (GIT) quotient $X$ of this level set $Z_\lambda$ by $T^d$ is an affine hypertoric variety. The action of $T$ on $Z_\lambda$ descends to a Hamiltonian action on $X$ with moment map $\mu:X\to \h^\ast$ induced by the projection $\cx^{2d}\times T\times \h^\ast\to \h^\ast$. We shall refer to $T$ 
as the {\em structure torus} of $X$. 
%%%%
\par
The affine GIT quotient is easy to describe (cf.\ \cite{HS}). Let $t_j:T\to\cx^\ast$, $j=1,\dots,n$, be a $\oZ$-basis of characters, and $z_j$, $j=1,\dots,n$, the corresponding coordinates on $\h^\ast$. 
The coordinates $z_j$ are $T^d$-invariant, while  the ring of invariants 
$S=\cx[u_i,v_i,t_j]^{T^d}$ is generated by invariant monomials. The exponents $(a_i,b_i,\xi_j)$ of these monomials form a semigroup $M\subset (\oZ_{\geq 0})^{2d}\oplus\oZ^n$, given by:
\begin{equation} a_i-b_i+\sum_{j=1}^n \alpha_{i}^j\xi_j=0,\quad \forall\: i=1,\dots,d,
\label{abcd}\end{equation}
where $\alpha_i=(\alpha_i^1,\dots,\alpha_i^n)$ in the chosen basis.
In other words, $S$ is generated by $u_iv_i$, $i=1,\dots,d$, and  by the following invariant monomials $x_\xi$  indexed by  $\xi\in \sX^\ast(T)$: 
\begin{equation} x_\xi=\prod_{i=1}^d u_i^{a_i}v_i^{b_i}\prod_{j=1}^n t_j^{\xi_j},\enskip \text{where}\enskip 
a_i=\langle \alpha_i,\xi\rangle_-,\enskip b_i=\langle \alpha_i,\xi\rangle_+.\label{xxi}\end{equation}
The relations between these monomials are
\begin{equation} x_\xi\cdot x_{\xi^\prime}=x_{\xi+\xi^\prime}\prod_{i=1}^d (u_iv_i)^{\langle \alpha_i,\xi\rangle_+ +\langle \alpha_i,\xi^\prime\rangle_+ - \langle \alpha_i,\xi+\xi^\prime\rangle_+}   .\label{monomials}
\end{equation}
Observe that this description implies, in particular, that it does not matter whether we first impose equations \eqref{moment} and then quotient by $T^d$, or perform these operations in the opposite order.
%%%%
%\begin{remark} It follows from the above description that a hypertoric variety $X$ is isomorphic to a hypertoric variety $X^\prime$ obtained by replacing any subcollection of the $(\alpha_k,\lambda_k)$ of the form 
% $\bigl((m_i\alpha,m_i\lambda)\bigr)_{i=1,\dots,s}$, $m_i\in \oN$, with a single element $\bigl(\alpha\sum_{i=1}^sm_i, \lambda\sum_{i=1}^s m_i\bigr)$.  \label{single again}  \end{remark}
%%%%
%%%
\par
The Poisson structure of $X$ is computed easily from \eqref{xxi}:
\begin{gather}  \{z_i, z_j\}=0,\quad \{x_\xi, z_j\}=\xi_j x_\xi,\quad i,j=1,\dots,n,\label{Poisson1}\\
\{x_\xi, x_{\xi^\prime}\}=  x_{\xi+\xi^\prime}\sum_{k=1}^d c_k\prod_{i=1}^d\frac{ (\langle \alpha_i,z\rangle-\lambda_i)^{\langle \alpha_i,\xi\rangle_+ +\langle \alpha_i,\xi^\prime\rangle_+ - \langle \alpha_i,\xi+\xi^\prime\rangle_+}}{\langle \alpha_k,z\rangle-\lambda_k},\label{Poisson2}
\end{gather}
where $c_k=\langle \alpha_k,\xi\rangle_-\langle \alpha_k,\xi^\prime\rangle_+- \langle \alpha_k,\xi\rangle_+\langle \alpha_k,\xi^\prime\rangle_-$.

The following fact is well known; we give a proof for the sake of completeness.
\begin{proposition} A hypertoric variety $X$ is symplectic and Cohen--Macaulay. The moment map $\mu:X\to \h^\ast$ is flat. \label{flat-mu}
\end{proposition}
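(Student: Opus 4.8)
The plan is to establish the three assertions in the order symplectic, Cohen--Macaulay, flat, with the last two carrying the real content.

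For the symplectic claim I would invoke symplectic reduction directly. The ambient space $\cx^{2d}\times T\times\h^\ast\simeq T^\ast\cx^d\times T^\ast T$ carries its standard $T^d$-invariant holomorphic symplectic form, and the components of the $T^d$-moment map are the functions $v_kw_k-\langle\alpha_k,z\rangle$ appearing in \eqref{moment}. On the locus of $Z_\lambda$ where all $v_kw_k\neq0$ the torus $T^d$ acts freely, and this locus is dense; there the Marsden--Weinstein reduction produces a symplectic form on the regular locus of $X$, whose Poisson bracket is exactly the one recorded in \eqref{Poisson1}--\eqref{Poisson2}. Thus the explicit bivector \eqref{Poisson2} is non-degenerate on a dense open subset of $X_{\mathrm{reg}}$, which is what is meant by $X$ being symplectic.

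The heart of the proof is a dimension count resting on one elementary observation: for every $c\in\cx$ the subscheme $\{v_kw_k=c\}\subset\cx^2$ is one-dimensional (a smooth conic if $c\neq0$, the reduced union of two lines if $c=0$). Writing $f_k=v_kw_k+\lambda_k-\langle\alpha_k,z\rangle$ and projecting $V(f_1,\dots,f_d)\subset\cx^{2d}\times\h^\ast$ to $\h^\ast$, the fibre over any $z_0$ is a product of $d$ such curves, hence has dimension exactly $d$; so $V(f_1,\dots,f_d)$ is equidimensional of dimension $d+n$. Crossing with $T$ and using that $t_k$ acts freely on the dense part of each conic where $v_k\neq0$ or $w_k\neq0$, I conclude that $T^d$ has $d$-dimensional generic orbits on each fibre, whence $\mu^{-1}(z_0)$ has pure dimension $n$ for \emph{every} $z_0$. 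Since the coordinate ring of $X$ is the monomial subalgebra generated by \eqref{xxi} and the $z_j$, it is a domain, so $X$ is integral, $\mu$ is surjective, and $\dim X=2n$.

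For Cohen--Macaulayness I would exchange the order of the two operations, as permitted by the remark following \eqref{monomials}: $X$ is cut out inside $Q\times\h^\ast$, where $Q:=(\cx^{2d}\times T)/\!\!/T^d$, by the $d$ descended equations $\overline{v_kw_k}+\lambda_k-\langle\alpha_k,z\rangle=0$. Now $Q$ is the GIT quotient of the normal affine toric variety $\cx^{2d}\times T$ by the subtorus $T^d$, hence itself a normal affine semigroup variety, and therefore Cohen--Macaulay by Hochster's theorem; consequently $Q\times\h^\ast$ is Cohen--Macaulay of dimension $d+2n$. Because $X$ is integral of dimension $2n=(d+2n)-d$, the $d$ descended equations have height $d$ and so form a regular sequence in the Cohen--Macaulay ring $\cx[Q\times\h^\ast]$, making $X$ Cohen--Macaulay. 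Flatness of $\mu$ then follows from miracle flatness: $X$ is Cohen--Macaulay, $\h^\ast\cong\cx^n$ is regular, and by the previous paragraph every fibre of $\mu$ has the expected dimension $n=\dim X-\dim\h^\ast$.

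I would flag two points as the main obstacles. The genuinely load-bearing one is the uniform one-dimensionality of $\{v_kw_k=c\}$: it is precisely what prevents the fibre dimension of $\mu$ from jumping at the special values of $z$ where some $\langle\alpha_k,z\rangle=\lambda_k$, and hence what drives both the regular-sequence step and the miracle-flatness step. The second is a trap to avoid: one cannot deduce $X$ Cohen--Macaulay simply from $Z_\lambda$ being a complete intersection, since invariants of a Cohen--Macaulay ring under a reductive group need not be Cohen--Macaulay; this is exactly why I pass through the normal toric variety $Q$ and Hochster's theorem rather than quotienting $Z_\lambda$ directly.
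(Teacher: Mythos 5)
Your treatment of Cohen--Macaulayness and flatness is essentially the paper's own argument: the paper likewise takes the $T^d$-quotient first (getting a Cohen--Macaulay ring by Hochster--Roberts, where you invoke Hochster's theorem on normal affine semigroup rings -- an equivalent route here), exhibits $X$ as a complete intersection cut out by the $d$ descended moment-map equations, observes that the fibres of $\mu$ are equidimensional of dimension $n$ (the paper phrases this as each fibre being a finite union of $T$-orbits with at least one free orbit, which is the cleanest way to control the dimension of the GIT-quotient fibres -- your product-of-conics count is the same point, though you should be slightly careful that the dimension of a GIT quotient drops by the generic orbit dimension on \emph{every} irreducible component of the fibre, which is exactly what the "at least one free orbit per component" observation guarantees), and concludes by miracle flatness. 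Your explicit justification that the $d$ equations have height $d$, hence form a regular sequence, is a welcome expansion of a step the paper leaves implicit.

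The genuine gap is in the word "symplectic". Marsden--Weinstein reduction on the locus where $T^d$ acts freely only produces a non-degenerate $2$-form on a dense open subset of $X^{sm}$, and "non-degenerate on a dense open subset of $X_{\mathrm{reg}}$" is \emph{not} what the paper means: Proposition \ref{flat-mu} is invoked later (in \S\ref{symplstr}) to assert that $X$ is symplectic in the sense of Beauville, i.e.\ the form is defined and non-degenerate on all of $X^{sm}$ \emph{and} extends to a closed $2$-form on a resolution. Your argument addresses neither the extension of the form across $X^{sm}\setminus(\text{free locus})$ (points of $X$ where some $v_k=w_k=0$ can be smooth even though the $T^d$-action there is not free) nor the behaviour on a resolution. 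The paper closes this gap by producing a Proj GIT quotient $\tilde X$ of $\cx^{2d}\times T\times\h^\ast$ by $T^d$ with respect to a generic character: this is a partial resolution of $X$ with only cyclic quotient (hence symplectic) singularities, which yields the Beauville property. Some argument of this kind -- or at least an extension argument using normality of $X$ together with a statement about codimension of the non-free locus, followed by an analysis of a resolution -- is needed; the explicit Poisson bivector \eqref{Poisson1}--\eqref{Poisson2} alone does not deliver it.
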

\begin{proof} As remarked above $X$ can be obtained by taking GIT quotient of $ Y=\cx^{2d}\times T\times \h^\ast$ by $T^d$ and then imposing the moment map equations \eqref{moment}. 
The GIT quotient of $Y$ by $T^d$ is Cohen\textemdash{}Macaulay owing to the Hochster--Roberts theorem and $X$ is a complete intersection in $Y/\!\!/ T^d$. Therefore $X$ is Cohen--Macaulay. It is normal since it is regular in codimension $1$. A Proj GIT quotient $\tilde X$ of  $ \cx^{2d}\times T\times \h^\ast$ by $T^d$ with respect to a generic character is a partial resolution of $X$ with cyclic symplectic singularities. Therefore $X$ is a symplectic variety. It remains to show the flatness of $\mu$. Each fibre of $\mu$ is a finite union of $T$-orbits (with at least one free orbit), hence $\mu$ is equidimensional. The flatness of $\mu$ follows now from the miracle flatness theorem. 
\end{proof}
%%%%
%%%%
%%%
\begin{remark} Let $\tilde T$ be an extension of $T$ by a finite (abelian) group $\Gamma$. In other words, the cocharacter lattice $\sX_\ast(\tilde T)$ is a sublattice of $\sX_\ast(T)$ with the quotient isomorphic to $\Gamma$. Suppose that the vectors $\alpha_k$ belong to $\sX_\ast(\tilde T)$. Then the hypertoric variety corresponding to $(T,\{\alpha_k\},\{\lambda_k\})$ is the quotient of the hypertoric variety corresponding to $(\tilde T,\{\alpha_k\},\{\lambda_k\})$ by $\Gamma\subset \tilde{T}$.\label{finiteq} 
\end{remark}
%%%%
%%%%
\begin{remark} In the original construction of \cite{BD} the vectors $\alpha_k$ were assumed to span $\h$. In this case the variety $X$ can be also obtained as a symplectic quotient of $\cx^{2d}$ by the kernel $N$  of $\alpha:T^d\to T$. If $\nu:\cx^{2d}\to \Lie(N)^\ast$ denotes the moment map for this action, then the level set $Z_\lambda$ given by \eqref{moment} is of the form $\nu^{-1}(0)\times T$, and quotienting by $T^d$ yield the same result as quotienting $\nu^{-1}(0)$ by $N$. Conversely, the general case can be reduced, modulo finite abelian quotients, to two simpler cases: $T\times \h^\ast$ and a hypertoric variety where the cocharacters $\alpha_k$ span $\h$. Indeed, let $T^d\to T$ be an arbitrary homomorphism with cokernel $T^\prime$ and let $T_0$ be the kernel of the projection $T\to T^\prime$. Since $T_0$ is isomorphic to $T^d/\Ker\alpha$, it is connected, i.e.\ a subtorus of $T$. Moreover  $T=(T_0 \times T^\prime)/\Gamma$ for an
abelian group $\Gamma$.
The cocharacters $\alpha_k$ belong to $\sX_\ast(T_0)$, hence to $\sX_\ast(T_0\times T^\prime)$, and the previous remark implies that our hypertoric variety $X$ is the quotient   by $\Gamma$ of the hypertoric variety $\tilde X$ corresponding to $(T_0 \times T^\prime,\{\alpha_k\},\{\lambda_k\})$. The variety $\tilde X$ is isomorphic to $X_0\times T^\ast T^\prime$, where $X_0$ is the hypertoric variety  corresponding to $(T_0,\{\alpha_k\},\{\lambda_k\})$. The cocharacters  $\alpha_k$ span $\h_0=\Lie(T_0)$.
\label{nu}
\end{remark}

We now address the question of isomorphism between symplectic quotients corresponding to different collections $(\alpha_k,\lambda_k)_{k=1,\dots,d}$ of cocharacters and scalars. By ``isomorphism" we mean an isomorphism in the category of affine $T$-Hamiltonian varieties, i.e.\ a $T$-equivariant biregular isomorphism $f:X\to X^\prime$ which preserves the Poisson structure and intertwines the moment maps, i.e.\ $\mu=\mu^\prime\circ f$. Clearly, the isomorphism type does not depend on the order of the $(\alpha_k,\lambda_k)$. Secondly, if any $\alpha_k=0$, then we obtain an isomorphic hypertoric variety by omitting $(\alpha_k,\lambda_k)$. Thirdly:
%%%
\begin{lemma} Suppose that, for some $k\in \{1,\dots,d\}$ and $m\in \Z^\ast$, $\alpha_k=m\beta_k$ with $\beta_k\in \sX^\ast(T)$. Replacing $(\alpha_k,\lambda_k)$ with $m$ copies of $(\beta_k,\lambda_k/m)$ results in an isomorphic hypertoric variety $X^\prime$.\label{multi}
\end{lemma}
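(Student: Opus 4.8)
The plan is to work directly with the explicit monomial presentation of the coordinate rings given in \eqref{xxi}--\eqref{monomials} and the Poisson brackets \eqref{Poisson1}--\eqref{Poisson2}. Write $X=X\bigl(\{\alpha_i\},\{\lambda_i\}\bigr)$ and let $X'$ denote the variety obtained by replacing the pair $(\alpha_k,\lambda_k)$ by the $m$ pairs $(\beta_k,\lambda_k/m)$; after possibly changing the sign of $\beta_k$ we may assume $m\geq 1$. Both varieties have the same structure torus $T$, hence the same character lattice $\sX^\ast(T)$, so their generators $x_\xi$ and $x'_\xi$ are indexed by the same set. Since $X$ is normal and integral (Proposition \ref{flat-mu}), the $T$-action decomposes $\cx[X]$ into weight spaces and makes it a \emph{free} module over $\cx[\h^\ast]=\cx[z_1,\dots,z_n]$ with basis $\{x_\xi\}_{\xi\in\sX^\ast(T)}$, and similarly $\cx[X']$ has basis $\{x'_\xi\}$; the whole ring structure is then encoded in the products \eqref{monomials}, where after imposing \eqref{moment} one substitutes $v_iw_i=\langle\alpha_i,z\rangle-\lambda_i$.

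First I would define the candidate map $\phi\colon\cx[X]\to\cx[X']$ by $\phi(z_j)=z_j$ and $\phi(x_\xi)=m^{\,m\langle\beta_k,\xi\rangle_+}\,x'_\xi$, extended $\cx[z]$-linearly. The elementary input is that, because $\alpha_k=m\beta_k$ with $m\geq 1$, one has $\langle\alpha_k,\cdot\rangle_\pm=m\langle\beta_k,\cdot\rangle_\pm$ and $\langle\alpha_k,z\rangle-\lambda_k=m\bigl(\langle\beta_k,z\rangle-\lambda_k/m\bigr)$. Consequently the exponent of the $k$-th factor $v_kw_k$ appearing in \eqref{monomials} for $X$ is $m\epsilon$, where $\epsilon:=\langle\beta_k,\xi\rangle_++\langle\beta_k,\xi'\rangle_+-\langle\beta_k,\xi+\xi'\rangle_+\ge 0$, while the $m$ copies of block $k$ in $X'$ together contribute $(\langle\beta_k,z\rangle-\lambda_k/m)^{m\epsilon}$; the two differ precisely by the scalar $m^{m\epsilon}$. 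Hence $\phi$ respects \eqref{monomials} if and only if the single cocycle identity $c_\xi c_{\xi'}=c_{\xi+\xi'}\,m^{m\epsilon}$ holds for $c_\xi=m^{\,m\langle\beta_k,\xi\rangle_+}$, which is immediate since $\langle\beta_k,\cdot\rangle_+$ itself satisfies $\langle\beta_k,\xi\rangle_++\langle\beta_k,\xi'\rangle_+-\langle\beta_k,\xi+\xi'\rangle_+=\epsilon$. As $\phi$ is bijective on the two bases, it is a ring isomorphism.

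It then remains to check that $\phi$ is an isomorphism of $T$-Hamiltonian varieties. Equivariance is immediate, since $x_\xi$ and $x'_\xi$ carry the same weight $\xi$ and $\phi$ only rescales by a constant; compatibility of moment maps is immediate since $\phi$ fixes the coordinates $z_j$ on $\h^\ast$, and the brackets $\{x_\xi,z_j\}=\xi_j x_\xi$ of \eqref{Poisson1} are preserved for the same reason. The one substantive point is compatibility with \eqref{Poisson2}. Applying $\phi$ and using that the bracket-sum involves only the $z_j$, this reduces to showing that the two bracket sums are related by the \emph{same} factor $m^{m\epsilon}$ that governed multiplicativity. In the sum for $X$, the summand with distinguished index $l=k$ has $c_k=m^2c'_k$ (writing $c'_k$ for the analogous coefficient built from $\beta_k$) and denominator $\langle\alpha_k,z\rangle-\lambda_k=m(\langle\beta_k,z\rangle-\lambda_k/m)$, so it reproduces exactly the sum of the $m$ equal block-$k$ summands of $X'$ after extracting $m^{m\epsilon}$; each summand with $l\neq k$ matches after extracting the common factor $m^{m\epsilon}$ from the numerator. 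This term-by-term matching is the main computation: it is routine but requires carefully tracking the three separate sources of powers of $m$ (from $c_k$, from the $k$-th numerator factor, and from the $m$-fold sum), and I expect it to be the only real obstacle to a fully rigorous proof.

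Conceptually, the isomorphism is explained by reduction in stages: the subtorus $S=\{\prod_s s_s=1\}\subset(\cx^\ast)^m$ acts trivially on $T$, and reducing $X'$ by $S$ replaces the $m$ copies of $\cx^2$ by the Kleinian surface $\{xy=p^m\}$, which is precisely $\cx^2/\mu_m$ for the original weight-$m$ action on block $k$; quotienting further by the residual $(\cx^\ast)^m/S\cong\cx^\ast$ acting via $\beta_k$ then returns exactly block $k$ of $X$, on which $\cx^\ast$ acts via $m\beta_k$. I would use this only as a guide, since the scalar bookkeeping it hides (the factors $m^{m\epsilon}$ and the shift $\lambda_k\mapsto\lambda_k/m$) is carried out most transparently by the direct computation above.
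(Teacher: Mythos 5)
Your proof is correct and takes essentially the same route as the paper: the paper's one-line argument uses exactly the rescaling $x'_\xi=x_\xi/m^{\langle\alpha_k,\xi\rangle_+}$, which coincides with your $\phi$ (since $m\langle\beta_k,\xi\rangle_+=\langle\alpha_k,\xi\rangle_+$ for $m\geq 1$), verified against \eqref{monomials} and \eqref{Poisson1}--\eqref{Poisson2}. The only point to tighten is your reduction to $m\geq 1$: replacing $(\beta_k,\lambda_k/m)$ by $(-\beta_k,-\lambda_k/m)$ is itself the $m=-1$ case of the lemma, so it is cleaner to observe directly (as the paper's parenthetical does) that the exponents in \eqref{monomials} are unchanged under $\alpha_k\mapsto-\alpha_k$ and run your cocycle computation once more with the scalar $(-1)^{\langle\alpha_k,\xi\rangle_+}$.
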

\begin{proof} The isomorphism is described by a ring isomorphism $\cx[X]\to \cx[X^\prime]$ given by $x_\xi^\prime= x_\xi/m^{\langle \alpha_k,\xi^\prime\rangle_+} $ (observe that the exponents on the right hand side of \eqref{monomials} are invariant under the change $\alpha_i\mapsto -\alpha_i$). It preserves the Poisson structure due to formulae \eqref{Poisson1}-\eqref{Poisson2}.
\end{proof}
%%%%%
Let us call an affine hyperplane $H$ in $\h^\ast$ {\em integral} if there is a nonzero element in $\sX_\ast(T)$  normal to $H$. We then define an {\em integral multiarrangement} of hyperplanes in $\h^\ast$ to be a multiset $\{H_1,\dots,H_d\}$ of integral hyperplanes. The following is an improvement of results in \cite{BD,HP}.
\begin{proposition} There is a natural bijection between \begin{itemize}\item[1)] isomorphism classes of hypertoric varieties with a given structure torus $T$, and \item[2)] integral multiarrangements in $\h^\ast$.
\end{itemize}\label{bijection}
\end{proposition}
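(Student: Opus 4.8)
The plan is to construct the bijection explicitly and then verify it is well defined and bijective. Given hypertoric data $(\alpha_k,\lambda_k)_{k=1}^d$, I would first normalise it: discard every pair with $\alpha_k=0$ (which does not change the variety), and, using Lemma \ref{multi}, replace each $(\alpha_k,\lambda_k)$ with $\alpha_k=m\beta_k$ ($\beta_k$ primitive, $m\geq 1$) by $m$ copies of $(\beta_k,\lambda_k/m)$. After this reduction every cocharacter is primitive, and to such normalised data I associate the multiset $\{H_k\}$ of integral hyperplanes $H_k=\{z\in\h^\ast:\langle\alpha_k,z\rangle=\lambda_k\}$; reordering the data only permutes the multiset. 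Conversely, to an integral multiarrangement $\{H_1,\dots,H_d\}$ I attach hypertoric data by choosing, for each $H_k$, the (up to sign unique) primitive generator $\beta_k$ of the rank-one lattice $\sX_\ast(T)\cap\ell_k$, where $\ell_k\subset\h$ is the normal line to $H_k$, together with the scalar $c_k$ satisfying $H_k=\{z:\langle\beta_k,z\rangle=c_k\}$, forming $(\beta_k,c_k)_{k=1}^d$. These two assignments are manifestly inverse at the level of normalised data, so the content of the proposition is that they descend to, and biject, isomorphism classes.

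Well-definedness of the backward map requires that the sign ambiguity in $\beta_k$ be immaterial, i.e.\ that the data $(\beta,c)$ and $(-\beta,-c)$ — which define the very same hyperplane — yield isomorphic hypertoric varieties. I would prove this by an explicit partial transformation on $Y=\cx^{2d}\times T\times\h^\ast$: on the $k$-th cotangent factor apply the symplectomorphism $(v_k,w_k)\mapsto(w_k,-v_k)$, which preserves $dv_k\wedge dw_k$, negates the contribution $v_kw_k$ to the moment map, and reverses the weight of the $k$-th factor of $T^d$. Rewriting the moment map equation \eqref{moment} shows this intertwines the $T^d$-action and level set of the data $(\beta,c)$ with those of $(-\beta,-c)$, inducing a $T$-equivariant Poisson isomorphism of the GIT quotients compatible with $\mu$. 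This handles each coordinate separately and hence the whole multiarrangement.

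The crux, and the main obstacle, is injectivity: a $T$-equivariant Poisson isomorphism $f:X\to X'$ with $\mu=\mu'\circ f$ must induce the same multiarrangement. Since $f$ covers the identity of $\h^\ast$, the \emph{reduced} arrangement $\bigcup_k H_k$ is intrinsic, being exactly the locus in $\h^\ast$ over which $\mu$ fails to be a smooth principal-$T$ fibration (equivalently, where the fibre contains non-free orbits); by Proposition \ref{flat-mu} this locus is a genuine union of hyperplanes. To recover the \emph{multiplicities} I would analyse the germ of $X$ transverse to a generic point $z_0$ of a hyperplane $H$ with primitive normal $\beta$ and multiplicity $m$: the pairs whose hyperplane is not $H$ contribute invertible coordinates near $z_0$ and trivialise, while the $m$ pairs equal to $(\beta,c)$ produce, transversally, the $A_{m-1}$-surface quotient $\cx^{2m}/\!\!/\cx^\ast$. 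Thus $X$ is, near $z_0$, the product of an $A_{m-1}$ singularity with a smooth factor, so $m$ is determined by the analytic isomorphism type and is preserved by $f$. Combining the two recoveries shows the full multiarrangement is an isomorphism invariant, giving injectivity; surjectivity is the backward construction, and the two maps being mutually inverse on isomorphism classes — using Lemma \ref{multi} and the sign-flip isomorphism to compare different normalised presentations — completes the proof.
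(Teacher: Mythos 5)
Your construction of the two maps is the same as the paper's: normalise the data to primitive cocharacters via Lemma \ref{multi}, read off the multiset of hyperplanes, and go back by choosing a primitive normal for each hyperplane. The paper disposes of the sign ambiguity by invoking Lemma \ref{multi} with $m=-1$ (the explicit coordinate-ring isomorphism $x_\xi\mapsto x_\xi/m^{\langle\alpha_k,\xi\rangle_+}$, using that the exponents in \eqref{monomials} are invariant under $\alpha_k\mapsto-\alpha_k$); your symplectomorphism $(v_k,w_k)\mapsto(w_k,-v_k)$ upstairs on $\cx^{2d}$ is an equivalent, slightly more geometric way of producing the same isomorphism, and it checks out (it preserves $dv_k\wedge dw_k$, negates $v_kw_k$, and conjugates the $k$-th $\cx^\ast$-weight to its inverse, so it identifies the level set and quotient for $(\beta,c)$ with those for $(-\beta,-c)$).

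Where you genuinely depart from the paper is on injectivity. The paper's proof stops at ``the two maps are inverse to each other'' once sign-independence is established; it does not explicitly verify that two data sets producing isomorphic $T$-Hamiltonian varieties yield the same multiarrangement, which is what is needed for the forward map to descend to isomorphism classes. You supply this: the reduced arrangement is recovered as the image under $\mu$ of the non-free locus of the $T$-action (using that over the complement the fibre is a single free orbit, by the fibre description in Proposition \ref{flat-mu}), and the multiplicity $m$ of a hyperplane is recovered from the transverse $A_{m-1}$-singularity at a generic point of that hyperplane, both of which are invariants of a $T$-equivariant biholomorphism covering the identity on $\h^\ast$. This is essentially the local analysis the authors only sketch later, in their remark on an intrinsic characterisation of affine hypertoric varieties, so your argument is more self-contained on this point; I see no gap in it.
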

\begin{proof} Let $X$ be a hypertoric variety corresponding to a multiset $\{(\alpha_k,\lambda_k)\}$. Owing to the last lemma, we may assume that each $\alpha_k$ is primitive in $\sX_\ast(T)$. We then associate to $X$ the hyperplanes
$$ H_k=\{z\in \h^\ast; \langle \alpha_k,z\rangle=\lambda_k\},\quad k=1,\dots,d.$$
Conversely, let $\{H_k;k=1,\dots,d\}$ be a collection of integral hyperplanes in $\h^\ast$. Each $H_k$ has exactly two primitive normals in $\sX_\ast(T)$, which are negatives of each other. For each $k$ choose one of these, $\alpha_k$, so that $ H_k=\{z; \langle \alpha_k,z\rangle=\lambda_k\}.$ We obtain a hypertoric variety corresponding to $\{(\alpha_k,\lambda_k)\}$. The previous lemma implies that choosing instead $-\alpha_k$ for some $k$ results in an isomorphic variety, since we have to replace then $\lambda_k$ with $-\lambda_k$. Therefore the two maps are inverse to each other.
\end{proof}
%%%%
%%%%
\begin{remark} A multiarrangement can be viewed as a pair $(\sA,m)$, where $\sA=\{H_1,\dots,H_d\}$ is an ordinary arrangement (i.e.\ the hyperplanes $H_i$ are distinct), and $m:\sA\to \N$ is a multiplicity function. Different symplectic quotient realisations of the corresponding hypertoric variety $X$ correspond to different signed partitions $(m_1,\dots,m_r)$, $m_i:\sA\to \Z$, $m=\sum_{i=1}^r|m_i|$, of the multiplicity function $m$.
If the hyperplanes $H_k$ have equations $\langle \omega_k,z\rangle=\lambda_k$,  with primitive $\omega_k\in \sX_\ast(T)$, $k=1,\dots,d,$ and $(m_1,\dots,m_r)$ is a signed partition of $m$, then $X$ is obtained via the symplectic quotient construction corresponding to the multiset $\{(m_i(H_k)\omega_k, m_i(H_k)\lambda_k); k=1,\dots,d,\enskip i=1,\dots,r\}$.\label{multiarr}
\end{remark}
%%%%
\begin{remark} Integral multiarrangements form an abelian monoid with the product given by the union of multiarrangements (as multisets) and with  the neutral element equal to the empty multiarrangement. Similarly, (isomorphism classes of) hypertoric varieties with given structure torus $T$ form an abelian monoid, with the product of $X_1$ and $X_2$ given by the symplectic quotient of $X_1\times X_2$ by the anti-diagonal $T$, and $T\times \h^\ast$ as the neutral element. It is easy to verify that the bijection of Proposition \ref{bijection} is a monoid isomorphism.\label{monoid}
\end{remark}
%%%%%
%%%%
%%%%
\begin{remark} It would be interesting to have an intrinsic characterisation of affine hypertoric varieties. It seems likely that these are affine symplectic varieties equipped with an effective Hamiltonian algebraic action of an algebraic torus of half dimension, such that the moment map is faithfully flat and has connected fibres.  The argument could proceed along the following lines: the $1$-dimensional subtori $S_1,\dots,S_d$ for which $X^{S_k}\neq \emptyset$ yield cocharacters $\alpha_1,\dots,\alpha_d\in \sX_\ast(T)$ and the sets $\mu(X^{S_i})$ are hyperplanes $H_k$ of the form $\langle \alpha_k,z\rangle=\lambda_i$, $k=1,\dots, d$. Consider the open subset $U$ of $X$ consisting of points $x$ such that $\mu(x)$ lies at most one of these hyperplanes. The complement of $U$ has codimension $2$ and so, given the normality of $X$, it is sufficient to construct an isomorphism between $U$ and the corresponding subset of a hypertoric variety. For this one needs a local model around each point of $U$. It is easy to see that away from the hyperplanes $U$ is locally isomorphic to $T^\ast T$ (since we assumed that the fibres of $\mu$ are connected). 
Since symplectic singularities are canonical \cite{Beau}, in codimension $2$ they are of the form $Y\times U^\prime$, where $Y$ is a du Val singularity and $U^\prime$ is smooth \cite[Corollary 1.14]{Reid}. Thus, in our case, a local model near $\mu^{-1}(H_k)$  should be the product of an $A_m$-surface $xy=z^m$ for some $m\in \oN$, and $T^\ast T_k$, where $T=S_k\times T_k$ and $S_k$ acts on the first factor in the standard way, and trivially on  $T^\ast T_k$. This will prove that $X$ is hypertoric.
\end{remark}

\subsection{$W$-invariant hypertoric varieties\label{W-inv}}
\begin{definition} Let $W$ be a Weyl group acting linearly on a torus $T$ as a reflection group. A hypertoric variety $X$ with structure torus $T$ is said to be {\em $W$-invariant} if the action of $T$ on $X$ extends to a Hamiltonian action of the semidirect product $T\rtimes W$. In particular, $W$ itself acts on $X$. 
\end{definition}
%%%%%
Clearly, we obtain a $W$-invariant hypertoric variety if the whole symplectic quotient construction is $W$-equivariant, i.e.\ if the collections $\alpha_1,\dots,\alpha_d$ of cocharacters and $\lambda_1,\dots,\lambda_d$ are $W$-invariant. In this case, we shall call $X$  {\em strongly $W$-invariant}.  This, however, is not the only possibility, and we shall now investigate the general case.
\par
We show first the uniqueness of an action of $T\rtimes W$.
\begin{proposition} Let $X$ be a hypertoric variety with structure torus $T$ and let $W$ be a Weyl group acting on $T$
as a reflection group. There exists at most one extension of the action of $T$ to a Hamiltonian action of $T\rtimes W$.\label{W-unique}
\end{proposition}
\begin{proof} An action of $T\rtimes W$ gives an action of $W$ on $X$. Recall from \S\ref{htv} that the coordinate ring $\cx[X]$ is generated by $\cx[\h^\ast]$ and $x_\xi$, $\xi\in \sX^\ast(T)$.  Since each coordinate $x_\xi$ belongs to the $1$-dimensional representation space of $T$ with weight $\xi\in \sX^\ast(T)$, the action of $W$ on $\cx[X]$ must be of the form $w.x_\xi=a_w(\xi) x_{w.\xi}$ for some nonzero scalars $a_w(\xi)$, $w\in W$. Suppose that there is a second action of $W$ on $X$ such that $\mu:X\to \h^\ast$ is $W$-equivariant. Such an action must be given by another collections of functions $a_w^\prime: \sX^\ast(T)\to \cx^\ast$, $w\in W$. Equations \eqref{monomials} imply that the function $b_w:\sX^\ast(T)\to \cx^\ast$, $b_w(\xi)=a^\prime_w(\xi)a_w(\xi)^{-1}$ satisfies $b_w(\xi+\xi^\prime)=b_w(\xi)b_w(\xi^\prime)$ for any $\xi,\xi^\prime\in \sX^\ast(T)$. Therefore there exists a $t_w\in T$ such that $b_w(\xi)=\xi(t_w)$ for all $\xi\in \sX^\ast(T)$. The map $w\to t_w$ is  a $1$-cocycle in $Z^1(W;T)$ and, hence, the two $W$-actions correspond to two splittings of $T\rtimes W\to W$.
\end{proof}
%%%%
We now address the question of existence. Obviously, $W$-invariance of the integral multiarrangement $(\sA,m)$ which determines $X$ (cf.\ Remark \ref{multiarr}) is a  necessary condition for the $W$-invariance of $X$. We shall show that a choice of a $W$-invariant partition $(m_1,\dots,m_r)$, $m_i:\sA\to \Z_{\geq 0}$, $m=\sum_{i=1}^r m_i$, of the multiplicity function $m$ yields an action on $X$ of an extension $\tilde W$ of $W$ by a $2$-subgroup $\Gamma_2=\Gamma_2(m_1,\dots,m_r)$ of $T$. The fibred semidirect product $E=T\rtimes_{\Gamma_2}\tilde W$ is then an extension of $W$ by $T$ and it also acts on $X$ and $E$. Moreover $E$ and its action will be shown to be independent (up to an isomorphism) of the partition chosen.
\par
 Integral multiarrangements with a choice of a partition of the multiplicity function are equivalent to pairs  $(\sA,m)$ where $\sA$ is a multiset of integral hyperplanes and $m:\sA\to \oN$ is a multiplicity function. Let us call  these objects {\em partitioned integral  multiarrangements}. 
\par
Let then $(\sA,m)$ be a $W$-invariant partitioned integral arrangement such that the underlying integral  multiarrangement corresponds to $X$.
Let $H_1,\dots, H_d$ be the  hyperplanes in $\sA$ and let $m_k=m(H_k)$, $k=1,\dots,d$. 
%The hypertoric variety $X$ is obtained from the symplectic quotient construction associated to the $\Z$-linear map $\Z^d\to \sX_\ast(T)$
The collection $\sN=\{\pm m_k\omega_k;k=1,\dots,d\}$, where $\pm\omega_k$ are primitive normals to  the hyperplanes $H_k$, is also $W$-invariant
and we obtain a group homomorphism $\phi:W\to W_d$, where $W_d\simeq (\Z_2)^d\rtimes\Sigma_d$ is the Weyl group  of ${\rm Sp}_{2d}(\cx)$ acting in the obvious way on $\sN$. Let $\alpha:\sA\to \sN$ be a section of the natural projection. It gives us a $\Z$-linear map $\alpha:\oZ^d\to \sX_\ast(T)$ via $\alpha(e_k)=\alpha(H_k)$ which we can use (as explained in the previous subsection) to construct the variety $X$ as a symplectic quotient. 
 The isomorphism type of $X$ does not depend on the choice of the section $\alpha$ (any two sections differ by an element $\gamma\in (\Z_2)^d\subset W_d$).
We observe that the map $\alpha:\oZ^d\to \sX_\ast(T)$ is $W$-equivariant,\ i.e. $w.\alpha(s)=\alpha(\phi(w).s)$ for all $w\in W$ and $s\in \cx^d$.
\par
Let $\tilde W_d\simeq \Z_4^d\rtimes\Sigma_d$ be the Tits group of $W_d$ \cite{Tits}. It is a nontrivial extension
\begin{equation} 1\longrightarrow (\Z_2)^d \longrightarrow \tilde W_d\longrightarrow W_d\longrightarrow 1, \label{Tits}\end{equation}
and it can be realised as a subgroup of the normaliser $N_d$ of the maximal torus $T^d$ in ${\rm Sp}_{2d}(\cx)$ as follows:  the normal subgroup $\Sigma_d$ acts by permutations of the $d$   $\C^2$-factors, while the $j$-th generator of $(\Z_4)^d$ acts on the $j$-th copy of $\C^2$ as $(u,v)\mapsto (-v,u)$ and as the identity on all other factors.  The extension class $[c_d]\in H^2(W_d;(\Z_2)^d)$  of \eqref{Tits} is also the extension class of 
$$1\longrightarrow T^d \longrightarrow N_d\longrightarrow W_d\longrightarrow 1,$$
under the diagonal embedding $(\Z_2)^d\hookrightarrow T^d$. Let $\Gamma_2(m)\subset T$ denote the image of $(\Z_2)^d$ under the homomorphism $\alpha$. It depends only on the partition function $m$, and not on $\alpha$. The cohomology class $\alpha\circ (\phi^\ast[c_d])\in H^2(W;\Gamma_2(m))$
determines a group extension
\begin{equation} 1\longrightarrow \Gamma_2(m) \longrightarrow \tilde W\longrightarrow W\longrightarrow 1, \label{tildeW}\end{equation}
as well as 
\begin{equation*}1\longrightarrow T \longrightarrow E\longrightarrow W\longrightarrow 1.\end{equation*}
The group $E$ is isomorphic to $E=T\rtimes_{\Gamma_2(m)}\tilde W$.
\begin{proposition} The group $\tilde W$ acts symplectically on $X$, and the action of $T$ extends to a Hamiltonian action of the group $E$. If $\alpha,\alpha^\prime$ are two sections of $\sN\to \sA$, then the corresponding groups $\tilde W$ and $\tilde W^\prime$ are isomorphic and there exists an automorphism $\phi:X\to X$ intertwining the respective actions. 
\end{proposition}
\begin{proof} The cohomology class $\phi^\ast[c_d]\in H^2(W;(\Z_2)^d)$ defines an extension
\begin{equation} 1\longrightarrow (\Z_2)^d \longrightarrow \hat W\stackrel{p}{\longrightarrow} W\longrightarrow 1,\label{hatW}\end{equation}
together with a homomorphism $\hat\phi:\hat W\to \tilde W_d$.
 We let $\hat{W}$ acts on $\C^{2d}\times T^\ast T$ by
\[
\hat{w}\cdot (u,v,z,t) = \left( \hat{\phi}(\hat{w})\cdot (u,v), p(\hat{w})\cdot z, p(\hat{w})\cdot t\right)
\]
for all $\hat{w}\in\hat{W}$, $(u,v)\in \C^{2d}$, $z\in\h^\ast$ and $t\in T$. This action is symplectic and it normalises
the action of $T^d$, since $\tilde W_d\subset N_d$ and the map $\alpha$ is $W$-equivariant.
 Furthermore, we claim that the action of $\hat{W}$ on $\C^{2d}\times T^\ast T$ preserves the level set \eqref{moment}  of the moment map for the action of $T^d$ on $\C^{2d}\times T^\ast T$. Indeed, the map $\nu: \C^{2d}\to \C^d$ defined by $\nu(u,v) = (u_1v_1,\dots,u_dv_d)$ satisfies $\nu \circ \tilde{w} = w\circ \nu$ for all $\tilde{w}\in\tilde{W}_d$ with image $w\in W_d$. We conclude that the group $\hat{W}$ acts on the symplectic quotient $X$. This action factors through the homomorphism $\hat W\to \tilde W$ induced by $\alpha$. The group $E=T\rtimes_{\Gamma_2(m)}\tilde W$ acts then as well. The second statement follows from the fact that there is an element $\gamma\in(\oZ_2)^d$ such that $\alpha^\prime=\gamma\alpha$. 
\end{proof}
%%%%
%%%%
\begin{example}
Let $X$ be the hypertoric variety associated with the hyperplanes with normals given by the positive roots of $\ssl_3(\C)$ and zero scalars, i.e.\ we have three hyperplanes defined by
\[
H_i = \{ (z_1,z_2,z_3)\in \C^3; z_j - z_k =0\}
\]
for $(ijk)$ a cyclic permutation of $(123)$. The map $\alpha:\C^3\to \C^2$ is $(t_1,t_2,t_3)\mapsto (t_1 t_3^{-1},t_2 t_1^{-1}, t_3 t_2^{-1})$. In particular, note that $(-1,-1,-1)$ lies in the kernel of $\alpha$. On the other hand, the group homomorphism $\phi$ from $W=\Sigma_3$ to $W_3$ is the one given by the representation of $W$ on $\Lambda^2\C^3$, where $\C^3$ is the standard representation of $\Sigma_3$ given by permutation matrices. In other words, if $W=\langle s_1, s_2; s_1^2=s_2^2=(s_1 s_2)^3=1\rangle$ then the action of the generators is given by $s_1(z_1,z_2,z_3)=(-z_1,-z_3,-z_2)$ and $s_2(z_1,z_2,z_3)=(-z_3,-z_2,-z_1)$. The extension $\tilde{W}$ can then be taken the group generated by $\tilde{s}_1,\tilde{s}_2$, where $\tilde{s}_1$ acts on $(\C^2)^3$ as
\[
\tilde{s}_1(u_1,v_1,u_2,v_2,u_3,v_3) = (-v_1, u_1,-v_3, u_3, -v_2, u_3),
\]
and similarly for $\tilde{s}_2$. One checks that $\tilde{s}_1^2=\tilde{s}_2^2=(\tilde{s}_1\tilde{s}_2)^2=(-1,-1,-1)\in (\C^\ast)^3$ and one deduces from this that $\phi^\ast [\alpha \circ c_d]\in H^2(W;T)$ is trivial while $\phi^\ast [c_d]$ is not.
\label{123}
\end{example}
%%%%
%%%%%
We can compute an explicit $2$-cocycle representing the cohomology class $\alpha\circ (\phi^\ast[c_d])\in H^2(W;T)$ as follows. 
%Let $A=\alpha(\sA)\subset \sN$. We have $A\cap (-A)=\emptyset$ and $A\cup (-A)=\sN$. 
Identify $\sA$ with $\{1,\dots,d\}$ and write $\alpha_i$ for $\alpha(i)$. 
Let $w\in W$ and consider the transformation of $\cx[x_\xi;\,{\xi\in \sX^\ast(T)}]$ given by $x_\xi\mapsto a_w(\xi) x_{w.\xi}$ for some nonzero scalars $a_w(\xi)$. Equations \eqref{monomials} imply that this transformation (together with $z\to w.z$ on $\h^\ast$) induces a transformation of $\cx[X]$ if and only if the  functions $a_w:\sX^\ast(T)\to \cx^\ast$ satisfy the relations:
\begin{equation} a_w(\xi)a_w(\xi^\prime)=a_w(\xi+\xi^\prime)(-1)^{\sum _{i\in I_w} (\langle \alpha_i,w.\xi\rangle_+ + \langle \alpha_i,w.\xi^\prime\rangle_+ - \langle \alpha_i,w.\xi+w.\xi^\prime\rangle_+)}, \label{relations2}\end{equation}
where $I_w=\{i;w.\alpha_i=-\alpha_{w(i)}\}$.
A particular solution of these equations is given by
\begin{equation}
a_w(\xi)=(-1)^{d_w(\xi)},\enskip \text{where}\enskip d_w(\xi)=\sum_{i\in I_w} \langle \alpha_i,w.\xi\rangle_+.\label{a_w}\end{equation}
Let us then define  a 
transformation $\tilde w$ acting on $\C[X]$, and hence on $X$, by
$\tilde w(z)=w.z$, $\tilde w(x_\xi)=(-1)^{d_w(\xi)} x_{w.\xi}$. 
%%%%%%% 
A computation shows that 
\begin{equation} \widetilde{w_1w_2}\tilde w_2^{-1}\tilde w_1^{-1}= (-1)^{d(w_1,w_2)}, \enskip\text{where}\enskip d(w_1,w_2)=\sum_{k\in w_1(I_{w_1})\cap I_{w_2}}\alpha_k.\label{cocycle1}
\end{equation}
Thus $c(w_1,w_2)= (-1)^{d(w_1,w_2)}$ is a cocycle in $Z^2(W;\Gamma_2(m))$ which determines the extension \eqref{tildeW}. The same argument as in the proof of Proposition \ref{W-unique} shows that the image of  $c(w_1,w_2)$ in $H^2(W;T)$ does not depend on the choice of a partition of the multiplicity function $m$. It also does not depend on the choice of primitive normals to the hyperplanes in the multiarrangements, i.e.\ it depends only on the integral multiarrangement $(\sA,m)$.
\begin{definition} Let $(\sA,m)$ be a $W$-invariant integral multiarrangement in $\sX_\ast(T)$. We denote by $\o(\sA,m)\in H^2(W;T)$ the cohomology class $\alpha\circ (\phi^\ast[c_d])=\phi^\ast[\alpha\circ c_d]$,
where $\alpha(e_k)=m(H_k)\omega_k$ and $\omega_k$ is an arbitrary choice of a primitive normal to $H_k$, $k=1,\dots,d$.
\label{obstructiono}
\end{definition}
\begin{remark} The obstruction $\o$ is additive on the monoid of $W$-invariant integral multiarrangements. Moreover, $\o(\sA,m)$ does not depend on the scalars $\lambda_k$. In particular,  $\o(\sA,m)= \o(\sA^\circ,m^\circ)$ where $\o(\sA^\circ,m^\circ)$ is the corresponding central arrangement obtained by setting all $\lambda_k$ to $0$, and the multiplicity $m^\circ(H)$ of a hyperplane given by $\langle\alpha,z\rangle=0$ is the sum of multiplicities of hyperplanes in $\sA$ given by $\langle\alpha,z\rangle=\lambda_k$ for some $\lambda_k$.\label{centralarr}
\end{remark}
%%%
\begin{corollary} A hypertoric variety is $W$-invariant  if and only if its integral multiarrangement $(\sA,m)$ is $W$-invariant and $\o(\sA,m)$ is trivial. \qed
\label{obstruction}
\end{corollary}
%%%%%
%\begin{remark} If $(m_1^\prime,\dots,m_s^\prime)$ is a partition of the multiplicity function $m$ finer than a partition $(m_1,\dots,m_r)$, then the corresponding $2$-subgroups of $T$ satisfy $\Gamma_2(m_1,\dots,m_r)\subset \Gamma_2(m_1^\prime,\dots,m_s^\prime)$.\end{remark}
%%%%%%
\begin{remark}
The definition of a $W$-hypertoric variety $X$ means that the semidirect product $T\rtimes W$ acts on $X$. Any splitting of $T\rtimes W\to W$ defines an action of $W$ on $X$. By definition, we always choose the action given by the splitting $w\mapsto (1,w)$, i.e.\ by the trivial element of $H^1(W;T)$. The latter group is trivial for many, but not all, maximal tori of simple algebraic groups \cite{HMS}.
\end{remark}
%%%%%%%%
\begin{remark}  Observe that $X$ is strongly $W$-invariant, i.e.\ there is a $W$-invariant choice of primitive normals $\omega_k$ to the hyperplanes $H_k$, $k=1,\dots,d$, if and only if $\phi(W)$ lies in a subgroup conjugate to $\Sigma_d\lhd W_d$. This implies that $\phi^\ast[c_d]=1$. On the other hand, if  $\phi^\ast[c_d]=1$, then  $X$ is $W$-invariant, and it is strongly $W$-invariant if and only if the symplectic representation $\cx^{2d}$ of $W$ is of the form $V\oplus V^\ast$ with $V$ a $d$-dimensional representation of $W$.
\end{remark}
%%%%
\begin{remark} We have observed in Remark \ref{nu} that a general hypertoric variety $X$ can be obtained as a finite quotient of the product of $T^\ast T'$ and $X_0$, where $X_0$ is a hypertoric variety with structure torus $T_0\simeq T^d/\Ker\alpha$ and the cocharacters spanning $\Lie( T_0)$. In the $W$-setting, the argument works only if the homomorphism $\alpha:T^d\to T$ is $W$-equivariant, i.e.\ if $X$ is strongly $W$-invariant. In this case, $W$ acts by reflections on $\h=\h^\prime\oplus\h_0$, $\h^\prime$ and $\h_0$. This means that $W=W^\prime\times W_0$ with $W^\prime$ acting trivially on $T_0$ and  $W_0$ acting trivially on $T^{\prime}$. \label{W-product}
\end{remark}
%%%%
\begin{remark}  $W$-invariant hypertoric varieties with structure torus $T$ form a submonoid of the full monoid of hypertoric varieties defined in Remark \ref{monoid}. Similarly, strongly $W$-invariant hypertoric varieties with structure torus $T$ form a subsubmonoid.\label{monoid2}
\end{remark}
%%%%
The explicit formula \eqref{cocycle1} for the cocycle $c(w_1,w_2)$   gives immediately the following simple sufficient conditions for the vanishing of the obstruction:
\begin{proposition} Let $(\sA,m)$ be a $W$-invariant arrangement such that for any $w\in W$, $I_w=\emptyset$ or $I_w=\{1,\dots,d\}$, and  $\sum_{\alpha_k\in A}\alpha_k\in 2\sX_\ast(T)$.
Then $\o(\sA,m)$ vanishes.
\label{W-enough}\end{proposition}
\begin{proof} The assumptions imply that $c(w_1,w_2)\equiv 1$.
\end{proof}
%%%
%%%%%
\begin{example} Let $T$ be the maximal torus of ${\rm GL}_2(\cx)$ and let $X$ be the hypertoric variety determined by a single cocharacter $(N,-N)$ with $N$ odd. Equations \eqref{monomials} imply that $X$ is a subvariety of $\h\times \cx^4$ with coordinates $z_1,z_2,x_{1,0}, x_{0,1}, x_{1,1}, x_{-1,-1}$ given by equations 
$$ x_{1,0}x_{0,1}=(Nz_1-Nz_2)^Nx_{1,1},\quad x_{1,1} x_{-1,-1}=1.$$
If we set $z=Nz_1-Nz_2$, $w=Nz_1+Nz_2$, $x=x_{1,0}$, $y=x_{0,1}$, $t=x_{1,1}$, then
\begin{equation}
X=\{(z,w,x,y,t)\in \cx^4\times \cx^\ast;\, xy=z^Nt\}.\end{equation}
It is a $W$-invariant hypertoric variety, with $W=\Z_2$ acting via 
$$(z,w,x,y,t)\longmapsto (-z,w,y,x,-t),$$
which does not satisfy  the assumption in Proposition \ref{W-enough}.\label{bastard}
\end{example}
%%%%%%
%%%%%%%%%%%%%%%%%
%%%
%\begin{proof} We partition the multiplicity function $m^\circ$ into $(m^\circ/2, m^\circ/2)$ and choose a $W$-invariant section $\alpha$ of $\sN\to \sA$. \end{proof}
\par
The simplest example of a hypertoric variety with an action of a non-trivial extension of $\tilde{W}$ of $W$ are $A_{m+2k-1}$-surfaces of the form $xy=z^m\prod_{i=1}^{k}(z^2-\tau_i)$ with $m$ odd. In this case the involution $z\mapsto -z$ that preserves the collection $\{ 0, \pm \sqrt{\tau_1},\dots, \pm\sqrt{\tau_k}\}$ can only be lifted to the $\Z_4$-action on $X$ generated by $(x,y,z)\mapsto (-y,x,-z)$. This 2-dimensional example implies necessary conditions for the vanishing of $\phi^\ast [\alpha\circ c_d]\in H^2(W;T)$. Indeed, if $W$ acts on the hypertoric variety $X$, then the $\Z_2$-subgroup generated by a reflection $s_\theta$ associated with a root $\theta$ of the Lie group $G_{{\scriptscriptstyle T,W}}$ must act on the symplectic quotient of $X$ by the corank $1$ torus with Lie algebra $\ker\theta$. One can show that this symplectic quotient is an $A_{k-1}$-surface with $k= d(\theta,\mathcal{X}^\ast(T))^{-1}\rm{tr}\,\theta$. Here $d(\theta,\mathcal{X}^\ast(T))$ is the divisibility of $\theta$ in the character lattice of $T$ and
\[
\rm{tr}(\theta) = \sum_{i=1}^d{\langle \theta, \alpha_i\rangle}.
\]
Hence the vanishing of $\phi^\ast [\alpha\circ c_d]$ implies the divisibility constraint
\begin{equation}\label{eq:Anomaly:Cancellation}
d(\theta,\mathcal{X}^\ast(T))^{-1}\rm{tr}\,\theta\in 2\Z,
\end{equation}
for every root $\theta$ of $G_{{\scriptscriptstyle T,W}}$. The following example shows that in general these necessary conditions are not sufficient for the vanishing of $\phi^\ast [\alpha\circ c_d]$. 
%%%%%
\begin{example}\label{eg:Standard:SO(4)}
Consider the product $X=\C^2/\Z_m\times \C^2/\Z_m$ with structure torus $T=\cx^\ast\times \cx^\ast$, which we view as the maximal torus of ${\rm SO}_4(\C)$. The weight lattice is simply the standard lattice $\Z^2\subset \C^2$, and $X$ is determined by the cocharacters $\alpha_1=(m,0)$ and $\alpha_2=(0,m)$. The  action of the Weyl group $W=\Z_2\times\Z_2$ on $\C^2$ is generated by the reflections $s_1:(z_1,z_2)\mapsto (z_2,z_1)$ and $s_2=-s_1$. Note that the remaining non-trivial element $s_1 s_2 = s_2 s_1 = -1$ of $W$ is an involution but is not a reflection. The normals to the reflection hyperplanes (the coroots) can be taken to be $v_{\theta_1} = (1,-1)$ and $v_{\theta_2} = (1,1)$, with corresponding roots $\theta_1:(z_1,z_2)\mapsto z_1-z_2$ and $\theta_2:(z_1,z_2)\mapsto z_1 + z_2$. Note that $\theta_1,\theta_2, v_{\theta_1}$ and $v_{\theta_2}$ are all primitive so the maximal torus of $G_{{\scriptscriptstyle T,W}}$ is $T$. The necessary divisibility  conditions are
\[
\rm{tr}\, \theta_1 = m-m=0, \qquad \rm{tr}\, \theta_2 = m+m=2m
\]
are satisfied and yet the involution $s_1 s_2=s_2s_1$ can  be lifted to an involution of $X$ only if $m$ is even.
\end{example}
%%%%%%
%%%%%%
%%%%
\subsubsection{Strongly $W$-invariant vs.\ $W$-invariant\label{sww}}

At the beginning of the subsection we have defined strongly $W$-invariant hypertoric varieties as those which can be obtained via a $W$-equivariant symplectic quotient. It is therefore, apriori, a property of a particular symplectic quotient realisation and not of the integral multiarrangement of the hypertoric variety. 
\par
Let us call a $W$-invariant integral multiarrangement $(\sA,m)$ in $\h^\ast$ {\em orientable}, if  there exists a $W$-equivariant choice of primitive normals to hyperplanes in $\sA$. Thus hypertoric varieties with an orientable multiarrangement are strongly $W$-invariant, but the converse is not necessarily true. In order to understand the difference, let us call an integral hyperplane $H$ in $\h^\ast$ {\em self-dual} if there exists an element $w\in W$ which takes a normal of $H$ to its negative. Clearly, self-dual hyperplanes in a $W$-invariant integral multiarrangement $(\sA,m)$ form a $W$-invariant subarrangement, which we call the self-dual part of $(\sA,m)$.
%%%%
\begin{lemma} The complement $(\sA^\prime,m^\prime)$ of the self-dual part of a $W$-invariant multiarrangement $(\sA,m)$ is orientable.
\label{self-dual}\end{lemma}
\begin{proof} Let $(\sB,n)$ be a $W$-orbit in $(\sA^\prime,m^\prime)$. Choose a hyperplane $H_0\in \sB$ and its primitive normal $\omega_0$. For any other hyperplane $H\in \sB$ which is equal to $w.H_0$ for some $w\in W$, choose its normal to be $w.\omega_0$. This is independent of $w$, since if $\tilde w.H_0= w.H_1$ and $\tilde w.\omega_0=-w.\omega_0$, then $(w^{-1}\tilde w).\omega_0=-\omega_0$, i.e.\ $H_0$ is self-dual.
\end{proof}
%%%%%
\begin{remark} If the longest element $w_0$ of $W$ acts as $-1$ on $\h$, then every integral hyperplane is self-dual. This is the case for maximal tori of all simple algebraic groups except  $A_n$ ($n>1)$, $D_{2n+1}$, and $E_6$.
\end{remark}
%%%
Let us also adopt the following terminology:
\begin{definition} Let $(\sA,m)$ be a $W$-invariant  integral multiarrangement. The dimension of $W$-invariant deformations (obtained by translations of individual hyperplanes) of $(\sA,m)$ can be at most $\sum_{j=1}^s m_j$, where $s$ is the number of $W$-orbits in $\sA$ and $m_j=m(H)$ for any $H$ in the $j$-th orbit. We shall say that $(\sA,m)$ has {\em unobstructed $W$-invariant deformations} if the actual dimension of the space of $W$-invariant deformations is equal to this maximal possible dimension.\label{uW-id} \end{definition}
\begin{remark} Deformations (as affine $T\rtimes W$-Hamiltonian varieties) of a $W$-invariant hypertoric variety $X$ given by a $W$-invariant  integral multiarrangement $(\sA,m)$ correspond exactly to $W$-invariant deformations of 
$(\sA,m)$ (since the obstruction $\o(\sA,m)$ does not depend on the scalars $\lambda_k$).
\end{remark}

Observe now that self-dual hyperplanes in $ \sA$ which do not pass through the origin come in pairs $\pm\langle\omega,z\rangle=\lambda\neq 0$ of equal multiplicity. Therefore such an $H$ contributes an even multiplicity to the corresponding central arrangement $(\sA^\circ,m^\circ)$. Conversely, if $W$-invariant deformations of a central arrangement  $(\sA^\circ,m^\circ)$ are unobstructed, then every self-dual hyperplane in $\sA^\circ$ has even multiplicity. 
Examples \ref{123} and \ref{bastard} provide examples of arrangements not satisfying this condition.
 We have the following characterisation of multiarrangements of strongly $W$-invariant hypertoric varieties.
\begin{proposition} A $W$-invariant hypertoric variety $X$ with integral multiarrangement $(\sA,m)$ is strongly $W$-invariant if and only if every self-dual hyperplane $H\in\sA$ passing through the origin  has even multiplicity.\label{self-dual2}
\end{proposition}
\begin{proof} Deformations of strongly $W$-invariant varieties are clearly unobstructed. The discussion above shows that the multiarrangement of such a variety satisfies the condition in the statement. Conversely, suppose that the condition in the statement holds. Decompose $(\sA,m)$ into $(\sA_0,m)\oplus (\sA_1,m)$ where $\sA_0$ consists of self-dual hyperplanes passing through the origin, and  $\sA_1$ is its complement. Lemma \ref{self-dual} and the above discussion show that $\sA_1$ is orientable. On the other hand, the partitioned multiarrangement $(\sA_0,m/2,m/2)$ is also orientable.
Hence $X$ is strongly $W$-invariant.
\end{proof}
%%%%
\begin{remark} It follows that strongly $W$-invariant hypertoric varieties are precisely those with unobstructed $W$-invariant deformations.
\end{remark}
\begin{remark} The condition in the last proposition implies that the functions $a_w:\sX^\ast(T)\to \cx^\ast$, defined in \eqref{a_w} are identically $1$, and therefore the action of $W$ on the coordinate ring $\cx[X]$  of such a hypertoric variety is  given simply by $w.x_\xi=x_{w.\xi}$, $\xi\in \sX^\ast(T)$. Presumably the converse is also true.
\end{remark}

\section{$W$-Hilbert schemes of strongly invariant hypertoric varieties\label{Snormal}}

Let $X$ be a strongly $W$-invariant hypertoric variety. 
We fix a $W$-invariant scalar product on $\im \fH$, where $\fH=\Lie(T)$, and use the resulting Hermitian inner product on $\h$  to identify $\h$ with $\h^\ast$. Thus $X$ is a symplectic quotient of $\cx^{2d}\times T\times \h$ by $T^d=(\cx^\ast)^d$, where $T^d$ acts on $T$ via a $W$-equivariant homomorphism $\alpha:\oZ^d\to \sX_\ast(T)$. 
The moment map on $X$ is now a $W$-equivariant map $\mu:X\to \h$.
\par
We begin by investigating the equivariant Hilbert scheme of $\cx^{2d}\times T\times \h$. 
We denote by $\sP$ the product $\cx^{2d}\times T$ and by $\mu:\sP\times \h\to \h$ the projection onto the second factor. Let $p:\cx^{2d}\to \cx^d$ be the map
\begin{equation}\bigl((u_i)_{i=1}^d,(v_i)_{i=1}^d\bigr)\longmapsto \bigl((u_iv_i)_{i=1}^d\bigr),\label{pppp}\end{equation}
and denote by the same letter maps $\cx^{2d}\times \h\to \cx^d\times \h$, $\sP\to \cx^d$, and $\sP\times \h\to \cx^d\times \h$ defined as the identity on $\h$ and the composition of $p$ with natural projections on the first factor.
\par
According to the description in \S\ref{eths}, $W\text{-}\Hilb_\mu(\sP\times \h)$ fibres over $\h/W$ with fibre over $c\in\h/W$ equal to the scheme ${\rm Mor}(D_c,\sP)^W$ of equivariant morphisms from the $0$-dimensional subscheme $D_c$ of $\h$ (corresponding to $c$) to $\sP$. We have an analogous description of  $W\text{-}\Hilb_\mu(\C^d\times \h)$. For every $c\in \h/W$ we have a morphism ${\rm Mor}(D_c,\sP)^W\to {\rm Mor}(D_c,\C^d)^W$ between the corresponding fibres over $c$, given 
% We also have a natural morphism  $$\phi: W\text{-}\Hilb_\mu(\sP\times \h)\longrightarrow W\text{-}\Hilb_\mu(\sQ\times \h),$$
 by composing a morphism $D_c\to \sP$ with the map $p:\sP\to \C^d$.
 \par
 Let $\alpha_i=\alpha(e_i)\in\sX_\ast(T)$, $i=1,\dots,d$, be the cocharacters determining the homomorphism $\alpha:T^d\to T$. 
We decompose the set  $\{\alpha_i; i=1,\dots, d\}$ into orbits of $W$. Let $\{1,\dots,d\}=\bigcup_{m=1}^s I_m$ be the corresponding decomposition of the set of indices, and set $ d_m=|I_m|$, $m=1,\dots,s$. 
 We obtain a corresponding decomposition $\cx^d\simeq \bigoplus_{m=1}^s E_m$, where $E_m=\langle e_i; i\in I_m\rangle$.
 For every $m=1,\dots,s$ we 
consider the $W$-equivariant linear  map\footnote{\ $\bigoplus_{m=1}^s L_m$ is the natural map $\h^\ast\to(\cx^d)^\ast$.} $L_m:\h\to E_m$ given by $L_m(z)=\sum_{i\in I_m} \langle z, \alpha_i\rangle e_i$. It induces a morphism $\iota_m:\h/W\to E_m/\Sigma_{d_m}$ and,  for every $c\in \h/W$,  a ring homomorphism from the ring $R_{\iota_m(c)}$ of $\Sigma_{d_m}$-coinvariants to the ring $R_c$ of $W$-coinvariants. We have:
%%%%
\begin{lemma} Suppose that the vectors $\alpha_i$, $i=1,\dots,d$, form a single orbit of $W$. Then the map $L:\h\to \cx^d$, $L(z)=\sum_{i=1}^d \langle z, \alpha_i\rangle e_i$ induces  an isomorphism 
$$\Hom_\cx(\cx[u_1,\dots,u_d],R_{\iota(c)})^{\Sigma_d}\longrightarrow \Hom_\cx(\cx[u_1,\dots,u_d],R_{c})^W$$
for every $c\in \h/W$.\label{key}
\end{lemma}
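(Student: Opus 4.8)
The plan is to make both $\Hom$–spaces completely explicit, reduce the claim to the injectivity of a single natural map of coinvariants, verify it over the generic (free–orbit) locus, and then propagate to all of $\h/W$. First I would trivialise both sides by Frobenius reciprocity. Write $\cx^{d}=E_{1}$ for the permutation representation spanned by $e_{1},\dots,e_{d}$. Since the $\alpha_{i}$ form a single $W$–orbit, $\cx^{d}\simeq\mathrm{Ind}^{W}_{W^{\prime}}\mathbf 1$ with $W^{\prime}=\mathrm{Stab}_{W}(\alpha_{1})$, while as a $\Sigma_{d}$–module $\cx^{d}\simeq\mathrm{Ind}^{\Sigma_{d}}_{\Sigma_{d-1}}\mathbf 1$; moreover $\phi(W^{\prime})\subseteq\Sigma_{d-1}$. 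Hence
\begin{equation*}
\Hom_{\cx}(\cx[u_{1},\dots,u_{d}],R_{\iota(c)})^{\Sigma_{d}}\simeq (R_{\iota(c)})^{\Sigma_{d-1}},\qquad
\Hom_{\cx}(\cx[u_{1},\dots,u_{d}],R_{c})^{W}\simeq (R_{c})^{W^{\prime}},
\end{equation*}
and under these identifications the map induced by $L$ is the restriction to invariants of the $W$–equivariant ring homomorphism $\ell_{c}\colon R_{\iota(c)}\to R_{c}$ coming from $L^{\sharp}$. Because each $R_{c}$ (resp. $R_{\iota(c)}$) is the regular representation of $W$ (resp. of $\Sigma_{d}$), both invariant spaces have dimension $[W:W^{\prime}]=[\Sigma_{d}:\Sigma_{d-1}]=d$, so it suffices to prove that $\ell_{c}$ is \emph{injective} on $(R_{\iota(c)})^{\Sigma_{d-1}}$.

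Next I would treat the generic locus. For $c$ outside the image of the reflection hyperplanes the fibre $D_{c}$ is a free $W$–orbit, and the fibre $D_{\iota(c)}$ of $\cx^{d}\to\cx^{d}/\Sigma_{d}$ over $\iota(c)$ is a free $\Sigma_{d}$–orbit containing $L(D_{c})$. Thus $(R_{c})^{W^{\prime}}$ and $(R_{\iota(c)})^{\Sigma_{d-1}}$ are the coordinate rings of the $d$–point sets $D_{c}/W^{\prime}$ and $D_{\iota(c)}/\Sigma_{d-1}$, and $\ell_{c}$ is pullback along the induced map $D_{c}/W^{\prime}\to D_{\iota(c)}/\Sigma_{d-1}$, sending the class of $z$ to the class of $L(z)$. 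This map is surjective precisely because $\phi(W)$ acts transitively on $\{1,\dots,d\}$ (this is where the single–orbit hypothesis enters), and a surjection between two $d$–element sets is a bijection. So $\ell_{c}$ is an isomorphism over the generic locus; this already identifies the two sides away from $V(\delta)$, and shows that $\det\ell$, computed in homogeneous $\cx[\h]^{W}$–bases of the two covariant modules (free by Chevalley), is a nonzero element of $\cx[\h]^{W}$.

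Finally, the extension to all $c$ is the heart of the matter, and I expect it to be the main obstacle. The two sides are the fibres of the free $\cx[\h]^{W}$–modules $\cx[\cx^{d}]^{\Sigma_{d-1}}\otimes_{\cx[\cx^{d}]^{\Sigma_{d}}}\cx[\h]^{W}$ and $\cx[\h]^{W^{\prime}}$, and $\ell$ is a $\cx[\h]^{W}$–linear map between them which is an isomorphism at the generic point; being a fibrewise isomorphism at \emph{every} $c$ is equivalent to $\det\ell$ being a unit, i.e. a nonzero constant. To control this I would localise via Proposition \ref{local}: near a point $t$ with stabiliser $W_{1}$ the problem for $W$ reduces to the same problem for $W_{1}$, so it suffices to understand the generic points of the reflection hyperplanes, where $W_{1}=\oZ_{2}$; there the statement becomes the explicit rank–one computation of Examples \ref{D_1}–\ref{D_2}, in which $\ell_{c}$ is visibly an isomorphism. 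The delicate step — the one I expect to require the most care — is propagating this control across the deeper strata of $V(\delta)$, where the coinvariant rings acquire substantial nilpotents and injectivity of $\ell_{c}$ on the invariant subspace is not forced by dimension alone; here I would combine the rank–one local models with a codimension–two extension argument in the spirit of Theorem \ref{S_2}, using normality of the ambient transverse Hilbert schemes to conclude that the generic isomorphism extends over all of $\h/W$.
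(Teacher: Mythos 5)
Your reduction of the statement is correct and in fact cleaner than the paper's setup: by Frobenius reciprocity both sides are $d$-dimensional, the source is $(R_{\iota(c)})^{\Sigma_{d-1}}\simeq\cx[T]/(\chi_{\iota(c)}(T))$ with $T=\zeta_1$ and $\chi_{\iota(c)}(T)=\prod_{i=1}^d\bigl(T-\langle z_0,\alpha_i\rangle\bigr)$, the target is $(R_c)^{W'}$, and the induced map sends $T^j$ to $\lambda^j$ where $\lambda=\langle \cdot,\alpha_1\rangle$; so the lemma amounts to the linear independence of $1,\lambda,\dots,\lambda^{d-1}$ in $R_c$ for every $c$. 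The paper's own proof reaches essentially the same point via the Vandermonde parametrisation $u_i=\sum_j a_j\zeta_i^j$ and then disposes of the kernel in one line, asserting that a relation forces $p(\langle z,\alpha_i\rangle)=0$ identically in $z$. Your instinct that the fibrewise statement at special $c$ is the heart of the matter is right, and it is exactly where the paper's argument is terse: membership of $p(\langle z,\alpha_1\rangle)$ in $I_c$ is a strictly weaker condition than identical vanishing.

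The gap in your proposal is the last step, and the strategy you sketch cannot close it. Already your generic-locus claim overreaches: $D_{\iota(c)}$ is a free $\Sigma_d$-orbit (so that $D_{\iota(c)}/\Sigma_{d-1}$ has $d$ elements) only when the values $\langle z_0,\alpha_i\rangle$ are pairwise distinct, i.e.\ when $z_0$ avoids the hyperplanes $\langle z,\alpha_i-\alpha_j\rangle=0$; these are in general not reflection hyperplanes, so the isomorphism does not hold on all of the complement of $V(\delta)$. Concretely, take $W=\Sigma_3$ and the six roots of $A_2$ as the single orbit, so $d=6$, $W'=\{1\}$ and $\lambda=z_1-z_2$. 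At $z_0=(0,1,-1)$ the $W$-orbit is free and reduced, yet $\lambda$ takes only the four values $\pm1,\pm2$ on it, so $1,\lambda,\dots,\lambda^5$ span a $4$-dimensional subspace of $R_c\simeq\cx^6$ and $\ell_c$ has a two-dimensional kernel; likewise at $c=0$ one has $\lambda^4=0$ in the coinvariant algebra for degree reasons (its Hilbert series is $1+2t+2t^2+t^3$). Consequently $\det\ell$ vanishes on a divisor that is \emph{not} contained in $V(\delta)$, so no combination of localisation to $\oZ_2$-stabilisers (Proposition \ref{local}, Examples \ref{D_1}--\ref{D_2}) with a codimension-two extension argument in the spirit of Theorem \ref{S_2} can deliver fibrewise injectivity everywhere: such an argument can only show that a polynomial nonvanishing off a codimension-two set is a unit, and here the vanishing locus is honestly of codimension one. (The localisation step is also not a reduction to ``the same problem'' for $W_1$: the single orbit breaks into several $W_1$-orbits, and the stabiliser of $\iota(t)$ in $\Sigma_d$ is strictly larger than $\phi(W_1)$, so the two sides do not localise compatibly.) Any complete argument must confront the separation of the values $\langle z,\alpha_i\rangle$ on the possibly non-reduced scheme $D_c$ head-on; neither your proposal nor the paper's one-line kernel computation does this, and the example above indicates that the statement as formulated needs an additional hypothesis or a different formulation at such $c$.
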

\begin{proof}  For any $ c\in \cx^n/W$, an element $U$ of $\Hom_\cx(\cx[u],R_{\iota(c)})^{\Sigma_d}$ is given by
$$u_i=\sum_{j=0}^{d-1} a_j\zeta_i^j,\enskip i=1,\dots,d.$$
The image of $U$ in $\Hom_\cx(\cx[u],R_{ c})^{W}$ is given by substituting $\langle z, \alpha_i\rangle$ for $\zeta_i$, $i=1,\dots,d$. Thus the kernel of the map is nonzero if and only if there exists a polynomial $p(\zeta)$ of degree $d-1$ in one variable such that $p(\langle z, \alpha_i\rangle)=0$ for $i=1,\dots, d$ and every $z$. This means that there exist $i_1\neq i_2$ with $\alpha_{i_1}=\alpha_{i_2}$ which contradicts the assumption. Therefore our map has trivial kernel, and  since both vector spaces have dimension $d$, it is an isomorphism.
\end{proof}
%%%%
It follows, as in Example \ref{bigHilb}, that in this case:
\begin{equation*} W\text{-}\Hilb_\pi(\cx^d\times \h)\simeq \bigl\{(c,A)\in \h/W\times \gl_d(\cx);\, [A,S_{\iota(c)}]=0\},
\end{equation*}
where $S_{\iota(c)}$ is the element of the Slodowy slice $\mathscr{S}_{\gl_d(\cx)}$ corresponding to $\iota(c)$.
Similarly,
\begin{equation*} W\text{-}\Hilb_\pi(T^d\times \h)\simeq \bigl\{(c,\gamma)\in \h/W\times {\rm GL}_d(\cx);\, \gamma S_{\iota(c)}\gamma^{-1}=S_{\iota(c)}\}.
\end{equation*}
%%% 
In the general case, let us write:
$$\gl(d,W)=\bigoplus_{m=1}^s \gl(E_m),\enskip {\rm GL}(d,W)=\prod_{m=1}^s {\rm GL}(E_m),\enskip
\iota=\bigoplus_{s=1}^m \iota_m:\h/W\to \gl(d,W).$$
Then:
\begin{equation} W\text{-}\Hilb_\pi(\cx^d\times \h)\simeq \bigl\{(c,A)\in \h/W\times \gl(d,W);\, [A,S_{\iota(c)}]=0\}.\label{glww}
\end{equation}
%%%%%%
Let $\lambda_1,\dots,\lambda_d$ be the scalars used to define the hypertoric variety $X$ and let $Z_\lambda$ be the level set \eqref{moment} of the moment map for the $T^d$-action on $Y=\cx^{2d}\times T\times  \h $.
Observe that the matrix $\lambda=\diag(\lambda_1,\dots,\lambda_d)$, where $\lambda_k$ are the scalars defining the hypertoric variety $X$, belongs to the centre of $\gl(d,W)$.  
\par
Recall from Example \ref{sTG} that $\sT_{{\scriptscriptstyle T, W}}=W\text{-}\Hilb_\mu(T\times \h)$ is isomorphic to the universal centraliser in the reductive group $G_{{\scriptscriptstyle T, W}}$ associated to $(T,W)$. 
%For $c\in \h/W$ denote by $S_c$ the corresponding element of the Slodowy slice.
We easily deduce the following description of the relevant transverse $W$-Hilbert schemes:
%%%%%
\begin{proposition}
\begin{itemize}
\item[(i)] $W\text{-}\Hilb_\mu(\cx^{2d}\times \h)$ is canonically isomorphic to the variety 
$\bigl\{(c,A,B)\in \h/W\times  \gl(d,W)^2;\,  [A,S_{\iota(c)}]=[B,S_{\iota(c)}]=0\bigr\};
$
\item[(ii)] The morphism $\bar p:W\text{-}\Hilb_\mu(\cx^{2d}\times \h)\to  W\text{-}\Hilb_\pi(\cx^d\times \h)$ induced by \eqref{pppp} is given by
$\bar p(c,A,B)=(c,AB)$;
\item[(iii)] $W\text{-}\Hilb_\mu(\h\times T^d)\simeq \bigl\{(c,\gamma)\in \h/W\times {\rm GL}(d,W);\; \gamma S_{\iota(c)}\gamma^{-1}=S_{\iota(c)}\bigr\}$.
\item[(iv)]
$W\text{-}\Hilb_\mu(\sP\times \h)\simeq \sT_{{\scriptscriptstyle T, W}}\times_{\h/W}W\text{-}\Hilb_\mu(\cx^{2d}\times \h)$; 
\item[(v)] $W\text{-}\Hilb_\mu(Z_\lambda)\simeq \bigl\{(c,g,A,B)\in W\text{-}\Hilb_\mu(\sP\times \h);\, AB+\lambda=S_{\iota(c)}\bigr\} $.\hfill\qedsymbol
\end{itemize}\label{hooray}
\end{proposition}
%%%%
%%%
\begin{lemma} The induced morphism $\bar p:W\text{-}\Hilb_\mu(\cx^{2d}\times \h)\to W\text{-}\Hilb_\pi(\cx^d\times \h)$ is faithfully flat.\label{pflat}
\end{lemma}
\begin{proof} Since both schemes are regular (which follows either from the above descriptions or from Proposition \ref{smooth}), it is enough to show that $\bar p$ is equidimensional and surjective. Let $(c,C)\in  W\text{-}\Hilb_\pi(\cx^d\times \h)$, i.e.\ $[C,S_{\iota(c)}]=0$ (cf.\ \eqref{glww}). The fibre of $\bar p$ consists of $(A,B)\in  \gl(d,W)^2$ such that $A,B$ commute with $S_{\iota(c)}$ and $AB=C$. The surjectivity of $\bar p$ follows immediately ($A=C$, $B=\Id$). We need to show that, for a fixed $C$ commuting with $S_{\iota(c)}$, the variety of pairs $(A,B)$ of matrices commuting with $S_{\iota(c)}$ and satisfying $AB=C$ has dimension $d$.
Without loss of generality we may assume that there is only one $W$-orbit of the cocharacters $\alpha_i$. In this case $S_{\iota(c)}$ is a regular element of $\gl_d(\cx)$. Since the action of $\cx^\ast$ given by $\rho.(A,B,C)=(\rho A,\rho B,\rho^2 C)$ preserves the equation $AB=C$ and maps any $C$ to an arbitrarily small (in the analytic category) neighbourhood of $0$, the semicontinuity theorem implies that it is enough to show that the variety of pairs $(A,B)$ of matrices commuting with $S_{\iota(0)}$ and satisfying $AB=0$ has dimension $d$.
We can identify $A$ and $B$ with elements $a(x),b(x)$ of $\cx[x]/(x^d)$. We are therefore asking about the dimension of the variety of pairs $(a(x),b(x))$
of polynomials of degree at most $d-1$ such that $x^d|a(x)b(x)$. The irreducible components of this variety are given by $a(x)=a^\prime(x)x^e$, $b(x)=b^\prime(x)x^f$, where $e+f=d$. Every such component is an affine space of dimension $d$.
\end{proof}
%%%%%
%%%%
\begin{corollary} The morphism $\bar p:W\text{-}\Hilb_\mu(\sP\times \h)\to  W\text{-}\Hilb_\pi(\cx^d\times \h)$ is faithfully flat.\label{Pflat}
\end{corollary}
\begin{proof} The above lemma and part (iv) of Proposition \ref{hooray} imply that the morphism $W\text{-}\Hilb_\mu(\sP\times \h)\to  \sT_{{\scriptscriptstyle T, W}}\times_{\h/W}W\text{-}\Hilb_\mu(\cx^{d}\times \h)$
is faithfully flat. The morphism $\sT_{{\scriptscriptstyle T, W}}\times_{\h/W}W\text{-}\Hilb_\mu(\cx^{d}\times \h)\to W\text{-}\Hilb_\mu(\cx^{d}\times \h)$ is a surjective equidimensional morphism between regular varieties. 
\end{proof}
%%%%%%
%%%%
Let $\sQ$ denote
the GIT quotient $\sP/\!\!/T^d$ and $\rho:\sP\to\sQ$ the natural projection. We continue denoting by $\mu$ denote the projection from $\sQ\times \h\to \h$ onto the second factor. The ring $\cx[\sQ]=\cx[\sP]^{T^d}$ has been described at the beginning of \S\ref{htv}. It is generated by $x_\xi$, $\xi\in \sX^\ast(T)$, and by $u_iv_i$, $i=1,\dots,d$, with relations \eqref{monomials}.  In particular, there are no relations between the $u_iv_i$, and, consequently, there is a surjective morphism $ \pi:\sQ\to \cx^d$ given by the embedding $\cx[u_1v_1,\dots,u_dv_d]\hookrightarrow \cx[\sQ]$. We denote also by $\pi$ the morphism $\pi\times \Id:\sQ\times \h\to \cx^d\times \h$.
%%%
%%%\minCDarrowwidth18pt
\begin{proposition} $W\text{-}\Hilb_\mu(\sP\times \h)$ is isomorphic to the fibre product 
$$\begin{CD} W\text{-}\Hilb_\mu(\sP\times \h)& @>>> & W\text{-}\Hilb_\mu(\cx^{2d}\times \h)\\
@VV\bar\rho V & & @VV\bar p V\\ W\text{-}\Hilb_\mu(\sQ\times \h)& @>\bar\pi >> & W\text{-}\Hilb_\mu(\cx^{d}\times \h).
\end{CD}
$$\label{PQflat}
\end{proposition}
%%%%%
\begin{proof} Consider the fibres of $W\text{-}\Hilb_\mu(\sP\times \h)$ and of $W\text{-}\Hilb_\mu(\sQ\times \h)$ over $c\in \h/W$. They are isomorphic, respectively, to ${\rm Mor}(D_c,\sP)^W$ and ${\rm Mor}(D_c,\sQ)^W$. 
%% A (closed) point of ${\rm Mor}(D_c,\sQ)^W$ (resp.\ of ${\rm Mor}(D_c,\sP)^W$) is a ring homomorphism $\cx[\sQ]\to R_c$ (resp. $\cx[\sP]\to R_c$), where $R_c=\cx[D_c]$ is the deformed ring of coinvariants. 
Given the description of $\cx[\sQ]=\cx[\sP]^{T^d}$ in \S\ref{htv}, we conclude  that the weight decomposition of $\cx[\sP]$ is given by
$$ \cx[\sP]=\bigoplus_{m\in \oZ^d} \Pi(m) \cx[\sQ],\enskip \text{where}\enskip \Pi(m)=\prod_{i=1}^d u_i^{(m_i)_-} v_i^{(m_i)_+},
$$
with the addition and multiplication compatible with this decomposition. If $m,m^\prime \in \oZ^d$, then 
$$\Pi(m)\Pi(m^\prime)=\Pi(m+m^\prime)\prod_{i=1}^d(u_iv_i)^{l_i}, \text{where}\enskip l_i=(m_i)_++(m_i^\prime)_+- (m_i+m_i^\prime)_+.$$
\par
Let $\varphi:D_c\to\sQ$ be a $W$-equivariant morphism, with the corresponding ring homomorphism 
$\varphi^\sharp: \cx[\sQ]\to R_c$, $R_c=\cx[D_c]$. It follows that $\varphi^\sharp=\tilde\varphi^\sharp\circ\rho^\sharp$ if and only if $\tilde \varphi^\sharp:\cx[\sP]\to R_c$ satisfies $\tilde\varphi^\sharp|_{\C[\sQ]}= \varphi^\sharp$ and $\tilde\varphi^\sharp(u_i)\tilde\varphi^\sharp(v_i)=\varphi^\sharp(u_iv_i)$, $i=1,\dots,d$. This means that the fibre of $\bar\rho$ over $\varphi$ is exactly the fibre of $\bar p$ over $\pi\circ \varphi$.
\end{proof}
%%%%%
\begin{corollary} The morphisms $\bar\rho:W\text{-}\Hilb_\mu(\sP\times \h)\to W\text{-}\Hilb_\mu(\sQ\times \h)$
and $\bar\pi:W\text{-}\Hilb_\mu(\sQ\times \h)\to W\text{-}\Hilb_\mu(\C^d\times \h)$ are faithfully flat.\label{Qflat}
\end{corollary}
\begin{proof} Since $\bar p$ is faithfully flat (Lemma \ref{pflat}), so is $\bar\rho$. The composition $\bar\pi\circ\bar\rho$ is equal to 
$\bar p:W\text{-}\Hilb_\mu(\sP\times \h)\to W\text{-}\Hilb_\mu(\cx^{d}\times \h)$, which is faithfully flat, owing to Corollary \ref{Pflat}. Since $\bar \rho$ is faithfully flat, so is $\bar\pi$.
\end{proof}
%%%%%%%
\begin{corollary} Let $\mu:X\to \h$ be a strongly $W$-invariant hypertoric variety. Then $\bar\mu:W\text{-}\Hilb_\mu(X)\to \h/W$ is faithfully flat.\label{flat2}
\end{corollary}
\begin{proof} We clearly have $X=\h\times_{(\cx^d\times \h)}(\sQ\times \h)$, where the map $\h\to \cx^d\times \h$ is 
$$z\longmapsto\Bigl(\sum_{i=1}^d(\langle z, \alpha_i\rangle e_i-\lambda_i), z\Bigr).$$
Since the functor $W\text{-}\Hilb$ commutes with base change \cite{Blume}, the same is true for  $W\text{-}\Hilb_\mu$. This, together with the functoriality of $W\text{-}\Hilb_\mu$ (Remark \ref{functorial}), shows that 
 $W\text{-}\Hilb_\mu(X)=\h/W\times_{W\text{-}\Hilb_\mu(\C^d\times \h)}W\text{-}\Hilb_\mu(\sQ\times \h)$. Since faithful flatness is preserved by base change, the claim follows.
\end{proof}
%%%%
%%%%%
\begin{corollary} $\Hilb_\mu^W(X)= W\text{-}\Hilb_\mu(X) $.\label{W=W}
\end{corollary}
\begin{proof} The previous corollary implies that $\delta\in\cx[\h]^W$ does not divide zero in $\C[W\text{-}\Hilb_\mu(X)]$. 
\end{proof}
%%%
\subsection{A presymplectic quotient description\label{sq}}
Let us return to the description of $W\text{-}\Hilb_\mu(\cx^{2d}\times T\times\h)$ given in  Proposition \ref{hooray}(iv).
Recall that the universal centraliser $\sT_{{\scriptscriptstyle T, W}}=\{(c,g)\in \h/W\times G_{{\scriptscriptstyle T, W}};\, \ad_g(S_c)=S_c\}$ has a natural symplectic form $\omega_0=\bigl\langle  dgg^{-1}\wedge d\bigl(\Ad_g(S_c)\bigr)\bigr\rangle$ \cite{Bi1}. We obtain a natural closed $2$-form $\omega=\omega_0+\tr dA\wedge dB$ on  $W\text{-}\Hilb_\mu(\cx^{2d}\times T\times\h)$. 
%%%%
%%%%
The homomorphism $\alpha:T^d\to T$ induces a homomorphism $\bar\alpha:\sT_{{\scriptscriptstyle W}}^d\to \sT_{{\scriptscriptstyle T, W}}=W\text{-}\Hilb_\mu(T\times \h)$, where $\sT_{{\scriptscriptstyle W}}^d=
W\text{-}\Hilb_\mu(T^{d}\times\h)$ has been described in Proposition \ref{hooray}(iii).
The  group scheme $\sT_{{\scriptscriptstyle W}}^d$  acts on $W\text{-}\Hilb_\mu(\cx^{2d}\times T\times\h)$ via $(c,g,A,B)\mapsto (c,\bar\alpha(\gamma)g,\gamma A, \gamma^{-1} B)$, and hence it preserves the form
$\omega$. The form $\omega$ is not nondegenerate everywhere (this is easily checked for $d=2$), but we can nevertheless hope that a {\em presymplectic} quotient of $W\text{-}\Hilb_\mu(Y)$ by  $\sT_{{\scriptscriptstyle W}}^d$ exists - see \cite{Bot} for an introduction to presymplectic manifolds and quotients. 
We  compute the moment map.  The group ${\rm GL}(d,W)$ is the product of ${\rm GL}(E_m)$, and it is enough to compute the moment map in the case $s=1$, i.e.\ when the vectors $\alpha_i$ form a single $W$-orbit. 
%For a fixed $c$, the fibre $\sT_c$ of $\sT_\psi$ over $c$ consists of $\gamma\in {\rm GL}(d,W)$ which commute with $S_{\iota(c)}$.
We can identify $\Lie(\sT_{{\scriptscriptstyle W}}^d)$ with $\h/W\times \cx^d$ and view the action of $\sT_{{\scriptscriptstyle W}}^d$ as the fibrewise action of the additive group $\cx^d$. If $a=(a_0,\dots,a_{d-1})\in \cx^d$, then
$$a.(c,g,A,B)=\bigl(c,\bar\alpha(p)g,\, p^{-1}A, \,
pB\bigr), \enskip\text{where $p=\exp\bigl(-\textstyle\sum_{i=0}^{d-1} a_iS_{\iota(c)}^i\bigr)$}.$$
We easily compute the moment map with respect to the form $\omega=\omega_0+\tr dA\wedge dB$ as
 \begin{equation*} \rho(c,g,A,B)=\bigl(\tr (AB)-\tr S_{\iota(c)},\dots, \tr (S_{\iota(c)}^{d-1}AB)-\tr S_{\iota(c)}^{d}\bigr).\label{momentS}
 \end{equation*}
Hence the level set equation $\rho(c,g,A,B)=(-\lambda,-\lambda,\dots,-\lambda)$ is equivalent to $AB+\lambda= S_{\iota(c)}$ (cf.\ Example \ref{central}). The same holds now for any number $s$ of $W$-orbits.
\par
We now want to consider the presymplectic quotient of $W\text{-}\Hilb_\mu(\cx^{2d}\times T\times\h)$ by $\sT_{{\scriptscriptstyle W}}^d$.
There are actually two such quotients, depending on the order of operations: 
\begin{itemize}
\item[(i)] $\hat H^W(X)=\Spec\cx\bigl[W\text{-}\Hilb_\mu(Z_\lambda)\bigr]^{\sT_{{\scriptscriptstyle W}}^d}$;
\item[(ii)] the subscheme $H^W(X)$ of $\Spec\cx\bigl[W\text{-}\Hilb_\mu(\cx^{2d}\times T\times\h)]^{\sT_{{\scriptscriptstyle W}}^d}$ defined by equations $D+\lambda=S_{\iota(c)}$, where the entries of $D=AB$ are viewed as $ \sT_{{\scriptscriptstyle W}}^d$-invariant functions on
$W\text{-}\Hilb_\mu(\cx^{2d}\times T\times\h)$.
\end{itemize}

Apriori, these schemes could be  nonnoetherian.
\begin{theorem} For any strongly $W$-invariant hypertoric variety $X$ the presymplectic quotient $H^W(X)$ is canonically isomorphic to $ W\text{-}\Hilb_\mu(X)$. If none of the defining hyperplanes $H_k$, $k=1,\dots,d$, is contained in the union $\tilde\Delta\subset \h$  of reflection hyperplanes, then  $\hat H^W(X)$ is also isomorphic to $W\text{-}\Hilb_\mu(X)$. \label{pre-H^W}
\end{theorem}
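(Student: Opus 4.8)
The plan is to compare both presymplectic quotients with $\Hilb_\mu^W(X)$ by means of the quotient results of \S\ref{tori}, applied to the torus $T^d$ acting on $Y=\cx^{2d}\times T\times\h$; thus the $T$, $X$, $\sT_\psi$ of \S\ref{tori} become $T^d$, $Y$ and $\sT_{{\scriptscriptstyle W}}^d$. Recall that $X=Z_\lambda/\!\!/T^d$ and, since for $T^d$ the GIT quotient commutes with imposing the moment map equations, $X$ is equally the closed subscheme of $Y/\!\!/T^d$ cut out by \eqref{moment}. The two presymplectic quotients transport these two operations to the transverse Hilbert schemes in opposite orders, and the whole point is that the order matters, because of the non-exactness phenomenon of \S\ref{exact}.

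For $H^W(X)$ one quotients first, so the object to understand is $\Spec\cx[\Hilb_\mu^W(Y)]^{\sT_{{\scriptscriptstyle W}}^d}$. I would identify it with $\Hilb_\mu^W(Y/\!\!/T^d)$ by applying Proposition \ref{fingen} to $Y$ with the $T^d\rtimes W$-invariant reduced hypersurface $H=V(\prod_{k=1}^d v_k)$ (invariant as a subscheme because, $X$ being strongly $W$-invariant, $W$ acts on $\cx^{2d}$ through a subgroup of $\Sigma_d$, permuting the coordinates $v_k$). Conditions (i)--(iii) all hold here: $Y\setminus H=\{v\in(\cx^\ast)^d\}\times\cx^d\times T\times\h$ is the trivial $T^d$-bundle trivialised by $v$; the locus $\{v_1=0\}$ already surjects onto $\h$ under $\mu$; and $(\mu^\sharp\tilde\delta,\prod v_k)$ is a regular sequence, since $\mu^\sharp\tilde\delta\in\cx[\h]$ and $\prod v_k\in\cx[\cx^{2d}]$ involve independent variables in the domain $\cx[Y]$. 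Under the resulting isomorphism the invariant functions given by the entries of $D=AB$ correspond to the transverse Hilbert images of the descended coordinates $v_kw_k$ (as in Example \ref{bigHilb}) and $S_{\iota(c)}$ corresponds to $\langle\alpha_k,z\rangle$ (Example \ref{central}); hence imposing $D+\lambda=S_{\iota(c)}$ is the same as substituting the coinvariant expansions into \eqref{moment}. By the description \eqref{Fphi} of the transverse Hilbert scheme of a closed subscheme, the subscheme so obtained is $\Hilb_\mu^W(X)$, its integrality being guaranteed by Corollary \ref{ci} (which makes $\Hilb_\mu^W(Z_\lambda)$ integral, so that no spurious components survive the generically free quotient). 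This argument uses no hypothesis on the $H_k$.

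For $\hat H^W(X)$ one imposes the moment equations first and works with $Z_\lambda$. Proposition \ref{above} applied to $Z_\lambda$ (with $X=Z_\lambda/\!\!/T^d$) furnishes an injection $\cx[\Hilb_\mu^W(X)]\hookrightarrow\cx[\Hilb_\mu^W(Z_\lambda)]^{\sT_{{\scriptscriptstyle W}}^d}=\cx[\hat H^W(X)]$ which is an isomorphism away from $\Delta$, so it remains to promote it to a global isomorphism. I would use the relaxed criterion of Remark \ref{afterfingen} with the same hypersurface $H=V(\prod v_k)$, now inside $Z_\lambda$. Condition (iii) still holds, because on $Z_\lambda$ one has $w_k=(\langle\alpha_k,z\rangle-\lambda_k)v_k^{-1}$ away from $H$, so $Z_\lambda\setminus H\simeq(\cx^\ast)^d\times T\times\h$ is again a trivial $T^d$-bundle and the complement of $\Hilb_\mu^W(Z_\lambda\setminus H)$ is a hypersurface $V(\bar h)$ with $\bar h\in\cx[\Hilb_\mu^W(Z_\lambda)]^{\sT_{{\scriptscriptstyle W}}^d}$. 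The decisive difference from the $Y$ case is that now $\mu(H)=\bigcup_k H_k$ rather than all of $\h$, so $\bar h$ vanishes exactly over the image of $\bigcup_k H_k$ in $\h/W$; consequently $(\bar\pi^\sharp\delta,\bar h)$ is a regular sequence precisely when $V(\delta)$ and $V(\bar h)$ share no component, i.e.\ precisely when no $H_k$ is a reflection hyperplane. This is the hypothesis $H_k\not\subset\tilde\Delta$, and under it Remark \ref{afterfingen} gives $\cx[\hat H^W(X)]\simeq\cx[\Hilb_\mu^W(X)]$.

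The main obstacle is the regular-sequence condition, and more precisely making its success versus failure explicit: the hypersurface $V(\prod v_k)$ has full $\mu$-image inside $Y$ but only $\bigcup_k H_k$ inside $Z_\lambda$, which is exactly why the statement for $H^W(X)$ is unconditional while that for $\hat H^W(X)$ requires $H_k\not\subset\tilde\Delta$. A secondary, more bookkeeping-type point is the precise matching of the $\sT_{{\scriptscriptstyle W}}^d$-invariant functions (the entries of $D=AB$) with the transported hypertoric data, so that $D+\lambda=S_{\iota(c)}$ is literally the transported moment map equation, together with the attendant reducedness claims for which Corollary \ref{ci} is the key input. When the hypothesis fails, i.e.\ when some $H_k$ lies in $\tilde\Delta$, one expects $\hat H^W(X)$ and $\Hilb_\mu^W(X)$ to differ genuinely, the discrepancy being an instance of the non-exactness of sequences of abelian group schemes discussed in \S\ref{exact}.
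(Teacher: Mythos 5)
Your architecture is the paper's: Proposition \ref{fingen} for the unconditional statement and Remark \ref{afterfingen} for the conditional one, applied to the action of $\sT^d_{{\scriptscriptstyle W}}$ on $\Hilb^W_\mu(Y)$ resp.\ $\Hilb^W_\mu(Z_\lambda)$, with the hypothesis on the $H_k$ entering exactly as you say, through the regular-sequence condition on $(\bar\pi^\sharp\delta,\bar h)$. The one substantive deviation is your choice of hypersurface $H=V(\prod_k v_k)$; the paper uses $H=V(\prod_k v_kw_k)$, and this difference is not cosmetic.

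The problem is that $\prod_k v_k$ is not $T^d$-invariant (it transforms by the character $t\mapsto\prod t_k$), so its avatar on the Hilbert scheme — the function cutting out the complement of $\Hilb^W_\mu(\,\cdot\setminus H)$, namely $\det A$ in the notation of Proposition \ref{hooray} — transforms by $\det\gamma$ under $\gamma\in\sT^d_{{\scriptscriptstyle W}}$ and is therefore \emph{not} an element of $\cx\bigl[\Hilb^W_\mu(Z_\lambda)\bigr]^{\sT^d_{{\scriptscriptstyle W}}}$. Remark \ref{afterfingen} explicitly requires $\bar h$ to lie in the invariant ring (and the localisation mechanism in the proof of Proposition \ref{fingen} needs the same: one writes $s=r_1\bar h^{-k}$ with $r_1$ in the image of the downstairs ring, which forces $\bar h$ invariant), so your application of both results breaks at this point. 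There is also an internal inconsistency revealing the slip: you assert that your $\bar h$ vanishes exactly over the image of $\bigcup_k H_k$ in $\h/W$, but that is the zero locus of $\det(AB)=\det\bigl(S_{\iota(c)}-\lambda\bigr)$, not of $\det A$ (over a point of $H_k$ one may have $v_k\neq 0$ and $w_k=0$, so $V(\det A)$ is a proper subset of $\bar\mu^{-1}$ of that image). The repair is exactly the paper's choice: take $h=\prod_k v_kw_k$, which is $T^d$-invariant, so that $\bar h=\det(AB)$ is $\sT^d_{{\scriptscriptstyle W}}$-invariant, vanishes precisely over the image of $\bigcup_k H_k$, and makes your regular-sequence analysis — and hence the equivalence with the hypothesis $H_k\not\subset\tilde\Delta$ — correct as stated. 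With that substitution your argument for both statements, including the identification of $D+\lambda=S_{\iota(c)}$ with the transported moment map equations via Example \ref{central} and Corollary \ref{ci}, matches the paper's.
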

%%%%
\begin{proof} We shall apply  Proposition \ref{fingen}. For the first statement let $h=\prod_{i=1}^dv_iw_i$. Corollary \ref{Qflat} implies that $(\delta,h)$ is a regular sequence in $W\text{-}\Hilb_\mu(\sQ\times \h)$.
For the second statement let $h=\prod_{k=1}^d\bigl(\langle\alpha_k,z\rangle -\lambda_k\bigr)$. Corollary \ref{Pflat} guarantees that $(\delta,h)$ is regular in $W\text{-}\Hilb_\mu(Z_\lambda)=\Hilb_\mu^W(Z_\lambda)$. 
\end{proof}
%%%%
%%%%
\begin{example} Let $X$ be the  $A_1$-singularity $xy=z^2$ so that ${\oZ_2}\text{-}\Hilb_\mu( X)$ is the $D_2$-singularity (Example \ref{D_2}). The variety ${\oZ_2}\text{-}\Hilb_\mu(Z_0)$ is isomorphic to 
$$\bigl\{(c,g,A,B)\in \cx\times {\rm SL}_2(\cx)\times \bigl(\Mat_{2,2}(\cx)\bigr);\,  [g,S_c]=[A,S_c]=[B,S_c]=0,\; AB=S_c
\bigr\}.$$
If we write $A=a_0+a_1S_c$ and $B=b_0+b_1S_c$, then the equation $AB+S_c$ implies that $a_0b_0+a_1b_1c=0$, i.e.\ $c|a_0b_0$. This means that if we write $E=e_0+e_1S_c$, where $E=g^{-1}\bigl(AB_{\rm adj}\bigr)$, then $e_0$ is divisible by $c$ in $\cx\bigl[{\oZ_2}\text{-}\Hilb_\mu(Z_0)\bigr]$. Therefore the entries of $E$ do not generate $\cx\bigl[{\oZ_2}\text{-}\Hilb_\mu(Z_0)\bigr]^{\sT^{2}_{{\scriptscriptstyle \oZ_2}}}$ and hence $\hat H^{\oZ_2}(X)\neq H^{\oZ_2}(X)$ in this case. In fact, $\hat H^{\oZ_2}(X)$ is a $D_1$ surface.
\end{example}
%%%%
%%%%
We know from \S\ref{tori} that the transverse Hilbert scheme $\Hilb_\mu^W(X)= W\text{-}\Hilb_\mu(X) $ admits an action of the abelian group scheme
$\sT_{{\scriptscriptstyle T,W}}\simeq W\text{-}\Hilb_\pi(T\times \h)$. We deduce the following decomposition result:
\begin{corollary} Let $X$ be a strongly $W$-invariant hypertoric variety $X$ given as a symplectic quotient of $\cx^{2d}\times T\times \h$ by $T^d=(\cx^\ast)^d$, where $T^d$ acts on $T$ via a $W$-equivariant homomorphism $\alpha:\oZ^d\to \sX_\ast(T)$. Let $\oZ^d=\bigoplus_{m=1}^s\oZ^{d_m}$ be the decomposition into $W$-orbits and write, correspondingly, $\alpha=\oplus_{m=1}^s\alpha^m$. For each $m=1,\dots,s$, let $X_m$ be the strongly   $W$-invariant hypertoric variety obtained from the homomorphism $\alpha^m$. Then
$$\cx[W\text{-}\Hilb_\mu( X)]\simeq \cx[W\text{-}\Hilb_\mu( X_1)\times_{\h/W}\cdots
\times_{\h/W} W\text{-}\Hilb_\mu( X_s)]^{\sT_0},
$$ where
$$\sT_0=\{(\tau_1,\dots,\tau_s)\in \sT_{{\scriptscriptstyle T, W}}\times_{\h/W}\times \cdots \times_{\h/W}\sT_{{\scriptscriptstyle T, W}};\, \tau_1\cdots\tau_s=1\}.$$\label{Hilb-decompose}
\end{corollary}
In other words, if $X$ is a symplectic quotient of $\prod_{m=1}^s X_m$ by $\{(t_1,\dots,t_s)\in T^s;\prod t_m=1\}$, then  $W\text{-}\Hilb_\mu( X)$  is the quotient  of the fibred product of transverse $W$-Hilbert schemes of $X_1,\dots, X_s$ by $\sT_0$.
\begin{proof} According to Theorem \ref{pre-H^W},
$$\cx[\Hilb_\mu^W(X_m)]\simeq \cx\bigl[W\text{-}\Hilb_\mu(\cx^{2d_m}\times T\times\h)]^{\sT_{{\scriptscriptstyle W}}^{d_m}}/I_m,\quad m=1,\dots,s,$$
where the ideal $I_m$ is generated by $\sT_{{\scriptscriptstyle T,W}}$-invariant functions.
Let us write $H_m=\Spec\cx\bigl[W\text{-}\Hilb_\mu(\cx^{2d_m}\times T\times\h)]^{\sT_{{\scriptscriptstyle W}}^{d_m}}$. 
Owing to Proposition \ref{hooray}(ii), we have:
$$W\text{-}\Hilb_\mu(\cx^{2d_m}\times T\times\h)\simeq \sT_{{\scriptscriptstyle T,W}}\times_{\h/W}  W\text{-}\Hilb_\mu(\cx^{2d_m}\times\h).$$
Since the actions $\sT_{{\scriptscriptstyle T, W}}$ and $\sT_{{\scriptscriptstyle W}}^{d_m}$ commute, we conclude, after applying Corollary \ref{sTsquared}, that 
$$ \cx[W\text{-}\Hilb_\mu(\cx^{2d}\times T\times\h)]^{\sT_{{\scriptscriptstyle W}}^{d}}\simeq \cx[H_1\times_{\h/W}\cdots\times_{\h/w} H_s]^{\sT_0}.
$$
The claim follows since   the ideal $I$ of $W\text{-}\Hilb_\mu(X) $ in $\cx[W\text{-}\Hilb_\mu(\cx^{2d}\times T\times\h)]^{\sT_{{\scriptscriptstyle W}}^{d}}$ as well as the ideals $I_m$, $m=1,\dots,s$, are generated by $\sT_{{\scriptscriptstyle T,W}}$-invariant functions, and hence:
$$I\simeq \bigl(I_1\otimes_{\cx[\h]^W}\cdots \otimes_{\cx[\h]^W} I_s\bigr)^{\sT_0}.$$
\end{proof}

\subsection{Normality of $\Hilb_\mu^W(X)$\label{normality}}
We shall now prove that $\Hilb_\mu^W(X)=W\text{-}\Hilb_\mu(X) $ is a normal variety. We use the following idea of Braverman, Finkelberg, and Nakajima \cite[Lemma 6.13]{BFN}: to prove the normality of an affine scheme $Y$ it is enough to find an open normal subset $U$ with $\codim (Y\backslash U)\geq 2$ and such that any regular function on $U$ extends to $Y$.
%We combine this with the fact (Propositions \ref{normal-bu} and \ref{affineblowup}) that the morphisms$$ \Hilb^W_\pi(X)^\nu\longrightarrow \Hilb^W_\pi(X)\longrightarrow X/W$$ are transverse blow-ups.
\par
Let $F\in \h/W$ be a subscheme of $V(\delta)\subset \h/W$ of codimension $\geq 1$ in $V(\delta)$. Let $M=\Hilb_\mu^W(X)$ and $M^\bullet=\bar\mu^{-1}\bigl((\h/W)\backslash F\bigr)$.
\begin{proposition} The subset $M\backslash M^\bullet$ has codimension at least $2$ and the embedding $j:M^\bullet\hookrightarrow M$ induces an isomorphism $ \sO_{M}\to j_\ast\sO_{M^\bullet}$.\label{S_2}\end{proposition}
\begin{proof} %According to Proposition \ref{affineblowup}, $M$ is an affine blow up of $X/W$ transverse to $V(\delta)$. 
Let $Z=M\backslash M^\bullet=\bar\mu^{-1}(F)$.
Corollary \ref{flat2} implies that $\codim Z\geq 2$. Since $\h/W$ is regular, in particular Cohen-Macaulay, and since $\bar\mu$ is flat, $ \sO_{M}$ has depth $\geq 2$ along $Z$. The claim follows from Theorem 3.8 and Proposition 1.11 in \cite{Groth} (or \cite[Ex. III.3.5]{Hart}).
\end{proof}
%%%%
%%%%%
\begin{theorem} Let $X$ be a strongly $W$-invariant hypertoric variety. Then $\Hilb_\mu^W(X)$ is a normal affine variety.\label{normal}
\end{theorem}
\begin{proof} Integrality of $\Hilb_\mu^W(X)$ follows from the integrality $X$. The previous proposition and the remark at the beginning of this subsection imply that  it is  enough to prove the normality of an open subset $U$ of $\Hilb_\mu^W(X)$ of the form $\bar\mu^{-1} \bigl((\h/W)\backslash F\bigr)$ where $F$ is a closed subset of $V(\delta)$ of codimension $1$.
%%%%
We choose $F$ to be the image of $\tilde F\subset \h$ consisting of
\begin{itemize}
\item[1)] intersections of two or more reflection hyperplanes;
\item[2)] transverse intersections of reflection hyperplanes with one of the hyperplanes $H_k=\{z\in \h; \langle\alpha_k,z\rangle =\lambda_k\}$, $k=1,\dots,d$.
\end{itemize}
%%%
Let $x$ be a point of $U$. If $\bar\mu(x)\not\in V(\delta)$, then a neighbourhood of $x$ is isomorphic to a corresponding  neighborhood in $X$ (cf.\ Proposition \ref{local}) and hence it is normal. The other possibility is $\bar\mu(x)$ lying on the image of exactly one reflection hyperplane. In this case, given Proposition \ref{local}, we need to prove the normality of $\Hilb_\mu^{\oZ_2}(X)$ for $\oZ_2$-invariant hypertoric varieties such that all defining $H_k=\{z\in \h; \langle\alpha_k,z\rangle =\lambda_k\}$, $k=1,\dots,d$, coincide with the hyperplane fixed by $\oZ_2$. In particular, all scalars $\lambda_k$ are  equal to $0$.
The reductive group $G=G_{T,\oZ_2}$,  associated to $T$ with a linear $\oZ_2$ action in Example \ref{sTG}, has semisimple rank $1$, and hence is isomorphic to ${\rm SL}_2(\cx)\times (\cx^\ast)^{r-1}$ or to ${\rm GL}_2(\cx)\times (\cx^\ast)^{r-2}$ (cf.\ the proof of Theorem \ref{Slodowy}). Therefore the torus $T$ splits as $T_1\times (\cx^\ast)^{r-1}$ or $T_2\times (\cx^\ast)^{r-2}$, where $T_1$ (resp.\ $T_2$) is the maximal torus of ${\rm SL}_2(\cx)$ (resp.\ of ${\rm GL}_2(\cx)$), and $W=\oZ_2$ acts trivially on the second factor. The assumptions that the hyperplanes $H_k$ coincide with the reflection hyperplane implies that the vectors $\alpha_k$ belong to $\Lie(T_1)$ (resp.\ $\Lie(T_2)$), and hence $X$ is isomorphic to the product of a hypertoric variety $X_0$ with structure torus $T_1$ (resp.\ $T_2$) and $T(\cx^\ast)^{r-1}$ (resp.\ $T(\cx^\ast)^{r-2}$). The Weyl group $\oZ_2$ acts trivially on the second factor and hence  $\Hilb_\mu^{\oZ_2}(X)\simeq  \Hilb_\mu^{\oZ_2}(X_0)\times T(\cx^\ast)^{s}$, $s=r-1$ or $s=r-2$. Therefore it is enough to prove the normality of $\Hilb_\mu^{\oZ_2}(X_0)$.
\par
In the case of $T_1$, this follows from Example \ref{D_2}, after noting that $X_0$ in this case must necessarily be the $A_{N-1}$-singularity with $N$ even. 
It remains to consider $\oZ_2$-invariant hypertoric varieties with structure torus equal to the maximal torus of ${\rm GL}_2(\cx)$, and such that all the hyperplanes $H_k=\{z\in \h; \langle\alpha_k,z\rangle =\lambda_k\}$, $k=1,\dots,d$, coincide with the reflection hyperplane. Hence, if we identify $\h$ with $\cx^2$ so that $\oZ_2$ acts via $(z_1,z_2)\to (z_2,z_1)$ and $\sX_\ast(T)$ is the standard lattice $\oZ\times \oZ$, then the vectors $\alpha_k$ are of the the form $(m_k,-m_k)$, $k=1,\dots,d$, and all scalars $\lambda_k$ are equal to $0$. Observe that $X_0$ is isomorphic to a hypertoric variety determined by the single cocharacter $\alpha=(N,-N)$, $N=\sum_{i=1}^d |m_i|$ (and the corresponding scalar $\lambda$ equal to $0$). Since $X_0$ is strongly $W$-invariant, $N$ is even.
Consider the exact sequence 
 \begin{equation} 1\to\Gamma\longrightarrow \tilde T\stackrel{\phi}\longrightarrow T\to 1,\quad \tilde T=T_1\times\cx^\ast, \quad \phi(t,s)=(t^{-1}s,ts),\label{extension}\end{equation}
 where $\Gamma=\oZ_2$. The vector $\alpha$ belong to $\sX_\ast(\tilde T)$, and hence, as observed in Remark \ref{finiteq}, $X_0$ is the quotient by $\Gamma$ of the hypertoric variety $\tilde X_0$ with structure torus $\tilde T$ and the same vector $\alpha$. It follows that $\tilde X_0$ is the product of the $A_{N-1}$-singularity and of $\cx\times \cx^\ast$.  The action of $W=\Z_2$ lifts to an action on $\tilde{X}_0$, in its standard way on the $A_{N-1}$-singularity and trivially on the second factor $\C\times \C^\ast$. It follows that $\Hilb^{\Z_2}_\mu (\tilde{X}_0)$ is the product of the $D_{\frac{1}{2}N+1}$-singularity and $\C\times \C^\ast$ and, since $\Gamma$ acts freely on $\tilde{X}_0$, $\Hilb^{\Z_2}_\mu (X_0)\simeq \Hilb^{\Z_2}_\mu (\tilde{X}_0)/\Gamma$. We conclude that $\Hilb^{\Z_2}_\mu (X_0)$ is normal in this case.
 This finishes the proof of the theorem.
\end{proof}
%%%%
%%%
The above proof yields also the following generalisation of a result of Teleman \cite[Theorem 6]{Tel1}:
\begin{proposition} Let $X$ be a strongly $W$-invariant hypertoric variety corresponding to an integral multiarrangement $\{\sA,m\}$. Let $(\sA^\prime,m^\prime)$ be the multiarrangement obtained by adding to $\{\sA,m\}$ all reflection hyperplanes with multiplicity two, and let $X^\prime$ be the corresponding hypertoric variety. Then $X^\prime$ is also strongly $W$-invariant hypertoric and $\Hilb_\mu^W(X^\prime)\simeq X/W$.\label{Tel/W}
\end{proposition}
\begin{proof} 
Both $\Hilb_\mu^W(X^\prime)$ and $X/W$ are normal and equipped with morphisms $\bar\mu^\prime,\bar\mu$ to $\h/W$. If we consider the same $F\subset \h/W$ as in the above proof, then the codimensions of complements of $(\bar\mu^\prime)^{-1}(\h/W\backslash F) ,(\bar\mu)^{-1}(\h/W\backslash F)$ are equal to $2$ and, therefore, it is enough to show that the two varieties are isomorphic over $\h/W\backslash F$. The above proof reduces this to the case when $X$ is a $\Z_2$-invariant $A_{2k-1}$-surface given by the equation $xy=p(z^2)$. It follows that $X^\prime$ is an $A_{2k+1}$-surface with equation $xy=z^2p(z^2)$. Both $\Hilb_\mu^{\Z_2}(X^\prime)$ and $X/Z_2$ are then isomorphic to the $D_{k+2}$-surface with equation $a^2-b^2c=cp(c)$.
\end{proof}
%%%%%
\begin{example} The product $\prod_n X$ of $n$ copies of an $A_k$-surface $X$ ($k\geq -1$ with $A_{-1}=\C\times \C^\ast$ and $A_0=\C^2$)  is a strongly $\Sigma_n$-invariant hypertoric variety. Let $Y$ be the hypertoric variety obtained by adding  all reflection hyperplanes $z_i=z_j$ with multiplicity $2$ to the multiarrangement of  $\prod_n X$. According to the above proposition,  $\Hilb_\mu^{\Sigma_n}(Y)\simeq S^n X$. If we add instead the hyperplanes $z_i-z_j=\lambda\neq 0$ for all $i\neq j$, we shall obtain a (strongly) $\Sigma_n$-invariant deformation $Y_\lambda$ of $Y$, and consequently a deformation $\Hilb_\mu^{\Sigma_n}(Y_\lambda)$ of $S^n(X)$. If $X$ is regular, then $Y_\lambda$ is regular for generic $\lambda$ \cite[Thm. 3.2]{BD}, and, consequently $\Hilb_\mu^{\Sigma_n}(Y_\lambda)$ is also regular. We expect that $\Hilb_\mu^{\Sigma_n}(Y_\lambda)$ is isomorphic to the Hilbert scheme of $n$ points on $X$ with respect to a generic complex structure of the hyperk\"ahler structure of $\Hilb^{[n]}(X)$. 
\end{example}
%%%%
%%%%
\begin{remark} In the case of an arbitrary $W$-invariant hypertoric variety we do not have a proof of the flatness of the induced morphism $\bar{\mu}:\Hilb_\mu^W(X)\to \h/W$, and, consequently, we do not know whether  Proposition \ref{S_2} is still true. If we assume this, however, then the remainder of the proof goes through (i.e.\ we can still show that the open set $U$ is normal) with the following modification: in the case $T=T_2$ we cannot assume that $N$ is even. The variety $X_0$ can be one of those in Example \ref{bastard}. The action of $W$ on this $X_0$ is free, and therefore $\Hilb^{\Z_2}_\mu (X_0)$ is an open subscheme of $X_0/W$, hence normal.
We remark that  Proposition \ref{S_2} holds as soon as the structure sheaf of $\Hilb_\mu^W(X)$ has depth $\geq 2$ along $\bar\mu^{-1}(F)$. 
\label{notstrong}
\end{remark}

\subsection{Symplectic structure \label{symplstr}}

A hypertoric variety $X$ is symplectic in the sense of Beauville \cite{Beau}, i.e.\ it has a symplectic form $\omega$ on the smooth locus $X^{sm}$, which extends to a closed $2$-form on any resolution of $X$. If $X$ is  $W$-invariant hypertoric, then   $\omega$ is $W$-invariant, and hence  $\omega$ descends to a symplectic form $\bar\omega$ on $X^0/W$, where $X^0$ is the subset of $X^{sm}$ on which $W$ acts freely. The intersection of $X^0/W$ with $\Hilb_\mu^W(X)$ is an open dense subset of the latter and the proof of Theorem \ref{normal} shows that $\bar\omega$ extends to the smooth part of the subset $U$ defined there. If $X$ is strongly $W$-invariant or  the depth of $\sO_{\Hilb_\mu^W(X)}$ along the complement of $U$ is at least $2$, then $\Hilb_\mu^W(X)$ is normal (Remark \ref{notstrong}) and the codimension  of the complement of $U$ is $\geq 2$. Therefore $\bar\omega$ extends to a symplectic form on the smooth locus of  $\Hilb_\mu^W(X)$. In the terminology of Beauville \cite{Beau}, $\Hilb_\mu^W(X)$ has a {\em symplectic form}, i.e.\ a closed reflexive $2$-form which is nondegenerate at smooth points. It follows that $\Hilb_\mu^W(X)$ is a Poisson variety. Moreover, the natural morphism $\Hilb_\mu^W(X)\to X/W$ is Poisson, and Lemma 2.1 in \cite{Bell} implies that  $\Hilb_\mu^W(X)$ has symplectic singularities.

\medskip

According to \S\ref{tori}, the transverse $W$-Hilbert scheme $\Hilb_\mu^W(X)$ of a $W$-invariant hypertoric variety $X$ is equipped with a (generically free) action of the group scheme  $\sT_{{\scriptscriptstyle T,W}}\simeq W\text{-}\Hilb_\pi(T\times \h^\ast)$. If $X$ is strongly $W$-invariant, hence Poisson, then this action is Hamiltonian with moment map $\bar\mu$.  In the category of affine Hamiltonian $\sT_{{\scriptscriptstyle T,W}}$-schemes, we can define the {\em symplectic quotient} of $\bar\mu:Y\to \h/W$ as $\Spec\cx[\bar\mu^{-1}(0)]^{\sT_{{\scriptscriptstyle T,W}}}$ (this is, {\em apriori}, different from the Poisson quotient equal to $\bar\mu^{-1}(0)$ on $\Spec\cx[X]^{\sT_{{\scriptscriptstyle T,W}}}$).
\par
Corollary \ref{Hilb-decompose} implies:
%%%
\begin{proposition} Transverse Hilbert schemes of strongly $W$-invariant hypertoric varieties form an abelian monoid with product given by the symplectic quotient by the anti-diagonal action of $\sT_{{\scriptscriptstyle T,W}}$. This monoid is isomorphic to  the monoid of integral multiarrangements in $\h\simeq \h^\ast$ satisfying the condition of Proposition \ref{self-dual2}.\qed \label{sT-monoid}
\end{proposition}
%%%%
%%%
\begin{remark} If $X$ is not strongly invariant, we can still  view 
$\bar\mu$ as an abstract moment map \cite{Kar} for the action of $\sT_{{\scriptscriptstyle T,W}}$ and  form quotients. We expect that the above proposition remains true for the monoid  of $W$-invariant hypertoric varieties and the corresponding monoid of integral multiarrangements in $\h\simeq \h^\ast$ for which the obstruction $\o(\sA,m)$ (Definition \ref{obstructiono}) vanishes. \label{action_of_T}
\end{remark}
%%%%

\section{Coulomb branches as $W$-Hilbert schemes of hypertoric varieties
% of Braverman, Finkelberg, and Nakajima
\label{Coul}}

As recalled in the introduction a Coulomb branch is a singular hyperk\"ahler space which should arise as a branch of the moduli space of vacua in a $3$-dimensional quantum gauge theory with $\mathcal{N}=4$ supersymmetries. Such a gauge theory is
associated to a compact connected Lie group and its quaternionic representation. As in the introduction, we denote by $K$ the Langlands dual of the gauge group.  In the case when the quaternionic representation of $K^\vee$ is of the form $V\oplus V^\ast$, where $V$ is a complex representation, Braverman, Finkelberg, and Nakajima \cite{BFN} constructed the Coulomb branches as Poisson affine varieties via the equivariant Borel-Moore homology of certain moduli stack. We are now going to prove that these varieties are either $W$-Hilbert schemes of hypertoric varieties, or Poisson quotients of such $W$-Hilbert schemes by algebraic tori. In the next subsection we are also going to discuss the case of an arbitrary quaternionic representation of $K^\vee$. 
\par
Let $G$ be a complex connected reductive Lie group and $V$ a representation of its Langlands dual $G^\vee$. Let $T,T^\vee$ denote maximal tori, $\h,\h^\vee$ the corresponding Cartan algebras, and $W$ the Weyl group (of both $G$ and $G^\vee$). A character of $T^\vee$ is a cocharacter of $T$.
%An element of the weight lattice for $G^\vee$ is a functional on $\h^\vee$ having integer values for any cocharacter of $T^\vee$. Therefore it can be viewed as a vector in $\h$ such that the pairing with any character of $T$ is an integer, i.e.\ as an element of $\sX_\ast(T)$.  
We can  form a strongly $W$-invariant
hypertoric variety $X(G,V)$ by taking all nonzero weights of the representation $V$ as the defining vectors $\alpha_k\in \sX_\ast(T)$ with all scalars $\lambda_k$ equal to $0$. We shall see that the transverse $W$-Hilbert scheme $\Hilb^W_\mu\bigl(X(G,V)\bigr)$ is the Coulomb branch associated to $(G,V)$ in most cases, but not in all. A counterexample is provided by $G={\rm SO}_3(\cx)$ and $V$  the standard $2$-dimensional representation of $G^\vee={\rm SL}_2(\cx)$. The above hypertoric variety $X(G,V)$ is then the $A_{1}$-singularity, and $\Hilb^W_\mu\bigl(X(G,V)\bigr)$ is the $D_{2}$-singularity (cf.\ Example \ref{D_2}). The Coulomb branch $\sM_C(G^\vee,V)$ is, however, the $D_{1}$-surface, see \cite[Lemma 6.9]{BFN}.
\par
The full answer is given by:
\begin{theorem} Let $G$ be a reductive algebraic group and $V$ a representation of its Langlands dual $G^\vee$. Decompose $G$ as $G=G^\prime\times \prod_{i=1}^N {\rm SO}_{2k_i+1}(\cx)$ with $k_i\geq 1$ and $G^\prime$ having no direct ${\rm SO}_{2k+1}$-factors.
\par
Then $V$ is  also canonically a representation of the Langlands dual of $\tilde G=G^\prime\times \prod_{i=1}^N {\rm Spin}^c_{2k_i+1}(\cx)$, and
the Coulomb branch $\sM_C(G^\vee,V)$ is naturally isomorphic\footnote{As affine symplectic varieties.} to the symplectic quotient\footnote{i.e.\ the GIT-quotient $\nu^{-1}(0)/\!\!/(\cx^\ast)^N$, where $\nu$ is the composition of $\bar\mu:\Hilb_\mu^W\bigl(X(\tilde G,V))\to \h^\prime/W$ and the projection $\h^\prime\to \Lie((\cx^\ast)^N)$.\label{centreq}} of $\Hilb_\mu^W\bigl(X(\tilde G,V))$ by the centre $(\cx^\ast)^N$ of the factor $\prod_{i=1}^N {\rm Spin}^c_{2k_i+1}(\cx)$.
\label{C=W}\end{theorem}
%%%%
%%%%The isomorphism in the theorem is an isomorphism of affine symplectic varieties.
The remainder of the subsection is devoted to a proof of this theorem.
\par
The key to identifying Coulomb branches is Theorem 5.26 in \cite{BFN}. We restate it here in a form more aligned with our notation (in particular their $G$ is our $G^\vee$). Let $P$ be a collection of hyperplanes in $\h$ consisting of reflection hyperplanes of $W$ and of hyperplanes of the form $\langle \alpha,z\rangle=0$ for any nonzero weight of $V$ viewed as a cocharacter of $T$. Let $h^\circ\subset \h$ be the complement of all  hyperplanes in $P$ and $\h^\bullet$ the complement of the intersection of two or more  hyperplanes in $P$. Braverman, Finkelberg, and Nakajima prove that an affine scheme $\Pi:\sM\to \h/W$ is the Coulomb branch associated to $(G,V)$ provided:
%%%
\begin{itemize}
\item[(i)] the natural embedding $j:\h^\bullet/W\to \h/W$ induces an isomorphism $ \Pi_\ast\sO_{\sM}\to j_\ast\Pi_\ast\sO_{\sM^\bullet}$, where $\sM^\bullet=\Pi^{-1}(\h^\bullet/W)$;
\item[(ii)] Over $\h^\circ/W$ $\sM$ is isomorphic to $(T\times \h^\circ)/W$;
\item[(iii)] For any $h\in \h^\bullet\backslash\h^\circ$ there is a neighbourhood $U$ of $\pi(h)\in \h^\bullet/W$ and an isomorphism between $\Pi^{-1}(U)$ and a corresponding subset in the Coulomb branch $\sM_C(Z_h,V^h)$, where $Z_h$ is the centraliser of $h$ in $G^\vee$ and $V^h\subset V$ is the subspace of $h$-invariants. Moreover, this isomorphism restricts to the one in (ii) over $U\cap \h^\circ/W$.
\end{itemize}
%%%%%
Consider now $\sM=\Hilb^W_\mu(X(G,V))$, where $X(G,V)$ has been defined above. It satisfies (i) owing to Theorem \ref{S_2}.
Condition (ii) is also satisfied, since over $\h^\circ/W$ $\:H^W\bigl(X(G,V)\bigr)$ is isomorphic to $X(G,V)/W$.
Finally, condition (iii) is satisfied for all $h\in \h^\bullet\backslash\h^\circ$ {\em unless} $h$ belongs to a reflection hyperplane and $Z_h\simeq {\rm SL}_2(\cx)\times (\cx)^{r}$. This follows from the proof of Theorem \ref{normal} and from the description of $\sM_C(Z_h,V^h)$ in \cite{BFN} (Lemma 6.9 and the proof of Proposition 6.12). We therefore have to investigate this remaining case. The first step is to determine groups $G^\vee$ for which the centraliser of an $h$ lying on a root hyperplane is isomorphic to ${\rm SL}_2(\cx)\times (\cx)^{r}$. Once again,   \cite[Prop. 3.2]{JMO} (quoted above as Proposition \ref{JMO}) provides an answer. It implies namely (with the notation from that proposition)  that the fundamental group of the centraliser $Z_\theta$ of a coroot $v_\theta$ in $G=K^\cx$   is equal to $\oZ_2$ if $\theta$ is a short root in an ${\rm SO}_{2n+1}(\cx)$ factor, and to $\{1\}$ otherwise. This follows from the following well-known fact:
$$\pi_1([Z_\theta,Z_\theta])=\bigl(\sX_\ast(T)\cap \Ker(\theta)^\perp)/\langle v_\theta\rangle.$$
%%%%%
Therefore $Z_\theta$ is isomorphic to ${\rm PGL}_2(\cx)\times (\cx^\ast)^r$ if and only if $\theta$ is such a short root. Consequently, the centraliser of a coroot $h$ in $G^\vee$ is isomorphic to ${\rm SL}_2(\cx)\times (\cx)^{r}$ if and only if it is a short coroot of ${\rm Sp}_{2n}(\cx)$. 
\par
We are thus reduced to the case when $G$ contains a direct ${\rm SO}_{2k+1}(\cx)$-factor. Let us write $G=G^\prime\times\prod_{i=1}^N {\rm SO}_{2k_i+1}(\cx)$, where $G^\prime$ has no ${\rm SO}_{2k+1}(\cx)$-factors. 
As explained in the introduction, we replace each ${\rm SO}_{2k_i+1}(\cx)$-factor with  ${\rm Spin}^c_{2k_i+1}(\cx)= {\rm Spin}_{2k_i+1}(\cx)\times_{{\scriptscriptstyle \oZ_2}}\cx^\ast$, and denote the resulting group by $G^\prime$. The Langlands dual of ${\rm Spin}_{2k+1}(\cx)$ is the {\em conformal symplectic group} ${\rm Sp}^c_{2k}(\cx)= {\rm Sp}_{2k}(\cx)\times_{{\scriptscriptstyle \oZ_2}}\cx^\ast$. If $\omega$ is the symplectic form used to define ${\rm Sp}_{2k}(\cx)\subset {\rm GL}_{2k}(\cx)$, then 
$${\rm Sp}^c_{2k}(\cx)=\bigl\{A\in {\rm GL}_{2k}(\cx);\: \exists {\lambda\in \cx^\ast}\; \forall {v,w\in \cx^{2k}}\;\:\omega(Av,Aw)=\lambda\,\omega(v,w)\bigr\}.$$
%%%%%
\begin{lemma} Any representation of ${\rm Sp}_{2k}(\cx)$ can be canonically lifted to a representation of ${\rm Sp}_{2k}^c(\cx)$. \label{liftsp}
\end{lemma}
\begin{proof} Let $\rho:{\rm Sp}_{2k}(\cx)\to \End(V)$ be an irreducible representation. If $\rho(-1)={\rm Id}_V$, then $\rho$ is a representation of ${\rm PSp}_{2k}(\cx)$, hence of ${\rm Sp}_{2k}^c(\cx)$. If $\rho(-1)=-{\rm Id}_V$, then the representation of ${\rm Sp}_{2k}(\cx)\times\cx^\ast$ given by $(g,\lambda).v=\lambda\rho(g)v$ descends to a representation of ${\rm Sp}_{2k}^c(\cx)$.
\end{proof}
%%%%%%
The representation $V$ of $G^\vee$ can be decomposed as a direct sum of representations of the form $\bigotimes_{i=0}^N V_i$, where $V_0$ is a representation of $(G^\prime)^\vee$ and $V_i$, $i\geq 1$, is a representation of ${\rm Sp}_{2k_i+1}(\cx)$. We can apply the above lemma to each factor $V_i$, $i=1,\dots, N$, and conclude that $V$ is a representation of $\tilde G^\vee$. 
We can therefore consider the strongly $W$-invariant hypertoric variety $X(\tilde G,V)$, and the claim 
follows from  \cite[Prop. 3.18]{BFN} applied to the short exact sequence 
$$1\to G^\vee\to \tilde G^\vee\to (\cx^\ast)^N\to 1$$
of reductive groups.
 Theorem \ref{C=W} has been proved.
%%%%%
%%%%%
\begin{remark} Proposition 3.18 in \cite{BFN} implies that we can use any lift of a representation of ${\rm Sp}_{2k}(\cx)$ to a representation of ${\rm Sp}_{2k}^c(\cx)$ (i.e.\ tensor the canonical lift with a character). In addition, in the case when the representation $V$ of ${\rm Sp}_{2k}(\cx)$ is a lift of a representation of ${\rm PSp}_{2k}(\cx)$, then  $\sM_C({\rm Sp}_{2k}(\cx),V)\simeq \Hilb^W_\mu\bigl(X({\rm Spin}_{2k+1}(\cx),V)\bigr)/\oZ_2$  \cite[\S 3.vii.c]{BFN} (or Remark \ref{finiteq}).
\end{remark}
\begin{remark} The same Proposition 3.18 in \cite{BFN} implies that, instead of replacing each direct factor ${\rm SO}_{2k_i+1}(\cx)$ with ${\rm Spin}_{2k_i+1}^c(\cx)$, we can replace their product with  $\widehat G_0=\C^\ast\times_{\oZ_2^N}\prod_{i=1}^N {\rm Spin}_{2k_i+1}(\cx)$, i.e.\ with $\cx^\ast \times_{\oZ_2}  (G_0^\vee/\oZ_2)^\vee$, where $G_0^\vee=\prod_{i=1}^N{\rm Sp}_{2k_i}(\cx)$, and $\oZ_2$ is embedded diagonally in the centre of $G_0^\vee$.
We obtain a new group $\widehat G=G^\prime\times \widehat G_0$ and a $W$-invariant hypertoric variety $\Hilb_\mu^W\bigl(X(\widehat G,V))$, and $\sM_C(G^\vee,V)$ is its symplectic quotient by a single $\cx^\ast$.\label{hatG}
\end{remark}
%%%

\subsection{Coulomb branches for general quaternionic representations\label{genquat}}

We now consider an arbitrary quaternionic representation $\mathbb{V}$ of $G^\vee$. The problem of defining the corresponding Coulomb branch  has been considered in \cite{BDFRT} and in \cite{Tel2}.
Our Theorem \ref{C=W} in the case $\mathbb{V}=V\oplus V^\ast$   can be used as a guide to give another definition of the Coulomb branches in this general case, as we now explain. 
\par
Let $\sX_\ast(\oV)\subset \sX_\ast(T)$ denote the set of nonzero weights of  $\mathbb{V}$. It is $W$-invariant and,
since $\mathbb{V}$ is self-dual, invariant under the involution $\alpha\mapsto -\alpha$. For every $\alpha\in \sX_\ast(\oV)$, let $d(\alpha)$ be its divisibility in $\sX_\ast(T)$, i.e.\ $\alpha=d(\alpha)\omega$, where $\omega$ is primitive  in $\sX_\ast(T)$.
We define a $W$-invariant integral multiarrangement  $(\sA,m)$ of hyperplanes in $\h^\ast$ as follows:
\begin{itemize}
\item $H\in \sA$ if and only if $H^\perp\cap \sX_\ast(\oV)\neq\emptyset$, i.e.\ there exists $\alpha\in \sX_\ast(\oV)$ such that $H=\{z;\langle\alpha,z\rangle=0\}$;
\item $m(H)=\frac{1}{2}\sum \{d(\alpha);\alpha\in H^\perp\cap \sX_\ast(\oV)\}$.
\end{itemize}
%%%%%%
As a replacement for $X(G,V)$, we can then consider the hypertoric variety $X(G,\frac{1}{2}\oV)$ corresponding to this multiarrangement.  In the case when $X(G,\frac{1}{2}\oV)$ is $W$-invariant and $G$ does not have direct ${\rm SO}_{2k+1}(\cx)$-factors, we  {\em define} the Coulomb branch corresponding to $\mathbb{V}$ to be $\Hilb^W_\mu (X(G,\frac{1}{2}\oV))$. 
\par
 In the case when 
$G$ does have ${\rm SO}_{2k+1}(\cx)$-factors, however,  we cannot simply follow the recipe of Theorem \ref{C=W}, 
since the lift of the representation $\oV$ to a representation of $\tilde G$  will, in general, no longer be quaternionic. Nonetheless, the construction of Theorem \ref{C=W} would still work if we could 
 construct  a $W$-invariantly hypertoric variety $\tilde X=X(\tilde G,\frac{1}{2}\oV)$ with structure torus equal to the maximal torus of $\tilde G$ (or the group $\widehat G$ of Remark \ref{hatG}), and such that $X(G,\frac{1}{2}\oV)$ is the symplectic quotient of $\tilde X$ by the centre $(\cx^\ast)^N$ of $\prod_{i=1}^N {\rm Spin}^c_{2k_i+1}(\cx)$.
 This is carried out in Appendix \ref{appendix:ht-lifts} for a large class of representations, which is, however, strictly smaller than those with $W$-invariant $X(G,\frac{1}{2}\oV)$. We therefore propose the following alternative construction of Coulomb branches in the case of ${\rm SO}_{2k+1}(\cx)$-factors.
 \par
 Let $G=G_0\times G^\prime$, where $ G_0=\prod_{i=1}^N{\rm SO}_{2k_i+1}(\cx)$ and $G^\prime$ has no direct ${\rm SO}_{2k+1}$-factors.
 The proof of Theorem \ref{C=W} shows that the passage from $ \Hilb^W_\mu (X(G,V))$ to $ \Hilb^W_\mu (X(\tilde G,V))$ amounts to modifying local models of $ \Hilb^W_\mu (X(G,V))$ along  the images in $\h/W$ of reflection hyperplanes corresponding to short roots of $G_0$ (and only along those).
 These local models are products of $D_k$-singularities and $T^\ast(\C^\ast)^{r-1}$, $k\geq 1$, and $ \Hilb^W_\mu (X(\tilde G,V))$ replaces the $D_k$-singularity with the $D_{k-1}$-singularity (these are actually regular for $k=0,1$).
 As shown in Example \ref{D_k-D_{k-1}}, the passage from $D_k$ to $D_{k-1}$ can be realised as a transverse blow-up of the $D_k$-surface.
 Let now $X(G,\frac{1}{2}\oV)$ be a $W$-invariant hypertoric variety corresponding to a quaternionic representation $\oV$ of $G^\vee$.
 Along the image $\bar\mu(H)$ in $\h/W$ of a reflection hyperplane $H$ corresponding to a short root of $G_0$ (and away from the subset $F$ defined in the proof of Theorem \ref{normal}), the local model  $ \Hilb^W_\mu (X(G,\frac{1}{2}\oV))$ is again the product of $T^\ast(\C^\ast)^{r-1}$ and a $D_k$-singularity, $k\geq 1$. The subscheme of the $D_k$-singularity, along which we have to blow it up in order to obtain the $D_{k-1}$-singularity, defines a locally closed subscheme of $(\bar\mu)^{-1}(V(\delta_0)\setminus F)$, where $\delta_0$ is the polynomial in $\C[\h^\ast]^W$ describing the union of reflection hyperplanes corresponding to short roots of $G_0$. Let $Z^\circ$ be the union of all these subschemes (there is one for each $i=1,\dots,N)$), and let $Z$ be the scheme-theoretic closure of $Z^\circ$ in $\Delta_0=(\bar\mu)^{-1}(V(\delta_0))$.
 \par
 We {\em define} the Coulomb branch for the pair $(G,\oV)$ as the blow-up of  $ \Hilb^W_\mu (X(G,\frac{1}{2}\oV))$ along $Z$ transverse to $\Delta_0$.
 \par
 Comparison with alternative definitions  will proceed via the same ideas, including the ones from \cite{BFN}, behind Theorem \ref{C=W}, which imply that  the spaces we defined are uniquely determined amongst affine varieties with flat projection onto $\h^\ast/W$ by their structure over the usual open subset of $\h^\ast/W$. Since neither we nor  \cites{BDFRT,Tel2}   have established these properties, we have to postpone such a direct comparison to the future.
 %%%%%%
%%%%
%%%%%
\begin{remark} The obstruction of Corollary \ref{obstruction} for $X(G,\frac{1}{2}\oV)$ to be $W$-invariant becomes now a condition on the representation $\mathbb{V}$. This obstruction vanishes in the case of $\mathbb{V}=V\oplus V^\ast$, since the arrangement $(\sA,m)$ is then orientable. In  \cite{BDFRT} and in \cite{Tel2} different types of obstructions have been defined in terms of the representation $\mathbb{V}$. In particular, the ``anomaly cancellation" of \cite[\S 4.1]{BDFRT} can be shown to be equivalent to condition \eqref{eq:Anomaly:Cancellation} above. 
We note that the representation in Example \ref{eg:Standard:SO(4)} is not quaternionic for odd $m$, so we do not know if \eqref{eq:Anomaly:Cancellation} is equivalent to the vanishing of the extension class in Corollary \ref{obstruction} for quaternionic representations. 
On the other hand, \cite[Appendix B]{ BDFRT}  describes  the anomaly cancellation condition in terms of the characteristic class $w_4(\mathbb{V})\in H^4(BG^\vee;\Z_2)$. In 
\cite[\S 5]{Tel2} Teleman discusses how a slightly stronger condition on $w_4(\mathbb{V})$ (which has to with gradings)  implies the vanishing of a class that should be
identified with our extension class in $H^2(W ; T) $ associated to the hypertoric variety $X$.  
\label{our-Tel}
\end{remark}
%%%%%
There is one more thing to verify. We require the Coulomb branch to map surjectively\footnote{One reason is that Coulomb branches are expected to be {\em complete} stratified hyperk\"ahler manifolds.} onto $\h/W$. As observed in Remark \ref{notstrong} local models of $\Hilb^W_\mu (X)$ for a general $W$-invariant hypertoric variety can be isomorphic to the variety in Example \ref{bastard}. In this example the action of $W$ on $X$ is free, and hence  the induced morphism $\bar\mu:\Hilb^W_\mu (X)\to \h/W$ is not surjective. We therefore need to show that Example \ref{bastard} cannot occur as a local model $\Hilb^W_\mu (X(G,\frac{1}{2}\oV))$   if $\oV$ is a quaternionic representation of $G$. This follows from the following simple lemma: 
\begin{lemma} If $\oV$ is a quaternionic representation of ${\rm GL}_2(\C)$, then any weight of  ${\rm GL}_2(\C)$ in $\oV$ of the form $(N,-N)$ has even multiplicity.\label{evenGL2}
 \end{lemma}
 \begin{proof} The weights $(k,l)$  of  an even-dimensional dimensional irreducible representation of  ${\rm GL}_2(\cx)$ satisfy $k+l\equiv 1\mod 2$. Therefore the weights of the form $(N,-N)$ can occur only among odd-dimensional irreducible summands of $\mathbb{V}$. Since $\oV$ is quaternionic, the odd dimensional irreducible summands appear in pairs $V\oplus V^\ast$, and the claim follows.
 \end{proof}
 %%%%%

\section{Hyperk\"ahler structure of $\Hilb^W_\pi(X)$, part  1\label{hk-Hilb^W}}

\subsection{Twistor space\label{twistor}}
The construction of a hypertoric variety as a complex-symplectic quotient can be refined to a hyperk\"ahler quotient. The data consists now of a compact torus $T_\oR$ and a collection $(\alpha_i,\underline{\lambda}_i)$, $i=1,\dots,d$, where $\alpha_i$ are cocharacters and $\underline{\lambda}_i\in \oR^3$. The hyperk\"ahler quotient of $\oH^d\times T_\oR\times \h_\oR^3$ by $T_\oR^d$ is a {\em stratified hyperk\"ahler manifold} \cite{DS,Mayr}. Its metric depends on the choice of a flat metric on $\oH^d\times T_\oR\times \h_\oR^3$. We always choose the standard Euclidean metric on $\oH^d$, while the $T_\oR$-invariant flat hyperk\"ahler metric on $T_\oR\times \h_\oR^3$  can be arbitrary. If we fix a scalar product $\langle\,,\,\rangle$ on $\h_\oR$ (thus identifying $\h_\oR^\ast\simeq \h_\oR$), then such a metric on $T_\oR\times \h_\oR^3$ with coordinates $(t,x^1,x^2,x^3)$ is of the form \cite{PP}
\begin{equation} \langle L^{-1}(d\log t),d\log t\rangle+ \sum_{i=1}^3\langle L (dx^i),dx^i\rangle,\label{flathk}\end{equation}
for a positive-definite self-adjoint linear automorphism $L$ of $\h_\oR$. The resulting (stratified) hyperk\"ahler manifold $X$ is called a {\em toric hyperk\"ahler manifold} \cite{BD} and it is determined by the following multiarrangement of codimension $3$ flats in $\h_\R\otimes \oR^{3}$ (cf.\ Proposition \ref{bijection}):
\begin{equation} \bigl\{\underline{H_i}=\{(x^1,x^2,x^3)\in \h_\oR\otimes\oR^3; \langle \alpha_i,x^j\rangle=\lambda_i^j,\enskip j=1,2,3\}; \enskip i=1,\dots,d\bigr\}.\label{Hflats}\end{equation}
Analogously to the complex case (Remark \ref{multiarr}), we shall view this multiarrangement as a pair $(\sA,m)$, where $\sA$ is an arrangement of distinct codimension $3$ flats and $m:\sA\to \N$ is a multiplicity function.
\par
The twistor space of $X$ can be obtained by performing the GIT symplectic quotient by $T=(T_\oR)^\cx$ along the fibres of the twistor space of  $\oH^d\times T_\oR\times \h_\oR^3$. In order to determine the metric it is enough to consider
a singular model $Z_L(X)$ of the twistor space, obtained by performing the affine symplectic quotient construction of \S\ref{htv} along the fibres of the twistor space of  $\oH^d\times T_\oR\times \h_\oR^3$. We proceed to describe this singular model. 
Let $(U,\zeta), (\tilde U,\tilde \zeta)$ be the standard holomorphic atlas of $\oP^1$ with $\tilde \zeta=\zeta^{-1}$, and let $(U\times \h,\zeta,\eta)$, $(\tilde U\times \h,\tilde \zeta,\tilde\eta)$ be the corresponding atlas on $Y=\sO_{\oP^1}(2)\otimes \h$ (thus $\tilde\eta=\eta/\zeta^2$). We denote by $P^L$ the principal $T$-bundle over $Y$ with transition function $\exp\bigl(-L(\eta)/\zeta\bigr)$ from $U$ to $\tilde U$. Observe that it is the twistor space for our flat hyperk\"ahler metric on $T_\oR\times \h_\oR^3$. For any character $\xi\in \sX^\ast(T)$ denote by  $\sL^{L(\xi)}$ the line bundle on $Y$ associated to $P^L$ via the $1$-dimensional representation of $T$ with weight $\xi$.
The description of $\cx[X]$, given in \S\ref{htv}, in terms of the moment map coordinates $z$ and functions $x_\xi$ labeled by a collection of characters $\xi\in S\subset \sX^\ast(T)$, yields a description of $Z_L(X)$ as a subvariety of the total space of
$$ \bigoplus_{\xi\in S} \sL^{L(\xi)}\otimes\sO_Y\Bigl(\sum_{i=1}^d|\langle \alpha_i,\xi\rangle|\Bigr),
$$
cut out by equations:
\begin{equation} x_\xi\cdot x_{\xi^\prime}=x_{\xi+\xi^\prime}\prod_{i=1}^d (\langle \alpha_i,\eta\rangle-\lambda_i(\zeta))^{\langle \alpha_i,\xi\rangle_+ +\langle \alpha_i,\xi^\prime\rangle_+ - \langle \alpha_i,\xi+\xi^\prime\rangle_+},   \label{sing-tw}
\end{equation}
where $x_\xi$ denotes the fibre coordinate on $\sL^{L(\xi)}\otimes\sO_Y\bigl(\sum_{i=1}^d|\langle \alpha_i,\xi\rangle|\bigr)$, and $\lambda_i(\zeta)=(\lambda_i^2+\sqrt{-1}\lambda_i^3) +2\lambda_i^1\zeta-(\lambda_i^2-\sqrt{-1}\lambda_i^3)\zeta^2$. The real structure $\sigma$ is given by
\begin{equation} \zeta\mapsto -1/\bar\zeta,\enskip \eta\mapsto -\bar\eta/\bar\zeta^2,\enskip x_\xi\mapsto (-1)^{\sum_{i=1}^d\langle\alpha_i,\xi\rangle_+}\,\ol{x_{-\xi}}\,\bar\zeta^{-\sum_{i=1}^d|\langle \alpha_i,\xi\rangle|}.\label{realstr}\end{equation}
\par
Observe that $T$ acts fibrewise on  $Z_L(X)$ and the action is Hamiltonian with respect to the fibrewise Poisson structure with the moment map equal to the projection onto $Y=\sO_{\oP^1}(2)\otimes \h$. The action of $T$ is free except at the points of $\mu^{-1}(\Delta)$, where $\Delta\subset Y$ is the union of hypersurfaces of the form 
\begin{equation} H_i=\{\eta\in \sO_{\oP^1}(2)\otimes\h;\, \langle \alpha_i,\eta\rangle=\lambda_i(p(\eta))\}\quad i=1,\dots,d,\label{H_i}\end{equation} 
where $p:Y\to \oP^1$ the natural projection. 
%The hyperk\"ahler stratification of $X$ induces a stratification of $Z_L(X)$.
\begin{remark} The hyperk\"ahler metrics defined above have volume growth $r^{3\dim T_\oR}=r^{\frac{3}{4}\dim_\oR X}$. In the case when the cocharacters $\alpha_k$ span $\h$, $X$ has also a $T_\oR$-invariant hyperk\"ahler metric with Euclidean volume growth, obtained as the hyperk\"ahler quotient of $\oH^d$ by the kernel of the homomorphism $h:T_\oR^d\to T_\oR$, $h(e_i)=\exp\alpha_i$ (as well as metrics with intermediate volume growth obtained as ``Taub-NUT-deformations").\label{evg}  
\end{remark}
Suppose now that our toric hyperk\"ahler manifold $X$ is {\em $W$-invariant}, i.e.\ the action of $T_\R$ extends to a   hyperk\"ahler action of $T_\R\rtimes W$. In particular the scalar product $\langle\,,\,\rangle$ on $\h_\oR$ must be $W$-invariant and the automorphism $L$ must be $W$-equivariant. We can then apply the functor $\Hilb^W_\pi$ to the fibres of $Z_L(X)$ and obtain a new scheme $\pi:Z^W_L(X)\to \oP^1$.
The morphism $\pi$ is flat (\cite[Prop. III.9.7]{Hart}). $Z^W_L(X)$ has a real structure covering the antipodal map. In general, we do not know whether $Z^W_L(X)$ has an $\sO_{\oP^1}(2)$-valued symplectic form in the sense of \S\ref{symplstr}. This is the case for strongly $W$-invariant $X$, but also for regular $X$. We therefore adopt the following definition:
\begin{definition} A $W$-invariant hypertoric variety $X$ is said to be {\em  unexceptional} if
\begin{itemize}
\item[(i)] the depth of $\sO_{\Hilb_\mu^W(X)}$ along the subset  $\bar\mu^{-1}(F)$ defined in the proof of Theorem \ref{normal} is at least $2$, and
\item[(ii)] $W\text{-}\Hilb_\mu(X)=\Hilb_\mu^W(X)$.
\end{itemize}
A $W$-invariant toric hyperk\"ahler (stratified) manifold is said to be {\em unexceptional} if it is  unexceptional with respect to every complex structure.\label{unexceptional}
\end{definition}
%%%
Therefore smooth  or strongly $W$-invariant toric hyperk\"ahler manifolds are  unexceptional. An  unexceptional $W$-invariant hypertoric variety is normal (Remark \ref{notstrong}) and it has a symplectic form (\S\ref{symplstr}). Therefore, if $X$ is an  unexceptional $W$-invariant toric hyperk\"ahler manifold, then  $Z_L(X)$  has an induced $\sO_{\oP^1}(2)$-valued symplectic form. Moreover every fibre of $\pi:Z^W_L(X)\to \oP^1$ is normal, and hence  $Z^W_L(X)$ is normal (cf.\ \cite[Theorem 23.9]{Mats}). Condition (ii) will be needed shortly, in order to describe sections of $Z^W_L(X)$ as curves in $Z_L(X)$.

\medskip

In what follows we shall assume that $X$ is an unexceptional $W$-invariant toric hyperk\"ahler manifold.
\begin{definition} We denote by $H_L(X)$ the smooth locus of the Douady space  of real sections of $ \pi$ with normal sheaf isomorphic to $\bigoplus \sO_{\oP^1}(1)$.\label{g_L}\end{definition}
\begin{remark} Since $Z_L(X)$ is singular, the condition on the normal sheaf is apriori not sufficient to guarantee the smoothness of the Douady space at the point corresponding to a section.
\end{remark}
$H_L(X)$ is a hypercomplex manifold with a pseudo-hyperk\"ahler metric on each connected component (possibly of varying signature). Moreover,  the map $H_L(X)\to \Hilb_\mu^W(X)$  which associates to a section its intersection with the fibre of $Z^W_L(X)$ is a local analytic isomorphism for a generic complex structure. 
%%%%
\par
We now observe that a section of $\pi$ is the same thing\footnote{The idea of constructing hyperk\"ahler metrics by lifting higher degree curves to a twistor space goes back at least to \cite{Nash}.} as a  $W$-invariant curve $\hat C$ in $Z_L(X)$ of degree $|W|$, flat over $\oP^1$ and such that $\mu|_{\hat C}$ is an equivariant isomorphism onto a $W$-invariant curve $C$ in $\h\otimes \sO_{\oP^1}(2)$.  
%%%%%
%%%
Our first task is therefore to describe $W$-invariant curves in $\h\otimes \sO_{\oP^1}(2)$ (flat of degree $|W|$ over $\oP^1$), which can be lifted $W$-equivariantly to $Z_L(X)$. We can do this for general, not necessarily $W$-invariant, hypertoric varieties. In the following theorem  we use the additive notation in the abelian group $H^1(Y,T)$ and the identification $H^1(Y,T)\simeq H^1(Y,\sO^\ast)\otimes_\oZ  \sX_\ast(T)$.
%%%%
\begin{theorem} Let $X$ be a toric hyperk\"ahler manifold corresponding to an integral multiarrangement $\{\sA,m\}$ with $\sA=\{\underline{H_1},\dots, \underline{H_d}\}$ and each $\underline{H_i}$ orthogonal to $\omega_i\otimes \R^3$, where $\omega_i\in\sX_\ast(T)$ is primitive. Set $m_i=m(\underline{H_i})$, $i=1,\dots,d$.
\par
Let $C$ be a curve in $\h\otimes \sO_{\oP^1}(2)$, flat over $\oP^1$, no component of which is contained in any of the hypersurfaces $H_i$, $i=1,\dots,d$. \\
{\rm (a)} Suppose that each Cartier divisor $(H_i)$,  $i=1,\dots,d$,  on $C$ admits a decomposition of the form $\sum_{k=0}^{m_i}H^k_i$, with each $ H^k_i$ effective, such that the principal $T$-bundle
\begin{equation} P^{L}|_C+\sum_{i=1}^d\sO_C(m_i)\Bigl[-\sum_{k=0}^{m_i}kH_i^k\Bigr]\otimes \omega_i\label{trivial-bundle}\end{equation}
on $C$ is trivial. Then $C$ can be lifted to $Z_L(X)$. \\
{\rm (b)} If $C$ is real, then a real lift of $C$ exists if and only if  the real structure interchanges $H_i^k$ and $H_i^{m_i-k}$ for every $k=0,\dots,m_i$ and $i=1,\dots,d$.\\
{\rm (c)} Suppose that $C$  satisfies, in addition:
\begin{itemize} 
\item[(i)] $C$ does not meet intersections of two or more hyperplanes $H_i$;
\item[(ii)] $C$ meets each $H_i$ transversely.
\end{itemize}
If $C$ can be lifted to $Z_L(X)$, then for each $i=1,\dots,d$ there exists a decomposition $C\cap H_i=\bigsqcup_{k=0}^{m_i}H^k_i$ such that the principal bundle \eqref{trivial-bundle} is trivial.\label{lift}
\end{theorem}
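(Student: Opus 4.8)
The plan is to reduce the lifting problem to divisor conditions on the sections $x_\xi$ cutting out $Z_L(X)$ in \eqref{sing-tw}. I would work in the presentation of Lemma \ref{multi}, replacing each $\underline{H_i}$ by $m_i$ primitive copies of $\omega_i$, so that \eqref{sing-tw} becomes $x_\xi x_{\xi'}=x_{\xi+\xi'}\prod_i\tilde g_i^{\,m_iE_i(\xi,\xi')}$, where $\tilde g_i:=\langle\omega_i,\eta\rangle-\lambda_i(\zeta)\in H^0(\sO_{\oP^1}(2))$ is the primitive defining section of $H_i$ and $E_i(\xi,\xi'):=\langle\omega_i,\xi\rangle_++\langle\omega_i,\xi'\rangle_+-\langle\omega_i,\xi+\xi'\rangle_+$. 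In this presentation each copy contributes a pair $(v^{(l)},w^{(l)})$ with $v^{(l)}w^{(l)}=\tilde g_i$, and near $H_i$ the generator $x_\xi$ equals $(x_i^-)^{\langle\omega_i,\xi\rangle_-}(x_i^+)^{\langle\omega_i,\xi\rangle_+}$, where $x_i^+=\prod_l w^{(l)}$ and $x_i^-=\prod_l v^{(l)}$ satisfy the local $A_{m_i-1}$ equation $x_i^+x_i^-=\tilde g_i^{\,m_i}$.

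For part (a), given the decomposition $(H_i)=\sum_{k=0}^{m_i}H_i^k$ I set
\[
D_\xi=\sum_{i=1}^d\Bigl(\langle\omega_i,\xi\rangle_+\sum_{k}k\,H_i^k+\langle\omega_i,\xi\rangle_-\sum_{k}(m_i-k)H_i^k\Bigr),
\]
an effective divisor on $C$. Using $a_+-a_-=a$ one computes $D_\xi+D_{\xi'}-D_{\xi+\xi'}=\sum_i m_iE_i(\xi,\xi')\,(H_i)$, which is exactly the divisor on $C$ of the product on the right of \eqref{sing-tw}. A second computation, by cases on the sign of $\langle\omega_i,\xi\rangle$ and using $\sO_C\bigl((H_i)\bigr)\cong\sO_C(2)$, identifies $\sN_\xi\otimes\sO_C(-D_\xi)$ — where $\sN_\xi:=\sL^{L(\xi)}|_C\otimes\sO_C\bigl(\sum_i m_i|\langle\omega_i,\xi\rangle|\bigr)$ is the bundle of which $x_\xi$ is a section — with the pairing of the $T$-bundle \eqref{trivial-bundle} against $\xi$. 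By $H^1(C,T)\simeq H^1(C,\sO^\ast)\otimes_\oZ\sX_\ast(T)$, triviality of all these line bundles is equivalent to triviality of \eqref{trivial-bundle}. Granting the latter, there exist sections $x_\xi$ of $\sN_\xi$ with divisor exactly $D_\xi$, unique up to scalars. The divisor identity makes each ratio $x_\xi x_{\xi'}/(x_{\xi+\xi'}\prod_i\tilde g_i^{\,m_iE_i})$ a nowhere-vanishing regular, hence constant, function $c(\xi,\xi')\in\cx^\ast$; the $c(\xi,\xi')$ form a symmetric $2$-cocycle on $\sX^\ast(T)$, which is a coboundary because $\cx^\ast$ is divisible and $\sX^\ast(T)$ is free. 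Rescaling the $x_\xi$ by the corresponding constants makes \eqref{sing-tw} hold identically, so the $x_\xi$ define a section of $Z_L(X)$ over $C$, the required lift.

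For part (b), the real structure \eqref{realstr} carries a section with divisor $D_\xi$ to one with divisor $\sigma(D_\xi)$, while $x_{-\xi}$ has divisor $D_{-\xi}$; since $(-a)_\pm=a_\mp$, one checks that $\sigma(D_\xi)=D_{-\xi}$ for all $\xi$ is equivalent to $\sigma(H_i^k)=H_i^{m_i-k}$ for all $i,k$. Hence this condition is necessary for a real lift; conversely it allows the constants $c(\xi,\xi')$ to be chosen $\sigma$-invariant, so the trivialising cochain, and with it the lift, can be taken real.

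For part (c), assumptions (i)--(ii) make $\tilde g_i$ vanish to order $1$ at each $P\in C\cap H_i$ and keep $C$ off the deeper strata, so the germ of $Z_L(X)$ there is the product of the surface $x_i^+x_i^-=\tilde g_i^{\,m_i}$ with a smooth factor. A lift meets this surface in a smooth branch, whence $\mathrm{ord}_P(x_i^+)+\mathrm{ord}_P(x_i^-)=m_i$; I set $k(P):=\mathrm{ord}_P(x_i^+)\in\{0,\dots,m_i\}$ and $H_i^k:=\{P\in C\cap H_i:k(P)=k\}$. The local formula for $x_\xi$ gives $\mathrm{ord}_P(x_\xi)=\langle\omega_i,\xi\rangle_+k+\langle\omega_i,\xi\rangle_-(m_i-k)$ on $H_i^k$, so the divisor of the given $x_\xi$ equals $D_\xi$, whence $\sN_\xi\cong\sO_C(D_\xi)$ for all $\xi$, which by the identification in (a) is the triviality of \eqref{trivial-bundle}. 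The hard part is the passage in (a) from an equality of divisor classes to genuine compatible sections satisfying \eqref{sing-tw} across the singular locus $\mu^{-1}(\Delta)$: this needs the local $A_{m_i-1}$ model of $Z_L(X)$ along each $H_i$ together with the vanishing of the cocycle $c(\xi,\xi')$; the reality argument in (b) is the other delicate point, and the transversality hypotheses in (c) are exactly what make the sheet number $k(P)$ well defined.
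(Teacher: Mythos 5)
Your argument is correct and rests on the same crux as the paper's proof, namely the identification of the line bundle associated to the torus bundle \eqref{trivial-bundle} via a character $\xi$ with $\sL^{L(\xi)}|_C\bigl(\sum_i|\langle\alpha_i,\xi\rangle|\bigr)$ twisted down by the divisor $D_\xi=\sum_{i,k}\bigl(k\langle\omega_i,\xi\rangle_++(m_i-k)\langle\omega_i,\xi\rangle_-\bigr)H_i^k$, using $[\textstyle\sum_k H_i^k]\simeq\sO_C(2)$. Where you diverge is in how the multiplicative compatibility \eqref{sing-tw} is achieved: the paper takes a single trivializing section of the $T$-bundle \eqref{trivial-bundle}, which produces sections $s_\xi$ of all associated line bundles satisfying $s_\xi s_{\xi'}=s_{\xi+\xi'}$ for free, and then multiplies by local equations of the $H_i^k$; you instead choose each $x_\xi$ separately with divisor $D_\xi$ and kill the resulting symmetric $2$-cocycle. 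That step is fine, but two small remarks: for non-reduced or disconnected $C$ (which the hypotheses allow) the cocycle takes values in $H^0(C,\sO_C)^\ast$ rather than $\cx^\ast$, and the cleanest reason it is a coboundary is not divisibility but that $\Ext^1_{\oZ}(\sX^\ast(T),A)=0$ for $\sX^\ast(T)$ free and any abelian $A$ (a symmetric cocycle defines an abelian extension, which therefore splits). For part (c) you use the local $A_{m_i-1}$ model of $Z_L(X)$ transverse to a generic point of $H_i$ to define the sheet number $k(P)=\operatorname{ord}_P(x_i^+)$; the paper instead observes directly from \eqref{sing-tw} that $\xi\mapsto d_p(\xi)-\langle\alpha_i,\xi\rangle_+$ is a homomorphism $\langle\beta_p,\cdot\rangle$ and pins down $\beta_p=-k\omega_i$ from positivity of the orders alone, which avoids having to justify the local product structure. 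Both routes are valid and give the same decomposition $C\cap H_i=\bigsqcup_k H_i^k$. Your treatment of (b) is at the same level of detail as the paper's one-line assertion; if one wanted to be scrupulous, the sufficiency direction requires checking that the element of $T$ relating a lift to its conjugate can be written as $s^{-1}\sigma(s)$, a point neither you nor the paper dwells on.
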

\begin{proof} Given the description of $Z_L(X)$ above, it is clear that a lift of a curve $C$ is equivalent to the existence of sections $x_\xi$ of line bundles $\sL^{L(\xi)}|_{C}\Bigl(\sum_{i=1}^d|\langle \alpha_i,\xi\rangle|\Bigr)$, $\xi\in \sX^\ast(T)$, where $\alpha_i=m_i\omega_i$,
which satisfy relations \eqref{sing-tw}. Let us suppose that the assumptions in part (a) are fulfilled. A section of the trivial $T$-bundle $P$ given by \eqref{trivial-bundle} yields, for each $\xi\in \sX^\ast(T)$, a section $s_\xi$ of the line bundle $P\times_\xi \cx$ associated to $P$  via the $1$-dimensional representation of $T$ with weight $\xi$. These sections satisfy $s_\xi s_{\xi^\prime}=s_{\xi+\xi^\prime}$ for all $\xi,\xi^\prime\in \sX^\ast(T)$. We claim that the line bundle $P\times_\xi \cx$ is isomorphic to 
\begin{equation}\sL^{L(\xi)}|_{C}\Bigl(\sum_{i=1}^d|\langle \alpha_i,\xi\rangle|\Bigr)\Bigl[- \sum_{i=1}^d \sum_{k=0}^{m_i}
\bigl(k\langle \omega_i,\xi\rangle_+ + (m_i-k)\langle \omega_i,\xi\rangle_-\bigr)H_i^k\Bigr].\label{asso}\end{equation}
Since $\sL^{L(\xi)}$ has been defined as $P_L\times_\xi \cx$, it is enough to show  that, for each $i=1,\dots,d$, the line bundle associated to $\sO_C(m_i)\Bigl[-\sum_{k=0}^{m_i}kH_i^k\Bigr]\otimes \omega_i$ is isomorphic to 
\begin{equation}\sO_C\Bigl(|\langle \alpha_i,\xi\rangle|\Bigr)\Bigl[-  \sum_{k=0}^{m_i}
\bigl(k\langle \omega_i,\xi\rangle_+ + (m_i-k)\langle \omega_i,\xi\rangle_-\bigr)H_i^k\Bigr].\label{assoc}\end{equation}
This is obvious if $\langle \omega_i,\xi\rangle\geq 0$ (and hence $\langle \alpha_i,\xi\rangle\geq 0$). If $\langle \omega_i,\xi\rangle< 0$, the associated line bundle is clearly isomorphic to $\sO(-m_i|\langle \omega_i,\xi\rangle|)\Bigl[  \sum_{k=0}^{m_i} k\langle \omega_i,\xi\rangle_-\bigr)H_i^k\Bigr]$. This is, however, isomorphic to \eqref{assoc}, owing to  $\bigl[\sum_{k=0}^{m_i} H^k_i\bigr]\simeq \sO_C(2)$.
\par
Let now ${U_j}$, $j=1,\dots,t$, be a covering of $C$ such that each Cartier divisor $H^k_i$ is represented by  $(U_j,h^k_{ij})_{j=1,\dots,t}$, where $h^k_{ij}\in \sO(U_j)$. Then 
$$\Bigl(U_j,s_\xi \sum_{i=1}^d \sum_{k=0}^{m_i}
\bigl(k\langle \omega_i,\xi\rangle_+ + (m_i-k)\langle \omega_i,\xi\rangle_-\bigr)h^k_{ij}\Bigr)_{j=1,\dots,t}$$ represents a section $x_\xi$ of $\sL^{L(\xi)}|_{C}\Bigl(\sum_{i=1}^d|\langle \alpha_i,\xi\rangle|\Bigr)$.
It remains to check that these sections satisfy relations \eqref{sing-tw}. We observe that 
$$k\langle \omega_i,\xi\rangle_+ + (m_i-k)\langle \omega_i,\xi\rangle_-= \langle \alpha_i,\xi\rangle_+ +(k-m_i)\langle \omega_i,\xi\rangle, \quad i=1,\dots,d,\enskip k=0,\dots,m_i.
$$
Relations \eqref{sing-tw} follow now easily from $s_\xi s_{\xi^\prime}=s_{\xi+\xi^\prime}$ and $\sum_{k=0}^{m_i}H^k_i=(H_i)$.
\par
Part (b) is a consequence of the description \eqref{realstr} of the real structure on $Z_L(X)$.
\par
For part (c), let us suppose that a lift of $C$ exists, i.e.\ there exist sections $x_\xi$ of line bundles $\sL^{L(\xi)}|_{C}\Bigl(\sum_{i=1}^d|\langle \alpha_i,\xi\rangle|\Bigr)$, $\xi\in \sX^\ast(T)$,
which satisfy relations \eqref{sing-tw}.  Given the genericity assumption on the curve $C$, the intersection $C\cap H_i$ consists of distinct points. Let $p$ be one of these points and let $d_p(\xi)\geq 0$ be the order of $x_\xi$ at $p$. The relations \eqref{sing-tw} imply that $d_p(\xi)-\langle \alpha_i,\xi\rangle_+$ is a homomorphism $\sX^\ast(T)\to \oZ$, i.e.\ $d_p(\xi)-\langle \alpha_i,\xi\rangle_+= \langle \beta_p,\xi\rangle$ for some $\beta_p\in \sX_\ast(T)$. Were $\oR\beta_p\neq \oR\alpha_i$,  we could find a $\xi\in \sX^\ast(T)$ with $\langle \alpha_i,\xi\rangle=0$ and $\langle \beta_p,\xi\rangle<0$, which would contradict $d_p(\xi)\geq 0$. Similarly, we get a contradiction if $\beta_p=r\alpha_i$ with $r\not\in [-1,0]$.
Therefore the only possibilities are $\beta_p=-k\omega_i$, for a $k\in\{0,1,\dots,m_i\}$. Set 
$$H^k_i=\{p\in C\cap H_i;\, \beta_p=-(m_i-k)\omega_i\},\quad k=1,\dots,m_i.$$ 
For $p\in H^k_i$ and all $\xi\in \sX^\ast(T)$ we have $d_p(\xi)=k\langle \omega_i,\xi\rangle_+ + (m_i-k)\langle \omega_i,\xi\rangle_-$. It follows that  the line bundle \eqref{asso}
is trivial. Therefore all line bundles associated to the torus bundle \eqref{trivial-bundle} are trivial, and hence the torus bundle itself  is trivial.
\end{proof}
%%%%
%%%%
\begin{remark} In the case when $m_i=1$  (e.g.\ when $X$ is smooth), the decomposition of $(H_i)$ involves only two divisors $H_i^0$ and $H_i^1$, which in the case of a real curve and a real lift are interchanged by $\sigma$, and hence their  degrees are both equal to  $\deg C$. 
% In the general case, however, the cardinalities of $H_i^k$, $k=1,\dots,m_i$, are not fixed, and lifts $\hat C$ with different cardinalities of the $H_i^k$ will likely belong to different components of the Douady space. 
\end{remark} 
%%%%
\begin{remark} For the hyperk\"ahler metric with Euclidean volume growth defined in Remark \ref{evg}, it may happen that there are no lifts of curves in $\h\otimes \sO_{\oP^1}(2)$ to the twistor space.  This is the case, for example, for $X=\cx^{2d}$, cf.\ \cite[\S 6]{slices}. \end{remark}
%%%
%%%%%
\begin{remark} Suppose that $X$ is a $W$-invariant toric hyperk\"ahler manifold and let $\hat C$ be a lift of a $W$-invariant $C$ obtained from Theorem \ref{lift}(a).
If $\hat C$ is also $W$-invariant, then, clearly, the collection $\{H^k_i\}$ is $W$-invariant for each $k$. Conversely, if $\{H^k_i\}$ is $W$-invariant for each $k$, then for every $w\in W$ the sections $x_\xi$  corresponding to $\hat C$ and to $w.\hat C$ have the same $0$-divisor, i.e.\  $w.\hat C=t_w.\hat C$ for some $t_w\in T$. The map $w\to t_w$ is a cocycle in $Z^1(W;T)$, and, since we assumed (at the beginning of \S\ref{Snormal}) that the action of $W$ on $X$ corresponds to the trivial class in $H^1(W;T)$, there exists a $t\in T$ such that $t_w= w(t)t^{-1}$ $\forall w\in W$. The curve $t^{-1}.\hat C$ is then a  $W$-equivariant lift of $C$.
%The cohomology group $H^1(W;T)$ has been computed in \cite{HMS} for all simple algebraic groups $G$ (where $T$ is the maximal torus in $G$). In particular, $H^1(W;T)=0$ if $G$ is one of the following: ${\rm SL}_2(\cx)$, ${\rm SL}_{2n+1}(\C)$, ${\rm P{\rm SL}}_n(\cx)$ ($n\neq 4$), ${\rm PSp}_{2n}(\C)$ ($n\geq 5$), ${\rm PSO}_{2n}(\C)$ ($n\geq 5$), and all exceptional groups except ${\rm E}_7$. 
\end{remark}
\begin{remark} Let $X$ be a strongly $W$-invariant toric hyperk\"ahler manifold, and let $X^\prime$ be the $W$-invariant toric hyperk\"ahler manifold obtained as in Proposition \ref{Tel/W}, i.e.\ we add the reflection flats in $\h_\R\otimes \oR^{3}$ with multiplicity two to the multiarrangement determining $X$. Proposition \ref{Tel/W} implies that a $W$-orbit of sections of $Z_L(X)$ defines a section of $Z_L^W(X^\prime)$. This gives us a component of $H_L(X^\prime)$ isomorphic to $X/W$ as a stratified hyperk\"ahler manifold.
In other words, Proposition \ref{Tel/W} remains true in the hyperk\"ahler setting (with the proviso that there could be other components of $H_L(X^\prime)$).
\end{remark}
%%%%
\begin{remark} The description \eqref{monomials} of the coordinate ring of a hypertoric variety $X$ implies that, away from the hyperplanes $H_i$, $X$ is isomorphic to the variety given by equations
$$x_{\xi}x_{-\xi}=\prod_{i=1}^d (\langle \alpha_i,z\rangle-\lambda_i)^{|\langle \alpha_i,\xi\rangle|},\quad \xi\in \sX^\ast(T).$$
We can define a ``twistor space" $\bar{Z}_L(X)$ of this variety by imposing only equations \eqref{sing-tw} with $\xi^\prime=-\xi$. Clearly, any lift of a curve $C$ in $\h\otimes \sO_{\oP^1}(2)$ to $Z_L(X)$ defines a lift to $\bar Z_L(X)$. Conversely, a generic lift to $\bar Z_L(X)$ will also yield a lift to to $Z_L(X)$. In the presence of a $W$-action, $\bar{Z}_L(X)$  is also $W$-invariant, and the correspondence between lifts to $Z_L(X)$ and to   $\bar{Z}_L(X)$
remains valid for $W$-equivariant lifts. \label{bar{Z}}
\end{remark}
%%%%%
Let us return to the case of an arbitrary unexceptional $W$-invariant toric hyperk\"ahler (stratified) manifold $X$.
Let   $C$ be a $W$-invariant curve in $\h\otimes \sO_{\oP^1}(2)$, flat of degree $|W|$ over $\oP^1$, and suppose that  $\hat C$ is a $W$-equivariant lift of $C$ to $Z_L(X)$. 
As discussed above, such a lift defines a section $s_{\hat C}$ of the twistor space $Z_L^W(X)$ and we ask whether its normal sheaf is isomorphic to $\bigoplus \sO_{\oP^1}(1)$. 
\begin{proposition} The normal sheaf of $s_{\hat C}$ is locally free. The normal sheaf of $\hat C$ is locally free if the intersection of $\hat C$ with ${\rm Sing}\; Z_L(X)$ is a finite set of integral points of $\hat C$.
If $\sN_{\hat C/Z_L(X)}$ is torsion-free, then $\sN_{s_{\hat C}/Z_L^W(X)}\simeq \bigoplus \sO_{\oP^1}(1)$ if and only if  $H^0\bigl(\hat C, \sN_{\hat C/Z_L(X)}(-2)\bigr)^W=0.$\label{singZ}
\end{proposition}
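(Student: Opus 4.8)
The plan is to read off the splitting type of $\sN_{s_{\hat C}/Z^W_L(X)}$ from that of $\sN_{\hat C/Z_L(X)}$ by matching deformations of the section $s_{\hat C}$ with $W$-invariant deformations of the curve $\hat C$, and then to exploit the symmetry forced by the fibrewise symplectic form.

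The local freeness of $\sN_{s_{\hat C}/Z^W_L(X)}$ is almost automatic: $s_{\hat C}\simeq\oP^1$ is a smooth curve, and the normal sheaf is the dual of the conormal sheaf $\sI/\sI^2$, hence reflexive, hence locally free on the smooth curve $\oP^1$. For $\sN_{\hat C/Z_L(X)}$ I would argue locally. Away from ${\rm Sing}\,Z_L(X)$ the ambient space is smooth and $\mu$ restricts to an isomorphism $\hat C\xrightarrow{\sim}C$ onto a curve in the smooth space $Y=\sO_{\oP^1}(2)\otimes\h$, so the normal sheaf is locally free there; at the finitely many integral points of $\hat C$ lying over ${\rm Sing}\,Z_L(X)$ one uses the local product structure of the codimension-two symplectic singularities (a du Val surface times a smooth factor, as in the proof of Theorem \ref{normal}) together with local integrality of $\hat C$ to conclude local freeness by a direct computation.

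The heart of the matter is the sheaf identification on $\oP^1$
\[
\sN_{s_{\hat C}/Z^W_L(X)}\;\simeq\;\bigl(\pi_\ast\,\sN_{\hat C/Z_L(X)}\bigr)^W,\qquad \pi\colon\hat C\to\oP^1\ \text{finite flat of degree }|W|.
\]
I would obtain this from the deformation theory of the relative transverse Hilbert scheme: $Z^W_L(X)\to\oP^1$ is the transverse $W$-locus of the relative Hilbert scheme of $|W|$ points of $Z_L(X)\to\oP^1$, the section $s_{\hat C}$ is the flat family $\hat C$, and the relative tangent sheaf of the Hilbert scheme along such a section is $\pi_\ast$ of the normal sheaf of the universal subscheme; passing to the transverse $W$-invariant locus produces the $W$-invariants. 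The torsion-freeness hypothesis is exactly what makes $\pi_\ast\sN_{\hat C/Z_L(X)}$ well behaved at the points where $\hat C$ meets the singular locus, ensuring that it commutes with the fibrewise computation and that $\sN_{\hat C/Z_L(X)}$ restricts on each fibre to the normal sheaf of the $0$-dimensional slice. This comparison, with all the equivariant and transverse bookkeeping, is the step I expect to be the main obstacle.

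Granting the identification, the projection formula, finiteness of $\pi$, and $\mathrm{char}=0$ give
\[
H^0\bigl(\oP^1,\sN_{s_{\hat C}/Z^W_L(X)}(-2)\bigr)\;\simeq\;H^0\bigl(\hat C,\sN_{\hat C/Z_L(X)}(-2)\bigr)^W,
\]
so the vanishing in the proposition is equivalent to $H^0(\oP^1,\sN_{s_{\hat C}/Z^W_L(X)}(-2))=0$. Writing $\sN_{s_{\hat C}/Z^W_L(X)}\simeq\bigoplus_{i}\sO_{\oP^1}(a_i)$ by Grothendieck's theorem, the fibrewise $\sO_{\oP^1}(2)$-valued symplectic form on $Z^W_L(X)$, non-degenerate on the smooth locus which $s_{\hat C}$ meets in a dense open set, induces generically inverse injections $\sN_{s_{\hat C}/Z^W_L(X)}\rightleftarrows\sN_{s_{\hat C}/Z^W_L(X)}^\vee(2)$, which together force $\sum_i a_i=\deg\sN_{s_{\hat C}/Z^W_L(X)}=2\dim\h$. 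Since there are $2\dim\h$ summands and $H^0(\sN(-2))=0$ is equivalent to $a_i\le1$ for all $i$, the vanishing holds precisely when every $a_i=1$, i.e.\ when $\sN_{s_{\hat C}/Z^W_L(X)}\simeq\bigoplus\sO_{\oP^1}(1)$. This yields the stated equivalence.
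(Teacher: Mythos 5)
Your skeleton — identify $\sN_{s_{\hat C}/Z_L^W(X)}$ with $\bigl(\pi_\ast\sN_{\hat C/Z_L(X)}\bigr)^W$, then reduce the splitting question to a degree count plus $H^0(\sN(-2))=0$ — is exactly the paper's, and your treatment of the first two statements (duals of conormal sheaves are torsion-free, hence locally free on a curve) is essentially the argument given there. But the two steps that carry the weight are not actually established in your proposal. The identification of normal sheaves, which you rightly flag as the main obstacle, is left as a plan: the deformation-theoretic route through the relative Hilbert scheme would require verifying that the tangent-sheaf description survives restriction to the transverse $W$-invariant locus and that $\pi_\ast\sN_{\hat C/Z_L(X)}$ commutes with the fibrewise computation, none of which you carry out. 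The paper instead gets this concretely: the moment map $Z_L(X)\to\sO_{\oP^1}(2)\otimes\h$ gives the exact sequence $0\to\sO_{\hat C}\otimes\h\to\sN_{\hat C/Z_L(X)}\to\sN_{C/\sO_{\oP^1}(2)\otimes\h}\to 0$; pushing down, taking $W$-invariants, and comparing with the analogous sequence for $s_{\hat C}$ (whose outer terms are $\bigoplus_i\sO_{\oP^1}(-2d_i+2)$ and $\bigoplus_i\sO_{\oP^1}(2d_i)$, $d_i$ the degrees of the generators of $\cx[\h]^W$), the five lemma gives the middle isomorphism.

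The second gap is the degree count. You deduce $\deg\sN_{s_{\hat C}/Z_L^W(X)}=2\dim\h$ from ``generically inverse injections'' $\sN\rightleftarrows\sN^\vee(2)$ induced by the fibrewise symplectic form. A map defined only on the dense open set where $s_{\hat C}$ meets the smooth locus need not extend to a morphism of sheaves over all of $\oP^1$; at best the globally defined direction $\sN\to\sN^\vee(2)$ (if it is globally defined at all, which requires the reflexive form to restrict to the normal sheaf at the singular points) yields $\deg\sN\le 2\dim\h$, and without the reverse inequality the implication ``$H^0(\sN(-2))=0\Rightarrow$ all summands are $\sO(1)$'' fails. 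In the paper the degree is read off for free from the same exact sequence, since $\sum_i(-2d_i+2)+\sum_i 2d_i=2\dim\h$. So the statement is true and your outline points in the right direction, but both the sheaf identification and the degree computation need the exact-sequence argument (or a completed substitute) to close.
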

\begin{proof}
Since the normal sheaves of $\hat C$ and of   $s_{\hat C}$ are duals of the coherent conormal sheaves, they are torsion-free as soon as the torsion of the conormal sheaves is supported at integral points. This is always the case for $s_{\hat C}$. Since $\hat C\simeq C$, $\hat C$ is lci, and therefore its conormal sheaf is locally free outside of ${\rm Sing}\; Z_L(X)$. The assumption in the second statement guarantees now that the normal sheaf of $\hat C$ is torsion free. In the case of curves torsion-free is equivalent to locally free, and this proves the first two statements. It follows now from well-known results (see, e.g.\ \cite[Appendix A]{Tel}) that  $\pi_\ast \sN_{\hat C/Z_L(X)}$ is torsion-free, hence locally free. We have an exact sequence
$$0\longrightarrow \sO_{\hat C}\otimes \h \longrightarrow  \sN_{\hat C/Z_L(X)} \longrightarrow  \sN_{C/\sO_{\oP^1}(2)\otimes\h}  \longrightarrow 0.
$$
If we push it down to $\oP^1$ and take the subsheaf of $W$-invariant sections, we obtain a commutative diagram:
$$ \minCDarrowwidth18pt\begin{CD}  
0 @>>> \bigl(\pi_\ast\sO_{\hat C}\otimes \h\bigr)^W  @>>> \pi_\ast\bigl(\sN_{\hat C/Z_L(X)}\bigr)^W  @>>> \pi_\ast\bigl(\sN_{C/\sO_{\oP^1}(2)\otimes\h}\bigr)^W   @>>>0\\
@. @VV\wr V @VVV @VV\wr V @.\\
0@>>> \bigoplus_{i=1}^n\sO_{\oP^1}(-2d_i+2) @>>> \sN_{s_{\hat C}/Z_L^W(X)} @>>>  \bigoplus_{i=1}^n\sO_{\oP^1}(2d_i) @>>> 0,
\end{CD}$$
where $d_1,\dots,d_n$ are the degrees of generators of $\cx[\h]^W$.
Therefore the middle arrow is also an isomorphism, and this proves the third statement.
\end{proof}

\subsection{$W$-invariant curves, tori, and modified Nahm's equations\label{Torsors}} 
We now recall, after \cite{Kan,Scogn,Hurt}, the correspondence between $W$-invariant $T$-bundles over $W$-invariant   curves in $\h\otimes  \sO_{\oP^1}(2)$ and sections of $\ad \Pi\otimes\sO_{\oP^1}(2)$, where $\Pi$ is a principal $G$-bundle over $\oP^1$, where $G= G_{{\scriptscriptstyle T, W}}$. Let $C$ be a $W$-invariant curve in $\h\otimes  \sO_{\oP^1}(2)$, flat   of degree $|W|$ over $\oP^1$. Principal $T$-bundle over $C$ are classified by $H^1(C,T)\simeq H^1(C,\sO^\ast)\otimes_{\oZ}\sX_\ast(T)$. We decompose the branch locus $\sB$ of $C$ as $\sB=\sum_{\beta \in R^+} \sB_\beta$, where $R^+$ is the set of positive roots and $\sB_\beta$ is the divisor fixed by the reflection  in the $\beta$-hyperplane, and   
define a principal bundle $T$-bundle $P_{{\scriptscriptstyle\sB}}=\sum_{\beta\in R^+}[\frac{1}{2}\sB_\beta]\otimes \beta^\vee$.  If $P$ is a $W$-invariant $T$-bundle on $C$, then the sheaf
$\g\times_T(P- P_{{\scriptscriptstyle\sB}})$ of Lie algebras is the pullback of $\ad \Pi$, where $\Pi$ is a principal $G$-bundle  on $\oP^1$ \cite{Scogn,Hurt}. The sheaf $\g\times_T(P- P_{{\scriptscriptstyle\sB}})$ has a tautological section given by $x\mapsto[x,u]\in \h\times_T(P- P_{{\scriptscriptstyle\sB}})\subset \g\times_T(P- P_{{\scriptscriptstyle\sB}})$, where $u$ is any element of $P- P_{{\scriptscriptstyle\sB}}$ over $x$. This section descends to a section of $\ad \Pi$.
%%% and if $\Pi$ is trivial and we fix its global trivialisation, we obtain a section of $\g\otimes  \sO_{\oP^1}(2)$. 
Conversely, given a section $\Phi$ of $\ad \Pi\otimes\sO_{\oP^1}(2)$  such that $\Phi(\zeta)$ is a regular element of $\g$ for every $\zeta$, we can conjugate it over any $\zeta\in \oP^1$ to an element $b(\zeta)$ of a fixed Borel subalgebra $\fB\supset\h$: $b(\zeta)=\Ad_{g(\zeta)} \bigl(\Phi(\zeta)\bigr)$. The $W$-orbit of the $\h$-part of $b(\zeta)$ defines the curve $C$, while $\zeta\mapsto g(\zeta)^{-1}Bg(\zeta)$ defines a principal $B$-bundle over $C$ ($B=\exp \fB$). The $T$ bundle obtained from the  projection $B\to T$ is the bundle $P- P_{{\scriptscriptstyle\sB}}$.
\par
Suppose now that the $G$-bundle $\Pi$ is trivial. Once we fix its trivialisation, then the sections of $\ad \Pi\otimes\sO_{\oP^1}(2)$ are quadratic $\g$-valued polynomials (with regular values for every $\zeta\in \oP^1$). We can fix the curve $C$ and  
 consider the flow of $T$-bundles on $C$ in the direction $P^L$, where $P^L$ is the $T$-bundle over $\h\otimes  \sO_{\oP^1}(2)$ defined in the previous subsection. If the flow exists on an interval $I$ (i.e.\ the principal bundle $\Pi$ associated to $(C,T)$ stays trivial on $I$), then  choice of a connection on $p^\ast \Pi$, where $p:\oP^1\times I\to \oP^1$ is the projection, produces a flow of quadratic polynomials. 
 \par
In particular, there is a connection such that the flow $P+ P^{\,t\Id}$ corresponds to the flow $\Phi(\zeta,t)$ of quadratic $\g$-valued polynomials $\Phi(t,\zeta)$ given by $\frac{\partial \Phi}{\partial t} =\frac{1}{2}\bigl[\Phi,\frac{\partial \Phi}{\partial \zeta}\bigr]$ (cf.\ \cite[Eq.\lpar 4.6\rpar]{Hurt}, \cite{Hit}). The same argument shows that the flow $P+ P^{\,tL}$ corresponds to the flow $\frac{\partial \Phi}{\partial t} =\frac{1}{2}L\Bigl(\bigl[\Phi,\frac{\partial \Phi}{\partial \zeta}\bigr]\Bigr),$
where $L:\g\to \g$ is the unique $G$-equivariant linear extension of the $W$-invariant $L:\h\to \h$.
%%%%%
In order to obtain  a solution to Nahm's equations, or to our $L$-version of them, we need to impose reality conditions on $T$-bundles over $C$. We call such a bundle $P$  {\em real} if $\ol{\sigma^\ast P}\simeq -P$. A $\g$-valued polynomial $\Phi(\zeta)=\Phi_0+\Phi_1\zeta+\Phi_2\zeta^2$ corresponding to a real $P$ satisfies
$$\Ad_h(\Phi_0)=-\Phi_2^\ast,\enskip \Ad_h(\Phi_1)=\Phi_1^\ast$$
for some $h\in G$, where the asterisk denotes the negative of the Cartan involution defined by the maximal compact subgroup $K$ (i.e.\ $\ast$ is equal to $-1$ on $\fK$). In particular, there is an open and closed subset of the variety of real $W$-invariant $T$-bundles, where $h$ can be chosen in $\exp (i\fK)$. $\Phi(\zeta)$ can be then $K$-conjugated to a polynomial which satisfies $\Phi_0=T_2+iT_3$, $\Phi_1=2iT_1$, $\Phi_2=T_2-iT_3$ with $T_i\in\fK$. The flow $P+ P^{\,tL}$ corresponds then to a solution of $\fK$-valued {\em $L$-Nahm's equations}:
\begin{equation} \dot T_1=L\bigl([T_2,T_3]\bigr),\quad  \dot T_2=L\bigl([T_3,T_1]\bigr), \quad  \dot T_3=L\bigl([T_1,T_2]\bigr).\label{mNe}
\end{equation}
We discuss these equations in greater detail in Appendix \ref{appendix:B}. 
\par
%%%%%%
We wish to give another description of the space of the isomorphism classes of $W$-invariant principal $T$-bundles over a $W$-invariant curve in $\h\otimes  \sO_{\oP^1}(2)$, flat   of degree $|W|$ over $\oP^1$. We have $\bigl(\h\otimes  \sO_{\oP^1}(2)\bigr)\big/ W\simeq \bigoplus_{i=1}^n\sO_{\oP^1}(2d_i)$, where $d_1,\dots,d_n$ are the degrees of generators of $\cx[\h]^W$. Thus a $W$-invariant curve $C$ in $\h\otimes \sO_{\oP^1}(2)$ corresponds to a section $\bar C$ of $\bigoplus_{i=1}^n\sO_{\oP^1}(2d_i)$. We can identify $\h/W$ with the Slodowy slice $\mathscr{S}_{\g}\subset \g$ to the regular nilpotent orbit, and restrict the universal centraliser \eqref{ucentr} to $\bar C$. This is an abelian group scheme over $\bar C$, which we denote by $\sT_{\bar C}$.
If $P\stackrel{\pi}{\rightarrow} C$ is a $W$-invariant $T$-bundle over $C$, then $\Hilb^W_\pi(P)$ is a $\sT_{\bar C}$-torsor, locally trivial in the sense that it is locally isomorphic to $\sT_{\bar C}$ (since $T$ is locally trivial). Locally trivial analytic $\sT_{\bar C}$-torsors over $\bar C\simeq \oP^1$ are classified by the abelian group $H^1(\oP^1,\sT_{\bar C})$, where the cohomology coefficients denote the sheaf of local analytic sections of $\sT_{\bar C}\to \oP^1$. We have: 
\begin{proposition} The map $P\to \Hilb^W_\pi(P)$ induces an isomorphism of abelian groups
$H^1(C,T)^W\to H^1(\oP^1,\sT_{\bar C})$.
\label{torsors} 
\end{proposition}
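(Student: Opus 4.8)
The plan is to realise the map $P\mapsto\Hilb^W_\pi(P)$ on the level of sheaves and only then pass to $H^1$. Write $p\colon C\to\oP^1$ for the degree $|W|$ quotient map ($\oP^1\simeq\bar C=C/W$) and let $\underline{T}_C\cong\sO_C^\ast\otimes_\oZ\sX_\ast(T)$ denote the sheaf of local holomorphic $T$-valued functions on $C$, so that $H^1(C,\underline{T}_C)=H^1(C,T)$. First I would identify the sheaf of local analytic sections of the group scheme $\sT_{\bar C}\to\oP^1$ with the invariant direct image $(p_\ast\underline{T}_C)^W$. By the moduli description of transverse $W$-Hilbert schemes (Remark \ref{pairs}), a section of $\sT_{\bar C}$ over an open $U\subseteq\oP^1$ is precisely a $W$-equivariant $T$-valued function on $p^{-1}(U)\subseteq C$; this is the content of the identification, and because the group law on $\sT_{\bar C}$ is fibrewise multiplication of invertible coinvariants it upgrades to an isomorphism of sheaves of abelian groups $\theta\colon(p_\ast\underline{T}_C)^W\xrightarrow{\sim}\sT_{\bar C}$. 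Under $\theta$ the assignment $P\mapsto\Hilb^W_\pi(P)$ becomes the natural comparison morphism $H^1(C,T)^W\to H^1\bigl(\oP^1,(p_\ast\underline{T}_C)^W\bigr)$, and it is a group homomorphism since $\Hilb^W_\pi$ sends the tensor product of $T$-bundles to the Baer sum of $\sT_{\bar C}$-torsors.

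It then remains to prove that this comparison map is an isomorphism, i.e.\ that taking $W$-invariants commutes with $H^1$ for the torus-valued sheaf $p_\ast\underline{T}_C$. Since $p$ is finite its higher direct images vanish, so $H^\bullet(\oP^1,p_\ast\underline{T}_C)=H^\bullet(C,T)$ $W$-equivariantly, and the only issue is the non-exactness of $(-)^W$. To linearise it I would use the $W$-equivariant exponential sequence $0\to\underline{\Lambda}_C\to\sO_C\otimes\h\xrightarrow{\exp}\underline{T}_C\to0$ with $\Lambda=\sX_\ast(T)$, push it forward to $\oP^1$ (exact, as $p$ is finite), and split its middle term, which is a sheaf of $\cx$-vector spaces and hence carries the averaging idempotent $\tfrac1{|W|}\sum_{w}w$, so that $(-)^W$ is exact on it. Comparing the long exact sequences for $(-)^W$ and for plain cohomology then reduces the entire discrepancy to the group-cohomology sheaves $\sH^i(W,p_\ast\underline{\Lambda}_C)$ measuring the failure of $(-)^W$ to be exact on the lattice part.

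The key point, and the step I expect to carry the proof, is that these sheaves are skyscrapers supported on the finite set of branch points of $p$ in $\oP^1$. Indeed, over a point $u$ whose fibre is a free $W$-orbit the stalk $(p_\ast\underline{\Lambda}_C)_u\cong\mathrm{Maps}(W,\Lambda)$ is an induced $W$-module, hence cohomologically trivial by Shapiro's lemma, so $\sH^i(W,p_\ast\underline{\Lambda}_C)_u=0$ for $i\ge1$; the stalks survive only over the images of the reflection flats, where they compute to $H^i(W_u,\Lambda)$ for the stabiliser $W_u$. Being skyscrapers, these sheaves have no $H^1$ on $\oP^1$, so in the two long exact sequences the torsion they produce lives only in degree $0$; a diagram chase then identifies $H^1\bigl(\oP^1,(p_\ast\underline{T}_C)^W\bigr)$ with $H^1(C,T)^W$ and yields the desired isomorphism $H^1(C,T)^W\xrightarrow{\sim}H^1(\oP^1,\sT_{\bar C})$.

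The main obstacle is precisely the control of these stabiliser contributions at the branch points: one must check that the degree-zero torsion coming from $\sH^\bullet(W,p_\ast\underline{\Lambda}_C)$ cancels between the invariant and the non-invariant exponential sequences, rather than merely vanishing, and in particular that the relevant connecting maps are surjective. Here the local analysis already available in the paper makes the bookkeeping tractable: the localisation isomorphism of Proposition \ref{local} reduces each branch point to the rank-one case $W_u=\oZ_2$, where the lattice cohomology $H^\bullet(\oZ_2,\Lambda)$ and the corresponding fibre of $\sT_{\bar C}$ can be computed directly (as in the surface examples of \S\ref{eths}) and seen to match, thus confirming the cancellation and completing the proof.
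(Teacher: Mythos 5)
Your strategy is genuinely different from the paper's. The paper does not compare cohomologies at all: it constructs the inverse map directly, by pulling a $\sT_{\bar C}$-torsor $\sP$ back to $C$, noting it is a $T$-bundle away from the ramification, and taking the subsheaf of sections that remain finite over the ramification; local triviality of $\sP$ reduces everything to the single local check that this procedure applied to $\sT_{\bar C}$ itself returns the trivial $T$-bundle. Your first step --- identifying the sheaf of sections of $\sT_{\bar C}$ with $(p_\ast\underline{T}_C)^W$, using the scheme-theoretic fibres of $p$ --- is correct and is indeed what underlies the paper's argument. But two things go wrong afterwards. First, the ``natural comparison morphism'' you invoke points the wrong way: the inclusion of sheaves $(p_\ast\underline{T}_C)^W\hookrightarrow p_\ast\underline{T}_C$ induces a map $H^1(\oP^1,\sT_{\bar C})\to H^1(C,T)$ landing in the invariants, not a map out of $H^1(C,T)^W$. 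To define the forward map one must choose a $W$-linearisation of $P$ and check that $(p_\ast P)^W$ is locally nonempty; neither point is addressed, and neither is the compatibility of whatever map your diagram chase produces with $P\mapsto\Hilb^W_\pi(P)$.

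Second, and more seriously, the concluding ``diagram chase'' is where the entire content of the proposition lives, and the way you propose to close it --- a local computation at each branch point in the rank-one case --- cannot work, because the discrepancies between the two long exact sequences are global. Take already $G={\rm SL}_2(\cx)$, $T=\cx^\ast$, $W=\oZ_2$, and a smooth curve $C=\{\eta^2=c(\zeta)\}$ of genus $1$ with four simple branch points. Then $(p_\ast\underline{\Lambda}_C)^W$ is the extension by zero of a rank-one local system with monodromy $-1$ at each branch point, and the product relation among these monodromies forces $H^2\bigl(\oP^1,(p_\ast\underline{\Lambda}_C)^W\bigr)\cong\oZ_2$, whereas $H^2(C,\Lambda)^W=0$; meanwhile $H^0$ of your skyscraper $\sH^1(W,p_\ast\underline{\Lambda}_C)$ is $(\oZ_2)^4$, it maps nontrivially into $H^1$ of the kernel sheaf via the connecting homomorphism, and the kernel of that connecting map is exactly the image of the global sections of $\sT_{\bar C}$, i.e.\ the centre of $G$. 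So the degree-zero torsion does not simply ``cancel'': it redistributes among $H^2$ of the invariant lattice sheaf, the component groups of the special fibres, the centre of $G$, and the $2$-torsion of the Prym, and it must be matched against $H^\bullet(W,H^0(C,T))$ and $H^\bullet(W,H^1(C,\Lambda))$ on the other side. None of these quantities is visible branch point by branch point, so Proposition \ref{local} cannot confirm the cancellation. This is a genuine gap; the paper's direct construction of the inverse torsor avoids this bookkeeping entirely.
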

\begin{proof} It is clear that the induced map is a group homomorphism. We need to construct the inverse homomorphism. Let $\sP$ be a $\sT_{\bar C}$-torsor over $\bar C$, and let $p:C\to\bar C\simeq \oP^1$ be the projection. The pullback $p^\ast\sP$ is a $T$-bundle away from $p^{-1}(R)$, where $R$ is the ramification divisor. We consider the subsheaf of sections of this bundle which remain finite as sections of $p^\ast \sP$ over points of $p^{-1}(R)$. Since $\sP$ is locally trivial, this sheaf is locally isomorphic to the sheaf obtained in this way from $\sT_{\bar C}$, i.e.\ to the trivial $T$-bundle over $C$. We recover thus the $T$-bundle $P$.
\end{proof}
\begin{remark} The $\sT_{\bar C}$-torsor corresponding to a section $\Phi(\zeta)$ of $\ad\Pi\otimes\sO_{\oP^1}(2)$ is obtained by conjugating each $\Phi(\zeta)$ to  the unique element $\bar C(\zeta)$ of the Slodowy slice in the fibre of  $\ad\Pi\otimes\sO_{\oP^1}(2)$ over $\zeta$. This yields the section $\bar C$ and the $\sT_{\bar C}$-torsor is given by $g(\zeta)^{-1}Z_G(\bar C(\zeta))g(\zeta)$, where  $g(\zeta)$ is any element of $G$ with $\Ad_{g(\zeta)}(\Phi(\zeta))=\bar C(\zeta)$.\label{torsors2}
\end{remark}
%%%%%
%%%%
The moduli space of $W$-invariant $T$-bundles on $C$ has a distinguished divisor $\Theta$ (a generalised theta divisor), where the corresponding principal $G$-bundle on $\oP^1$ is nontrivial. The quadratic polynomials $\Phi(t)$ corresponding to a linear flow on $H^1(C,T)^W$ (and a choice of connection) will acquire a regular singularity at points of $\Theta$. In the case of $\fK$-valued solutions to $L$-Nahm's equations \eqref{mNe}, the  leading term is a simple pole, the residues of which are of the form $L^{-1}(\sigma_1), L^{-1}(\sigma_2), L^{-1}(\sigma_2)$, where $\sigma_1,\sigma_2,\sigma_3\in \fK$ is an $\su(2)$-triple.
 There is a distinguished conjugacy class of $\su(2)$-triples, called principal $\su(2)$-triples, defined by requiring that $\sigma_2+i\sigma_3$ is a regular nilpotent element. 
\begin{proposition} A solution to $L$-Nahm's equations \eqref{mNe} on $(0,\epsilon)$ has a simple pole at $t=0$ with residues $L^{-1}(\sigma_1), L^{-1}(\sigma_2), L^{-1}(\sigma_2)$, where $\sigma_1,\sigma_2,\sigma_3$ is a  principal $\su(2)$-triple, if and only if the corresponding flow of $W$-invariant $T$-bundles on $C$ converges to the trivial bundle as $t\to 0$.\label{poles}
\end{proposition}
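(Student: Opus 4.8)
The plan is to reduce the statement to the classical boundary analysis of $\ssl_2$-valued Nahm data by localising the pole behaviour root by root. First I would use the dictionary recalled above (after \cite{Kan,Scogn,Hurt}) together with Proposition \ref{torsors}: the flow $t\mapsto P(t)=P_0+t\,\ell$ of $W$-invariant $T$-bundles on $C$, where $\ell=[P^L|_C]\in H^1(C,T)^W$, corresponds to the flow $\Phi(t,\zeta)$ of sections of $\ad\Pi(t)\otimes\sO_{\oP^1}(2)$, and $\Phi$ is a genuine $\g$-valued quadratic polynomial precisely while the associated $G$-bundle $\Pi(t)$ stays trivial. Thus the only way $\Phi$ can blow up as $t\to 0$ is for the limiting bundle $P_0=\lim_{t\to 0}P(t)$ to lie on the generalised theta divisor $\Theta$, and the type of the resulting singularity is determined by the stratum of $\Theta$ through which the flow line enters. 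Because $L$ is $G$-equivariant and invertible, I would first treat the case $L=\Id$ (ordinary Nahm's equations) and transport the conclusion back by applying $L^{-1}$, which turns an ordinary $\su(2)$-triple of residues $\sigma_i$ into the residues $L^{-1}(\sigma_i)$ of \eqref{mNe}.

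Next I would localise. For a generic flow the curve $C$ meets each reflection hyperplane transversally and away from the deeper intersections, so by Proposition \ref{local} the analysis near $t=0$ splits into contributions indexed by the positive roots $\beta\in R^+$. For each $\beta$ the root $SL_2$ reduces the question to a flow of line bundles on the $\Z_2$-cover determined by $\beta$, equivalently to a $2\times 2$ trace-free Nahm flow. Here the twist by $P_{\sB}=\sum_{\beta}[\tfrac12\sB_\beta]\otimes\beta^\vee$ must be tracked carefully: for the \emph{trivial} $T$-bundle one computes that the $\beta$-root line bundle on $C$ is $\sO_C(-\sB_\beta)$, so the limiting $SL_2$-bundle is the nontrivial $\sO_{\oP^1}(1)\oplus\sO_{\oP^1}(-1)$ sitting at the deepest point of $\Theta$. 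The classical local model for $\ssl_2$-Nahm data (Hitchin \cite{Hit}, Hurtubise \cite{Hurt}) then shows that a flow entering $\Theta$ at this point acquires a \emph{simple} pole whose residue lies in the regular nilpotent orbit of $\ssl_2$, i.e.\ is an $\ssl_2$-principal $\su(2)$-triple; conversely a simple pole with regular nilpotent residue forces this particular bundle degeneration, and hence the triviality of the $\beta$-component of $P_0$.

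I would then assemble the root-by-root statements. Triviality of $P_0$ in $H^1(C,T)^W$ is equivalent to triviality of every $\beta$-component, hence by the previous step to the residue being regular nilpotent in each $\ssl_2^\beta$ simultaneously. This simultaneous condition is exactly the requirement that $\sigma_2+\sqrt{-1}\,\sigma_3$ be a regular nilpotent element of $\g$, which is the defining property of a principal $\su(2)$-triple $(\sigma_1,\sigma_2,\sigma_3)$; since each factor contributes only a first-order pole, the pole of $\Phi$ is simple. Running the equivalences backwards gives the converse, and applying $L^{-1}$ yields the residues $L^{-1}(\sigma_i)$ as stated.

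The hard part will be the geometric identification in the middle paragraph: matching the twist by the ramification bundle $P_{\sB}$ with the pushforward through the $\Z_2$-cover so that the trivial $T$-bundle corresponds to exactly the degeneration producing a \emph{regular} (rather than zero or merely non-regular) nilpotent residue, and pinning the pole order to be exactly one. Controlling this requires relating the vanishing order of the relevant section along $\Theta$ to the order of the pole, and checking that the root contributions genuinely decouple near a simple, transverse crossing so that no cross-terms between distinct root subgroups spoil the simple-pole/regular-nilpotent conclusion; by comparison, the $L$-equivariance reduction is routine.
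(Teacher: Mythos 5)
Your starting dictionary is the right one (Proposition \ref{torsors}; a pole of $\Phi$ corresponds to the associated $G$-bundle $\Pi$ on $\oP^1$ becoming non-trivial), and the reduction of general $L$ to residue statements via $L^{-1}$ is harmless. But the central step --- localising the pole analysis root by root and reducing to $2\times 2$ Nahm data for each $\beta\in R^+$ --- does not work, and this is where the argument genuinely breaks. The condition that $(\sigma_1,\sigma_2,\sigma_3)$ be \emph{principal} is not a conjunction of rank-one conditions over the positive roots: a regular nilpotent element is conjugate to $\sum_{\alpha\ \mathrm{simple}}e_\alpha$, so its component along a non-simple positive root vanishes, and ``regular nilpotent in each root $\ssl_2^\beta$ simultaneously'' is not equivalent to ``regular nilpotent in $\g$''. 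Moreover the decoupling you defer to as ``the hard part'' actually fails: near the pole $T_i(t)\sim L^{-1}(\sigma_i)/t$ with the $\sigma_i$ spanning a single principal $\ssl_2$, and the indicial analysis of \eqref{mNe} at $t=0$ is governed by the adjoint action of that principal $\ssl_2$ on all of $\g$ (the exponents of $\g$), with brackets $[e_\alpha,e_\beta]$ of distinct simple root vectors feeding the non-simple root spaces; the equations do not split into root subalgebras. Finally, Proposition \ref{local} localises over the base $\h/W$ near generic points of one reflection hyperplane, whereas the degeneration at $t=0$ is a degeneration in the flow parameter detected by the global triviality of $\Pi$ on $\oP^1$; it does not localise near branch points of $C\to\oP^1$. (Also, $\sO_{\oP^1}(1)\oplus\sO_{\oP^1}(-1)$ is the shallowest non-trivial stratum of $SL_2$-bundles on $\oP^1$, not ``the deepest point of $\Theta$''.)

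The paper's proof is global and avoids all of this: conjugate the polynomial residue $L^{-1}(e+h\zeta-f\zeta^2)$ to $L^{-1}(e)$ by some $A(\zeta)\in G$ and apply the singular gauge transformation $t^{h/2}$ attached to the principal triple. By the contracting $\cx^\ast$-action on the Slodowy slice, the gauged solution $\Phi'(t,\zeta)$ converges as $t\to 0$ to the Kostant section $\bar C$, whose centraliser torsor is the trivial $\sT_{\bar C}$-torsor; since for $t\neq 0$ the transformation $t^{h/2}A(\zeta)$ is merely a change of global trivialisation of the trivial bundle $\Pi$, the torsors of $\Phi(t,\cdot)$ have the same class and hence converge to the trivial torsor, and Proposition \ref{torsors} translates this into convergence of the $T$-bundles to the trivial bundle. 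If you want a local model at the pole, it is the single principally embedded $\ssl_2$, not one $\ssl_2$ per positive root.
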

\begin{proof} Let $\sigma_1,\sigma_2,\sigma_3$ be a fixed principal $\su(2)$-triple, and $e=\sigma_2+i\sigma_3$, $f=-\sigma_2+i\sigma_3$, $h=2i\sigma_1$ the corresponding $\ssl_2(\cx)$-triple. The quadratic polynomial $\Phi(t,\zeta)$ has residue $L^{-1}(e+h\zeta -f \zeta^2)$. If we conjugate this to $L^{-1}(e)$ via $A(\zeta)\in G$ and then act by the gauge transformation $t^{h/2}$, we shall obtain $\Phi^\prime(t,\zeta)$ which converges, as $t\to 0$, to the section $\bar C$ of $\bigoplus_{i=1}^n\sO_{\oP^1}(2d_i)$, where $\bigoplus_{i=1}^n\sO_{\oP^1}(2d_i)$ is the Slodowy slice obtained from $\bigl(\h\otimes\sO_{\oP^1}(2)\bigr)/W$. The $\sT_{\bar C}$-torsor corresponding to $\Phi^\prime(0,\zeta)$ is simply $\sT_{\bar C}$. For $t\neq 0$, on the other hand, the gauge  transformation $t^{h/2}A(\zeta)$ is just a change of the global trivialisation of the trivial $G$-bundle $\Pi$ on $\oP^1$. Therefore our solution to $L$-Nahm's equations has residues given by a principal $\su(2)$-triple if and only if  the $\sT_{\bar C}$-torsors corresponding to $\Phi(t,\zeta)$ converge to the trivial torsor. The claim follows now from Proposition \ref{torsors}.
\end{proof}

\subsection{Special cases\label{special}} We can describe complete hyperk\"ahler metrics on $\Hilb_\mu^W(X)$ in two basic cases: $X=T\times \h$, and $X=\cx^{2d}$. 
\par
We first consider $X=T\times \h$ with the flat hyperk\"ahler metric \eqref{flathk}
given by a $W$-equivariant $L:\h\to \h$. 
According to the description in \S\ref{twistor} we need to consider $W$-invariant curves $C\subset \h\otimes \sO_{\oP^1}(2)$ , flat of degree $|W|$ over $\oP^1$, such that the $T$-bundle
$P^{\rm L}|_C$  is trivial.  Proposition \ref{poles} and the fact that $L$-Nahm's equations correspond to the flow $ P+ P^{\,tL}$ means that there exists a solution to $L$-Nahm's equations on $(0,1)$ such that the corresponding curve is $C$, and which has simple poles at both ends  with residues $L^{-1}(\sigma_1), L^{-1}(\sigma_2), L^{-1}(\sigma_2)$, where $\sigma_1,\sigma_2,\sigma_3$ is a  principal $\su(2)$-triple.
The  hyperk\"ahler $L^2$-metric, defined in Appendix \ref{appendix:B}, on the  moduli space of solutions to $L$-Nahm's equations with these boundary conditions is  complete. The moduli space itself is biholomorphic to the universal centraliser $\sZ_G\simeq \Hilb^W_\mu(T\times \h)$ for every complex structure. Note that this metric depends on the choice of a $W$-invariant scalar product on $\h_\oR$ we made earlier (which canonically defines an $\Ad_K$-invariant scalar product on $\fK$ owing to the Chevalley theorem).
\par
In the case $X=\cx^{2d}$ and $L=\Id$, the complete hyperk\"ahler metric on $\Hilb^{\Sigma_d}_\mu(\cx^{2d})$ has been already described in \cite[\S6]{slices}. It is the natural $L^2$-metric on a moduli space of $\u(d)$-valued solutions to (usual) Nahm's equations on $(0,1)$ with the same boundary conditions as above and a discontinuity in the middle. This discontinuity is given by the bifundamental representation of ${\rm GL}_d(\cx)$. In other words, the hyperk\"ahler metric on  $\Hilb^{\Sigma_d}_\mu(\cx^{2d})$ is the hyperk\"ahler quotient by $U(d)\times U(d)$ of the product of the following three spaces:
\begin{itemize}
\item[(i)] the moduli space of $\u(d)$-valued solutions to Nahm's equations on $(0,c]$, regular at $t=c$ and with a simple pole at $t=0$ with  residues given by a fixed principal $\su(2)$-triple;
\item[(ii)] the moduli space of $\u(d)$-valued solutions to Nahm's equations on $[c,1)$, regular at $t=c$ and with a simple pole at $t=1$ with  residues given by a fixed principal $\su(2)$-triple;
\item[(iii)] $T^\ast\Mat_{d,d}(\cx)\simeq \Mat_{d,d}(\oH)$ with its flat hyperk\"ahler metric.
\end{itemize}
The hyperk\"ahler quotient matches the values of the Nahm matrices at $t=c$ with the two $\u(k)$-valued moment maps on $T^\ast\Mat_{d,d}(\cx)$. The fact that such a solution to Nahm's equations corresponds to triviality of $P^{\rm  Id}|_C+\sum_{i=1}^d\sO_C(1)[-H^1_i]\otimes e_i$ follows from \cite[Prop. 2.1]{BBC}, and will be reproved in a greater generality in the next section (Proposition \ref{lift2}). We remark that since this triviality does not depend on the location $c$ of the discontinuity, neither does the hyperk\"ahler metric. The point is that a hyperk\"ahler metric alone does not determine a solution to Nahm's equations; one needs additional geometric structures, such as a hyper-Poisson bivector \cite{Bi-hP}.

We wish to describe the hyperk\"ahler metric on $\Hilb^W_\mu(X)$ for a general unexceptional $W$-invariant hypertoric variety $X$ and arbitrary $L$, analogously to a combination of these two examples: as a moduli space of solutions to $L$-Nahm's equations on an interval, with poles at both ends given by a principal $\su(2)$-triple, and an appropriate discontinuity in the middle. In the next section we shall define an appropriate replacement for $T^\ast\Mat_{d,d}(\cx)$.

\section{Hyperk\"ahler structure of $\Hilb^W_\mu(X)$, part 2\label{hypers}}

\subsection{Hypertoric analogues of $G\times \g^{\rm reg}$\label{M_G^circ}}
Let $X$ be an affine $W$-invariant hypertoric variety with structure torus $T$. Let $G=G_{{\scriptscriptstyle T, W}}$ (\S\ref{sTG}) and $\g=\Lie(G)$. The universal centraliser $\sT= \sT_{\scriptscriptstyle{T,W}}$ \eqref{ucentr} acts on   $\Hilb^W_\mu(X)$ and
we consider   a ``quotient" $M^\circ_{G}(X)$ of  $G\times G\times \Hilb^W_\mu(X)$ by the following action of $\sT\times \sT$: 
\begin{equation}(\tau_1,\tau_2).(g_1,g_2,x)=\bigl(g_1\tau_1^{-1}, g_2\tau_2^{-1},(\tau_1\tau_2^{-1}).x\bigr).\label{action-tau}\end{equation}
%%%%
The quotient is understood in the sense of algebraic relations: two points $(g,h,x)$ and $(g^\prime,h^\prime,x^\prime)$ lie in the same orbit of $\sT\times \sT$ if and only if
\begin{equation*}\bar\mu(x)=\bar\mu(x^\prime),\enskip \Ad_{g^{-1}g^\prime}(\bar\mu(x))=\bar\mu(x)=\Ad_{h^{-1}h^\prime}(\bar\mu(x)), \enskip \bigl(g^{-1}g^\prime {h^\prime}^{-1}h\bigr).x^\prime=x,\label{rel}\end{equation*}
where we have identified $\h/W$ with the Slodowy slice $\mathscr{S}_\g\subset \g$.
It is not clear whether this quotient always exists as a scheme (see, however, Theorem \ref{global-hol} for the strongly $W$-invariant case). It does, however, exist as an algebraic space provided $\Hilb^W_\mu(X)$ is seminormal\footnote{In the analytic category this means that every local continuous holomorphic function which is holomorphic outside a nowhere dense analytic set is holomorphic. For an algebraic definition see \cite{GS}.}. This follows from Artin's extension of GAGA and from 
 a theorem  of Grauert (cf.\ \cite[Thm.\ 7.1]{VII}  which shows that $M^\circ_{G}(X)$ exists as a (seminormal) complex space  (the assumptions of  Grauert's theorem are easily checked in our case). Moreover, if $\Hilb^W_\mu(X)$ is normal, then $M^\circ_{G}(X)$ is normal (cf.\ the second paragraph on p.\ 203 of \cite{VII}). 
%%%%
%%%%%
\par
In what follows we shall work in the analytic category and assume that $X$ is unexceptional (Definition \ref{unexceptional}). Then  $\Hilb^W_\mu(X)$  is normal an therefore $M^\circ_{G}(X)$ is a normal complex space. It is equipped with a natural morphism $\phi=(\phi_-,\phi_+):M^\circ_{G}(X)\to \g\oplus\g$ given by 
$$\phi[g_1,g_2,x]=\bigl(\Ad_{g_1}\bar\mu(x),\Ad_{g_2}\bar\mu(x)\bigr).
$$
The values of $\phi_-$ and $\phi_+$ are regular elements of $\g$.
%%%%
\par
Recall now that the (smooth) variety $G\times\mathscr{S}_{\g}$ is symplectic, with symplectic form given by
$\omega_-=\bigl\langle  dgg^{-1}\wedge d\bigl(\Ad_g (X)\bigr)\bigr\rangle,$
where $\langle \phi\wedge \psi\rangle(u,v)=\langle \phi(u),\psi(v)\rangle-\langle \phi(v),\psi(u)\rangle.$
The action of $G$ by left translations is Hamiltonian with moment map $(g,X)\mapsto\Ad( g) X$. It follows that
$M^\circ_G(X)$ is a symplectic quotient of $(G\times\mathscr{S}_{\g})\times (G\times\mathscr{S}_{\g})\times \Hilb^W_\mu(X)$ by $\sT\times \sT$, and hence it has a symplectic form on its regular locus, which yields a Poisson structure on $M^\circ_G(X)$. The action of $G\times G$ on $M_G^\circ(X)$ is Hamiltonian with moment map $(\phi_-,-\phi_+)$, and 
\begin{equation}\Hilb^W_\mu(X)=\{m\in  M_G^\circ(X);\,\phi_-(m)=\phi_+(m)\in \mathscr{S}_{\g}\}.\label{phi=phi}
\end{equation}
%%%%%
\begin{remark} The symplectic form on the regular locus of $M^\circ_G(X)$ (i.e.\ on the subset corresponding to the regular locus of $\Hilb^W_\mu(X)$) can be described as follows. The tangent space at such a point corresponding to an equivalence class $[g_1,g_2,x]$ is generated by the tangent space to $\Hilb^W_\mu(X)$ at $x$ and by a pair of left-invariant vector fields on $G$. The symplectic form is the sum of the symplectic form on $\Hilb^W_\mu(X)$ and of the Kostant-Kirillov-Souriau symplectic forms of the orbits of $\bar\mu(x)$ and $-\bar\mu(x)$. \end{remark}
%%%%%%
%%%
\begin{remark} The above description of $M_G^\circ(X)$ can be generalised by taking the product of any number of copies of $G$ with $\Hilb_\mu^W(X)$ and quotienting by  the product of the same number of copies of $\sT$, where each copy of $\sT$ acts in the standard way on the corresponding copy of $G$ and by $x\mapsto \tau^{s}x$ on $\Hilb_\mu^W(X)$ with each exponent $s$  equal to $\pm 1$ and their sum  equal to $0$. In the case $X=T\times \h$ these are the Moore-Tachikawa varieties, as constructed in \cite{BiMT}.\end{remark}
%%%%%%
The variety $G\times\mathscr{S}_{\g}$  has, in addition to $\omega_-$, a second symplectic form $\omega_+=\bigl\langle  g^{-1}dg\wedge d\bigl(\Ad_{g^{-1}}( X)\bigr)\bigr\rangle$. This time the action of $G$ by right translations is Hamiltonian with moment map  $(g,X)\mapsto -\Ad_{g^{-1}}(X)$. The  symplectic forms $\omega_-$ and $\omega_+$ are interchanged by a Cartan involution.
%%%%
\begin{proposition} The Poisson quotient of $(G\times\mathscr{S}_{\g})\times (G\times\mathscr{S}_{\g})\times M_G^\circ(X)$ by $G\times G$, where the Poisson structure on the first factor is given by $\omega_-$ and on the second one by $\omega_+$,  is isomorphic to $\Hilb_\mu^W(X)$. \label{M_G-Hilb}
\end{proposition}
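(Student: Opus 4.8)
The plan is to read the Poisson quotient as symplectic reduction at level $0$ by $G\times G$, and to show that reducing against the two copies of $G\times\mathscr{S}_{\g}$ is precisely the operation of restricting the moment map $\phi=(\phi_-,\phi_+)$ of $M_G(X)$ to the Slodowy slice in each factor; by Theorem \ref{Hilb-phi} the resulting space is $\phi^{-1}(\Delta\mathscr{S}_{\g})\cong\Hilb^W_\mu(X)$. Concretely, write $P=(G\times\mathscr{S}_{\g})_{\omega_-}\times(G\times\mathscr{S}_{\g})_{\omega_+}\times M_G(X)$ and let the first (resp.\ second) copy of $G$ act by left translation on the first factor with moment map $(g_1,X_1)\mapsto\Ad_{g_1}(X_1)$ (resp.\ by right translation on the second factor with moment map $(g_2,X_2)\mapsto-\Ad_{g_2^{-1}}(X_2)$), and on $M_G(X)$ with moment map $\phi_-$ (resp.\ $-\phi_+$), matching $\Phi=(\phi_-,-\phi_+)$. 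The zero level set of the total moment map is then cut out by $\phi_-(m)=-\Ad_{g_1}(X_1)$ and $\phi_+(m)=-\Ad_{g_2^{-1}}(X_2)$. I would carry out the argument first over the open locus $M_G^\circ(X)$ where $\phi$ takes regular values, and only afterwards extend to all of $M_G(X)$.

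The key step is a Slodowy-reduction lemma that I would isolate and prove by the one-line computation below: if $(N,\nu)$ is a Hamiltonian $G$-variety, then the reduction of $N\times(G\times\mathscr{S}_{\g})_{\omega_-}$ by the diagonal $G$ at $0$ is canonically isomorphic, as a symplectic variety, to $\nu^{-1}(-\mathscr{S}_{\g})$ with the restricted form. Indeed, on the zero level set $\{\nu(n)+\Ad_{g}(X)=0\}$ the $G$-invariant map $(n,g,X)\mapsto g^{-1}\cdot n$ has image exactly $\nu^{-1}(-\mathscr{S}_{\g})$, since $\nu(g^{-1}\cdot n)=\Ad_{g^{-1}}\nu(n)=-X\in-\mathscr{S}_{\g}$, and each fibre is a single $G$-orbit, the residual ambiguity in $g$ being the centraliser $Z_G(X)$, which fixes $g^{-1}\cdot n$. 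Applying this lemma successively to the two copies of $G$ (the first with $\omega_-$, the second with $\omega_+$) reduces $P$ to $\{m\in M_G(X):\phi_-(m),\phi_+(m)\in\mathscr{S}_{\g}\}=\phi^{-1}(\mathscr{S}_{\g}\oplus\mathscr{S}_{\g})$; here I use that $-\mathscr{S}_{\g}$ is again a Slodowy slice to the regular nilpotent orbit, so the analogue of Theorem \ref{Hilb-phi} applies after the evident conjugation.

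Since $F(\phi_-)=F(\phi_+)$ for every $F\in\cx[\g]^G$ and the slice maps isomorphically onto $\g/\!\!/G$, the conditions $\phi_-(m),\phi_+(m)\in\mathscr{S}_{\g}$ force $\phi_-(m)=\phi_+(m)$, so $\phi^{-1}(\mathscr{S}_{\g}\oplus\mathscr{S}_{\g})=\phi^{-1}(\Delta\mathscr{S}_{\g})$, which is $\Hilb^W_\mu(X)$ by Theorem \ref{Hilb-phi}. To identify the reduced symplectic form with that of $\Hilb^W_\mu(X)$, I would compare with the description of $M_G^\circ(X)$ as the $\sT\times\sT$ symplectic quotient of $(G\times\mathscr{S}_{\g})\times(G\times\mathscr{S}_{\g})\times\Hilb^W_\mu(X)$ via the action \eqref{action-tau}: the locus $\phi^{-1}(\Delta\mathscr{S}_{\g})$ corresponds there to classes $[e,e,x]$, on which the reduced form restricts visibly to the form of $\Hilb^W_\mu(X)$, and reduction in stages identifies the two. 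Finally I would globalise: the isomorphism holds over $M_G^\circ(X)$, and since $M_G(X)$ is normal (Definition \ref{S-indep}) with $M_G(X)\setminus M_G^\circ(X)$ of codimension $\geq 2$ (shown above), while $\Hilb^W_\mu(X)$ is normal (Theorem \ref{normal}), it extends by a Hartogs/$S_2$ argument as in Theorem \ref{S_2}. The main obstacle I anticipate is the rigorous treatment of the Slodowy reduction itself: because $G$ is non-compact, $\mathscr{S}_{\g}$ is only an affine slice, and $M_G(X)$ is singular, one must verify that the $G\times G$-action on the relevant zero level set is free with closed orbits — so that the analytic/GIT quotient genuinely is the orbit space — and that the reduced and restricted symplectic forms coincide on the nose, which is exactly where the normalisation of $\omega_-,\omega_+$ and their relation under the Cartan involution must be used.
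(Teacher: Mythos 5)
Your argument is correct and follows essentially the same route as the paper's proof: identify the zero level set of the $G\times G$-moment map on the triple product, observe that it forces $\phi_\pm(m)$ to be conjugate into a Slodowy slice and hence regular, and conclude via Theorem \ref{Hilb-phi} that the quotient is $\phi^{-1}(\mathscr{S}_{\g}\oplus\mathscr{S}_{\g})=\phi^{-1}(\Delta\mathscr{S}_{\g})\simeq\Hilb^W_\mu(X)$, with your ``Slodowy-reduction lemma'' being exactly the orbit computation the paper performs in one line. The only superfluous step is the final Hartogs extension from $M_G^\circ(X)$ to $M_G(X)$: since every point of the zero level set has $\phi_\pm(m)$ regular, the level set lies entirely over $M_G^\circ(X)$ (and $\phi^{-1}(\Delta\mathscr{S}_{\g})\subset M_G^\circ(X)$ as well), so the reduction of the full product already coincides with the reduction over the open locus and nothing needs to be extended.
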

\begin{proof} The quotient of this variety by $G\times G$ is $\mathscr{S}_{\g}\times \mathscr{S}_{\g}\times M_G^\circ(X)$, and the moment map on this quotient 
is 
$$ \mathscr{S}_{\g}\times \mathscr{S}_{\g}\times M_G^\circ(X)\ni(S_1,S_2,m) \longmapsto 
\bigl(S_1+\phi_-(m),-S_2-\phi_+(m)\bigr).
$$
Since $\phi_+(m)$ is conjugate to $\phi_-(m)$, the zero level set of the moment map is the subscheme $\{m\in  M_G^\circ(X);\,\phi_-(m)=\phi_+(m)\in \mathscr{S}_{\g}\}$ of $M_G^\circ(X)$, which is equal to $\Hilb_\mu^W(X)$.
\end{proof}
%%%
\begin{remark} As Theorem \ref{C=W} and remarks in the introduction indicate, the category of transverse $W$-Hilbert schemes of $W$-invariant hypertoric varieties should be enlarged to include symplectic quotients of $\Hilb_\mu^W(X)$ by $W$-invariant tori of the structure torus $T$ of $X$. 
As shown in Appendix  \ref{appendix:ht-lifts}, such a symplectic quotient $Y$ depends only on the symplectic quotient $\bar X$ of $X$ by $T_0$ and not on $X$ itself. 
Let $\bar T=T/T_0$, $\bar\sT= \sT_{\scriptscriptstyle{\bar T,W}}$, and $\bar G= G_{{\scriptscriptstyle \bar T, W}}$. We can  define $M^\circ_{\bar G}(\bar X)$ as the quotient of  $\bar G\times \bar G\times Y$ by the action \eqref{action-tau} of $\bar\sT\times \bar\sT$. Equivalently,  $M^\circ_{\bar G}(\bar X)$ is the symplectic quotient of $M_G^\circ(X)$ by $T_0\times T_0$. The variety $M^\circ_{\bar G}(\bar X)$  plays the same role for $Y$ as  $M^\circ_{ G}(X)$ does for $\Hilb_\mu^W(X)$.
\end{remark}

\subsection{Twistor space and hyperk\"ahler structure\label{Z_G(X)}}
We shall now construct and analyse a twistor space for $M_G^\circ(X)$.
We  fix a hyperk\"ahler metric on $X$ and obtain a (singular model of) twistor space $Z_L^W(X)$ for $\Hilb^W_\mu(X)$ (\S\ref{twistor}), which is normal. Therefore  we can  define a (singular model of) twistor space for $M_G^\circ(X)$ as the fibrewise quotient
$$ Z_L(M_G^\circ(X))=G\times G\times Z_L^W(X)\big/\!\!\big/\sT\times \sT,$$
where the quotient is understood in the sense of analytic relations, as in the previous subsection. 
The symplectic form on the regular locus of $M_G^\circ(X)$, defined  in the previous subsection, yields  a fibrewise $\sO_{\oP^1}(2)$-valued symplectic form on the regular locus of $Z_L(M_G^\circ(X))$. We also have a compatible real structure, given by the real structure on   $Z_L^W(X)$, and the Cartan involution  with respect to $K$ on both copies of $G$. 
\par
In the case\footnote{
Remark \ref{W-product} implies that this is not a restriction for strongly $W$-invariant toric hyperk\"ahler manifolds.} when the cocharacters $\alpha_k$ span $\h$,  we can also use the Euclidean volume growth metric on $X$ (Remark \ref{evg}).
 Let us denote the  twistor space for $\Hilb^W_\mu(X)$ corresponding to this metric by $Z_0^W(X)$ and the twistor space for $M_G^\circ(X)$ by $Z_G^\circ(X)$. The moment map for the fibrewise action of $G\times G$ induces a map $Z_G^\circ(X)\to (\g\oplus\g)\otimes\sO_{\oP^1}(2)$. Therefore any section of $Z_G^\circ(X)$ defines a pair $\Phi_-(\zeta),-\Phi_+(\zeta)$ of $\g$-valued quadratic polynomials (with values regular elements of $\g$ for every $\zeta$). The $W$-invariant  curves defined by $\Phi_-(\zeta)$ and $\Phi_+(\zeta)$ coincide, owing to the definition of $M_G^\circ(X)$. 
We now address the question when such a pair $(\Phi_-(\zeta),\Phi_+(\zeta))$
of quadratic $\g$-valued polynomials defining a common curve can be lifted to a section of $Z_G^\circ(X)$.
%%%
\begin{proposition} Let $\Phi_-(\zeta),\Phi_+(\zeta)$ be a pair
of quadratic $\g$-valued polynomials, with regular values for every $\zeta\in \oP^1$, such that the corresponding $W$-invariant curves in $\h\otimes\sO_{\oP^1}(2)$ coincide. Let $C$ be this common curve and suppose that no component of $C$ is contained in any of the hyperplanes $H_i$, $i=1,\dots,d$. Let  $P_-,P_+$ denote the  $W$-invariant principal $T$-bundles on $C$ corresponding to $\Phi_-(\zeta),\Phi_+(\zeta)$,   and
suppose that, for each $i=1,\dots,d$ with $\alpha_i=m_i\omega_i$, where $m_i\in \oN$ and $\omega_i$ is primitive in $\sX_\ast(T)$, the Cartier divisor $(H_i)$ on $C$ admits a decomposition $\sum_{k=0}^{m_i}H^k_i$ into effective divisors, such that 
\begin{equation} P_+-P_-= \sum_{k=1}^d\sO_C(m_i)\Bigl[-\sum_{k=0}^{m_i}kH^k_i\Bigr]\otimes \omega_k.\label{difference}\end{equation}
Then the pair $(\Phi_-(\zeta),\Phi_+(\zeta))$ can be lifted to a section of $Z_G^\circ(X)$. If $C$
satisfies the genericity conditions (i)-(ii) of Theorem \ref{lift}(c), then the existence of the $H_i^k$ satisfying \eqref{difference} is necessary.\label{lift2} 
\end{proposition}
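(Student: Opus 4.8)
The plan is to convert the statement into a cohomological identity between $W$-invariant $T$-bundles on $C$, exploiting the dictionary of Proposition \ref{torsors} and Remark \ref{torsors2} together with the description of the twistor space as the fibrewise quotient $Z_G^\circ(X)=G\times G\times Z_0^W(X)/\!\!/\sT\times\sT$. Since the $T\times\h$-factor is handled separately in \S\ref{special}, I may assume the cocharacters $\alpha_i$ span $\h$ and work throughout with the Euclidean model $Z_0^W(X)$, whose fibrewise $G\times G$-moment map produces the pair $(\Phi_-,\Phi_+)$.

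First I would produce the two frames. As $\Phi_-(\zeta),\Phi_+(\zeta)$ are regular-valued and define the same curve $C$, Remark \ref{torsors2} conjugates each $\Phi_\pm(\zeta)$ to the Slodowy section $\bar C(\zeta)$ by some $g_\pm(\zeta)\in G$ and identifies the residual $\sT_{\bar C}$-ambiguity of $g_\pm$ with the $\sT_{\bar C}$-torsors attached to $\Phi_\pm$; under the isomorphism of Proposition \ref{torsors} these are exactly the $W$-invariant $T$-bundles $P_\pm$ on $C$. I would then analyse the quotient: over a cover $\{U_\alpha\}$ of $\oP^1$, a section of $Z_G^\circ(X)$ lying over $(\Phi_-,\Phi_+)$ lifts to triples $(g_-^\alpha,g_+^\alpha,x^\alpha)$ with $x^\alpha$ a local section of $Z_0^W(X)$ over $\bar C$, glued on overlaps by the action \eqref{action-tau} of $\sT_{\bar C}$-valued cocycles $(\tau_1^{\alpha\beta},\tau_2^{\alpha\beta})$. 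Because $\Ad_\tau$ fixes $\bar C$ the gluing preserves $\Phi_\pm$, and the classes $[\tau_1],[\tau_2]\in H^1(\oP^1,\sT_{\bar C})\simeq H^1(C,T)^W$ are precisely $[P_-],[P_+]$.

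Since $\tau_1,\tau_2$ act on the internal factor only through $\tau_1\tau_2^{-1}$, the local sections $x^\alpha$ glue into a lift $\hat C$ of $C$ to $Z_0^W(X)$ whose associated $T$-bundle is twisted by $P_--P_+$. Applying the triviality criterion of Theorem \ref{lift}(a) to this twisted lift — the Euclidean contribution $P^L|_C$ being trivial in the spanning case — turns the existence of $\hat C$ into the requirement
\[
\sum_{i=1}^d \sO_C(m_i)\Bigl[-\sum_{k=0}^{m_i}k H_i^k\Bigr]\otimes\omega_i = P_+-P_-,
\]
which is exactly \eqref{difference}. Running the construction backwards proves sufficiency: a decomposition $\{H_i^k\}$ satisfying \eqref{difference} trivializes the twisted bundle, Theorem \ref{lift}(a) yields the twisted section $\hat C$, and reassembling it with the frames $g_\pm$ produces a section of $Z_G^\circ(X)$ projecting to $(\Phi_-,\Phi_+)$. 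For necessity under the genericity conditions (i)--(ii), I would invoke Theorem \ref{lift}(c): any lift of $C$ to $Z_0^W(X)$ forces an effective decomposition $C\cap H_i=\bigsqcup_k H_i^k$, and the twist computed above identifies the resulting bundle with $P_+-P_-$, forcing \eqref{difference}.

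The main obstacle is the bookkeeping in the twisting step. One must verify that the $\sT\times\sT$-gluing descends to the single twist $[P_-]-[P_+]$ on the internal factor with the correct sign, and that the additive structure on $H^1(\oP^1,\sT_{\bar C})$ is respected even though the fibres of $\sT_{\bar C}$ may be non-connected — precisely the phenomenon of \S\ref{exact}, where morphisms of such group schemes need not be surjective, so that passing between torsor classes requires care. Pinning down that the Euclidean model really contributes a trivial $P^L|_C$, so that Theorem \ref{lift}(a) applies verbatim to the twisted section, is the other delicate point; once these are controlled, the remainder is the torsor dictionary already established in Proposition \ref{torsors} and Remark \ref{torsors2}.
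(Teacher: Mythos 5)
Your proposal is correct and follows essentially the same route as the paper: both reduce the lifting problem to the \v{C}ech cocycle identity $\sP_+-\sP_H-\sP_-=\sT_{\bar C}$ in $H^1(\oP^1,\sT_{\bar C})$ via the frames $g_\pm$ of Remark \ref{torsors2}, the identification of a local lift of $\bar C$ with a local section of the bundle $P_H$ on the right-hand side of \eqref{difference}, and Proposition \ref{torsors}, with Theorem \ref{lift}(c) giving necessity under the genericity hypotheses. The points you flag as delicate (the sign of the twist and the commutation of the transition frames with the internal gluing data) are exactly the ones the paper settles by noting that $g_+(\zeta)\phi(t)g_-(\zeta)^{-1}$ is invariant under the action \eqref{action-tau} and that $(g_-^j)^{-1}g_+^i$ lies in the centraliser of $\bar\mu(x)$.
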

\begin{proof}  Let $\sP_\pm= \Hilb^W(P_\pm)$ be the $\sT_{\bar C}$-torsors  over the corresponding  section $\bar C$  of $\bigl(\h\otimes  \sO_{\oP^1}(2)\bigr)\big/ W$ (cf.\ Proposition \ref{torsors}). Local isomorphisms $\sP_{\pm}\to\sT_{\bar C}$ are given by local sections $g_\pm(\zeta)\in G$ such that $\Ad_{g(\zeta)^{-1}}(\Phi_\pm(\zeta))=\bar C(\zeta)\in \mathscr{S}_{\g}$ (cf.\ Remark \ref{torsors2}). Let $s(\zeta)$ be a local lift of $\bar C(\zeta)$ to $Z_0^W(X)$. As  in the proof of Theorem \ref{lift}, $s$ can be identified with a local section of $\sP_H=\Hilb_\pi^W(P_H)$, where $P_H$ is the $T$-bundle on the right hand side of \eqref{difference}. Therefore $s$ provides a local isomorphism $\phi:\sP_H\to\sT_{\bar C}$. Now observe that $g_+(\zeta)\phi(t)g_-(\zeta)^{-1}$  is invariant for the action \eqref{action-tau}. Therefore  the section $(\Phi_-(\zeta),\Phi_+(\zeta))$ can be lifted to $Z_G^\circ(X)$ if and only if we can find an open cover $\{U_i\}$ of $\oP^1$ with corresponding trivialisations $g_\pm^i,\phi^i$ such that $g_+^i(\zeta)\phi^i(t)g_-^i(\zeta)^{-1}=g_+^j(\zeta)\phi^j(t)g_-^j(\zeta)^{-1}$ on any $U_i\cap U_j$. Since $(g_-^j)^{-1}g_+^i, (g_-^i)^{-1}g_+^j\in Z_G(\bar \mu(x))$, they commute with $\phi^i(t),\phi^j(t)$, and hence the above equality on $U_i\cap U_j$ is equivalent to
$$ g_+^j(\zeta)^{-1}g_+^i(\zeta)\phi^{i}(t)\phi^{j}(t)^{-1}g_-^i(\zeta)^{-1}g_-^j(\zeta)=1,$$
which is precisely the \v{C}ech cocycle condition equivalent to $\sP_+-\sP_H-\sP_-=\sT_{\bar C}$ in $H^1(\oP^1,\sT_{\bar C})$. Owing to Proposition \ref{torsors}, this is equivalent to \eqref{difference}. For the second statement, the proof of Theorem \ref{lift}(c) shows that a local lift of $C$ to $Z_0(X)$ defines a decomposition of each $(H_i)$ as in the statement, and hence a local section of $P_H$. The above argument shows now that \eqref{difference} must hold if $(\Phi_-(\zeta),\Phi_+(\zeta))$ can be lifted to a section of $Z_G^\circ(X)$.
\end{proof}
%%%%%%
\begin{remark} The above proposition shows that the sections of  $Z_G^\circ(X)$ are characterised by an {\em algebraic} condition. Hyperk\"ahler manifolds for which this holds are much simpler than the general ones. In particular, this is a necessary condition for a hyperk\"ahler manifold to arise as a hyperk\"ahler quotient of a finite dimensional vector space.
\end{remark}
%%%%
%%%%
We now address the question of the normal sheaf of such a section. Let us assume that it is torsion-free, hence locally free (cf.\ Proposition \ref{singZ}). 
\par
Denote by $\g^\circ$ the set of regular elements in $\g$ and by $\pi:\g^\circ\to \h/W$ the restriction of  the natural morphism induced by $G$-invariant polynomials. Denote by $Z_{\bar C}$ the submanifold 
of $\g\otimes \sO_{\oP^1}(2)$ defined by $\pi(x)=\bar C$. This manifold has been consider by D'Amorim Santa-Cruz \cite{SC}, who showed that a generic section of $Z_{\bar C}$ has normal bundle isomorphic to $\sO_{\oP^1}(1)^{\oplus (\dim G-\rank G)}$.
Let $s$ be a  section of $Z_G^\circ(X)$, not contained in the singular locus,  and let $(\Phi_-(\zeta),\Phi_+(\zeta))$ be the corresponding pair of quadratic polynomials, with a common $W$-invariant curve $C$ in $\h\otimes\sO_{\oP^1}(2)$. 
We consider deformations of such an $s$ which keep   $C$ fixed. We can arbitrarily deform the $T$-bundle $P_-$ and arbitrarily change the trivialisation of $\ad \Pi$ which yields $\Phi_+(\zeta)$ from $P_+$. Therefore the normal bundle $\sN$ of $s$ fits into the exact sequence
\begin{equation}0\longrightarrow \sN_{\Phi_-/Z_{\bar C}}\oplus \sO^{\oplus \dim G} \longrightarrow \sN \longrightarrow \bigoplus_{i=1}^{\rank G}\sO_{\oP^1}(2d_i)\longrightarrow 0.\label{lesN}\end{equation}
This implies, that if $\Phi_-(\zeta)$ is generic with $\sN_{\Phi_-/Z_{\bar C}}\simeq \sO_{\oP^1}(1)^{\dim G-\rank G}$, then the degree of any rank $1$ direct summand in $\sN$ is nonnegative. Hence, if $\sN\not\simeq \sO(1)^{\oplus 2\dim G}$, then  $\sN$ has a trivial summand, which must be isomorphic to a trivial summand in the first term of \eqref{lesN}, i.e.\ it comes from a conjugation of $\Phi_+(\zeta)$. If we now consider the sequence analogous to \eqref{lesN}, but with $\Phi_-$ replaced by $\Phi_+$, then since this trivial summand does not come from a conjugation of $\Phi_-(\zeta)$, it must be isomorphic to a direct summand in $\sN_{\Phi_+/Z_{\bar C}}$.
Thus $\sN_{\Phi_+/Z_{\bar C}}\not\simeq  \sO_{\oP^1}(1)^{\oplus (\dim G-\rank G)}$ in this case. Conversely, if either $\sN_{\Phi_-/Z_{\bar C}}$ or $\sN_{\Phi_+/Z_{\bar C}}$ is not isomorphic to $\bigoplus \sO(1)$, then it has a direct summand of degree $\geq 2$, and \eqref{lesN} implies that the same is true for $\sN$. We therefore conclude:
%%%%(pseudo-)hyperk\"ahler
\begin{proposition} Let $s$ be a  section of $Z_G^\circ(X)$ with torsion-free normal sheaf and corresponding quadratic $\g$-valued polynomials $\Phi_{\pm}(\zeta)$ and  $W$-invariant curve $C$ in $\h\otimes\sO_{\oP^1}$.  The normal bundle of $s$ is isomorphic to  $\sO(1)^{\oplus 2\dim G}$ if and only if   $\sN_{\Phi_\pm/Z_{\bar C}}\simeq \sO_{\oP^1}(1)^{\oplus (\dim G-\rank G)}$.\hfill $\Box$\label{normal2}
\end{proposition}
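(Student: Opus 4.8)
The plan is to read off the splitting type of $\sN$ from the two exact sequences produced by the construction of $Z_G^\circ(X)$. Keeping the curve $C$ fixed, the deformations of the section $s$ split into deformations of the $T$-bundle $P_-$, the $\dim G$ directions of regauging that recover $\Phi_+$ from $P_+$, and the deformations of the curve itself; this is the content of \eqref{lesN},
\begin{equation*}0\longrightarrow \sN_{\Phi_-/Z_{\bar C}}\oplus \sO^{\oplus \dim G}\longrightarrow \sN\longrightarrow \bigoplus_{i=1}^{\rank G}\sO_{\oP^1}(2d_i)\longrightarrow 0.\end{equation*}
Since the construction is symmetric under interchanging the two copies of $G$ (and hence $\Phi_-\leftrightarrow\Phi_+$), there is a second such sequence $(E_+)$ with $\Phi_-$ replaced by $\Phi_+$. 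First I would record the numerology: $\sN_{\Phi/Z_{\bar C}}$ has rank $\dim G-\rank G$, and its degree is also $\dim G-\rank G$ (a topological invariant, constant over the connected family of sections of $Z_{\bar C}$ and equal to the generic value of \cite{SC}). Combined with $\sum_{i=1}^{\rank G}2d_i=\dim G+\rank G$ (equivalently, the number of positive roots is $\sum_i(d_i-1)$), this yields $\deg\sN=2\dim G=\rank\sN$.

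For the ``only if'' direction I argue by contraposition. If, say, $\sN_{\Phi_-/Z_{\bar C}}$ is not balanced, then, having slope one (rank and degree both $\dim G-\rank G$) but not all summands equal to $\sO_{\oP^1}(1)$, it must contain a summand $\sO_{\oP^1}(a)$ with $a\ge 2$. The inclusion in \eqref{lesN} exhibits this as a sub-line-bundle of $\sN$, so the largest degree appearing in $\sN$ is at least $2$ and $\sN\not\simeq\sO_{\oP^1}(1)^{\oplus 2\dim G}$; the same reasoning applies with $\Phi_+$ via $(E_+)$.

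For the ``if'' direction, assume both $\sN_{\Phi_\pm/Z_{\bar C}}\simeq\sO_{\oP^1}(1)^{\oplus(\dim G-\rank G)}$. I would first show $\sN$ has no negative summand: a quotient $\sN\twoheadrightarrow\sO_{\oP^1}(-k)$ with $k>0$ would vanish on the subbundle $\sO_{\oP^1}(1)^{\oplus(\dim G-\rank G)}\oplus\sO^{\oplus\dim G}$ and on the quotient $\bigoplus_i\sO_{\oP^1}(2d_i)$ of \eqref{lesN}, since there are no nonzero maps from a nonnegative bundle to $\sO_{\oP^1}(-k)$ on $\oP^1$; thus the projection itself would vanish, a contradiction. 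Given nonnegative summands, the equality $\deg\sN=\rank\sN$ forces $\sN\simeq\sO_{\oP^1}(1)^{\oplus 2\dim G}$ unless $\sN$ carries a trivial ($\sO_{\oP^1}$) summand. Such a summand cannot lie in $\sN_{\Phi_-/Z_{\bar C}}=\sO_{\oP^1}(1)^{\oplus(\dim G-\rank G)}$ nor in the curve-moving quotient $\bigoplus_i\sO_{\oP^1}(2d_i)$, so in $(E_-)$ it must originate from the gauge factor $\sO^{\oplus\dim G}$ that recovers $\Phi_+$. Passing to $(E_+)$, where this $\sO^{\oplus\dim G}$ instead records regaugings of $\Phi_-$, the same trivial summand cannot come from that factor, hence it must be a summand of $\sN_{\Phi_+/Z_{\bar C}}$ — contradicting the assumption. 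Therefore $\sN$ is balanced.

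The main obstacle is this last tracing of the trivial summand across the two sequences: it requires identifying the $\sO^{\oplus\dim G}$ direct summand geometrically as the conjugation directions producing $\Phi_+$ in $(E_-)$ and $\Phi_-$ in $(E_+)$, and then arguing that a genuine \emph{direct} summand of $\sN$ — not merely a subsheaf — matches a genuine direct summand on the left of each sequence. Making ``comes from'' precise, i.e.\ that the induced map of the trivial summand into the left term is a split injection onto either an $\sO^{\oplus\dim G}$- or an $\sN_{\Phi_\pm/Z_{\bar C}}$-factor, is where the torsion-freeness of $\sN$ (Proposition \ref{singZ}) and the rigidity of splittings over $\oP^1$ must be invoked with care.
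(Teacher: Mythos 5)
Your proof is correct and follows essentially the same route as the paper's: both rest on the exact sequence \eqref{lesN} and its mirror with $\Phi_+$, the slope-one degree bookkeeping, and the tracing of a putative trivial summand of $\sN$ through the two sequences. The extra details you supply (the computation $\deg\sN=2\dim G$ and the explicit exclusion of negative summands) and the caveat you flag about making ``comes from'' precise are exactly the points the paper's own argument states without elaboration.
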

%%%%%
\begin{remark} D'Amorim Santa-Cruz \cite{SC} has shown that the normal bundle of a section $\Phi$ 
of $\g\otimes\sO_{\oP^1}(2)$ is isomorphic to $\sO_{\oP^1}(1)^{\oplus (\dim G-\rank G)}$ if and only if
the bundle of centralisers $\{(\zeta,\rho)\in \oP^1\times\g;\,[\rho,\Phi(\zeta)]=0\}$ fills $\g$.
This latter condition can be also reformulated as nonvanishing of a determinant: let $p_1,\dots,p_n$ be generators of $\cx[\g]^G$ and view each $dp_i$ as a $G$-equivariant map $\g\to \g^\ast$. Then  $\sN_{\Phi_\pm/Z_{\bar C}}\simeq \sO_{\oP^1}(1)^{\oplus (\dim G-\rank G)}$ if and only if the system of linear equations
$$ \big\langle dp_i\bigl(\Phi(\zeta)\bigr), B\big\rangle=0,\quad i=1,\dots,n, \enskip \zeta\in \oP^1,$$
has only the trivial solution $B=0$.
\label{SC}
\end{remark}
%%%%
%%%
\begin{remark} Toric hyperk\"ahler manifolds with structure torus $T_\oR$ and Euclidean volume growth form a semigroup with respect to the product $X_1\star X_2$ of $X_1$ and $X_2$ defined  as the hyperk\"ahler quotient of $X_1\times X_2$ by the antidiagonal action of $T_\oR$. This semigroup is isomorphic to the semigroup of codimension $3$ flats of the form \eqref{Hflats} such that the cocharacters $\alpha_i$ span $\h$. The definition of $M_G^\circ(X)$ and Proposition \ref{sT-monoid} imply immediately that if $X_1$ and $X_2$ are strongly $W$-invariant toric hyperk\"ahler manifolds with structure torus $T_\oR$ and Euclidean volume growth, then  the  hyperk\"ahler quotient of $M_G^\circ(X_1)\times M_G^\circ(X_2)$ by the diagonal action of $G$ is isomorphic to
 $ M_G^\circ(X_1\star X_2)$. An analogous statement for complex-symplectic quotients is also true, and it does not require the assumption that the cocharacters $\alpha_i$ span $\h$.
 \label{star-product}
\end{remark}

\subsection{Hyperk\"ahler structure of $\Hilb^W_\mu(X)$ via $L$-Nahm's equations\label{10}}
We can use the complex space $M_G^\circ(X)$ to describe the hyperk\"ahler structure of $\Hilb^W_\mu(X)$
analogously to the case $X=T^\ast \cx^d$ in \S\ref{special}.
\par
Let $X$ be an unexceptional $W$-invariant toric hyperk\"ahler manifold with the cocharacters $\alpha_i$ spanning $\h$. 
%Assume that $\Hilb^W_\mu(X)$ is normal for every complex structure so that the twistor space $Z_G^\circ(X)$ defined in the previous subsection exists.
Let $M_K^\circ(X)$ denote the subscheme of the smooth locus of the Douady space  consisting of  real sections of $Z_G^\circ(X)$  with normal sheaf $\sO(1)^{\oplus 2\dim G}$. Every connected component $B$ of $M_K^\circ(X)$ is a smooth hypercomplex manifold  with a natural $K\times K$-invariant pseudo-hyperk\"ahler metric (recall that $K$ denotes the maximal compact subgroup of $G$), and for a generic complex structure the map $B\to M_G^\circ(X)$ which associates to a section its intersection with the fibre of $Z_G^\circ(X)$ is a local analytic isomorphism. The hypercomplex moment map for the action of $K\times K$ is $(\phi_K^-,-\phi_K^+)$, where
\begin{equation}\phi_K^\pm=\frac{1}{2}\bigl(-i(\Phi_\pm)_1,(\Phi_\pm)_0+i(\Phi_\pm)_2, i(\Phi_\pm)_2-(\Phi_\pm)_0\bigr),\label{hk-moment}
\end{equation}
and $(\Phi_\pm)_k$ is the coefficient of $\zeta^k$ in the quadratic polynomial $\Phi_{\pm}(\zeta)$ in Proposition \ref{normal2}.
\par
In Appendix \ref{appendix:B} we describe complete hyperk\"ahler manifolds $N_K^\pm(L)$, defined as moduli spaces of solutions to $L$-Nahm's equations. Choose $c_-,c_+\in (0,1)$ with $c_-+c_+=1$. 
We can form the hyperk\"ahler quotient of $N_K^-(c_-L)\times M_K^\circ(X)\times N_K^+(c_{+}L)$ by $K\times K$. It is a  moduli space $N_X(L;c_-,c_+)$  of  solutions $(T_0(t),T_1(t),T_2(t),T_3(t))$ to $L$-Nahm's equations on the interval $(-c_-,c_+)$, with simple poles at $t=c_{\pm}$ and a discontinuity at $t=0$. The discontinuity is described as follows:   there exists $m\in  M_K^\circ(X)$ such that $\lim_{t\to 0_\pm} (T_1(t),T_2(t),T_3(t)=\phi_K^\pm(m)$, where $\phi_K^\pm$ is given by \eqref{hk-moment} ($T_0$ is continuous at $t=0$).
\par
The moduli space $N_X(L;c_-,c_+)$ is a hypercomplex manifold, with a pseudo-hyperk\"ahler metric on each connected component. Theorem \ref{lift} and Proposition \ref{lift2}, together with the above discussion, imply:
%%%%
%%%%%
\begin{theorem} The manifold $N_X(L;c_-,c_+)$ is isomorphic to an open subset of the Douady space $H_L(X)$ of Definition \ref{g_L}.
\hfill$\Box$ \label{N_X}
\end{theorem}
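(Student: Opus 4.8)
The plan is to construct an explicit bijection between $N_X(L;c_-,c_+)$ and a suitable open subset of $H_L(X)$, and then to promote it to an isomorphism of hypercomplex manifolds by comparing the two section-space (twistor) descriptions. A point of $N_X(L;c_-,c_+)$ is a $\fK$-valued solution $(T_0,T_1,T_2,T_3)$ of $L$-Nahm's equations on $(-c_-,c_+)$, with simple poles at $t=c_\pm$ whose residues form a principal $\su(2)$-triple, and a discontinuity at $t=0$ prescribed by a point $m\in M_K^\circ(X)$ through $\lim_{t\to 0_\pm}(T_1,T_2,T_3)=\phi_K^\pm(m)$. By the correspondence of \S\ref{hk-Hilb^W} between such solutions and flows $P+P^{tL}$ of real $W$-invariant $T$-bundles on a fixed $W$-invariant curve $C\subset\h\otimes\sO_{\oP^1}(2)$ (equivalently, flows of $\sT_{\bar C}$-torsors, Propositions \ref{torsors} and \ref{torsors2}), each such solution determines a single curve $C$, common to both subintervals because equation \eqref{MG1} forces the curves of $\Phi_-$ and $\Phi_+$ to coincide.

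First I would read off the bundle data at the three distinguished times. By Proposition \ref{poles}, the simple pole with principal-triple residue at $t=-c_-$ (resp.\ $t=c_+$) is equivalent to the flow of $T$-bundles converging to the trivial bundle there. Normalising the flow so that $t$ measures the flow parameter in the direction $P^L$ (which absorbs the scalings $c_\pm L$), the left solution runs from the trivial bundle at $t=-c_-$ to the bundle $P_-=P^{c_-L}|_C$ at $t=0_-$, while the right solution runs from $P_+=-P^{c_+L}|_C$ at $t=0_+$ to the trivial bundle at $t=c_+$. On the other hand, the boundary values $\phi_K^\pm(m)$ of \eqref{hk-moment} recover precisely the quadratic polynomials $\Phi_\pm(\zeta)$ attached to $m$, hence the bundles $P_\pm$; and since $m\in M_K^\circ(X)$ lifts to a section of $Z_G^\circ(X)$, Proposition \ref{lift2} gives the discontinuity relation $P_+-P_-=P_H$, where $P_H=\sum_{i}\sO_C(m_i)\bigl[-\sum_k kH_i^k\bigr]\otimes\omega_i$ is the bundle of \eqref{trivial-bundle}. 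Combining the two flow computations with this relation yields $P_H=-P^L|_C$, i.e.\ $P^L|_C+P_H$ is trivial on $C$, which is exactly the hypothesis of Theorem \ref{lift}(a); reality of the lift follows from Theorem \ref{lift}(b), since the principal-triple residues force the real structure to interchange $H_i^k$ and $H_i^{m_i-k}$. Thus $C$ admits a real $W$-equivariant lift $\hat C$, a section of $Z_L^W(X)$. Moreover, $m$ has normal bundle $\sO(1)^{\oplus 2\dim G}$ by construction, so Proposition \ref{normal2} forces $\sN_{\Phi_\pm/Z_{\bar C}}\simeq\sO_{\oP^1}(1)^{\oplus(\dim G-\rank G)}$, and this in turn guarantees via Proposition \ref{singZ} that $\hat C$ has normal sheaf $\bigoplus\sO_{\oP^1}(1)$, i.e.\ defines a point of $H_L(X)$. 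Conversely, starting from a generic section of $H_L(X)$, Theorem \ref{lift}(c) produces the decomposition $C\cap H_i=\bigsqcup_k H_i^k$ and hence $P_H$ with $P^L|_C+P_H$ trivial; splitting this triviality symmetrically as $P_-=P^{c_-L}|_C$, $P_+=-P^{c_+L}|_C$ determines $m\in M_K^\circ(X)$ through Proposition \ref{lift2}, and solving $L$-Nahm's equations on $(-c_-,0)$ and $(0,c_+)$ with these boundary bundles and principal-triple poles (existence and completeness being the content of Appendix B) recovers a point of $N_X(L;c_-,c_+)$. These assignments are mutually inverse, identifying $N_X(L;c_-,c_+)$ with the open subset of $H_L(X)$ of sections with regular $\Phi_\pm$ and generic $C$.

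To upgrade this bijection to a hypercomplex isomorphism I would argue at the level of twistor spaces: for generic $\zeta$ the map sending a section to its intersection with the fibre is a local biholomorphism in each complex structure (Definition \ref{g_L} and the analogous statement for $M_K^\circ(X)$), so the correspondence is compatible with the family of complex structures, and the $\sO_{\oP^1}(2)$-valued symplectic forms on $Z_L^W(X)$ and on $Z_G^\circ(X)$ match under the quotient description \eqref{action-tau}. This should reduce the claim to the already-settled cases $X=T\times\h$ and $X=\cx^{2d}$ of \S\ref{special}, glued along the hyperk\"ahler quotient by $K\times K$. The main obstacle I expect is the metric identification: one must verify that the natural $L^2$-metric on $N_X(L;c_-,c_+)$ agrees with the pseudo-hyperk\"ahler metric that $H_L(X)$ inherits from the twistor construction, and in particular that the hyperk\"ahler quotient by $K\times K$ is precisely the operation of imposing the matching conditions at $t=0$ on the section space built from $M_K^\circ(X)$. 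A secondary technical point is to delimit the open subset sharply, checking that the genericity hypotheses (i)--(ii) of Theorem \ref{lift}(c), the regularity of $\Phi_\pm$ underlying $M_K^\circ(X)$, and the completeness boundary conditions of Appendix B all cut out the same open locus on the two sides.
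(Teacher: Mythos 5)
Your proposal follows essentially the same route as the paper, whose proof of Theorem \ref{N_X} is simply the observation that it follows from Theorem \ref{lift} and Proposition \ref{lift2} together with the preceding discussion; you have correctly expanded that into the chain: poles with principal residues $\leftrightarrow$ trivial bundle (Proposition \ref{poles}), the flow $P+P^{tL}$ giving $P_-=P^{c_-L}|_C$ and $P_+=-P^{c_+L}|_C$, the matching condition $P_+-P_-=P_H$ from Proposition \ref{lift2}, hence triviality of $P^L|_C+P_H$ as required by Theorem \ref{lift}(a), with the normal-sheaf condition handled via Propositions \ref{normal2} and \ref{singZ}. The sign bookkeeping is consistent with the paper's conventions, and the technical caveats you flag (metric identification, delimiting the open locus via the genericity hypotheses of Theorem \ref{lift}(c)) are precisely the points the paper leaves implicit.
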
 
%%%%%
The isomorphism here is understood as a diffeomorphism which preserves the hypercomplex structure and is an isometry with respect to the pseudo-hyperk\"ahler metric on each connected component.
%%%%
\begin{remark} The choice of the splitting $1=c_-+c_+$  plays no role for the metric (cf.\ \S\ref{special}). One can therefore always choose $c_-=c_+=\frac{1}{2}$.
\end{remark}
%%%%%
\begin{remark} Proposition \ref{lift2} implies that if $m$ is a point of $H_L(X)$ such  that the corresponding curve $C$ satisfies the genericity conditions of Theorem \ref{lift}(c), then $C$ can be lifted to  $Z_G^\circ(X)$. Let $\Phi_-,\Phi_+$ be the quadratic $\g$-valued polynomials corresponding to $T$-bundles $P^{c_-L}|_C$ and $P^{c_+L}|_C$. It follows from Proposition \ref{normal2} that for a generic $C$ the normal sheaf of the lift of $C$ splits as $\bigoplus\sO(1)$ as soon as $\sN_{\Phi_\pm/Z_{\bar C}}\simeq \sO_{\oP^1}(1)^{\oplus (\dim G-\rank G)}$. In this case $m$ belongs to the open set in the above theorem.\end{remark}
%%%%

\section{Completions for strongly $W$-invariant hypertoric varieties\label{completions}}

As mentioned in the introduction we expect the hyperk\"ahler metrics on $\Hilb^W_\mu(X)$ to be complete (in a suitable sense if $\Hilb^W_\mu(X)$ is a stratified manifold), at least in the strongly $W$-invariant case. We are not able to verify this directly from the definition of $N_X(L;c_-,c_+)$ since $M_K^\circ(X)$ is never complete. For example, if $X=\oH^{d}$, then $M_{U(d)}^\circ(X)$ is only an open subset of $\Mat_{d,d}(\oH)$. We would like  to replace $M_K^\circ(X)$ by a larger complete $K\times K$-invariant (stratified) hyperk\"ahler manifold $M_K(X)$ such that the hyperk\"ahler quotient $N_K^-(c_-L)\times M_K(X)\times N_K^+(c_{+}L)$ by $K\times K$ is still isomorphic to  $N_X(L;c_-,c_+)$. In this section we shall show that there is a natural candidate for $M_K(X)$.
%%%%%
%%%%%
\subsection{The hyperspherical variety}
\begin{theorem} If $X$ is a strongly $W$-invariant hypertoric variety, then the ring $A_G(X)=\cx[G\times \Hilb_\mu^W(X)\times G]^{\sT\times\sT}$  is finitely generated.\label{global-hol}
\end{theorem}
We begin the proof of Theorem \ref{global-hol} with some general remarks.
The coordinate ring of an affine hypertoric variety $X$, described in \S\ref{htv}, can be viewed as a subring of 
$$ \cx[T\times \h]\simeq \cx[t_\xi,z_1,\dots,z_n]/(t_\xi t_{\xi^\prime}-t_{\xi+\xi^\prime}, t_0-1),\quad \xi\in \sX^\ast(T),
$$
generated by the $z_i$ and $t_\xi\prod_{i=1}^d(\langle\alpha_i,z\rangle-\lambda_i)^{\langle\alpha_i,\xi\rangle_+}$, $\xi \in \sX^\ast(T)$.
If $X$ is strongly $W$-invariant, then this embedding is $W$-equivariant. We therefore have a $W$-equivariant morphism 
$T\times \h\to X$, which induces (cf.\ Remark \ref{functorial}) a morphism $W\text{-}\Hilb_\mu(T\times \h)\to W\text{-}\Hilb_\mu(X)$. 
%A point of $W\text{-}\Hilb_\mu(X)$ or $W\text{-}\Hilb_\mu(T\times \h)$ corresponds to a $W$-equivariant homomorphism from the respective coordinate ring to the deformed ring of coinvariants $\cx[\h]/I_c$ (Remark \ref{R_c}) which restricts to the tautological projection on $\cx[\h]$. 
We therefore obtain a canonical homomorphism $\cx[W\text{-}\Hilb_\mu(X)]\to\cx[W\text{-}\Hilb_\mu(T\times \h)]$, i.e.\ from $\cx[\Hilb_\mu^W(X)]$ to $\cx[\sZ_G]$, where $\sZ_G$ is the universal centraliser \eqref{ucentr} for $G=G_{\scriptscriptstyle{T,W}}$. This homomorphism is also injective since the varieties are irreducible and isomorphic on an open dense set.
\begin{example} The coordinate ring of the $A_1$ surface $xy+z^2=0$ embeds $\oZ_2$-equivariantly into the coordinate ring $\cx[t,s,z]/(ts-1)$ of $\cx^\ast\times \cx$ via $z\mapsto z$, $x\mapsto sz$, $y\mapsto -tz$. The universal centraliser $\sZ_{{\rm SL}_2(\cx)}$ is the $D_1$-surface $a_0^2-a_1^2 c=1$, and the corresponding embedding of $\cx\bigl[\Hilb^{\oZ_2}_\mu(A_1)]\simeq \cx[b_0,b_1,z]/(b_0^2-b_1^2c+c)$  into $\cx[\sZ_{{\rm SL}_2(\cx)}]$ is given by $c\mapsto c$, $b_0=a_1c$, $b_1=a_0$.\label{embeddA_1}
\end{example}
%%%%
In the case $X=T\times \h$, $M_G^\circ(X)\simeq G\times \g^\circ$, where $\g^\circ$ denotes the set of regular elements. Since the codimension of $\g\setminus\g^\circ$ in $\g$ is two, any regular function on $M_G^\circ(X)$ extends to $G\times \g\simeq T^\ast G$ and we conclude that $M_G( T\times \h)\simeq T^\ast G$. This can be also interpreted as follows: we have a natural ring homomorphism
$$\psi:\cx[T^\ast G]\longrightarrow \cx[G]\otimes \cx[T^\ast G]\otimes \cx[G]\longrightarrow \cx[G]\otimes \cx[\sZ_G]\otimes \cx[G]\simeq \cx[G\times \sZ_G\times G],
$$
where the first map is given by the $G\times G$-action and the second one by the closed immersion $\sZ_G\hookrightarrow T^\ast G$. The statement that $M_G(X)\simeq T^\ast G$ is equivalent to the image of $\psi$ being equal to the subring $\cx[G\times \sZ_G\times G]^{\sT\times\sT}$. For a general $X$,  $\cx[\Hilb_\mu^W(X)]$ is a subring of $\cx[\sZ_G]$, and hence we can conclude that $A_G(X)$ is naturally a subring of $\cx[T^\ast G]$.
\begin{lemma} The ring $A_{{\rm SL}_2(\C)}(X)$ is finitely generated in the case when $X$ is an $A_{2k-1}$-surface.\label{fingenA}
\end{lemma}
\begin{proof} Let $X$ be given by the equation $xy+\prod_{j=1}^k(z^2-\lambda_j^2)=0$. Then $\Hilb^{\oZ_2}_\mu(X)$ is the $D_{k+1}$-surface (Example \ref{D_2}) with equation $b_0^2-b_1^2c+\prod_{j=1}^k(c-\lambda_j^2)=0$. As in Example \ref{embeddA_1} we obtain an embedding $\cx\bigl[\Hilb_\mu^{\oZ_2}(X)]$ into $\cx[\sZ_{{\rm SL}_2(\cx)}]$ given by $c\mapsto c$, $b_0$ (resp.\ $b_1$) equal to the even part of $(a_0+a_1z)\prod_{j=1}^k(z-\lambda_j)$ (resp.\ $z^{-1}\cdot$[odd part of $(a_0+a_1z)\prod_{j=1}^k(z-\lambda_j)$]), where we set $z^2=c$. The ring $A_{{\rm SL}_2(\C)}(X)$ is then the ring of the subvariety of $ \ssl_2(\cx)\times \Mat_{2,2}(\C)\times  \ssl_2(\cx)$
consisting of $(\Phi_-,A,\Phi_+)$ such that $ \det\Phi_-=\det\Phi_+$, $\Phi_-A=A\Phi_+$, and $\det A= -\prod_{j=1}^k(c-\lambda_j^2)$ where $c=-\det\Phi_-$.
\end{proof}
%%%%%
Let $\tilde F\subset \h$ and $F\subset \h/W$ be the subsets defined in the proof of Theorem \ref{normal}. Denote by $\delta\in \cx[\h]^W=\cx[\g]^G$ the polynomial defining the ramification divisor of the natural morphism $\h\to\h/G$.
Let $\g^{\rm rs}$ denote the subset of regular semisimple elements, i.e.\ the complement of $(\delta)$, and  by $\g^{\bullet}$ the subset of regular elements such that their semisimple part can be conjugated to the complement of $\tilde{F}$. We have
$$\g^{\rm rs}\subset \g^\bullet\subset \g^\circ.$$
Let $M_G^\bullet(X)=\phi^{-1}(\g^{\bullet}\times \g^{\bullet})\subset M_G^\circ(X) $. The codimension of the complement of $M_G^\bullet(X)$ in $M_G^\circ(X)$ is $\geq 2$, and since $M_G^\circ(X)$ is normal, it suffices to prove that the ring of restrictions to $M_G^\bullet(X)$ of functions in $A_G(X)$ is finitely generated.
\par
Let $R_1,\dots,R_s$ be distinct $W$-orbits of roots of $\g$. Each such orbit defines a polynomial $p_i\in\cx[\g]^G$ and $\delta=\prod_{i=1}^sp_i$. At any point of $M_G^\bullet(X)$ at most one $p_i$ vanishes (and only to order $1$), and hence $M_G^\bullet(X)=\bigcup_{i=1}^s M_G^{\bullet,i}(X)$, where $M_G^{\bullet,i}(X)$ is the subset where $\prod_{j\neq i} p_j\neq 0$. Since being finitely generated is a local property (cf.\ \cite[Prop. 6.3.3]{EGA}), it is sufficient to show that the ring $A_G^{i}(X)$ of restrictions of elements of $A_G^{i}(X)$ to $M_G^{\bullet,i}(X)$ is finitely generated for each $i$.

Now consider the restrictions of the polynomials $p_j$ to $\h$ and the subset $U_i$ of $\mu^{-1}(\h\setminus \tilde F)$ where $\prod_{j\neq i} p_j\neq 0$. 
As in the proof of Theorem \ref{normal}, $U_i$ is isomorphic to $V_i\times T^\ast T^\prime$, where $T^\prime$ is a subtorus of corank $1$ or $2$ and $V_i$ is an open subset of a $\Z_2$-invariant hypertoric variety $X_0$ with structure torus equal to the maximal torus of ${\rm SL}_2(\C)$ or ${\rm GL}_2(\C)$. Moreover, $V_i$ is the complement of $\mu^{-1}(0)$ and $X_0$ in the ${\rm GL}_2(\C)$-case is a $\Z_2$-quotient of the product of an $A_{2k-1}$-variety and $\C\times \C^\ast$. Let $H$ be the group $G_{{\scriptscriptstyle T, W}}$
for $X_0\times  T^\ast T^\prime$, i.e.\ $H= {\rm SL}_2(\C)\times T^\prime$ or $H= {\rm GL}_2(\C)\times T^\prime$. Lemma \ref{fingenA} implies that
$A_H(X_0\times  T^\ast T^\prime)$ is finitely generated. Now observe that the isomorphism $U_i\simeq V_i\times T^\ast T^\prime$ means that we have a finite branched covering
\begin{equation}G\times_HM_H^\circ(X_0\times  T^\ast T^\prime)\times_H G\longrightarrow  M_G^{\bullet,i}(X),\label{MHMG}\end{equation}
which induces an injective homomorphism $$A_G^{i}(X)\longrightarrow  R=\bigl(\cx[G]\otimes 
A_H(X_0\times  T^\ast T^\prime)\otimes \C[G]\bigr)^{H\times H}.$$
Since $H$ is reductive, $R$ is finitely generated, and  since the morphism \eqref{MHMG} is finite, $R$ is finitely generated as a module over $A_G^{i}(X)$. The Artin-Tate lemma implies now that 
$A_G^{i}(X)$ is finitely generated and the proof of Theorem \ref{global-hol} is complete.

%Just as $\Hilb_\pi^W(X)$ is isomorphic to $X/W$ away from $(\delta)$, we construct a ``codimension 1" approximation to $M_G(X)$ as follows: we consider first the quotient of $G\times X\times G$ by $T\times T$ acting (freely and properly) as follows (cf.\ \eqref{action-tau}): \begin{equation*}(t_1,t_2).(g_1,x,g_2)=\bigl(g_1t_1^{-1}, (t_1t_2^{-1}).x, t_2g_2\bigr).\label{action-t}\end{equation*}Both $G$ and $X$ admit actions of group extensions of $W$ by $T$ (the normaliser of the maximal torus and the semidirect product, respectively), which induce an action of $W\times W$ on $G\times X\times G/T\times T$. The quotient $M_G^1(X)$ is a normal affine variety, equipped with a natural $G\times G$-equivariant morphism $\psi=(\psi_-,-\psi_+):M_G^1(X)\to\g\oplus\g$ given by $$\psi[g_1,x,g_2]=\bigl(\Ad_{g_1}(\mu(x), \Ad_{g_2}(\mu(x)\bigr).$$

\begin{definition} Let $X$ be a strongly $W$-invariant hypertoric variety with structure torus $T$ and $G=G_{{\scriptscriptstyle T, W}}$. The affine variety $M_G(X)=\Spec A_G(X)$ will be called the {\em hyperspherical variety} associated to $X$.
\end{definition}
Thus $M_G(X)$ is an affine completion of $M_G^\circ(X)$. The following properties of $M_G(X)$ are obvious from the definition:
\begin{proposition} $M_G(X)$ is a normal Poisson variety of dimension $2\dim G$ with a Hamiltonian action of $G\times G$. Moreover $\cx[M_G(X)]^{G\times G}\simeq \cx[\h]^W$, so that $M_G(X)$  is {\em coisotropic} in the sense of Losev \cite{Losev}.\qed
\end{proposition}
%%%%%
In the case when the cocharacters $\alpha_k$ span $\h$ we can  apply the construction $M_G^\circ(X)\mapsto A_G(X)\mapsto M_G(X)$ to the fibres of the twistor space $Z^\circ_G(X)$ and obtain thus a new twistor space $Z_G(X)$, the fibres of which are affine varieties. We denote by $M_K(X)$ denote the subscheme of the smooth locus of the Douady space  of $Z_G(X)$ consisting of  real sections  with normal sheaf $\sO(1)^{\oplus 2\dim G}$. The hyperk\"ahler quotient of 
$N_K^-(c_-L)\times M_K(X)\times N_K^+(c_{+}L)$ by $K\times K$ is still isomorphic to  $N_X(L;c_-,c_+)$.
%%%%%
\begin{proposition} Suppose that a connected component $M$ of $ M_K(X)$ is a complete (stratified) hyperk\"ahler manifold. Then the corresponding connected component of $N_X(L;c_-,c_+)$ is also complete. If, in addition, $M$  admits a $K\times K$-invariant K\"ahler potential for one of the complex structures, then $N_X(L;c_-,c_+)$ is isomorphic to $\Hilb_\mu^W(X)$ with respect to this complex structure. \label{everything}
\end{proposition}
\begin{proof} The first statement follows from the completeness of $M$ and of $N_K^-(c_-L)$, $N_K^+(c_{+}L)$. The second statement  follows from \cite[Theorem 1]{Bi1}.
\end{proof}
%%%%
%%%%
\begin{remark} The case when the cocharacters $\alpha_k$ do not span $\h$ is also easily included in the above scheme. According to Remark \ref{W-product}, $X$ is then the quotient by a finite abelian group $\Gamma$ of the product of $T^\ast T^\prime$ and a hypertoric variety $X^{\dprime}$ with structure torus $T^{\dprime}$ such that the defining cocharacters span $\h^{\dprime}=\Lie(T^{\dprime})$.  Moreover $W=W^\prime\times W^{\dprime}$ with
$W^\prime$ acting trivially on $T^{\dprime}$ and  $W^{\dprime}$ acting trivially on $T^{\prime}$. 
Since $\Gamma$ acts freely on $T^\ast T^\prime\times X^{\dprime}$, we obtain
$$\Hilb^W_\pi(X)\simeq \Hilb^W_\pi\bigl(T^\ast T^\prime\times X^{\dprime}\bigr)/\Gamma\simeq \bigl(\Hilb^{W^\prime}_{\pi^\prime} (T^\ast T^\prime)\times\Hilb^{W^{\dprime}}_{\pi^{\dprime}}( X^\prime)\bigr)/\Gamma.
$$
Let $G^\prime= G_{{\scriptscriptstyle T^\prime, W^\prime}}$ and $G^{\dprime}= G_{{\scriptscriptstyle T^{\dprime}, W^{\dprime}}}$.
Then $M_G(X)$ is the quotient  of $T^\ast G^\prime\times M_{G^{\dprime}}(X^{\dprime})$ by $\Gamma$.
The decomposition $\h_\oR=\h^\prime_\oR\oplus \h_\oR^{\dprime}$ yields a decomposition of the automorphism $L$ as $ L^\prime \oplus L^{\dprime}$. The hyperk\"ahler manifold $N_{X^{\dprime}}(L^{^{\dprime}};c_-,c_+)$ is obtained as above from $M_{K^{\dprime}}(X^{\dprime})$. We also have (cf.\ \S\ref{special}) the moduli space $N^\prime$ of solutions to $L^\prime$-Nahm equations describing the complete hyperk\"ahler metric on $\Hilb^{W^\prime}_{\pi^\prime} (T^\ast T^\prime)$.  The quotient by $\Gamma$ of   $N^\prime\times N_{X^{\dprime}}(L^{\dprime};c_-,c_+)$ is isomorphic to an open subset of the Douady space $H_L(X)$.
\label{notspanning}
\end{remark}

\subsection{Minuscule representations\label{minuscule}}
For any strongly $W$-invariant hypertoric variety $X$ we can find a set $S$ of characters of $T$ such that
\begin{itemize}
\item[(i)] $\cx[\h]$ and $x_\xi$, $\xi\in S $, generate $\cx[X]$;
\item[(ii)] $S$ equals to the set of weights of distinct irreducible representations $V_1,\dots, V_l$ of $G$.
\end{itemize}
We remark that the representation $\bigoplus_{i=1}^l V_i$ of $G$ is faithful (since $S$ generates $\sX^\ast(T)$) and that $G$ is a closed subvariety of $ \bigoplus_{i=1}^l\End(V_i)$ (since the $G$-orbit of ${\rm Id}\in \bigoplus_{i=1}^l\End(V_i)$ under the left multiplication is closed, owing to the Hilbert-Mumford criterion). Let $I(G)$ be the ideal of $G\subset \bigoplus_{i=1}^l\End(V_i)$. Observe that $T^\ast G$ can be viewed as the subvariety $\{(\Phi_-,\Phi_+,g);\, \Phi_+=g^{-1}\Phi_-g\}$ of $\g\times\g\times G$   ($\Phi_-,-\Phi_+$ are then the moment maps for the left and right action of $G$ with respect to the standard symplectic structure on $T^\ast G$).  It follows that $T^\ast G$ is a subvariety of $\g\oplus\g\oplus  \bigoplus_{i=1}^l\End(V_i)$ cut out by the ideal generated  by $I(G)$ and $\Phi_-A_i-A_i\Phi_+$, $A_i\in \End(V_i)$, $i=1,\dots,l$.
 \par
 We now embedd $X$ into $ \h \oplus\h \oplus \bigoplus_{i=1}^lZ(\End(V_i)^T)$
 via 
$$\bigl(z,(x_\xi)_{\xi\in S}\bigr)\longmapsto \bigl(z, z,\bigoplus_{i=1}^lx_\xi\Id_{V_i^\xi}\bigr),$$
and define $M_S(X)$ to be the Zariski closure of $(G\times G).X$ in $\g\oplus\g\oplus  \bigoplus_{i=1}^l\End(V_i)$, where the action of $G\times G$ is given by
$$(g,h).(\Phi_-,\Phi_+,A_1,\dots, A_l)=\bigl(\Ad_{g}(\Phi_-), \Ad_{h}(\Phi_+), gA_1h^{-1}, \dots, gA_lh^{-1}\bigr).$$
%%%%%
\begin{theorem} If each $V_i$ is minuscule, then  $M_S(X)$ is isomorphic to the hyperspherical variety $M_G(X)$.\label{minuscul}
\end{theorem}
\begin{proof}
We need to recall some facts about covariants.
Let $V$ be a finite-dimensional $W$-module. Then the space $\sP^W(\h,V)$ of $W$-equivariant polynomial maps from $\h$ to $V$ is a free graded $\cx[\h]^W$-module of rank $\dim V$ \cite{Che}. Similarly, if $V$ is a finite-dimensional $G$-module, then the space  $\sP^G(\g,V)$ of $G$-equivariant polynomial maps from $\g$ to $V$ is a free graded $\cx[\g]^G$-module of rank $\dim V^T$ (i.e.\ the dimension of the zero weight space). 
%; this result seems to stem from Kostant, but we could not pinpoint the reference). 
Chevalley's theorem implies that the restriction $\cx[\g]\to \cx[\h]$ induces an isomorphism $\cx[\g]^G\simeq \cx[\h]^W$. We shall denote this ring by $J$. For an arbitrary $V$, the restriction induces an injective homomorphism $r:\sP^G(\g,V)\to \sP^W(\h,V^T)$ between free graded $J$-modules of the same rank. Broer \cite{Broer} has shown that $r$ is an isomorphism if and only if $V$ is {\em small}, meaning that $2\phi$ is not a weight of $V$ for any dominant root $\phi$ of $\g$. 
%%%%%
We now consider the $G$-module $V^\ast \otimes V\simeq \End(V)$. In this case the modules of covariants $\sP^G\bigl(\g,\End(V)\bigr)$ and $\sP^W\bigl(\h,\End(V)^T\bigr)$  are associative algebras, since $\End(V)$ and $\End(V)^T$ are. In addition, $r$ is an algebra homomorphism.
If $\Lambda\subset \sX^\ast(T)$ are the weights of $V$ and $V\simeq \bigoplus_{\xi\in \Lambda} V^\xi$ is the weight space decomposition, then 
$$ \End(V)^T\simeq \bigoplus_{\xi\in \Lambda} \End(V^\xi).$$
Thus the centre of $\End(V)^T$ is $L^0=\bigoplus_{\xi\in \Lambda}\cx\cdot\Id_{V_\xi}\simeq \cx^\Lambda$. Observe that $\sP^W(\h,L^0)$ is the centre of the algebra $ \sP^W(\h,\End(V)^T)$. 
 %%%%
 \begin{lemma} Let $\rho:\g\to \End(V)$ be a minuscule representation of the Lie algebra of $G$. Any covariant $p\in\sP^W(\h,L^0)$ is of the form $r\bigl(\tilde p(\rho)\bigr)$, where $\tilde p$ is a polynomial in one variable of degree at most $|\Lambda|-1$.\label{rrestriction}
 \end{lemma}
 \begin{proof} If $V$ is minuscule, then $\End(V)$ is small \cite[\S 2]{Reeder} and, consequently, the restriction homomorphism $r:\sP^G\bigl(\g,\End(V)\bigr)\to  \sP^W\bigl(\h,\End(V)^T\bigr)$ is an isomorphism. Clearly, restrictions of powers of $\rho$ belong to 
 $\sP^W(\h,L^0)$. Since both $\sP^W(\h,L^0)$ and the polynomial ring in $\rho$ are free $J$-modules of the same rank, the claim follows.
 \end{proof}
 %%%
 Let now $S$ be a set of characters as at the beginning of the subsection, with each $V_i$ minuscule. 
For each of the representations $V_i$, $i=1,\dots,l$, consider the following $W$-equivariant map $\h\to Z(\End(V_i)^T)$:
\begin{equation} p_i(z)=\bigoplus_{\xi\in \Lambda_i}\prod_{k=1}^d(\langle\alpha_k,z\rangle-\lambda_k)^{\langle\alpha_k,\xi\rangle_+}\Id_{V_i^\xi},\label{p_k(z)}\end{equation}
where $\Lambda_i$ denotes the set of distinct weights of $V_i$. Lemma \ref{rrestriction} implies that each $p_i(z)$ is of the form $r(\tilde p_i(\rho_i))$ where $\tilde p$ is a polynomial in one variable. This, together with the discussion at the beginning of the subsection, shows that $M_S(X)\subset \g\oplus\g\oplus  \bigoplus_{i=1}^l\End(V_i)$ is cut out by equations $\Phi_-A_i=A_i\Phi_+$ and
\begin{equation}F\bigl(\tilde p_1(\Phi_-^{-1})A_1,\dots \tilde p_l(\Phi_-^{-1})A_l\bigr)=0, \quad \forall F\in I(G).\label{eqsM_S}\end{equation}
%%%%%
The ideal $I(G)$ can be described by the following modification of a theorem of Chevalley  \cite[Theorem 4.27]{Milne} to the case of
algebraic monoids: 
%%%%%
\begin{lemma} Let $G$ be a closed subgroup of a reductive and normal algebraic monoid $M$ with a zero (i.e.\ an element $0\in M$ such that $0m=0=m0$ $\forall m\in M$.) Then there exists a finite-dimensional representation $L$ of $M$ such that $G$ is exactly the stabiliser of an element $\sigma\in L$. 
\end{lemma}
\begin{proof} Consider the action of $M$ on itself by left multiplication. Then the stabiliser of the closed subvariety $G$ is $G$, i.e.\ $G$ is the stabiliser of the ideal $I(G)\subset \cx[M]$. Choose a $G$-invariant finite-dimensional subspace $U$ of $I(G)$ which generates $I(G)$. Then $U$ is contained in an $M$-invariant finite-dimensional subspace $\tilde U$ of $\cx[M]$. This holds since we have a decomposition $\cx[M]=\bigoplus \End(V_\lambda)$ where the direct sum is over all (finite-dimensional) irreducible  representations of $M$ \cite{Vin}. 
Since $G$ is the stabiliser of $U$, it is also the stabiliser of the line $l=\Lambda^n U$ in the representation $L=\Lambda^n \tilde U$ of $M$, where $n=\dim U$. 
Let $L=L^\prime \oplus L^0$, where $L^0$ is the trivial subrepresentation, and let $\sigma=\sigma^\prime + \sigma^0$ be a generator of $l$ with $\sigma^\prime \in L^\prime$ and $\sigma^0\in L^0$. Observe that $0\in M$ acts as $0$ on $L^\prime$: any irreducible summand $V$ of $L^\prime$ is also an irreducible representation of the (reductive) group $M^\ast$ of units of $M$ and any eigenspace of $0$ in $V$ is  $M^\ast$-invariant. Therefore $\sigma^0\neq 0$, since otherwise $0$ stabilises $l$, i.e.\ $0\in G$.
Hence $G$ is exactly the stabiliser of $\sigma^\prime$ in $L^\prime$.
\end{proof}
%%%
We apply the above lemma to the monoid $M=\prod_{i=1}^l\End(V_i)$. Let $L$ and $\sigma$ be as in the lemma, so that $G$ is a subvariety of $\prod_{i=1}^l \End(V_i)$ consisting of those $(A_1,\dots,A_l)$ which satisfy  $(A_1,\dots,A_l)\sigma=\sigma$.
%%%%%%
It follows that $M_S(X)$ is equal to a subvariety of $ \g\oplus\g\oplus  \bigoplus_{i=1}^l\End(V_i)$ consisting of $(\Phi_-,\Phi_+,A_1,\dots,A_l)$ such that $\Phi_-A_i=A_i\Phi_+$ for each $i$, and 
\begin{equation} 
(A_1,\dots,A_l)\sigma=\bigl(\tilde p_1(\Phi_-),\dots,\tilde p_l(\Phi_-)\bigr)\sigma .
\label{M_S-eq}\end{equation}
%%%%
With this description we can prove:
\begin{lemma} The dimension of every fibre of the projection $M_S(X)\to \g$, $(\Phi_-,\Phi_+,A_1,\dots,A_l)\mapsto \Phi_-$, is equal to $\dim G$. \label{dimfibre}
\end{lemma}
\begin{proof} Let $B=(B_1,\dots,B_l)$ be a fixed element of $ \bigoplus_{i=1}^l\End(V_i)$  and consider the subvariety $F_B$ of $ \bigoplus_{i=1}^l\End(V_i)$ consisting of $(A_1,\dots,A_l)$  which satisfy \begin{equation}(A_1,\dots,A_l)\sigma=(B_1,\dots,B_l)\sigma.\label{ABsigma}\end{equation} It is enough to show  that $\dim F_B=\dim G$ for every $B$. This is certainly true for invertible $B$, so that $\dim F_B\geq \dim G$ for every $B$. To prove the opposite inequality it is enough, owing to the semicontinuity of dimension and the invariance of \eqref{ABsigma} under scaling, to consider $B=0$.   The group preserving the line $\langle \sigma\rangle\subset L$ is $G^\prime=\cx^\ast\times G$
and, hence, $F_0=\{(A_1,\dots,A_l);\,(A_1,\dots,A_l)\sigma=0\}$ is equal to the variety of those $(A_1,\dots,A_l)$  which map to $\ol{G^\prime}\setminus G^\prime\subset\End(L)$. Since the dimension of the latter variety is $\dim G$, we have $\dim F_0=\dim G$. 
\end{proof}
%%%%%
\begin{lemma} $M_S(X)$ is normal.\label{M_S-normal}
\end{lemma}
\begin{proof} The strategy is the same as in \S\ref{normality}. We observe that $M_S(X)$ is isomorphic to $M_G^{\rm ss}(X)=G\times_T\times X\times_T G$ away from $(\delta)$ and Proposition \ref{tbu} implies that  
 $M_S(X)$ is an affine blow up of $M_G^{\rm ss}(X)$ transverse to $(\delta)$. We consider now again the subset $\g^\bullet\subset \g$ defined in the proof of Theorem \ref{global-hol} and consider the open subset $U$ of $M_S(X)$ defined by $\Phi_-,\Phi_+\in\g^\bullet$. Lemma \ref{dimfibre} implies that Theorem \ref{S_2} holds for $j:U\to M_S(X)$. On the other hand, using the description in \eqref{M_S-eq} and local models as in the proof of Theorem \ref{global-hol} shows $U$ is isomorphic to the corresponding open subset of $M_G(X)$. In particular $U$ is normal, and the argument of Braverman, Finkelberg, and Nakajima, recalled at the beginning of \S\ref{normality}, shows that $M_S(X)$ is normal. 
 %Since the codimension of $U$ is $\geq 2$, owing to Lemma \ref{dimfibre}, we have 
 \end{proof}
 %%%%%
 We can now finish the proof of Theorem \ref{minuscul}. Restricting \eqref{M_S-eq} to $\Phi_-=\Phi_+\in \mathscr{S}_\g$ shows that $(\Phi_-,\Phi_+)^{-1}(\Delta\mathscr{S}_\g)\simeq \Hilb^W_\mu(X)$. Therefore we have a $\sZ_G\times \sZ_G$-invariant dominant morphism $G\times \Hilb^W_\mu(X)\times G\to M_S(X)$, which induces a dominant morphism $M_G(X)\to M_S(X)$. The latter is an isomorphism between open subsets $U_1=\phi^{-1}(\g^\circ\times g^\circ)$ and $U_2=(\Phi_-,\Phi_+)^{-1}( \g^\circ\times g^\circ)$. We therefore have an injective homomorphism $h:\cx[M_S(X)]\to \cx[M_G(X)]$, and the restriction of an $f\in M_G(X)$ to $U_1$ yields a regular function $\tilde f$ on $U_2$. Owing to Lemmata \ref{dimfibre} and \ref{M_S-normal}, $\tilde f$ extends to a regular function on $M_S(X)$. Therefore $h$ is an isomorphism and the proof is complete.
\end{proof}
%%%%
%%%%
We can also describe explicitly the twistor space $Z_G(X)$. Since each $V_i$ is miniscule, there exist $m_i\in \oN$ such that  $\sum_{k=1}^d|\langle \alpha_k,\xi\rangle|=m_i$ for every weight $\xi$ in $V_i$, $i=1,\dots,l$. We obtain $Z_{G}(X)$ by imposing the equations defining  $M_{G}(X)$ as a subvariety of $\g\oplus \g\oplus \bigoplus_{i=1}^l \End(V_i)$ onto the vector bundle
\begin{equation} \g\otimes\sO_{\oP^1}(2)\oplus \g\otimes\sO_{\oP^1}(2)\oplus \bigoplus_{i=1}^l \End(V_i)\otimes\sO_{\oP^1}(m_i).\label{vectorbundle}\end{equation}
This description  identifies $Z_G(X)$ as a complex space over $\oP^1$. The real structure and the $\sO_{\oP^1}(2)$-valued fibrewise symplectic form are defined as in  \S\ref{Z_G(X)}, i.e.\ they do not necessarily arise from such structures on the vector bundle \eqref{vectorbundle}.
%%%%
\begin{remark} Suppose that $\bigoplus_{i=1}^l \End(V_i)$ admits a linear map $\tau$ which preserves
$\bigoplus_{i=1}^lZ(\End(V_i)^T)$ and, for every $\xi\in S$, sends ${\rm Id}_{V_i^\xi}$ to $(-1)^{\sum_{j=1}^d\langle \alpha_j,\xi\rangle_+}{\rm Id}_{V_{i^\prime}^{-\xi}}$ for some $i^\prime$ (in particular, $S$ must be invariant under the involution $\zeta\to -\zeta$). The map $\tau$ must preserve each subspace $E_k=\bigoplus_{m_i=k} \End(V_i)$.
The real structure of $Z_G(X)$ is then the restriction of a real structure $\sigma$ on the vector bundle \eqref{vectorbundle}, given by $\Phi\mapsto -\Phi^\ast/\bar\zeta^2$ on the two copies of $\g\otimes\sO_{\oP^1}(2)$ (the asterisk denotes the negative of the Cartan involution with respect to the maximal compact Lie subalgebra $\fK$), and by  
 $\sigma(e)=\tau(e)^\ast/\bar\zeta^k$ on $E_k\otimes \sO_{\oP^1}(k)$ (the asterisk denotes the Hermitian transpose).\label{rrealstr}
\end{remark}
%This description determines the hypercomplex structure of $M_K(X)$, hence also the Levi-Civita connection of its pseudo-hyperk\"ahler metric. The metric itself is determined by the $\sO_{\oP^1}(2)$-valued fibrewise symplectic form on $Z_G(X)$, which is the one defined in \S\ref{M_G^circ}. 
%%%%
We give several examples in which we obtain a complete hyperk\"ahler metric on $M_K(X)$.
%%%%%
\begin{example} Let $X$ be the hypertoric variety determined by cocharacters $\alpha_i$ equal to $e_i$,  $i=1,\dots, d$, each with multiplicity $m$, and arbitrary scalars $\lambda_j$, $j=1,\dots,m$. $X$ is described by equations $v_iw_i=\prod_{j=1}^m(z_i-\lambda_j) $, $i=1,\dots d$, i.e.\ $X$ is the product of $d$ copies of an $A_{m-1}$ surface. The group $W=\Sigma_d$ acts by permutations, and $G=GL_d(\cx)$.
In this case, $M=M_G(X)$ can be described  as a hyperk\"ahler quotient of a quaternionic vector space.
The vector space is the representation space of the double $A_{m+1}$-quiver with dimension vectors all equal to $d$: $V=\bigoplus_{j=1}^{m}\bigl(\Hom(V_{j-1},V_{j})\oplus  \Hom(V_{j},V_{j-1})\bigr)$, where $V_j=\cx^d$ for every $j$. $M$ is a hyperk\"ahler quotient of $V$ by $H=U(d)^{m-1}$ acting in the standard way at the inner $m-1$ vertices. Let us choose a complex structure $J$ and write the corresponding complex coordinates on $V$ as $(A_j,B_j)_{j=1}^m$, where $(A_j,B_j)\in \Hom(V_{j-1},V_{j})\oplus  \Hom(V_{j},V_{j-1})$. The complex moment map equations are then 
\begin{equation} A_jB_j-B_{j+1}A_{j+1}=\kappa_j,\quad j=1,\dots, m-1,\label{mmap}
\end{equation}
for some complex scalars $\kappa_i$. The $H^\cx$-invariant polynomials  are $F(A_jB_j)$, $j=1,\dots, m-1$, $F\in \cx[\gl_d(\cx)]^{{\rm GL}_d(\cx)}$, as well as products of $A_i,B_j$ corresponding to any path starting and ending at  outer vertices.
The moment map equations imply that on the zero set of the moment map the ring of invariant polynomials is generated by $\Psi_1=A_mB_m$, $\Psi_2=B_1A_1$, $A=A_m\cdots A_1$, $B=B_1\cdots B_m$.
Owing to \eqref{mmap} these satisfy the equations
\begin{equation} AB=\Psi^m_1 +\sum_{j=1}^m c_j\Psi^j_1,\quad BA=\Psi^m_2 +\sum_{j=1}^m c_j^\prime\Psi^j_2,\label{AB,BA}\end{equation}
where the coefficients $c_j$ and $c_j^\prime$ are polynomials in the $\kappa_j$, as well as
\begin{equation}A\Psi_2=\Psi_1B+(\kappa_1+\dots+\kappa_{m-1})A.\label{APsi}\end{equation}
Moreover, equations \eqref{mmap} imply that 
$$F(\Psi_2)=F(\Psi_1+\kappa_1+\dots+\kappa_{m-1}), \enskip \forall F\in \cx[\gl_d(\cx)]^{{\rm GL}_d(\cx)}.
$$
 If we set  $\Phi_-=\Psi_1+w$, $\Phi_+=\Psi_2+w-\sum_{j=1}^{m-1}\kappa_j$, $w\in \cx$, then the characteristic polynomials  of $\Phi_-$ and $\Phi_+$ coincide, and (with an appropriate choice of $w$ and of the $\kappa_j$) equations \eqref{AB,BA}-\eqref{APsi} become
 \begin{equation}AB=\prod_{j=1}^m(\Phi_--\lambda_j), \quad BA=\prod_{j=1}^m(\Phi_+-\lambda_j), \quad A\Phi_+=\Phi_-A,\quad \Phi_+B=B\Phi_-.\label{ABm}\end{equation}
 This describes $M$ as an affine variety with respect to the chosen complex structure $J$. The real structure on the twistor space is given as in Remark \ref{rrealstr} with $\tau(A,B)=\bigl( (-1)^m B, A\bigr)$.
 %%%%
 In order to see that the complex-symplectic form of the complex-symplectic quotient coincides with the one on $M_G(X)$ given in \S\ref{M_G^circ}, it is enough to compute it on the open dense subset where $A,B,\Phi_-,\Phi_+$ are all diagonalisable under the action of ${\rm GL}_d(\C)\times{\rm GL}_d(\C)$. This computation is straightforward if lengthy, and is left to the reader. Therefore the twistor spaces of $M$ and $M_K(X)$ are isomorphic, and hence $M$ is isomorphic to $M_K(X)$ as a hyperk\"ahler manifold. Thus the hyperk\"ahler metric on $\Hilb_\mu^W(X)$ can be described as the moduli space $N_X(L;c_-,c_+)$ of $\fU(d)$-valued solutions to Nahm's equations with the discontinuity given by $M$.
 Since $M$ satisfies the assumptions of Proposition \ref{everything} (the existence of a K\"ahler potential  follows from the existence of a bounded from below ${\rm U}(d)^{m+1}$-invariant hyperk\"ahler potential on $V$), we conclude that the hyperk\"ahler metric on $\Hilb_\mu^W(X)$ is complete. 
 \par
Observe that for $m=1$ the above construction gives $T^\ast \Mat_{d,d}(\cx)$ arising  in the discussion of the special case $X=\cx^{2d}$ in  section \ref{special}.
\label{findimq}
\end{example}
\begin{example} Consider the special case of the previous example with $d=2n$ even, $m=2$, and $\lambda_1=-\lambda_2$. Let $J\in {\rm U}(2n)$ be the matrix defining the standard symplectic form on $\cx^{2n}$. The involution 
$$ \sigma:(\Phi_-,\Phi_+,A,B)\longmapsto (J\Phi_-^TJ,J\Phi_+^TJ,-JB^TJ,-JA^TJ)$$
preserves the hyperk\"ahler structure of the manifold $M$ in Example \ref{findimq}, and, for any complex structure, the submanifold $M^\sigma$ of its fixed points is biholomorphic to the subvariety of $\ssp_{2n}(\cx)\oplus\ssp_{2n}(\cx)\oplus\Mat_{2n,2n}(\cx)$ consisting of 
$(\Phi_-,\Phi_+,C)$ such that the characteristic polynomials of $\Phi_-$ and $\Phi_+$ coincide, and (with $\lambda=\lambda_1$) 
 \begin{equation}CJC^TJ=
\lambda^2-\Phi_-^2, \quad JC^TJC=\lambda^2-\Phi_+^2, \quad C\Phi_+=-\Phi_-JC^TJ.\label{involutionC}\end{equation} 
The group ${\rm Sp}(n)\times {\rm Sp}(n)$ acts on $M^\sigma$ and the fixed point set $X$ of the diagonal maximal torus 
%consists of diagonal matrices $\Phi_-=\Phi_+=\diag(z_1,\dots,z_n,-z_1,\dots,-z_n)$, $M=\diag(x_1,\dots,x_n,y_1,\dots,y_n)$. Thus this fixed point set 
is easily seen to be $n$ copies of the $A_1$-surface. We conclude that we obtain a   complete hyperk\"ahler metric
on $\Hilb_\mu^W(X)$ for this $X$ (the structure torus and the group $W$ are the maximal torus and the Weyl group of ${\rm Sp}_{2n}(\cx)$).  
\label{hsph2}
\end{example}
%%%
\begin{remark} The last example can be obviously generalised to any $m\in 2\oN$. In particular, if $d=2$, the construction of \S\ref{10} yields a description of (a subfamily of)  gravitational instantons of type $D_k$ ($m=2k-2$) as moduli spaces of $\su(2)$-valued solutions to Nahm's equations on $(0,1)$ with simple poles (and fixed residues) at $0,1$ and a discontinuity in the middle. The discontinuity is given by the fixed point set of the hyperk\"ahler manifold of Example \ref{findimq} under the involution \eqref{involutionC} (observe that, for $d=2$, $-JC^TJ=C_{\rm adj}$), see also the proof of Lemma \ref{fingenA}. The subfamily obtained this way is essentially the one in Example \ref{D_2}. Alternatively, we obtain the full family of $D_k$ gravitational instantons as hyperk\"ahler quotients by $S^1$ of moduli spaces of $\u(2)$-valued solutions to Nahm's equations with the discontinuity described in  Example \ref{findimq} (with $d=2$ and $m=2k$). This second description of $D_k$ gravitational instantons also follows by combining the results of  \cite{CH} and \cite{BC} (see also Example \ref{SO(3)-mon} below).
\end{remark}
%%%%%
%%%%
In general, equations \eqref{M_S-eq} are complicated.
The next example indicates a  method of identifying $M_K(X)$ without having to deal with the full set of equations.
\begin{example} Let $X$ be a $\oZ_2$-invariant hypertoric variety with structure torus equal to the maximal torus of ${\rm GL}_2(\cx)$ with  defining hyperplanes given by $kz_1+lz_2=0$ and $lz_1+kz_2=0$, $k,l\in\oZ$, $k\neq l$. $X$ is isomorphic to $\cx^4/\oZ_{k^2-l^2}$ and the   equations \eqref{monomials} corresponding to characters $\xi=e_i^\ast$, $\xi^\prime=-e_i^\ast$, $i=1,2$, are
$$ x_1y_1=(kz_1+lz_2)^k(lz_1+kz_2)^l,\quad x_2y_2=(kz_2+lz_1)^k(kz_2+lz_1)^l.
$$
Setting 
$$A=g_-\begin{pmatrix}x_1 & 0\\ 0 & x_2\end{pmatrix} g_+^{-1},\enskip B=g_+\begin{pmatrix}y_1 & 0\\ 0 & y_2\end{pmatrix}g_-^{-1},\enskip \Phi_\pm=g_\pm\begin{pmatrix}z_1 & 0\\ 0 & z_2\end{pmatrix}g_\pm^{-1},$$
we obtain equations
$$AB = \bigl(k\Phi_-+l(\Phi_-)_{\rm adj}\bigr)^k \bigl(l\Phi_-+k(\Phi_-)_{\rm adj}\bigr)^l,\enskip 
BA= \bigl(k\Phi_++l(\Phi_+)_{\rm adj}\bigr)^k \bigl(l\Phi_++k(\Phi_+)_{\rm adj}\bigr)^l.$$
Since, for a $2\times 2$ matrix $X$, $X_{\rm adj}=-X+\tr X$, we can rewrite these equations as
\begin{gather}AB=((k-l)\Phi_-+l\tr \Phi_-)^k((l-k)\Phi_-+k\tr\Phi_-)^l,\label{equations1}\\ BA= ((k-l)\Phi_++l\tr \Phi_+)^k((l-k)\Phi_++k\tr\Phi_+)^l.\label{equations2}\end{gather}
After replacing $(A,B)$ with $(A/(k-l), B/(l-k))$,  the right sides of these equations become monic polynomials in $\Phi_\pm$.
Introducing a parameter $\zeta$, so that $A,B,\Phi_\pm$ become sections of vector bundles over $\oP^1$, we obtain a ``twistor space", a generic section of which is also a section of the twistor space of $M_{{\rm U}(2)}(X)$ (cf.\ Remark \ref{bar{Z}}).
Now consider, as in Example \ref{findimq}, the representation space $V$ of the double $A_{k+l}$-quiver with dimension vectors all equal to $2$. Let $M$ be the hyperk\"ahler quotient of $V$ by ${\rm SU}(2)^{k+l-1}$ acting in the standard way at the inner vertices. $M$ has a ${\rm U}(2)^2\times {\rm U}(1)^{k+l-1}$ symmetry and the moment map for the ${\rm U}(1)^{k+l-1}$-action is given by (linear combinations) of parameters $\lambda_j$ in \eqref{ABm}. On the other hand, the moment map for ${\rm U}(1)\subset {\rm U}(2)^2$ acting via $(A,B)\mapsto (s A, s^{-1} B)$ is $\tr \Phi_-/2=\tr\Phi_+/2$. It follows that equations \eqref{equations1}-\eqref{equations2} are satisfied by the twistor space of the hyperk\"ahler quotient $\bar M$ of $M$ by ${\rm U}(1)^{k+l-1}$ acting via an appropriate homomorphism ${\rm U}(1)^{k+l-1}\to {\rm U}(1)^{k+l}$. A generic solution of these equations (with parameter $\zeta$, i.e.\ a $\oP^1$) is a section of both the twistor space of $\bar M$ and of the twistor space of  $M_{{\rm U}(2)}(X)$ . Therefore the hyperk\"ahler structures of $\bar M$ and  $M_{{\rm U}(2)}(X)$  coincide\footnote{As in Example \ref{findimq}, one verifies that the complex-symplectic forms coincide on the open dense subset where $A,B,\Phi_-,\Phi_+$ are all diagonalisable under the action of ${\rm GL}_2(\C)\times{\rm GL}_2(\C)$.} on an open dense subset, hence on the whole smooth locus. 
\end{example}

\section{Examples: monopole moduli spaces\label{monopoles}}
We shall now discuss how  moduli spaces of monopoles (with singularities) on $\R^3$  fit into the above scheme.
\par
 Let $H$ be a compact Lie group of rank $r$ and of adjoint type (this is necessary if we want to introduce ``minimally charged'' Dirac singularities, and irrelevant when there are no singularities). We consider a moduli space of monopoles on $\R^3$ with structure group $H$ and Dirac-type singularities at a finite collection of points. More precisely, asymptotic and topological data are determined by:
\begin{enumerate}
    \item a generic element $\omega$ in a Cartan subalgebra of $H$, the mass of the monopole (the genericity assumption corresponds to monopoles with maximal symmetry breaking at infinity);
    \item a non-abelian magnetic charge $\gamma_{{m}}=\sum_{\mu=1}^{r}{n_\mu \alpha_\mu^\vee}$, where $\alpha_1^\vee, \dots, \alpha_r^\vee$ are the simple coroots of $H$ determined by $\omega$ and $n_1,\dots, n_r\in \Z_{\geq 0}$;
    \item a collection of distinct points $\underline{\lambda}_1,\dots,\underline{\lambda}_k\in\R^3$, with $\underline{\lambda}_l = (\lambda_l^\R, \lambda_l)$ in a decomposition $\R^3 = \R\times\C$, each labelled by the choice of a fundamental cominuscule coweight $\gamma_l$ for $H$.
\end{enumerate}
The last requirement corresponds to Dirac-type singularities of the monopoles with ``minimal'' charge, leading to complete moduli spaces. The moduli space $\mathcal{M}$ of monopoles with structure group $H$ on $\R^3$ with mass $\omega$, non-abelian charge $\gamma_{m}$ and Dirac-type singularity of charge $\gamma_l$ at $\underline{\lambda}_l$ is expected to be a complete hyperk\"ahler manifold of dimension $4(n_1+\dots+n_{r})$ \cite{MRB}. An asymptotic region of this moduli space is qualitatively described as follows (cf.\  \cite{Taubes,Fos-Ross},  and upcoming follow-up work by C. Ross and the second author): for each $\mu=1,\dots, r$ we have $n_\mu$ indistinguishable distinct points $p^1_\mu,\dots, p^{n_\mu}_\mu$ in $\R^3\backslash \{ \underline{\lambda}_1,\dots,\underline{\lambda}_k\}$; each of these points is endowed with a phase parameter in $S^1$ and corresponds to an $SU(2)$ charge 1 monopole embedded in $H$ via the homomorphism that identifies the coroot of $SU(2)$ with $\alpha_\mu^\vee$. This asymptotic picture leads one to consider the hypertoric variety defined by the following collection of hyperplanes. Let $A$ denote the matrix $2\Id-C$, where $C$ is the Cartan matrix of $H$. We then consider hyperplanes
\[
A_{\mu \nu}\, z_{\mu i} - A_{\nu \mu}\, z_{\nu j} = 0 = \alpha_\mu (\gamma_l)\, z_{\mu i} - \lambda_l
\]
for $\mu,\nu = 1,\dots,r$, $i=1,\dots,n_\mu$, $j=1,\dots,n_\nu$ and $l=1,\dots,k$. Note that since $\gamma_l$ is a fundamental cominuscule coweight, $\alpha_\mu (\gamma_l)\in \{ 0,1\}$ and there exists a unique $\mu$ such that $\alpha_\mu (\gamma_l)=1$; moreover $A_{\mu \nu}$ and $A_{\nu\mu}$ are either both zero or both non-zero with at least one of them equal to 1. These facts can be used to show that $X$ is smooth for generic choices of $\lambda_1,\dots,\lambda_k$. More importantly, note that $X$ is strongly $W$-invariant for the Weyl group $W=\prod_{\mu=1}^r\Sigma_{n_\mu}$, where $\Sigma_{n_\mu}$ permutes the points $p^1_\mu,\dots, p^{n_\mu}_\mu$.
This means that the structure torus of $X$ is the maximal torus of $\prod_{\mu=1}^r {\rm GL}_{n_\mu}(\cx)=G$. Finally, the positive constants $\alpha_1(\omega),\dots,\alpha_r(\omega)$ determine the choice of positive definite $L$, i.e.\  the choice of a flat hyperk\"ahler metric on $T^\ast T$, where $ T=(\C^\ast)^{n_1+\dots+n_r}$ is the structure torus of $X$. We expect that the moduli space $\mathcal{M}$ of singular monopoles can be identified with ${\Hilb}^W_\mu(X)$. Recent work of Braverman, Finkelberg, and Nakajima \cite{BFN2} (together with our Theorem \ref{C=W}) provides strong evidence that this is the case when $H$ is a (product of) simply laced Lie group(s). For an arbitrary $H$ and nonsingular monopoles, evidence could be obtained by checking that the vanishing conditions on spectral curves of such monopoles, written down by Murray \cite{Mu}, coincide with those of Theorem \ref{lift}.
\par
We would like to calculate the coordinate ring of $X$ arising in this way and give a description of the corresponding variety $M_G(X)$. This describes, according to Proposition \ref{M_G-Hilb},  ${\Hilb}^W_\mu(X)$ as a Poisson quotient of the product of $M_G(X)$ and two copies of $G\times \mathscr{S}_{\g}$. In the cases where we can identify the hyperk\"ahler metric on $M_G(X)$, it also gives a description of the hyperk\"ahler metric as the $L^2$-metric on a moduli space of solutions to $L$-Nahm's equations (Theorem \ref{N_X}). We remark this moduli space is very different from the ones used before to describe moduli spaces of monopoles; this is the case even for nonsingular monopoles for classical groups (of rank $>1$), cf.\ \cite{HM}.
\par
We shall work this out explicitly when $H={\rm PU}(r+1)$. Then the $r$ fundamental coweights are all cominuscule. We assume that there are $k_\mu$ Dirac singularities with charge $\varpi_\mu^\vee$ for all $\mu=1,\dots,r$. We also fix an orientation for the Dynkin diagram of $H$. The hyperplanes of $X$ are then
\[
z_{\mu, i} - z_{\mu +1, j} = 0 = z_{\mu, i} - \lambda_{\mu, l}
\]
for $\mu =1,\dots,r$, $i=1,\dots, n_\mu$, $j=1,\dots, n_{\mu +1}$ and $l=1,\dots,k_\mu$. Here $(z_{\mu, i})$ are coordinates on $\h\simeq \h^\ast = \bigoplus_\mu \C^{n_\mu}$ ($\h$ denotes the Lie algebra of the structure torus of $X$, not of $H$).
\par
To calculate a basis $S$ of the coordinate ring of $X$ we argue as follows.
%For now we assume $k_\mu>0$ for all $\mu$ and shall indicate later the necessary modifications if some $k_\mu=0$.
The hyperplanes
\[
\xi_{\mu, i} - \xi_{\mu +1, j} = 0 = \xi_{\mu, i}
\]
subdivide the space $\h^\ast_\R$ into cones. A set of generators $\{ z_{\mu,i}, x_\xi, \xi\in S\}$ for the coordinate ring of $X$ then corresponds to a subset $S$ that generates each of these cones over $\Z$. It is clear we can take $S$ to be the collection
\[
\pm e_{\mu,i}^\ast, \qquad \pm (e_{\mu, i}^\ast + e_{\mu+1,j}^\ast),
\]
i.e.\  the collection of vectors generating the boundaries of the above cones, which is clearly $W$--invariant. This gives us functions
\[
x_{\mu,i}^\pm,\, x_{\mu\to \mu+1,ij}^\pm.
\]
The relations between these functions correspond to linear dependence relations between the vectors in $S$. Under our assumptions, these are given by expressing all the $x_\xi$'s in terms of the $x_{\mu,i}$'s. We then obtain the following relations:
\begin{gather}
    x_{\mu,i}^+ x_{\mu,i}^- = \prod_{l=1}^{k_\mu}(z_{\mu,i}-\lambda_{\mu,l})\prod_{j=1}^{n_{\mu+1}}{(z_{\mu,i}-z_{\mu+1,j})}\prod_{j=1}^{n_{\mu-1}}{(z_{\mu-1,j}-z_{\mu,i})},\label{onemon}\\
x^+_{\mu,i}x^+_{\mu+1,j} = x^+_{\mu\to\mu+1,ij} (z_{\mu,i}-z_{\mu+1,j}),\label{twomon} \\
x^-_{\mu,i}x^-_{\mu+1,j} = x^-_{\mu\to\mu+1,ij} (z_{\mu,i}-z_{\mu+1,j}),\label{threemon}\\
x^+_{\mu\to\mu+1,ij}x^-_{\mu\to\mu+1,ij} =\prod_{l=1}^{k_\mu}(z_{\mu,i}-\lambda_{\mu,l})\prod_{l=1}^{k_{\mu+1}}(z_{\mu+1,j}-\lambda_{\mu+1,l})\,\cdot \label{fourmon} \\ \cdot\prod_{l\neq j}^{n_{\mu+1}}{(z_{\mu,i}-z_{\mu+1,l})} \prod_{l\neq i}^{n_{\mu}}{(z_{\mu,l}-z_{\mu+1,j})}
\prod_{l=1}^{n_{\mu-1}}{(z_{\mu-1,l}-z_{\mu,i})}\prod_{l=1}^{n_{\mu+2}}{(z_{\mu+1,j}-z_{\mu+2,l})}.\notag
\end{gather}
%%%%
The representation $V$ corresponding to $S$ is then $\bigoplus_\mu{T^\ast\C^{n_\mu}\oplus T^\ast(\C^{n_\mu}\otimes\C^{n_{\mu+1}})}$. Since the irreducible summands are all minuscule, we can use the description of the previous section, i.e.\ view $M_{G}(X)=\Spec A_G(X)$ as the Zariski closure of $(G\times G).X$ in $\g\oplus\g\oplus\End(V)$.
In calculating $M_{G}(X)$ we therefore have for each $\mu$ four matrices
$\Phi_\mu^\pm,A_\mu,B_\mu\in\Mat_{n_\mu,n_\mu}(\C)$ given by
\[
\Phi^\pm_\mu = g_\mu^\pm {\diag}(z_{\mu,1},\dots,z_{\mu,n_\mu}) (g_\mu^\pm)^{-1},\]
\[ A_\mu = g_\mu^- {\diag}(x^-_{\mu,1},\dots,x^-_{\mu,n_\mu}) (g_\mu^+)^{-1},
\]
\[ B_\mu = g_\mu^+ {\diag}(x^+_{\mu,1},\dots,x^+_{\mu,n_\mu}) (g_\mu^-)^{-1},\]
as well as $(n_{\mu}n_{\mu+1}\times n_{\mu}n_{\mu+1})$-matrices
$$C_{\mu\to\mu+1}=(g_\mu^-\otimes g_{\mu+1}^-)\diag\bigl(x^-_{\mu\to\mu+1,ij}\bigr)_{i,j=1}^{{i=n_\mu},{j=n_{\mu+1}}} (g_\mu^+\otimes g_{\mu+1}^+)^{-1},
$$
$$D_{\mu\to\mu+1}=(g_\mu^+\otimes g_{\mu+1}^+)\diag\bigl(x^+_{\mu\to\mu+1,ij}\bigr)_{i,j=1}^{{i=n_\mu},{j=n_{\mu+1}}} (g_\mu^-\otimes g_{\mu+1}^-)^{-1}.
$$
%%%%
%%%%
The matrices $\Phi_\mu^\pm$ clearly satisfy  $\tr(\Phi_\mu^+)^i={\tr}(\Phi_\mu^-)^i$ for all $i$. Similarly, the commuting relations $\Phi_\mu^-A_\mu=A_\mu\Phi_\mu^+$ etc.\ are clearly satisfied.
In order to write down  equations corresponding to \eqref{onemon}--\eqref{fourmon} let us introduce the following notation: if $M\in \Mat_{m,m}(\cx)$, $N\in \Mat_{n,n}(\cx)$, then $P_N(M)$ denotes the characteristic polynomial of $N$ evaluated at $M$, i.e.: 
\begin{equation} P_N(M)=\sum_{j=0}^{n} (-1)^{j}e_{j}(N)  M^{n-j},\label{P_N(M)}
\end{equation}
where $e_j(N)$ denotes the $j$-th elementary symmetric polynomial in eigenvalues of the matrix $N$. Let us also write $\Lambda_\mu$ for the diagonal matrix $\diag(\lambda_{\mu,1},\dots,\lambda_{\mu,k_\mu})$.
The equations corresponding to \eqref{onemon} can be now written as:

\begin{gather}  A_\mu B_\mu = (-1)^{n_{\mu-1}}P_{\Lambda_\mu}(\Phi_\mu^- )P_{\Phi_{\mu+1}^-}(\Phi_\mu^-) P_{\Phi_{\mu-1}^-}(\Phi_\mu^-),\label{ABPhi}\\
B_\mu A_\mu = (-1)^{n_{\mu-1}}P_{\Lambda_\mu}(\Phi_\mu^+ )P_{\Phi_{\mu+1}^+}(\Phi_\mu^+) P_{\Phi_{\mu-1}^+}(\Phi_\mu^+).\label{BAPhi}
\end{gather}
%%%%%
Equations corresponding to  \eqref{twomon} and \eqref{threemon} are:
\begin{gather} A_\mu\otimes A_{\mu+1}= C_{\mu\to\mu+1}\bigl(\Phi_{\mu}^+\otimes 1-1\otimes \Phi_{\mu+1}^+\bigr),\label{AAC} \\  B_\mu\otimes B_{\mu+1}= D_{\mu\to\mu+1}\bigl(\Phi_{\mu}^-\otimes 1-1\otimes \Phi_{\mu+1}^- \bigr). \label{BBD}
\end{gather}
%%%%
In order to write down equations corresponding to \eqref{fourmon}, let $D$ denote the $(n_{\mu}n_{\mu+1}\times n_{\mu}n_{\mu+1})$ diagonal matrix, the $((ij),(ij))$-entry of which is the second line of \eqref{fourmon} multiplied by $(z_{\mu,i}-z_{\mu+1,j})^2$ (so that there are no omitted indices in the first two products).  
 Observe that 
$(g_\mu^\pm\otimes g_{\mu+1}^\pm)D (g_\mu^\mp\otimes g_{\mu+1}^\mp)^{-1}$ is equal to \begin{equation}(-1)^{n_{\mu-1}+n_\mu}P_{\Phi_{\mu+1}^\pm}(\Phi_\mu^\pm) P_{\Phi_{\mu-1}^\pm}(\Phi_{\mu}^\pm)\otimes P_{\Phi_{\mu}^\pm}(\Phi_{\mu+1}^\pm) P_{\Phi_{\mu+2}^\pm}(\Phi_{\mu+1}^\pm).\label{last}\end{equation}
 Similarly, 
$$(g_\mu^\pm\otimes g_{\mu+1}^\pm)\diag\bigl((z_{\mu,i}-z_{\mu+1,j})^2\bigr)_{i,j=1}^{{i=n_\mu},{j=n_{\mu+1}}} (g_\mu^\mp\otimes g_{\mu+1}^\mp)^{-1}=\bigl(\Phi_{\mu}^\pm\otimes 1-1\otimes \Phi_{\mu+1}^\pm\bigr)^2.$$
For each choice of sign in $\pm$-superscripts, the matrix in \eqref{last}  and $\bigl(\Phi_{\mu}^\pm\otimes 1-1\otimes \Phi_{\mu+1}^\pm\bigr)^2$ commute and the latter divides the former. We have therefore well defined $(n_{\mu}n_{\mu+1}\times n_{\mu}n_{\mu+1})$-matrices $ R(\Phi_{\mu}^\pm, \Phi_{\mu+1}^\pm)$ obtained by dividing \eqref{last} by $\bigl(\Phi_{\mu}^\pm\otimes 1-1\otimes \Phi_{\mu+1}^\pm\bigr)^2$. The equations corresponding to \eqref{fourmon} can be now written as:
\begin{gather} 
C_{\mu\to\mu+1}D_{\mu\to\mu+1}=\bigl(P_{\Lambda_\mu}(\Phi_\mu^- )\otimes 1\bigr) R(\Phi_{\mu}^-, \Phi_{\mu+1}^-) \bigl(1\otimes P_{\Lambda_{\mu+1}}(\Phi_{\mu+1}^- )\bigr),\notag\\
D_{\mu\to\mu+1}C_{\mu\to\mu+1}=  \bigl(P_{\Lambda_\mu}(\Phi_\mu^+ )\otimes 1\bigr) R(\Phi_{\mu}^+, \Phi_{\mu+1}^+)\bigl(1\otimes P_{\Lambda_{\mu+1}}(\Phi_{\mu+1}^+ )\bigr).\notag
\end{gather}
The variety $M_{G}(X)$ is an irreducible component of matrices satisfying these equations (the closure of the set where  eigenvalues of each $\Phi_{\mu}^+$ are distinct and distinct from those of $\Phi_{\mu+1}^+$).
%%%%
%%%%
As explained in  \S\ref{minuscule}, we can also describe the twistor space for $M_{G}(X)$. The
sections of this twistor space are matrix-valued polynomials $\Phi^\pm_\mu(\zeta)$, $A_\mu(\zeta)$, $B_\mu(\zeta)$,  $C_{\mu\to\mu+1}(\zeta)$, $D_{\mu\to\mu+1}(\zeta)$ in one variable, satisfying the above equations, and with
$$\begin{gathered}\deg\Phi^\pm_\mu(\zeta)=2, \quad \deg A_\mu(\zeta)=\deg B_\mu(\zeta)=k_\mu+n_{\mu-1}+n_{\mu+1}, \\ \deg C_{\mu\to\mu+1}(\zeta)=\deg D_{\mu\to\mu+1}(\zeta)=  k_\mu+k_{\mu+1}+\sum_{s=-1}^2 n_{\mu+s}-2.\end{gathered}
$$
Moreover, the real structure on $Z_{G}(X)$ can be described as in Remark \ref{rrealstr}: the linear map $\tau$ is given by $ \tau(A_\mu,B_\mu)=\bigl((-1)^{\deg B_\mu(\zeta)}B, A)$ and $$ \tau(C_{\mu\to\mu+1}, D_{\mu\to\mu+1})=\bigl((-1)^{\deg D_{\mu\to\mu+1}(\zeta)}D_{\mu\to\mu+1}, C_{\mu\to\mu+1}\bigr).
$$
%%%%
%%%%%
\begin{remark} In the case when all $k_\mu=0$ (i.e.\ the monopoles are nonsingular), the cocharacters $\alpha_j$ defining the hypertoric variety do not span $\h$. This case has to be dealt with as in Remark \ref{notspanning}.
%According to Remark \ref{nu}, $X$ is then the quotient by $\Gamma=\prod_{\mu=1}^r \Z_{n_\mu}$ of the product of $T^\ast(\cx^\ast)^r$ anda hypertoric variety $X^\prime$ with structure torus equal to the maximal torus of $\prod_{\mu=1}^r {\rm SL_{n_\mu}(\C)}$. The Weyl group acts trivially on $T^\ast(\cx^\ast)^r$ and, hence, $\Hilb_\mu^W(X)$ is also the quotient by $\Gamma$ of the product of $T^\ast(\cx^\ast)^r$ and  $\Hilb_\mu^W(X^\prime)$. 
\end{remark}
%%%%%
We hope that (a component of) the variety of  real polynomials satisfying the above equations can be described as  (a finite cover of) a  closed hyperk\"ahler submanifold of a  hyperk\"ahler  quotient of a vector space. This would prove, among other things, the completeness of the hyperk\"ahler metric on $\Hilb_\mu^W(X)$. In what follows, we shall show that this is the case if:
\begin{itemize}\setlength{\itemindent}{-1,5em}
\item either $r=1$, or
\item $r>1$ and, for each $\mu=1,\dots,r-1$, either $\min(n_\mu,n_{\mu+1})=1$ or $n_\mu=n_{\mu+1}=2$.
\end{itemize}
%%%%%
\begin{example} Example \ref{findimq} identifies the hyperk\"ahler metric on the above $M_G(X)$ in the case when
 $r=1$, i.e.\ in the case of charge $d$ ${\rm SO}(3)$-monopoles  with $k$ Dirac singularities located at $\underline{\lambda}_1,\dots,\underline{\lambda}_k\in\R^3$:
there is a connected component $D$ of  $M_K^\circ(X)$ which is a hyperk\"ahler quotient of a quaternionic representation space of the $A_{k+1}$-quiver with dimension vectors
$\cx^d$ by ${\rm GL}_d(\cx)^{k-1}$ acting on the inner $k-1$ vertices.
 Consequently (Theorem \ref{N_X}), the hyperk\"ahler metric on $\Hilb^W_\mu(X)$ is the natural $L^2$-metric on a moduli space of solutions to Nahm's equations: if $\omega=(c,-c)$, $c>0$, is the mass of monopoles, the solutions are $\u(d)$-valued and defined on $(-c,c)$ with simple poles and residues given by a fixed principal $\su(2)$-triple at $t=\pm c$, and a discontinuity determined by $D$ at $t=0$. This is the description of the moduli space of singular ${\rm SO}(3)$-monopoles given by Blair and Cherkis \cite{BC}.\label{SO(3)-mon}
\end{example}
%%%%
The general case $r>1$ can be dealt with ``in stages" as follows. Let $M_\mu$, $\mu=1,\dots,r$, be the hyperspherical variety corresponding to ${\rm SO}(3)$-monopoles of charge $n_\mu$ with Dirac singularities at $\underline{\lambda}_{\mu,l}$,
 $l=1,\dots,k_\mu$, and, for $n_\mu,n_{\mu+1}\geq 1$, let $M_{\mu\to\mu+1}$ be the hyperspherical variety corresponding to nonsingular ${\rm SU}(3)$-monopoles with charge $(n_\mu,n_{\mu+1})$.   The equations describing $M_G(X)$ for an arbitrary ${\rm PU}(r+1)$-monopole moduli space are simply the equations of the symplectic quotient of 
 $ \prod_{\mu=1}^rM_\mu\times  \prod_{\mu=1}^{r-1}M_{\mu\to\mu+1}$
 by $\prod_{\mu=1}^r{\rm GL}_{n_\mu}(\C)$  (cf.\ Remark \ref{star-product}). The hyperk\"ahler (as opposed to complex-symplectic) picture is slightly complicated by the fact that for $M_{\mu\to\mu+1}$ the cocharacters defining the hypertoric variety do not span $\cx^{n_\mu}\oplus \cx^{n_{\mu+1}}$. We need to deal with it as in Remark \ref{notspanning}, i.e.\ we consider the hyperspherical variety $M_{\mu\to\mu+1}^0$ corresponding to the hypertoric variety $X_{\mu\to\mu+1}^0$ with structure torus $T_0$ equal to the maximal torus of ${\rm S}({\rm GL}_{n_\mu}(\C)\times {\rm GL}_{n_{\mu+1}}(\C))$.
The hyperspherical variety $M_{\mu\to\mu+1}$ is then the quotient of $M_{\mu\to\mu+1}^0\times T^\ast \cx^\ast$ by $\oZ_{n_\mu +n_{\mu+1}}$, and the hyperk\"ahler metric on an arbitrary moduli space  of monopoles is obtained as in Remark \ref{notspanning}. One needs, therefore, to identify the hyperk\"ahler metric on $M_{\mu\to\mu+1}^0$. 
%%%%
\begin{example} Consider the case when $n_{\mu+1}=1$. The morphism $T^{n_\mu}\to T_0$ is given by 
$$(t_1,\dots,t_{n_\mu})\longmapsto (t_1,\dots,t_{n_\mu},t_1^{-1}\cdots t_{n_\mu}^{-1} ),$$
and is therefore an isomorphism. It follows that  $X_{\mu\to\mu+1}^0$ is simply  $\cx^{2n_\mu}$ and, consequently, $M_{\mu\to\mu+1}^0$ is $T^\ast\Mat_{n_\mu\times n_\mu}(\C)$ with its invariant flat hyperk\"ahler metric. Analogously, 
$M_{\mu\to\mu+1}^0\simeq T^\ast\Mat_{n_{\mu+1}\times n_{\mu+1}}(\C)$ if $n_\mu=1$.
This is not surprising: Hurtubise and Murray's \cite{HM} description of the moduli space of (nonsingular) ${\rm SU}(3)$-monopoles of charge $(n,1)$ (or $(1,n)$) is very similar to the Cherkis and Kapustin's \cite{CK} description of the moduli space of charge $n$ ${\rm SO(3)}$-monopoles with one Dirac singularity.\label{n,1}
\end{example}

\subsection{${\rm SU}(3)$-monopoles of charge $(2,2)$}
We are going to identify the hyperk\"ahler metric on $M_{\mu\to\mu+1}^0$ in the case $n_\mu=n_{\mu+1}=2$. According to the above discussion, this, together with Example \ref{n,1},  provides a description of moduli spaces of ${\rm PU}(r+1)$-monopoles in terms of solutions of Nahm's equations in the case when all $n_\mu\leq 2$ (and the $k_\mu$ are arbitrary).
\par
The structure torus $T_0$ of the hypertoric variety $X_{\mu\to\mu+1}^0$ is the maximal torus of ${\rm S}({\rm GL}_{2}(\C)\times {\rm GL}_{2}(\C))$. The character lattice of $T_0$ is therefore
$$ \oZ\langle e^\ast_{\mu,1},e^\ast_{\mu,2},e^\ast_{\mu+1,1},e^\ast_{\mu+1,2}\rangle \big/ \oZ(e^\ast_{\mu,1}+e^\ast_{\mu,2}+e^\ast_{\mu+1,1}+e^\ast_{\mu+1,2}).
$$
We can take the same generating set $S$ as for $X_{\mu\to\mu+1}$, and obtain generators $z_{\mu,i}\,,z_{\mu+1,j}\,, x_{\mu,i}^\pm,\, x_{\mu+1,j}^\pm,\, x_{\mu\to \mu+1,ij}^\pm$, $i,j=1,2$, of $\cx[X_{\mu\to\mu+1}^0]$. These satisfy equations \eqref{onemon}--\eqref{fourmon} (with $k_\mu=k_{\mu+1}=0$), and in addition (with $\sigma(1)=2,\,\sigma(2)=1$):
\begin{gather} z_{\mu,1}+ z_{\mu,2}+ z_{\mu+1,1}+ z_{\mu+1,2}=0,\label{m-1}\\
 x_{\mu\to \mu+1,ij}^\pm=x_{\mu\to \mu+1,\sigma(i)\sigma(j)}^\mp,\quad x_{\mu,1}^\pm x_{\mu,2}^\pm= x_{\mu+1,1}^\mp x_{\mu+1,2}^\mp,
 \label{middle}\\ x_{\mu,1}^\pm x_{\mu,2}^\pm x_{\mu+1,1}^\pm x_{\mu+1,2}^\pm=\prod_{i,j=1}^2(z_{\mu,i}-z_{\mu+1,j})=
 \prod_{i,j}x_{\mu\to \mu+1,ij}^\pm.\label{m+1}
\end{gather}
%%%
The coordinate ring of the hyperspherical variety $M_{\mu\to\mu+1}^0$ is generated, as for $M_{\mu\to\mu+1}$, by $2\times 2$ matrices $\Phi^\pm_\mu,\Phi^\pm_{\mu+1}, A_\mu,B_\mu, A_{\mu+1},B_{\mu+1}$, and by $4\times 4$ matrices $C_{\mu\to\mu+1},D_{\mu\to\mu+1}$.
These satisfy the equations for $M_{\mu\to\mu+1}$, and in addition, equations corresponding to \eqref{m-1}--\eqref{m+1}. The first of those yields
$ \tr\Phi_\mu^\pm+\tr\Phi_{\mu+1}^\pm=0.$ The equations corresponding to \eqref{m+1} are:
\begin{gather} 
\det A_{\mu}\det A_{\mu+1}=\det\bigl(\Phi_{\mu}^-\otimes 1-1\otimes \Phi_{\mu+1}^-\bigr)=\det C_{\mu\to\mu+1},\label{det(A)}\\
\det B_{\mu}\det B_{\mu+1}=\det\bigl(\Phi_{\mu}^+\otimes 1-1\otimes \Phi_{\mu+1}^+\bigr)=\det D_{\mu\to\mu+1}.\label{det(B)}\end{gather}
The second equation in \eqref{middle} yields:
$$\det A_\mu=\det B_{\mu+1},\enskip \det A_{\mu+1}=\det B_{\mu}.
$$
In order to understand the equation corresponding to the first equation in \eqref{middle}, identify $\Mat_{4\times 4}(\cx)$ with $\Mat_{2\times 2}(\cx)\otimes \Mat_{2\times 2}(\cx)$ and consider the involution $\sigma$ given by $\sigma(A\otimes B)=A_{\rm adj}\otimes B_{\rm adj}$. This involution is equal to $+1$ on $\cx\langle 1\otimes 1\rangle \oplus \ssl_2(\cx)\otimes \ssl_2(\C)$,
and to $-1$ on $\ssl_2(\cx)\otimes 1\oplus 1\otimes \ssl_2(\cx)$. The latter subspace is a Lie subalgebra of $\gl_4(\C)$, isomorphic to $\so_4(\cx)$, and we conclude that,  in an appropriate basis, $\sigma$ is equal to the matrix transpose. Now observe that the equation corresponding to the first equation in \eqref{middle} is $\sigma(C_{\mu\to\mu+1})=D_{\mu\to\mu+1}$.
\par
We now compute $ R(\Phi_{\mu}^\pm, \Phi_{\mu+1}^\pm)$ from its definition and \eqref{fourmon}:
$$ R(\Phi_{\mu}^\pm, \Phi_{\mu+1}^\pm)=\Bigl(\Phi_\mu^\pm\otimes 1-1\otimes\bigl(\Phi_{\mu+1}^\pm\bigr)_{\rm adj}\Bigr) \Bigl(\bigl(\Phi_{\mu}^\pm\bigr)_{\rm adj}\otimes 1-1\otimes\Phi_{\mu+1}^\pm\Bigr).$$
Since, for a $2\times 2$ matrix $X$, $X_{\rm adj}=-X+(\tr X)\cdot 1$, and since $\tr \Phi_{\mu}^\pm +\tr \Phi_{\mu+1}^\pm =0$, we can rewrite this as
\begin{equation}  R(\Phi_{\mu}^\pm, \Phi_{\mu+1}^\pm)=-\Bigl(\Phi_{\mu}^\pm\otimes 1 +
1\otimes\Phi_{\mu+1}^\pm\Bigr)^2+\bigl(\tr  \Phi_{\mu}^\pm\bigr)^2 1\otimes 1 .\label{R-adj}\end{equation}
%%%%%
%%%%%

\medskip 

We now consider, analogously to Example \ref{findimq}, the representation space $V$ of the double $A_2$-quiver with vertices $\{1,2,3\}$ and all dimension vectors  equal to $4$. A representation is given by the collection of $4\times 4$ matrices $A_{ij}$, $i,j=1,2,3$, $|i-j|=1$, where $A_{ij}$ corresponds to the arrow $i\to j$.
Let $M$ be the hyperk\"ahler quotient of $V$ by ${\rm SU}(4)$ acting in the standard way at the middle vertex.
We proceed to describe $M$ as a complex manifold for a fixed complex structure, i.e.\  the complex-symplectic quotient  of  $V$ by ${\rm SL}_4(\C)$. The ${\rm SL}_4(\C)$-invariant functions on $V$ are generated by the entries of $\Psi_{\rm L}=A_{21}A_{12}$, $\Psi_{\rm R}=A_{23}A_{32}$, $C=A_{21}A_{32}$, $D=A_{23}A_{12}$ , as well as by the four functions $a_{ij}=\det A_{ij}$.
Write $\kappa=\tr(A_{12}A_{21}-A_{32}A_{23})/4=\tr(\Psi_L-\Psi_R)/4$. The moment map equation for the action of ${\rm SL}_4(\C)$ at the middle vertex is $A_{12}A_{21}-A_{32}A_{23}=\kappa\cdot 1$, which yields
\begin{equation} CD=\Psi_{\rm L}^2-\kappa \Psi_{\rm L},\enskip DC=\Psi_{\rm R}^2+\kappa \Psi_{\rm R}.\label{CD,DC}\end{equation}
The remaining equations satisfied by the ${\rm SL}_4(\C)$-invariant functions are $\det \Psi_{\rm L}=a_{21}a_{12}$, $\det \Psi_{\rm R}=a_{23}a_{32}$, $\det C=a_{21}a_{32}$, and $\det D=a_{23}a_{12}$. 
These equations describe $M$ as a complex manifold. The twistor space of $M$ is described by the same equations in the total space of a vector bundle over $\oP^1$: $\Psi_L,\Psi_R, C,D\in \Mat_{4,4}(\C)\otimes \sO_{\oP^1}(2)$, $a_{ij}\in 
\sO_{\oP^1}(4)$. 
 Observe that if $\det C\neq 0$ and $\det D\neq 0$, then  \eqref{CD,DC} implies
\begin{equation} \det (\Psi_{\rm L}-\kappa)=a_{23}a_{32},\quad \det (\Psi_{\rm R}+\kappa)=a_{12}a_{21}.\label{additional}\end{equation}
Moreover, if $\det C$ and $\det D$ are nonzero, then so are $\det \Psi_L$ and $\det\Psi_R$, which means that the action of  ${\rm SL}_4(\C)$ is free on the subset of $V$ where $\det C\neq 0\neq \det D$. Since the complex-symplectic quotient $M^\prime$ of the subset of $V$, where the action of  ${\rm SL}_4(\C)$ is free, is regular, equations \eqref{additional} hold on all of $M^\prime$. This in turn implies, that \eqref{additional} 
is satisfied on the stratum $M_{(1)}$ of the hyperk\"ahler quotient $M$ corresponding to the free action of   ${\rm SU}(4)$. Since this stratum is open and dense, equations \eqref{additional} hold on the hyperk\"ahler quotient $M$ for any complex structure.
\par
We now observe that $M$ has an involution $\tau$ given by
\begin{equation*} (C,D,\Psi_{\rm L}, \Psi_{\rm R},a_{12},a_{21},a_{23},a_{32})\longmapsto (-D^T,-C^T, -\Psi_{\rm L}^T+\kappa, -\Psi_{\rm R}^T-\kappa,a_{32},a_{23},a_{21},a_{12}). 
\end{equation*}
This involution acts holomorphically and fibrewise on the twistor space of $M$, interwining the real structures, and therefore it preserves the hypercomplex structure, hence also the Levi-Civita connection of the hyperk\"ahler metric. Computing the complex-symplectic form of $M$ (this is straightforward on the open dense subset where $ C,D,\Psi_{\rm L}, \Psi_{\rm R}$ are diagonalisable under the action of ${\rm GL}_4(\C)\times {\rm GL}_4(\C)$) shows that $\tau$ is an isometry.
%%%%
% \begin{lemma} Let $\tau$ be an involution acting smoothly on an irreducible Riemannian manifold $(X,g)$. If $\tau$ preserves the Levi-Civita connection, then it is an isometry. \end{lemma} \begin{proof} The assumptions imply that $\tau^\ast g=cg$ for a positive constant $c$. Since $\tau^2=1$, $c^2=1$, i.e.\ $c=1$. \end{proof}
%%%%
%%%%%
We can now identify the hyperk\"ahler structure of our hyperspherical variety $M_{\mu\to\mu+1}^0$:
\begin{proposition} For $n_\mu=n_{\mu+1}=2$, the smooth locus of the hyperspherical variety $M_{\mu\to\mu+1}^0$ is isomorphic, as a hyperk\"ahler manifold, to a double cover of the smooth locus of the fixed point set of $\tau$ on $M$.
\end{proposition}
\begin{proof} 
%We shall show that the twistor spaces of the two hyperk\"ahler manifolds are isomorphic. 
Setting $\Phi_{\rm L}=\Psi_{\rm L}-\kappa/2$ and $\Phi_{\rm R}=\Psi_{\rm R}+\kappa/2$, equations \eqref{CD,DC} become
$$ CD=\Phi_{\rm L}^2-\kappa^2/4,\quad DC=\Phi_{\rm R}^2-\kappa^2/4,$$
while the involution $\tau$ sends $\Phi_{\rm L}, \Phi_{\rm R}$ to $-\Phi_{\rm L}^T, -\Phi_{\rm R}^T$.
 %Observe that the characteristic polynomials of $\Phi_L$ and $\Phi_R$ coincide.
 Hence $M^\tau$ is described by $D=-C^T$, $a_{12}=a_{32}$, $a_{21}=a_{32}$, and $\Phi_{\rm L}, \Phi_{\rm R}$ being skew-symmetric. For a fixed complex structure, we define a holomorphic map $M_{\mu\to\mu+1}^0\to M^\tau$ as follows: to a point $$(A_\mu,A_{\mu+1},B_\mu,B_{\mu+1}, C_{\mu\to\mu+1}, \Phi_\mu^\pm,\Phi_{\mu+1}^\pm)$$ of $M_{\mu\to\mu+1}^0$ we associate the point
 $$\begin{gathered} C=C_{\mu\to\mu+1},  \enskip  a_{12}=\det B_\mu,  \enskip a_{21}=\det A_\mu,  \enskip  \kappa=\tr  \Phi_{\mu}^+=-\tr  \Phi_{\mu}^-,\\ \Phi_{\rm L}=\Phi_{\mu}^-\otimes 1 +
1\otimes\Phi_{\mu+1}^-, \enskip \Phi_{\rm R}=\Phi_{\mu}^+\otimes 1 +
1\otimes\Phi_{\mu+1}^+\end{gathered}
 $$
 of $M^\tau$. This induces a holomorphic map between the twistor spaces, which sends real sections to real sections. Thus we have a smooth map $p:M_{\mu\to\mu+1}^0\to M^\tau$ between the stratified hyperk\"ahler manifolds preserving the hypercomplex structure on the smooth locus.
 \par
 Conversely, given a  section of the twistor space of $M^\tau$, we obtain quadratic polynomials $C_{\mu\to\mu+1}(\zeta)$, $\Phi^\pm_\mu(\zeta)$, and $\Phi^\pm_{\mu+1}(\zeta)$, which satisfy the reality conditions and the equations
 \begin{equation} C_{\mu\to\mu+1}C_{\mu\to\mu+1}^T=R(\Phi_{\mu}^-, \Phi_{\mu+1}^-), \quad C_{\mu\to\mu+1}^TC_{\mu\to\mu+1}=R(\Phi_{\mu}^+, \Phi_{\mu+1}^+).\label{CC^T}\end{equation}
 We need to show that there exist quadratic polynomials $A_\mu(\zeta),A_{\mu+1}(\zeta),B_\mu(\zeta)$, $B_{\mu+1}(\zeta)$ which satisfy the reality conditions and equations \eqref{ABPhi}--\eqref{BBD} (with $D_{\mu\to\mu+1}=C_{\mu\to\mu+1}^T$). Set 
 $F=C_{\mu\to\mu+1}\bigl(\Phi_{\mu}^+\otimes 1-1\otimes \Phi_{\mu+1}^+\bigr).$
 Since the matrix transposition on $\Mat_{4,4}(\cx)\simeq \Mat_{2,2}(\cx)\otimes \Mat_{2,2}(\cx) $ is, as explained above, equal to $\sigma(A\otimes B)=A_{\rm adj}\otimes B_{\rm adj}$, we obtain from \eqref{CC^T} and the definition of $R(\Phi_{\mu}^+, \Phi_{\mu+1}^+)$:
 $$ F^TF=\det\bigl(\Phi_{\mu}^+\otimes 1-1\otimes \Phi_{\mu+1}^+\bigr)\cdot 1.
 $$
 Suppose that the determinant $d$ on the right side is nonzero. Then $F/d^{1/2}$ is an orthogonal matrix. Using again the fact that the matrix transposition on $\Mat_{4,4}(\cx)\simeq \Mat_{2,2}(\cx)\otimes \Mat_{2,2}(\cx) $ is equal to $\sigma(A\otimes B)=A_{\rm adj}\otimes B_{\rm adj}$, we conclude that ${\rm SO}_4(\C)$ is isomorphic to the group of rank $1$ tensors $A\otimes B$, with $\det A\det B=1$.
 Suppose that our given section of the twistor space of $M$ satisfies $d(\zeta)=\det\bigl(\Phi_{\mu}^+(\zeta)\otimes 1-1\otimes \Phi_{\mu+1}^+(\zeta)\bigr)\not\equiv 0$. Then $F(\zeta)$ is a rank $1$ tensor for generic $\zeta$, hence a tensor of rank $\leq 1$ everywhere. We can therefore write $F(\zeta)=A_\mu(\zeta)\otimes A_{\mu+1}(\zeta)$.  Thus we have found (nonzero) solutions of \eqref{AAC} which are  unique up to a scalar multiple. A completely analogous argument shows the existence of  $B_\mu,B_{\mu+1}$ satisfying \eqref{BBD}.
 Observe that equations \eqref{det(A)}--\eqref{det(B)} are also satisfied, and that the matrices  $A_\mu(\zeta),A_{\mu+1}(\zeta),B_\mu(\zeta)$, $B_{\mu+1}(\zeta)$ are invertible for generic $\zeta$.
 If we multiply \eqref{AAC} and \eqref{BBD}, using the commutativity relation  $C_{\mu\to\mu+1}\bigl(\Phi_{\mu}^+\otimes 1-1\otimes \Phi_{\mu+1}^+\bigr)= \bigl(\Phi_{\mu}^-\otimes 1-1\otimes \Phi_{\mu+1}^-\bigr)C_{\mu\to\mu+1}$ and the relation $C_{\mu\to\mu+1}D_{\mu\to\mu+1}=R(\Phi_{\mu}^-,\Phi_{\mu+1}^-)$, we obtain
 $$A_\mu B_\mu\otimes A_{\mu+1}B_{\mu+1}=P_{\Phi_{\mu+1}^-}(\Phi_\mu^-)\otimes P_{\Phi_{\mu}^-}(\Phi_{\mu+1}^-),$$
 and, analogously:
 $$B_\mu A_\mu\otimes B_{\mu+1}A_{\mu+1}=P_{\Phi_{\mu+1}^+}(\Phi_\mu^+)\otimes P_{\Phi_{\mu}^+}(\Phi_{\mu+1}^+).$$
 %This means that if $P_{\Phi_{\mu+1}^-}(\Phi_\mu^-)(\zeta)\not\equiv 0$ and $P_{\Phi_{\mu}^-}(\Phi_{\mu+1}^-)(\zeta)\not\equiv 0$, or $P_{\Phi_{\mu+1}^+}(\Phi_\mu^+)(\zeta)\not\equiv 0$ and $P_{\Phi_{\mu}^+}(\Phi_{\mu+1}^+)(\zeta)\not\equiv 0$, then none of the matrix polynomials $A_\mu(\zeta),A_{\mu+1}(\zeta),B_\mu(\zeta)$, $B_{\mu+1}(\zeta)$ is identically zero, and they are determined up to constant multiples. 
 Since the left-hand sides are not identically zero, there is a unique, up to a sign,  scalar $c$ such that $cA_\mu(\zeta),cB_{\mu}(\zeta)$,  $ c^{-1}A_{\mu+1}(\zeta),c^{-1}B_{\mu+1}(\zeta)$ satisfy equations \eqref{ABPhi}--\eqref{BAPhi} and the matrices $A_\mu(\zeta),A_{\mu+1}(\zeta),B_\mu(\zeta)$,$B_{\mu+1}(\zeta)$ satisfy the reality conditions. We conclude that the map $p:M_{\mu\to\mu+1}^0\to M^\tau$ is a double cover on the  open  subset of $M_{\mu\to\mu+1}^0$ characterised by $d(\zeta)\not\equiv 0$.  Consequently, $p$ is a double cover on  the smooth locus of $M_{\mu\to\mu+1}^0$. We already know that $p$ commutes with the hypercomplex structure. One can now verify, as in Example \ref{findimq}, that $p$ preserves the complex-symplectic forms match on the open dense subset where all matrices are diagonalisable under the action of $\bigl({\rm S}({\rm GL}_{2}(\C)\times {\rm GL}_{2}(\C))\bigr)^2$. Therefore $p$ is a local isometry on the smooth locus. 
 \end{proof}
%%%%
%%%%
\begin{remark} The Nahm equations description of  the moduli space of (smooth) ${\rm SU}(3)$-monopoles of charge $(2,2)$, obtained from $M_{\mu\to\mu+1}^0$ as  in \S\ref{10} (see also Remark \ref{notspanning}),  is very different from the one of Hurtubise and Murray \cite{HM}.
\end{remark}

\section{Remarks on the asymptotic behaviour of the metric\label{asymptotics}}
%%%%
We would like to briefly explain the expected asymptotic behaviour of the hyperk\"ahler metric on $\Hilb^W_\mu(X)$ (cf.\ \cite[\S4.1]{BDG}). In order to do this  we need  to enlarge the class of toric hyperk\"ahler manifolds to include {\em folded toric hyperk\"ahler manifolds}. 
A folded hyperk\"ahler manifold  \cite{Hit-fold, Biq-fold} is a smooth manifold $X$ equipped with a $2$-sphere of closed $2$-forms $\omega_\zeta$, $\zeta\in \oP^1$, and containing a union $D$ (the {\em folding locus}) of codimension $1$ smooth hypersurfaces such that
\begin{itemize}\item[(i)] On each connected component of $X\setminus D$ the $2$-forms define a pseudo-hyperk\"ahler structure;
\item[(ii)] At a smooth point $p$ of $D$ either one or all $2$-forms $\omega_{\zeta}$ degenerate transversally, i.e.\ the highest power of such $\omega_\zeta$ vanishes transversally on a neighbourhood of $p$ in $D$.
\end{itemize}
This definition can be extended to stratified hyperk\"ahler manifolds (provided compatibility  between $D$ and the stratification is observed). Such folded hyperk\"ahler manifolds have well defined twistor space, with all the properties of the twistor space of a hyperk\"ahler manifold. The real sections corresponding to the points of the folding locus $D$ will, however,  have wrong normal bundle. 
\par
We shall now discuss folded toric hyperk\"ahler manifolds.
First of all, analogously to the monoid of affine hypertoric varieties (Remark \ref{monoid}), we can define a semigroup of toric  ALF\footnote{i.e.\ with volume growth equal to $r^{3\dim T_\R}$.} hyperk\"ahler manifolds with structure torus $T_\R$. The product structure is given by the hyperk\"ahler quotient of the Cartesian product of two such manifolds by the anti-diagonal torus. Unlike in the case of hypertoric varieties, there is no neutral element. This semigroup is clearly isomorphic to the semigroup $\oS^+$ consisting of triples $(L,\sA,m)$, where $L$ is a self-adjoint positive-definite linear automorphism of $\h_\oR$, $\sA$ is a collection of distinct codimension $3$ flats of the form \eqref{Hflats}, and $m:\sA\to \oN$ is a multiplicity function.
\par
We now observe that there is a natural extension of this correspondence to a larger semigroup $\oS$, consisting of  $(L,\sA,m)$ as above, but with $m:\sA\to \oZ$. One way to see this is via the generalised Legendre transform construction \cite[\S 3.G]{HKLR}. This construction determines the toric hyperk\"ahler metric  through a polyharmonic function $F:\h_\R\otimes \R^3\to \R$, and it follows from the calculation in  \cite[\S8]{BD} that  $F$  is additive as a function on $\oS^+$ (up to irrelevant linear summands). We can now allow negative signs in the summands of $F$ - the resulting function is still polyharmonic and performing the generalised Legendre transform results in a {\em folded toric hyperk\"ahler manifold} $X$. 
\par
The twistor space of such a folded toric hyperk\"ahler manifold corresponding to $(L,\sA,m)$ differs from the twistor space of the toric hyperk\"ahler manifold given by $(L,\sA,|m|)$ only in the real structure. If we write, for each codimension $3$ flat $\underline{H_i}\in\sA$, $m_i=m(\underline{H_i})$ and, if $m_i\neq 0$, $\omega_i=\alpha_i/|m_i|$ (so that $\omega_i$ are primitive in $\sX_\ast(T)$), then the real structure acting on the fibre coordinate $x_\xi$ of   $\sL^{L(\xi)}\otimes\sO_Y\bigl(\sum_{i=1}^d|m_i\langle \omega_i,\xi\rangle|\bigr)$
 is given by (cf.\ \eqref{realstr})
\begin{equation}  x_\xi\mapsto (-1)^{\sum_{i=1}^dm_i\langle\omega_i,\xi\rangle_+}\,\ol{x_{-\xi}}\,\bar\zeta^{-\sum_{i=1}^dm_i|\langle \omega_i,\xi\rangle|}.\label{realstr2}\end{equation}
%%%%%
The semigroup structure of $\oS$ is isomorphic to the semigroup structure obtained by taking fibrewise symplectic quotients  of fibred (over $\oP^1$) products of these twistor spaces by the anti-diagonal torus.
\par
We can  apply the functor $\Hilb^W_\mu$ to the fibres of the twistor space of a folded toric hyperk\"ahler manifold. Away from the folding locus the results of \S\ref{hk-Hilb^W}  will remain valid.
\par
Let now $X$ be a smooth toric hyperk\"ahler manifold corresponding to $(L,\sA,m)\in\oS^+$. Then $\Hilb^W_\mu(X)$ is also smooth and we conjecture that its putative complete hyperk\"ahler metric exhibits the following cluster-like asymptotic behaviour. 
%Let $R^+$ denote the set of positive roots of $G=G_{{\scriptscriptstyle T, W}}$, which we identify with the set of reflection hyperplanes in $\h$. 
%For any subset $R$ of $\sR$ let $W_R\subset W$ denote the stabiliser of $\bigcup\{H\in \R\}$. 
The asymptotic region of $\Hilb^W_\mu(X)$ decomposes into regions labelled by standard parabolic subgroups of $W$. Essentially, we view a point of $\Hilb^W_\mu(X)$  as a $W$-orbit in $X$, and the region labelled by a standard parabolic subgroup $P$ consists of orbits which are ``almost" equal to singular orbits supported at the union of fixed point sets $\h^{P^\prime}$, where $P^\prime$ runs over conjugates of $P$. 
\par
For each standard parabolic subgroup $P$, we define a folded toric hyperk\"ahler manifold $X_P$ by adding to the arrangement $\sA$ all reflection hyperplanes $H$ such that $\h^P\not\subset H$  with multiplicity $-2$.  The normaliser $N(P)$ of $P$ acts on $X_P$. We conjecture (at least for strongly $W$-invariant $X$) that the hyperk\"ahler metric in the asymptotic region of $\Hilb^W_\mu(X)$ corresponding to $P$ is\footnote{More precisely, there is a local diffeomorphism between the asymptotic regions of $\Hilb^{P}_\mu\bigl(X_P\bigr)/\Gamma$ and of $\Hilb^W_\mu(X)$ which is exponentially close to being a local isometry.}  exponentially close to the hyperk\"ahler metric on the corresponding region (in particular away from the folding locus) of  $\Hilb^{P}_\mu\bigl(X_P\bigr)/\Gamma$, where $\Gamma=N(P)/P$. For example, the ``largest" asymptotic region is the one corresponding to $P=\{1\}$, where we expect the metric to be exponentially close to the hypertoric analogue of the Gibbons-Manton metric \cites{GM,Bi-GM}, namely $X_{\{1\}}/W$,  and $X_{\{1\}}$ is a folded toric hyperk\"ahler manifold obtained from $X$ by adding {\em all} reflection hyperplanes with multiplicity $-2$. We remark that the results of  \cites{Bi-GM,Bi-clusters} provide evidence for this asymptotic picture in the case of $X=T\times \h$, where $T$ is the maximal torus of ${\rm U}(n)$ or of ${\rm SU}(n)$.

\appendix

\section{Modified Nahm's equations\label{appendix:B}}

Let $K$ be a compact Lie group with Lie algebra $\fK$ and a fixed  $\Ad_K$-invariant scalar product $\langle\,,\,\rangle$ on $\fK$. Let $L:\fK\to \fK$ be a $K$-equivariant positive-definite self-adjoint linear map.
The {\em $L$-Nahm equations} for a quadruple $(T_0(t),T_1(t),T_2(t),T_3(t))$ of $\fK$-valued smooth valued functions on an interval $I$ are the  following three ODEs: 
\begin{equation} \dot T_i=[T_i,T_0]+L\bigl([T_j,T_k]\bigr),\enskip \text{where $(ijk)$ is a cyclic permutation of $(123)$}.\label{mNe2}
\end{equation}
They are invariant under the usual $K$-valued gauge transformations $g:I\to K$ acting via 
 \begin{equation} T_0\mapsto \Ad_g(T_0)-\dot gg^{-1}, \quad T_i\mapsto  \Ad_g(T_i),\enskip i=1,2,3.\end{equation}
\begin{remark} In the case when $K$ is simple, the only possibility for $L$ is $c\Id$, $c>0$, and the $L$-Nahm equations are equivalent to the usual Nahm's equations via $T_0(t)\mapsto cT_0(ct)$, $T_i(t)\mapsto T_i(ct)$, $i=1,2,3$. In the general  case there will be a decomposition $\fK\simeq \bigoplus_{i=1}^r \fK_i$ such that  $L=\bigoplus_{i=1}^r c_i\Id_{\fK_i}$, $c_1,\dots,c_r\in (0,\infty)$. Equations \eqref{mNe2} are then simply rescaled Nahm's equations on each direct summand $\fK_i$. The group of gauge transformations, however, is not a product, unless $K$ itself is a product.\label{rescale}
\end{remark}
%%%%
%\begin{remark} Let  $\fK\simeq \bigoplus_{i=1}^r \fK_i$ be the decomposition of $\fK$ into simple Lie algebras. Then $L$ is of the form $\bigoplus_{i=1}^r c_i\Id_{\fK_i}$, $c_1,\dots,c_r\in (0,\infty)$. Equations \eqref{mNe2} are simply rescaled Nahm's equations on each direct summand $\fK_i$. The group $K$  does not, however, necessarily decompose, and, therefore, we can rescale the gauge transformations.\end{remark}
%%%%
We consider the following $K$-invariant scalar product on $\fK^{\oplus 4}$ (cf.\ \eqref{flathk}):
\begin{equation} \langle L^{-1}(t_0),t_0\rangle+ \sum_{i=1}^3\langle L (t_i),t_i\rangle.\label{Lflat}\end{equation}
This scalar product is quaternion-Hermitian  with respect to the following action of the standard basis  $J_1,J_2,J_3$ of imaginary quaternions:
$$ J_1(t_0)=-L(t_1),\enskip J_1(t_1)=L^{-1}(t_0),\enskip J_1(t_2)=-t_3, \enskip J_1(t_3)=t_2,
$$
and cyclic permutations of indices $1,2,3$.
\par
The moduli space of solutions to $L$-Nahm's equations, with appropriate boundary conditions, will be a hyperk\"ahler manifold. The hyperk\"ahler structure arises, as for the usual Nahm equations, via an infinite-dimensional hyperk\"ahler quotient: we start with the vector space $\sC$ of $\oR^3$-invariant connections $T_0(t)dt+T_1(t)dx_1+  T_2(t)dx_2+ T_3(t)dx_3 $ on $I\times \oR^3$ with given boundary conditions, and make it into a flat hyperk\"ahler Hilbert space by integrating the scalar product \eqref{Lflat} over $I$. The gauge group of $K$-valued gauge transformations which are equal to $1$ at both ends of $I$ acts on $\sC$ preserving the hyperk\"ahler structure, and the moment map equations are \eqref{mNe2}. 
\par
We are interested mainly in the following moduli spaces: $N_K^-(L)$ (resp.\  $N_K^+(L)$) is the moduli space of $\fK$-valued $L$-Nahm's equations on $(-1,0]$ (resp.\ on $[0,1)$), regular at $t=0$ and with simple poles at $t=-1$ (resp.\ $t=1$), with residues $L^{-1}(\sigma_1), L^{-1}(\sigma_2), L^{-1}(\sigma_2)$, where $\sigma_1,\sigma_2,\sigma_3$ is a  principal $\su(2)$-triple. The gauge group consists of gauge transformations  which are equal to $1$ at both ends of the interval. The arguments in \cite[Appendix A]{slices} can be repeated verbatim to show that $N_K^{\pm}(L)$
are complete hyperk\"ahler manifolds. Both  $N_K^-(L)$ and $N_K^+(L)$ admit a hyperk\"ahler action of $K$ given by gauge tranformations with arbitrary values at $t=0$. Moreover, as in \cite{Bi1}, they are biholomorphic, with respect to  any complex structure, to  $G\times\mathscr{S}_{\g}$, where $G=K^\cx$, $\g=\Lie(G)$, and $\mathscr{S}_{\g}$ is a Slodowy slice to the regular nilpotent orbit in $\g$. 
%%%%
\begin{remark} We can find a decomposition $\fK\simeq \bigoplus_{i=1}^r \fK_i$ such that  $L=\bigoplus_{i=1}^r c_i\Id_{\fK_i}$, $c_1,\dots,c_r\in (0,\infty)$. The group $K$ is of the form $\bigl(\prod_{i=1}^r K_i\bigr)/\Gamma$, where $\Lie(K_i)=\fK_i$ and $\Gamma$ is a finite abelian group. It follows from Remark \ref{rescale} that $N_K^-(L)$ is isomorphic, as a hperk\"ahler manifold, to $\bigl(\prod_{i=1}^rN_{K_i}^-(c_i\Id_{\fK_i})\bigr)/\Gamma$, and similarly for $N_K^+(L)$. Each $N_{K_i}^-(c_i\Id_{\fK_i})$ is isomorphic to a moduli space of solutions to the usual Nahm equations on $(-c_i,0]$.
\end{remark}
Finally, taking the hyperk\"ahler quotient of the product $N_K^-(L)\times N_K^+(L)$ by the diagonal $K$, we obtain again a complete hyperk\"ahler manifold which is biholomorphic to the universal centraliser \eqref{ucentr} for any complex structure.

\section{Lifting ${\rm SO}$ to ${\rm Spin}^c$\label{appendix:ht-lifts}}
Let $W$ be a Weyl group and 
\begin{equation}1\longrightarrow \C^\ast \longrightarrow \tilde T\longrightarrow T\longrightarrow 1,\label{C*-ext}\end{equation}
be a $W$-equivariant extension of complex tori
with $W$ acting by reflections on $T$ and $\tilde T$ and trivially on $\cx^\ast$. If $\tilde X$ is a $W$-invariant hypertoric variety with structure torus $\tilde T$, then $\Hilb^W_\mu(\tilde X)$ has a Hamiltonian  action of $\cx^\ast$ and we can form a symplectic quotient. In general (cf.\ Theorem \ref{C=W} or Example \ref{D1-D0}), this quotient is different from $\Hilb^W_\mu(X)$, where $X$ is the symplectic quotient of $\tilde X$ by $\cx^\ast$.
%The following example shows that this quotient is in general different from $\Hilb^W_\mu(X)$, where $X$ is the symplectic quotient of $\tilde X$ by $\cx^\ast$. \begin{example} Let $\tilde T$ (resp.\ $T$) be the maximal torus of $GL_2(\cx)$ (resp.\ $PSL_2(\C)$), and let $\tilde X\simeq T^\ast \cx^2$ with the standard action of $\tilde T$. The symplectic quotient $X$ of $\tilde X$ by the centre of $GL_2(\cx)$ is the $A_1$-singularity, and hence, according to Example \ref{D_2}, $\Hilb^W_\mu(X)$ is the $D_2$-singularity. On the other hand $\Hilb^W_\mu(\tilde X)$ has been computed in Example \ref{bigHilb}. The moment map for the action of $\cx^\ast$ on $\Hilb^W_\mu(\tilde X)$ is equal to $\tr S$, and hence the $0$-level set $X_0$ is described by the following two equations in $\cx^5$: \begin{equation*} a_0b_0+a_1b_1c=0,\quad a_1b_0+a_0b_1=1.\end{equation*} The action of $\cx^\ast$ is given by \begin{equation*}t.(a_0,a_1,b_0,b_1,c)= (ta_0,ta_1,t^{-1}b_0,t^{-1}b_1,c),\label{act!}\end{equation*} and it follows that $ \cx[X_0]^{\C^\ast}$ is the coordinate ring of the surface given by the equation $A^2c-B^2+B=0$ (where $A=a_1b_1$, $B=a_0b_1$), which is isomorphic to the $D_1$-singularity.\end{example}
%%%%
The following result shows that, for strongly $W$-invariant hypertoric varieties, the symplectic quotient of $\Hilb^W_\mu(\tilde X)$ by $\C^\ast$ depends only on the symplectic quotient of $\tilde X$ by $\C^\ast$, and not on $\tilde X$ itself.
%%%%%%
\begin{proposition}
Let $\tilde X$ and $\tilde X^\prime$ be two strongly $W$-invariant hypertoric variety with structure torus $\tilde T$ such that the symplectic quotients of $\tilde X,\tilde X^\prime$ by $\cx^\ast$ are isomorphic. Then the symplectic quotients of $\Hilb^W_\mu(\tilde X)$ and $\Hilb^W_\mu(\tilde X^\prime)$ by $\C^\ast$ are also isomorphic.\label{XtildeX}
\end{proposition}
\begin{proof} The coordinate ring of $\tilde X$ is given by equations (cf.\ \eqref{monomials}))
 $$ x_\xi\cdot x_{\xi^\prime}=x_{\xi+\xi^\prime}\prod_{i=1}^d (\langle \alpha_k,z\rangle-\lambda_k)^{\langle \alpha_i,\xi\rangle_+ +\langle \alpha_i,\xi^\prime\rangle_+ - \langle \alpha_i,\xi+\xi^\prime\rangle_+}, $$
 where $\langle \alpha_k,z\rangle-\lambda_k=0$ are equations of the hyperplanes in the (signed partitioned) integral multiarrangement $(\tilde \sA,m)$ of $\tilde X$. The coordinate ring of $\tilde X^\prime $ is given completely analogously, with $(\tilde \sA,m)$ replaced by   $(\tilde\sA^\prime,m^\prime)$ of $\tilde X^\prime$.
 $\Hilb^W_\mu(\tilde X)$ and $\Hilb^W_\mu(\tilde X^\prime)$ are obtained by $W$-equivariant 
substitution $x_\xi=\phi_\xi(z)$, where $\phi_\xi$ are elements of the deformed ring $\tilde R_c$ of coinvariants for $\tilde \h^\ast$ (cf.\ Remark \ref{R_c}). This means that the $\cx^\ast$-invariant polynomials in $\Hilb^W_\mu(\tilde X)$ (or $\Hilb^W_\mu(\tilde X^\prime)$) do not depend on $\alpha_k,\lambda_k$.  
The fact that the symplectic quotients of $\tilde X$ and $\tilde X^\prime$ are  isomorphic tmeans that the projections of the cocharacters $\alpha_k,\alpha_k^\prime\in \sX^\ast(\tilde T)$ onto $\sX_\ast(T)$ are equal. Moreover, the
 intersections of $(\tilde \sA,m)$ and  $(\tilde\sA^\prime,m^\prime)$  with the hyperplane $\nu^{-1}(0)$ (where $\nu:\tilde \h^\ast\to \cx$ is the projection induced by \eqref{C*-ext}) are equal. These two facts imply that the factors $\prod_{i=1}^d (\langle \alpha_k,z\rangle-\lambda_k)^{\langle \alpha_i,\xi\rangle_+ +\langle \alpha_i,\xi^\prime\rangle_+ - \langle \alpha_i,\xi+\xi^\prime\rangle_+}$ also induce the same elements of the deformed ring $ R_c$ of coinvariants for $ \h^\ast$. This proves the claim. 
\end{proof}
%%%
We now consider the following special case of the above situation.
Let $G$ be a reductive algebraic group of the form $G=G_0\times G^\prime$, where $G_0=\prod_{i=1}^N {\rm SO}_{2k_i+1}(\cx)$, $k_i\geq 1$, and $G^\prime$ has no direct ${\rm SO}_{2k+1}(\C)$-factors. Let $\widehat G_0=\C^\ast\times_{\oZ_2^N}\prod_{i=1}^N {\rm Spin}_{2k_i+1}(\cx)$ (cf.\ Remark \ref{hatG}).
 Denote by $T_0,\widehat T_0,T^\prime$ (resp.\ $\h_0,\hat\h_0,\h^\prime$)  the maximal tori of $G_0,\widehat G_0$ and $G^\prime$  (resp.\ the corresponding  Cartan subalgebras), and by $W_0,W^\prime, W=W_0\times W^\prime$ the Weyl groups of $G_0,G^\prime,G$. We have the following special case of the  exact sequence \eqref{C*-ext}:
 $$ 1\longrightarrow \cx^\ast\longrightarrow \widehat T_0\times T^\prime\longrightarrow T_0\times T^\prime \longrightarrow 1,
 $$
 and we ask whether, given a $W$-invariant hypertoric variety $X$ with structure torus $T_0\times T^\prime$,
 there exists a $W$-invariant hypertoric variety $\hat X$ with structure torus $\widehat T_0\times T^\prime$ such that the symplectic quotient of $\hat X$ by $\C^\ast$ is isomorphic to $X$. It follows from the results in \S\ref{htv} that this is equivalent to lifting integral multiarrangements.
 \par
 We identify the cocharacter lattice $\sX_\ast(T_0)$ with $\bigoplus_{i=1}^N\oZ^{k_i}$, with the $i$-th factor $\oZ_2^{k_i}\rtimes \Sigma_{k_i}$  of the Weyl group $W_0=\prod_{i=1}^N\oZ_2^{k_i}\rtimes \Sigma_{k_i}$ acting by permutations  and sign changes of coordinates of  the $i$-th summand of $\sX_\ast(T_0)$. The lattice $\sX_\ast(\widehat T_0)$ is then the sublattice of $\oZ\oplus\oZ^k$  such that the sum of all coordinates is even. 
  Let $(\sA,m)$ be the integral multiarrangement of the hypertoric variety $X$. We can decompose $(\sA,m)$ into the sum $(\sA^{\rm odd},m)\oplus (\sA^{\rm even},m)$ of two $W$-invariant multiarrangements, where a hyperplane $H$ belongs to $\sA^{\rm odd}$ (resp.\ $\sA^{\rm even}$) if and only if the sum $\sum_{i=1}^k \omega_i$ of $\sX_\ast(T_0)$-coordinates of its primitive normal $(\omega,\tau)\in \sX_\ast(T_0)\oplus\sX_\ast(T^\prime)$ is odd (resp.\ even). We shall show that $X$ can be lifted to $\hat X$ if
   $\sA^{\rm odd}$ has  unobstructed $W$-invariant deformations (Definition \ref{uW-id}). This means, according to   \S\ref{sww}, that every self-dual hyperplane $H\in \sA^{\rm odd}$ (i.e.\ such that there exists $ w\in W$ which sends a normal of $H$ to its negative) passing through the origin has even multiplicity.
\par
Let $\delta$ be the natural map $\sX_\ast(T_0)\to \sX_\ast(\widehat T_0)$ given by $\delta(\alpha)=(0,\alpha)$ (resp.\ $\delta(\alpha)=(1,\alpha)$) if the sum of coordinates of $\alpha$ is even (resp.\ odd).\footnote{Observe that the lift of a representation of ${\rm Sp}_{2k}(\cx)$ to a  representation of ${\rm Sp}_{2k}^c(\cx)$,  described in the proof of Lemma \ref{liftsp}, replaces the weights $\alpha\in \sX_\ast(T_0)$ with $\delta(\alpha)\in \sX_\ast(\widehat T_0)$.}
 We extend $\delta$ to $\sX_\ast(\widehat T_0)\oplus\sX_\ast(T^\prime)$ by the identity on the second summand. We have the following decomposition of the arrangement $(\sA,m)$:
$$ (\sA,m)=(\sA^\prime,m)\oplus(\sA^{\dprime},m)\oplus (\sA^{\rm even}, m),$$
where $(\sA^{\dprime},m)$ is the self-dual part of $\sA^{\rm odd}$ and $(\sA^\prime,m)$ is its complement. Recall (Lemma \ref{self-dual}) that $(\sA^\prime,m)$ is orientable, i.e.\ there exists a $W$-equivariant choice of primitive normals to hyperplanes in $\sA^\prime$. By changing the scalars $\lambda_k$ we may assume that every hyperplane in $\sA$ which does not pass through the origin has equation $\langle\omega,z\rangle=\lambda\neq 0$ with primitive $\omega$. Let us now choose,  
for every $H\in \sA$, its primitive normal $\omega(H)$, with the additional requirement that the choice is $W$-equivariant on $\sA^\prime$. We define an integral multiarrangement $(\widehat\sA,\hat m )$ in $\sX_\ast(\widehat T_0)\oplus\sX_\ast(T^\prime)$ as follows:
\begin{itemize}
\item a hyperplane $H\in \sA^{\dprime}$ passing through the origin contributes two  hyperplanes to $\widehat\sA$, also passing through the origin, and  having normals $\delta(\pm \omega(H))$ and multiplicities $m(H)/2$;
\item any other hyperplane $H\in \sA$ with equation $\langle\omega(H),z\rangle=\lambda $ contributes one hyperplane to $\widehat\sA$, with equation $\langle\delta(\omega(H)),z\rangle=\lambda $ and multiplicity $m(H)$.
\end{itemize}
It is clear that this multiarrangement is $W$-invariant. Moreover, the parts $(\widehat \sA^\prime,\widehat m)$ and $(\widehat\sA^{\dprime},\widehat m)$, corresponding to $(\sA^\prime,m)$ and $(\sA^{\dprime},m)$, are orientable, so that the obstruction $\o$ vanishes on these two parts. On the other hand,   the image of $ \sA^{\rm even}$ lies in $0\oplus\h_0^\ast\oplus{\h^\prime}^\ast$, and hence its obstruction $\o$ is unchanged, i.e.\ trivial\footnote{Since $\o(\sA,m)=0=\o(\sA^{\rm odd},m)$ by assumption,  we also have $\o(\sA^{\rm even},m)=0$.}. Therefore the hypertoric variety $\hat X$ defined by $(\widehat\sA,\widehat m)$ is $W$-invariant.
The symplectic quotient of $\hat X$ by the centre $\cx^\ast$ of $\widehat G_0$ is isomorphic to $X$, owing to the argument in the proof of Proposition \ref{XtildeX}.
%%%%
%%%
\par
As mentioned in \S\ref{genquat}, this construction yields an alternative definition of Coulomb branches for the group $G$ and those quaternionic representation $\oV$ of $G^\vee$ for which $\hat X$ exists: as the symplectic quotient of $\Hilb^W_\mu(\hat X)$ by $\cx^\ast$. The representation $\oV$ must satisfy two conditions:  the hypertoric variety $X( G,\frac{1}{2}\oV)$ must be $W$-invariant {\em and} the multiplicity of every self-dual hyperplane in $\sA^{\rm odd}$ must be even. For example, if the first condition is satisfied, then the second one is  also satisfied  if $G={\rm SO}_3(\C)$, but the second condition fails for $G={\rm SO}_5(\C)$ and the irreducible representation $\oV=\Gamma_{1,1}$ (with highest weight $2L_1+L_2$) of ${\rm Sp}_4(\cx)$. 
%%%%
\begin{remark} If $(\sA,m)$ is the multiarrangement of $X( G,\frac{1}{2}\oV)$, then the hyperplanes in $\sA^{\rm odd}$, which coincide with reflection hyperplanes, have even multiplicity. This follows from the fact that the reflection $s$ fixing such a hyperplane $H$  must act on the local model of $X( G,\frac{1}{2}\oV)$ along $H$ and this local model is the product of $T^\ast T^\prime$ (with trivial $\Z_2$-action) and either an $A_k$-singularity or  a $4$-dimensional hypertoric variety with structure torus equal to the maximal torus of $GL_2(\cx)$ and the multirarrangement consisting of the unique reflection hyperplane. In the first case, the $\oZ_2$-invariance implies that $k$ is odd (and hence the multiplicity is even), while in the second case the multiplicity must be even, owing to Lemma \ref{evenGL2}.
\label{evenroots}
\end{remark}
\begin{remark}
In the case $\oV=V\oplus V^\ast$ the arrangement $(\widehat\sA,\widehat m)$ may differ from the one obtained from the construction in \S\ref{Coul} (Theorem \ref{C=W} and Remark \ref{hatG}), and therefore $\hat X$ may differ from $X(\widehat G_0\times G^\prime,V)$. Nevertheless, the symplectic quotients of $\Hilb^W_\mu(\hat X)$  and $\Hilb^W_\mu(X(\widehat G_0\times G^\prime,V))$ by $\C^\ast$ are isomorphic, owing to Proposition \ref{XtildeX}. \end{remark}

\end{document}